\pdfoutput=1
\RequirePackage{ifpdf}
\ifpdf % We are running pdfTeX in pdf mode
\documentclass[pdftex]{sigma}
\else
\documentclass{sigma}
\fi

\usepackage{pgf,tikz,tikz-cd}
\usetikzlibrary{shapes.geometric, arrows}
\usetikzlibrary{arrows}
\usetikzlibrary{intersections}
\usetikzlibrary{shapes.misc}
\usetikzlibrary{patterns}
\usetikzlibrary{calc,patterns,angles,quotes}
\usetikzlibrary{positioning}
\usetikzlibrary{decorations.markings,decorations.pathreplacing}

\tikzset{cross/.style={cross out, draw,
 minimum size=2*(#1-\pgflinewidth),
 inner sep=0pt, outer sep=0pt}}

\tikzset{%
 add/.style args={#1 and #2}{
 to path={%
 ($(\tikztostart)!-#1!(\tikztotarget)$)--($(\tikztotarget)!-#2!(\tikztostart)$)%
 \tikztonodes},add/.default={.2 and .2}}
}

\xdefinecolor{darkgreen}{RGB}{0, 100, 0}

\usepackage[all]{xy}

\numberwithin{equation}{section}

\newtheorem{Theorem}{Theorem}[section]
\newtheorem{Corollary}[Theorem]{Corollary}
\newtheorem{Lemma}[Theorem]{Lemma}
\newtheorem{Proposition}[Theorem]{Proposition}
 { \theoremstyle{definition}
\newtheorem{Definition}[Theorem]{Definition}
\newtheorem{Example}[Theorem]{Example}
\newtheorem{Construction}[Theorem]{Construction}
\newtheorem{Remark}[Theorem]{Remark} }

\begin{document}
\allowdisplaybreaks

\newcommand{\arXivNumber}{1901.04166}

\renewcommand{\thefootnote}{}

\renewcommand{\PaperNumber}{013}

\FirstPageHeading

\ShortArticleName{Cluster Structures and Subfans in Scattering Diagrams}

\ArticleName{Cluster Structures and Subfans\\ in Scattering Diagrams\footnote{This paper is a~contribution to the Special Issue on Cluster Algebras. The full collection is available at \href{https://www.emis.de/journals/SIGMA/cluster-algebras.html}{https://www.emis.de/journals/SIGMA/cluster-algebras.html}}}

\Author{Yan ZHOU}

\AuthorNameForHeading{Y.~Zhou}

\Address{Beijing International Center for Mathematical Research, Peking University, China}
\Email{\href{mailto:y-chou@pku.edu.cn}{y-chou@pku.edu.cn}}
\URLaddress{\url{https://sites.google.com/view/yan-zhou/home}}

\ArticleDates{Received July 24, 2019, in final form March 01, 2020; Published online March 11, 2020}

\Abstract{We give more precise statements of Fock--Goncharov duality conjecture for cluster varieties parametrizing ${\rm SL}_{2}/{\rm PGL}_{2}$-local systems on the once punctured torus. Then we prove these statements. Along the way, using distinct subfans in the scattering diagram, we produce an example of a cluster variety with two non-equivalent cluster structures. To overcome the technical difficulty of infinite non-cluster wall-crossing in the scattering diagram, we introduce quiver folding into the machinery of scattering diagrams and give a quotient construction of scattering diagrams.}

\Keywords{cluster varieties; Donaldson--Thomas transformations; Markov quiver; non-equi\-valent cluster structures; scattering diagrams; quiver folding}

\Classification{13F60; 14J32; 14J33; 14N35}

\renewcommand{\thefootnote}{\arabic{footnote}}
\setcounter{footnote}{0}

\section{Introduction}

\subsection[Fock--Goncharov duality conjecture for $\mathcal{A}_{\mathbb{T}_{1}^{2}}$, $\mathcal{X}_{\mathbb{T}_{1}^{2}}$ and $\mathcal{A}_{{\rm prin},\mathbb{T}_{1}^{2}}$]{Fock--Goncharov duality conjecture for $\boldsymbol{\mathcal{A}_{\mathbb{T}_{1}^{2}}}$, $\boldsymbol{\mathcal{X}_{\mathbb{T}_{1}^{2}}}$ and $\boldsymbol{\mathcal{A}_{{\rm prin},\mathbb{T}_{1}^{2}}}$}

In \cite{FG06}, Fock and Goncharov introduced cluster coordinates
on moduli spaces of local systems on Riemann surfaces in Teichmüller
theory. In this paper, the main examples we investigate are cluster
varieties with seeds coming from ideal triangulations of the once
punctured torus $\mathbb{T}_{1}^{2}$. These cluster varieties parametrize
${\rm SL}_{2}/{\rm PGL}_{2}$-local systems with decorations or framing
on $\mathbb{T}_{1}^{2}$, as constructed in~\cite{FG06}. Any ideal
triangulation of $\mathbb{T}_{1}^{2}$ yields the same isomorphic
class of seeds that can be encoded by Markov quiver:

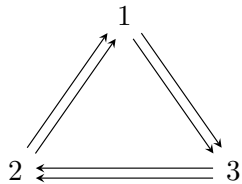
\begin{figure}[h!]\centering
\begin{tikzcd}[arrow style=tikz,>=stealth,row sep=4em]
& 1 \arrow[dr,shift left=.4ex] \arrow[dr,shift right=.4ex]
\\
2 \arrow[ur,shift left=.4ex] \arrow[ur,shift right=.4ex]
&& 3 \arrow[ll, shift left=.4ex] \arrow[ll, shift right=.4ex]
\end{tikzcd}
\caption{Markov quiver.}\label{Markov}
\end{figure}

Let $\mathcal{A}_{\mathbb{T}_{1}^{2}}$, $\mathcal{A}_{{\rm prin,\mathbb{T}_{1}^{2}}}$
and $\mathcal{X}_{\mathbb{T}_{1}^{2}}$ be cluster varieties associated
to $\mathbb{T}_{1}^{2}$ of type $\mathcal{A}$, $\mathcal{A}_{{\rm prin}}$
and $\mathcal{X}$ respectively. Using techniques of \emph{scattering
diagrams }(cf.~\cite{GS11,KS06}) and \emph{broken lines} (cf.~\mbox{\cite{CPS, G09}})
coming from mirror symmetry, Gross, Hacking, Keel and Kontsevich in~\cite{GHKK} construct canonical bases for cluster varieties whose elements, \emph{theta functions}, are generalization of \emph{cluster
variables}. Specifically, given a cluster variety $V$ over $\mathbb{C}$,
let $V^{\vee}$ be its \emph{Fock--Goncharov dual} variety and~$V^{\vee}\big(\mathbb{Z}^{T}\big)$
the set of integer tropical points of~$V^{\vee}$. For each~$q$ in
$V^{\vee}\big(\mathbb{Z}^{T}\big)$, there is a~theta function $\vartheta_{q}$
corresponding to~$q$. Write ${\rm can}(V)$ for the $\mathbb{C}$-vector
space with basis the set of theta functions, i.e.,
\[
{\rm can}(V)=\bigoplus_{q\in V^{\vee}(\mathbb{Z}^{T})}\mathbb{C}\cdot\vartheta_{q}.
\]
Theta functions can be formal power series. However, there is a canonically
defined subset $\Theta(V)$ in $V^{\vee}\big(\mathbb{Z}^{T}\big)$ called \emph{theta set} that indexes a vector subspace ${\rm mid}(V)$ of ${\rm can}(V)$
and ${\rm mid}(V)$ has a~$\mathbb{C}$-algebra structure. Moreover,
${\rm mid}(V)$ can always be identified as a subring of $\Gamma(V,\mathcal{O}_{V})$
if $V$ is of type $\mathcal{X}$ or $\mathcal{A}_{{\rm prin}}$.
In the case of cluster varieties associated to $\mathbb{T}_{1}^{2}$,
it is known that scattering diagrams on $\mathcal{A}_{\mathbb{T}_{1}^{2}}^{\vee}\big(\mathbb{R}^{T}\big)$
and $\mathcal{A}_{{\rm prin},\mathbb{T}_{1}^{2}}^{\vee}\big(\mathbb{R}^{T}\big)$
have two distinct subfan structures, one called \emph{cluster complex}
consisting of positive chambers $\{\mathcal{C}_{\overline{{\bf s}}_{v}}^{+}\}_{v\in\mathfrak{T}_{\overline{{\bf s}}}}$
and the other consisting of negative chambers~$\{\mathcal{C}_{{\bf \overline{{\bf s}}}_{v}}^{-}\}_{v\in\mathfrak{T}_{\overline{{\bf s}}}}$.
Here $\mathfrak{T}_{\overline{{\bf s}}}$ is the infinite tree whose
vertices parametrizing all the seeds mutationally equivalent to an
initial seed $\overline{{\bf s}}$ coming from an ideal triangulation
of $\mathbb{T}_{1}^{2}$. Because of the infinite wall-crossings from
$\mathcal{C}_{\overline{{\bf s}}}^{-}$ to $\mathcal{C}_{\overline{{\bf s}}}^{+}$
in the scattering diagram, it is not clear whether theta functions
parametrized by integer tropical points in $\mathcal{C}_{\overline{{\bf s}}}^{-}$
are formal sums or polynomials. Given a cluster variety $V$, we say
that the \emph{full Fock--Goncharov conjecture} holds for $V$ if the
following holds:
\[
\Gamma(V,\mathcal{O}_{V})={\rm mid}(V)={\rm can}(V)=\bigoplus_{q\in V^{\vee}\big(\mathbb{Z}^{T}\big)}\mathbb{C}\cdot\vartheta_{q}.
\]
 In this paper, one of our main results is the following:
\begin{Theorem}[cf.\ Theorem \ref{thm:5.1}, Corollary~\ref{cor:5.2}] The full Fock--Goncharov conjecture holds for~$\mathcal{A}_{{\rm prin},\mathbb{T}_{1}^{2}}$ and~$\mathcal{X}_{\mathbb{T}_{1}^{2}}$.
\end{Theorem}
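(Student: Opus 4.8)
The plan is to reduce the statement to the single case of $\mathcal{A}_{{\rm prin},\mathbb{T}_{1}^{2}}$, to isolate the purely formal part of the argument (which is already available in \cite{GHKK}), and then to do the real work on the scattering diagram of the Markov quiver via \emph{quiver folding}. First I would dispose of $\mathcal{X}_{\mathbb{T}_{1}^{2}}$: since $\mathcal{X}_{\mathbb{T}_{1}^{2}}$ is a quotient of $\mathcal{A}_{{\rm prin},\mathbb{T}_{1}^{2}}$ by a torus, its scattering diagram is the image of the one for $\mathcal{A}_{{\rm prin},\mathbb{T}_{1}^{2}}$ under the corresponding linear projection, and theta functions, the theta set, and the identification of ${\rm mid}$ inside $\Gamma(\mathcal{O})$ all descend along this quotient (\cite{GHKK}); hence the full Fock--Goncharov conjecture for $\mathcal{A}_{{\rm prin},\mathbb{T}_{1}^{2}}$ implies it for $\mathcal{X}_{\mathbb{T}_{1}^{2}}$. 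So it suffices to treat $V=\mathcal{A}_{{\rm prin},\mathbb{T}_{1}^{2}}$.

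For such $V$ two inclusions hold unconditionally: ${\rm mid}(V)\subseteq{\rm can}(V)$, because $\Theta(V)\subseteq V^{\vee}\big(\mathbb{Z}^{T}\big)$; and ${\rm mid}(V)\subseteq\Gamma(V,\mathcal{O}_{V})$ --- and it is precisely the failure of this second inclusion for type $\mathcal{A}$ that keeps $\mathcal{A}_{\mathbb{T}_{1}^{2}}$ out of the theorem. Consequently the full conjecture for $V$ is equivalent to the conjunction of: (i) every broken-line expansion of every $\vartheta_{q}$, $q\in V^{\vee}\big(\mathbb{Z}^{T}\big)$, is \emph{finite}, so that $\vartheta_{q}$ is a Laurent polynomial (not a formal power series) in each cluster chart and theta-multiplication is everywhere defined, whence $\Theta(V)=V^{\vee}\big(\mathbb{Z}^{T}\big)$ and ${\rm mid}(V)={\rm can}(V)$; and (ii) $\Gamma(V,\mathcal{O}_{V})\subseteq{\rm mid}(V)$, i.e.\ the cluster monomials span $\Gamma(V,\mathcal{O}_{V})$. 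Both (i) and (ii) reduce to controlling the broken lines of the scattering diagram $\mathfrak{D}_{\overline{{\bf s}}}$, and the sole obstruction is the infinitely many walls separating $\mathcal{C}_{\overline{{\bf s}}}^{-}$ from $\mathcal{C}_{\overline{{\bf s}}}^{+}$.

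The technical heart is to tame this infinite non-cluster wall-crossing, and the device is folding. The Markov seed (or a suitable mutation of it) carries a finite symmetry, and I would set up a \emph{quotient construction of scattering diagrams} realising the relevant part of $\mathfrak{D}_{\overline{{\bf s}}}$ as the image, under the induced action on $\mathcal{A}_{{\rm prin},\mathbb{T}_{1}^{2}}^{\vee}\big(\mathbb{R}^{T}\big)$, of a scattering diagram attached to an unfolded seed whose behaviour in this region is governed by a rank-$2$ affine scattering diagram --- one whose structure is classical: finitely many cluster rays on each side of a single limiting ray carrying an explicit wall function. The genuinely new work, and where I expect the main obstacle, is proving that this quotient construction is legitimate --- that walls, consistency, and path-ordered products all descend along the folding even though the wall structure in the ``Badlands'' is not locally finite.

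Granting the quotient construction, one reads off that every wall function of the folded diagram is a polynomial and that the chambers between $\mathcal{C}_{\overline{{\bf s}}}^{-}$ and $\mathcal{C}_{\overline{{\bf s}}}^{+}$ accumulate on only finitely many limiting rays; it then follows that each $\vartheta_{q}$ receives contributions from only finitely many broken lines with a given endpoint --- which is (i) --- and that the structure constants $\alpha(p,q;r)$ vanish for all but finitely many $r$ and are non-negative. Statement (ii) holds because in this example the upper cluster algebra coincides with $\Gamma(\mathcal{A}_{{\rm prin},\mathbb{T}_{1}^{2}},\mathcal{O})$ and is generated by cluster variables --- or, equivalently, one may invoke the ``Enough Global Monomials'' criterion of \cite{GHKK} once the diagram is understood. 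Combining (i) and (ii) with the two unconditional inclusions gives $\Gamma(\mathcal{O})={\rm mid}={\rm can}$ for $\mathcal{A}_{{\rm prin},\mathbb{T}_{1}^{2}}$, and then the first paragraph yields the case of $\mathcal{X}_{\mathbb{T}_{1}^{2}}$.
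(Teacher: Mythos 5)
There is a genuine gap, and it sits in both load\-/bearing steps. For your step (i), everything hinges on the claim that, granting a folding construction, the region of $\mathfrak{D}_{\overline{{\bf s}}}$ between $\mathcal{C}_{\overline{{\bf s}}}^{-}$ and $\mathcal{C}_{\overline{{\bf s}}}^{+}$ is ``governed by a rank-$2$ affine scattering diagram'' with finitely many limiting rays and polynomial wall functions, from which finiteness of broken lines would be read off. For the Markov seed this is unsupported and essentially false: the diagram is rank $3$, the complement of the two cluster subfans carries infinitely many walls (that is precisely the difficulty the theorem must overcome), and folding the Markov seed by its own symmetry is not even admissible in this framework (condition \eqref{eq:-2} fails, since $\{e_1,e_2\}=-\{e_3,e_2\}$). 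The paper's folding runs the other way: the Markov seed $\overline{{\bf s}}$ is obtained by folding the rank-$6$ seed ${\bf s}$ of the $4$-punctured sphere $\mathbb{S}_4^2$ under $\Pi\cong(\mathbb{Z}/2)^3$, and the finiteness input is not a structural statement about walls at all, but the fact that the Donaldson--Thomas transformation of $\mathcal{A}_{{\rm prin},\mathbb{S}_4^2}$ is a finite composition of cluster mutations (Goncharov--Shen \cite{GS16}; concretely the maximal green sequence $[1,3,2,4,6,5,1,6,4,3,2,5]$), so that $\mathfrak{p}^{{\bf s}}_{-,+}$ is rational with positive Laurent values; this is transported to $\overline{{\bf s}}$ through Proposition~\ref{prop:3.3} and Theorem~\ref{thm:3.5} (diagram~\eqref{eq:-9}), and closure properties of the theta set then give $\Theta\big(\mathcal{A}_{{\rm prin},\mathbb{T}_1^2}\big)=\mathcal{A}_{{\rm prin},\mathbb{T}_1^2}^{\vee}\big(\mathbb{Z}^T\big)$. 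Without identifying the unfolded seed and this DT/maximal-green input, your (i) has no proof.

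Your step (ii) is also unjustified as stated: the once-punctured torus is the standard example where the upper cluster algebra is \emph{not} generated by cluster variables, and indeed the extra generators found in \cite{MM13} correspond exactly to the theta functions $\vartheta_{(-f_{\overline{i}},0)}$ and $\vartheta_{(f_{\overline{i}}-f_{\overline{i+1}},0)}$, whose tropical points lie outside the cluster complex; nor is an appeal to ``Enough Global Monomials'' equivalent to that generation claim or verified here. What the paper actually does is explicit computation: it evaluates $\mathfrak{p}^{\overline{{\bf s}}}_{-,+}\big(z^{(-f_{\overline{i}},0)}\big)$ via the maximal green sequence pushed through the folding, computes $\vartheta_{(f_{\overline{i}}-f_{\overline{i+1}},0)}$ by a broken-line count inside a Kronecker-$2$ subdiagram of $\mathfrak{D}_{\overline{{\bf s}}}$, and matches these theta functions against the Matherne--Muller generators of $\Gamma\big(\mathcal{A}_{{\rm prin},\mathbb{T}_1^2},\mathcal{O}\big)$, which yields ${\rm can}=\Gamma(\mathcal{O})$. (Incidentally, what fails for $\mathcal{A}_{\mathbb{T}_1^2}$ is not ${\rm mid}\subseteq\Gamma(\mathcal{O})$ but $\Gamma(\mathcal{O})\subseteq{\rm can}$.) Your outer skeleton --- reduce $\mathcal{X}_{\mathbb{T}_1^2}$ to $\mathcal{A}_{{\rm prin},\mathbb{T}_1^2}$, prove $\Theta$ is everything, then prove $\Gamma(\mathcal{O})\subseteq{\rm can}$ --- does match the paper, but both decisive steps rest on claims that are false or missing.
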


It is unnatural to build up $\mathcal{A}_{{\rm prin},\mathbb{T}_{1}^{2}}$
using only the atlas $\mathcal{T}^{+}$ of torus charts parametrized
by positive chambers and exclude the atlas $\mathcal{T}^{-}$ of those
parametrized by negative chambers. If we use torus charts in both
$\mathcal{T}^{+}$ and $\mathcal{T}^{-}$, we will get a new variety
$\tilde{\mathcal{A}}_{{\rm prin},\mathbb{T}_{1}^{2}}$ that contains
the original cluster variety $\mathcal{A}_{{\rm prin},\mathbb{T}_{1}^{2}}$.
Moreover, $\tilde{\mathcal{A}}_{{\rm prin},\mathbb{T}_{1}^{2}}$ and
$\mathcal{A}_{{\rm prin},\mathbb{T}_{1}^{2}}$ have the same ring
of regular functions. The atlas $\mathcal{T}^{-}$ also provides for
$\tilde{\mathcal{A}}_{{\rm prin},\mathbb{T}_{1}^{2}}$ \emph{an atlas
of cluster torus charts} (cf.\ Definition \ref{def:5.3}) that are
not mutationally equivalent to those in~$\mathcal{T}^{+}$. Therefore
$\mathcal{T}^{-}$ equips $\tilde{\mathcal{A}}_{{\rm prin},\mathbb{T}_{1}^{2}}$
with a cluster structure non-equivalent to the one provided by $\mathcal{T}^{+}$.
See Definition~\ref{def:5.4} for what we precisely mean by two cluster
structures are non-equivalent. Hence we have the following theorem:
\begin{Theorem}[cf.\ Theorem~\ref{thm:5.7}]\label{thm:1.2} The variety $\tilde{\mathcal{A}}_{{\rm prin},\mathbb{T}_{1}^{2}}$ is a log Calabi--Yau variety with $2$ non-equivalent cluster structures,
each of which corresponds to one of the two distinct simplicial fans in the scattering diagram.
\end{Theorem}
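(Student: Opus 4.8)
The plan is to prove the statement in three steps: (i) that $\tilde{\mathcal{A}}_{{\rm prin},\mathbb{T}_{1}^{2}}$ is log Calabi--Yau; (ii) that each of the atlases $\mathcal{T}^{+}$ and $\mathcal{T}^{-}$ really endows it with a cluster structure, the associated simplicial fan being the cluster complex $\{\mathcal{C}^{+}_{\overline{{\bf s}}_{v}}\}$, respectively the negative complex $\{\mathcal{C}^{-}_{\overline{{\bf s}}_{v}}\}$; and (iii) that these two cluster structures are non-equivalent in the sense of Definition~\ref{def:5.4}. For (i): by construction $\tilde{\mathcal{A}}_{{\rm prin},\mathbb{T}_{1}^{2}}$ is glued out of algebraic tori, the transition maps being cluster mutations within each of $\mathcal{T}^{\pm}$ together with the path-ordered wall-crossing automorphisms of the scattering diagram used to glue $\mathcal{T}^{-}$ to $\mathcal{T}^{+}$. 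Each such birational map preserves, up to sign, the canonical invariant volume form $\bigwedge_{i}\mathrm{d}z_{i}/z_{i}$ on a torus chart --- classically for mutations, and the elementary wall-crossings $z^{m}\mapsto z^{m}f^{\langle n_{0},m\rangle}$ are volume-preserving as well --- so the local forms patch to a global nowhere-vanishing top-degree form, which, together with a toric-type partial compactification of the underlying cluster variety, exhibits $\tilde{\mathcal{A}}_{{\rm prin},\mathbb{T}_{1}^{2}}$ as log Calabi--Yau, once one checks, as for ordinary cluster varieties, that the glued space is a normal variety. The only nonroutine point here, that the gluing across the non-cluster region is well-defined, has been arranged by the quotient construction of scattering diagrams.

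For (ii): $\tilde{\mathcal{A}}_{{\rm prin},\mathbb{T}_{1}^{2}}$ contains $\mathcal{A}_{{\rm prin},\mathbb{T}_{1}^{2}}$, the union of the torus charts attached to the positive chambers, as an open subvariety with the \emph{same} ring of regular functions; hence $\mathcal{T}^{+}$ satisfies Definition~\ref{def:5.3} and is a cluster structure with fan the cluster complex. For $\mathcal{T}^{-}$ one argues symmetrically, using the description of the cluster scattering diagram \cite{GHKK}: each $\mathcal{C}^{-}_{\overline{{\bf s}}_{v}}$ is a full-dimensional simplicial chamber, the negative complex is a simplicial fan combinatorially isomorphic to the cluster complex, two adjacent negative chambers are separated by a single wall whose crossing realizes the corresponding mutation, and the union of the torus charts attached to the negative chambers is again a copy of $\mathcal{A}_{{\rm prin},\mathbb{T}_{1}^{2}}$ with ring of functions $\Gamma(\tilde{\mathcal{A}}_{{\rm prin},\mathbb{T}_{1}^{2}},\mathcal{O})$; so $\mathcal{T}^{-}$ also satisfies Definition~\ref{def:5.3}, now with fan the negative complex.

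For (iii): by Definition~\ref{def:5.4} it suffices to show that no chart of $\mathcal{T}^{+}$ is mutationally equivalent, inside $\tilde{\mathcal{A}}_{{\rm prin},\mathbb{T}_{1}^{2}}$, to any chart of $\mathcal{T}^{-}$. For this one observes that every positive chamber $\mathcal{C}^{+}_{\overline{{\bf s}}_{v}}$ is a simplicial cone \emph{all} of whose facets are mutation walls, each separating it from another positive chamber $\mathcal{C}^{+}_{\overline{{\bf s}}_{\mu_{k}(v)}}$; thus any mutation performed on a $\mathcal{T}^{+}$-chart again returns a $\mathcal{T}^{+}$-chart, so the $\mathcal{T}^{+}$-charts form a full mutation class --- and likewise for $\mathcal{T}^{-}$. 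These two classes are disjoint: if the cluster complex and the negative complex shared a maximal cone they would, each being closed under mutation-adjacency, coincide, contradicting that they are distinct subfans --- which is precisely the content of the nonemptiness of the non-cluster ``Badlands'' separating them in the Markov scattering diagram. Equivalently, to pass from a positive to a negative chamber one must cross a wall on the boundary of the cluster complex, i.e.\ perform an infinite non-cluster wall-crossing, never a mutation. Since each of the two cluster structures corresponds by construction to one of the two simplicial fans, the theorem follows.

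The hard part is everything feeding step (iii): one has to know that the scattering diagram of the Markov quiver genuinely contains two \emph{distinct} maximal simplicial subfans --- that the Badlands is nonempty, that it separates the cluster complex from the negative complex, and that no finite mutation sequence crosses it. This is exactly where the infinite non-cluster wall-crossing must be brought under control, which is what the quiver-folding and quotient-of-scattering-diagrams machinery is designed to do; granting that input, steps (i) and (ii) are comparatively routine bookkeeping.
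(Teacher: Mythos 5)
Your step (iii) has a genuine gap: you prove the wrong statement. Definition~\ref{def:5.4} declares the two atlases non-equivalent only if no chart of $\mathcal{T}^{+}$ has the \emph{same image in} $\tilde{\mathcal{A}}_{{\rm prin},\mathbb{T}_{1}^{2}}$ as any chart of $\mathcal{T}^{-}$; it is not a statement about chambers or mutation classes. Your argument shows that the cluster complex and the negative complex are distinct, mutation-closed simplicial fans separated by the Badlands, but this does not exclude the possibility that for some $v,w$ the gluing map from the chart attached to $\mathcal{C}^{+}_{\overline{{\bf s}}_{v}}$ to the chart attached to $\mathcal{C}^{-}_{\overline{{\bf s}}_{w}}$ --- a conjugate of the infinite path-ordered product $\mathfrak{p}^{\overline{{\bf s}}}_{+,-}$ --- happens to restrict to an isomorphism of tori, in which case the two charts would have identical image and the atlases would be equivalent in the sense of Definition~\ref{def:5.4} even though the fans differ. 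The paper closes exactly this hole (proof of Theorem~\ref{thm:5.7}) by an argument you do not have: the characters of a $\mathcal{T}^{+}$-chart that extend to regular functions on $\tilde{\mathcal{A}}_{{\rm prin},\mathbb{T}_{1}^{2}}$ (global monomials, Definition~\ref{def:5.5}) are precisely the theta functions indexed by $\mathcal{C}^{+}_{v}\big(\mathbb{Z}^{T}\big)$, those of a $\mathcal{T}^{-}$-chart are indexed by $\mathcal{C}^{-}_{w}\big(\mathbb{Z}^{T}\big)$, and $\mathcal{C}^{+}_{v}\big(\mathbb{Z}^{T}\big)\cap\mathcal{C}^{-}_{w}\big(\mathbb{Z}^{T}\big)=0\oplus\overline{N}$, which is of too small rank for the two charts to coincide. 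This argument needs the identification of global monomials with theta functions, hence the results $\Theta\big(\mathcal{A}_{{\rm prin},\mathbb{T}_{1}^{2}}\big)=\mathcal{A}_{{\rm prin},\mathbb{T}_{1}^{2}}^{\vee}\big(\mathbb{Z}^{T}\big)$ and ${\rm up}\big(\tilde{\mathcal{A}}_{{\rm prin},\mathbb{T}_{1}^{2}}\big)={\rm can}\big(\mathcal{A}_{{\rm prin},\mathbb{T}_{1}^{2}}\big)$, none of which your fan-combinatorics replaces.

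Relatedly, in step (ii) you assert ``symmetrically'' that the union of the negative-chamber charts has ring of functions $\Gamma\big(\tilde{\mathcal{A}}_{{\rm prin},\mathbb{T}_{1}^{2}},\mathcal{O}\big)$ and hence that both atlases cover up to codimension~$2$ (the condition required both by Definition~\ref{def:5.3} and by Definition~\ref{def:5.4}). This is not routine bookkeeping: it is Proposition~\ref{prop:5.6}, which rests on Theorem~\ref{thm:4.6} and on showing $\mathcal{C}^{+}_{\overline{{\bf s}}}\subset\Theta^{-}$, i.e.\ that theta functions expanded at generic points of negative chambers are (positive Laurent) polynomials --- proved by the explicit computation of $\vartheta_{(f_{\overline{i}},0),Q}$ via the folded DT-transformation. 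So the genuinely hard inputs are the finiteness/positivity statements of Proposition~\ref{prop:5.6} and the global-monomial comparison, not the nonemptiness of the Badlands, which you identify as the crux but which the paper never needs to labor over.
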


The full Fock--Goncharov conjecture says that $\mathcal{A}_{\mathbb{T}_{1}^{2}}^{\vee}\big(\mathbb{Z}^{T}\big)$
should parametrize a vector space basis for $\Gamma\big(\mathcal{A}_{\mathbb{T}_{1}^{2}},\mathcal{O}_{\mathcal{A}_{\mathbb{T}_{1}^{2}}}\big)$
and it turns out to be false.
\begin{Theorem}The full Fock--Goncharov conjecture does not hold for $\mathcal{A}_{\mathbb{T}_{1}^{2}}$.
We have
\[
\Theta\big(\mathcal{A}_{\mathbb{T}_{1}^{2}}\big)=\mathcal{A}_{\mathbb{T}_{1}^{2}}^{\vee}\big(\mathbb{Z}^{T}\big)
\]
and therefore,
\[
{\rm can}\big(\mathcal{A}_{\mathbb{T}_{1}^{2}}\big)={\rm mid}\big(\mathcal{A}_{\mathbb{T}_{1}^{2}}\big).
\]
However ${\rm can}\big(\mathcal{A}_{\mathbb{T}_{1}^{2}}\big)$ is strictly contained in $\Gamma\big(\mathcal{A}_{\mathbb{T}_{1}^{2}},\mathcal{O}_{\mathbb{T}_{1}^{2}}\big)$.
\end{Theorem}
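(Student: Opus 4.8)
The plan is to establish the two equalities first and then exhibit a regular function outside $\mathrm{can}\big(\mathcal{A}_{\mathbb{T}_1^2}\big)$. First I would prove that $\Theta\big(\mathcal{A}_{\mathbb{T}_1^2}\big) = \mathcal{A}_{\mathbb{T}_1^2}^\vee\big(\mathbb{Z}^T\big)$. By the general theory of \cite{GHKK}, the theta set always contains the integer tropical points lying in the support of the cluster complex, so it suffices to show that the cluster complex, together with the negative chambers $\{\mathcal{C}_{\overline{\mathbf{s}}_v}^-\}$, covers all of $\mathcal{A}_{\mathbb{T}_1^2}^\vee\big(\mathbb{R}^T\big)$ up to a set containing no integer points, or else directly verify the positivity/consistency condition that places every integer tropical point in $\Theta$. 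Here I would use the quotient construction of the scattering diagram via quiver folding introduced earlier in the paper: the Markov scattering diagram is the folding of a scattering diagram with better-understood chamber structure, and from the known wall structure one reads off that the complement of the two subfans (cluster complex and its negative) meets $\mathbb{Z}^T$ trivially — equivalently, every integer tropical point is reached by a broken line with positive coefficients, so it lies in $\Theta$. The equality $\mathrm{can} = \mathrm{mid}$ is then immediate since $\mathrm{mid}\big(\mathcal{A}_{\mathbb{T}_1^2}\big)$ is by definition the span of $\{\vartheta_q : q \in \Theta\}$ and $\mathrm{can}$ the span over all of $\mathcal{A}_{\mathbb{T}_1^2}^\vee\big(\mathbb{Z}^T\big)$.

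Next I would show the strict containment $\mathrm{can}\big(\mathcal{A}_{\mathbb{T}_1^2}\big) \subsetneq \Gamma\big(\mathcal{A}_{\mathbb{T}_1^2}, \mathcal{O}\big)$. The strategy is to produce an explicit global regular function $f$ on $\mathcal{A}_{\mathbb{T}_1^2}$ which is \emph{not} a (finite) $\mathbb{C}$-linear combination of theta functions. The natural candidate comes from the geometry of the once-punctured torus: the trace-of-monodromy-around-the-puncture function, i.e.\ the analogue of the Markov cubic / the boundary class, which is a Laurent polynomial in every cluster chart (hence regular on all of $\mathcal{A}_{\mathbb{T}_1^2}$, whose charts are indexed only by the \emph{positive} chambers). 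I would compute this function in one fixed seed and expand it in the theta basis: because theta functions on $\mathcal{A}_{\mathbb{T}_1^2}$ are characterized by their leading (lowest-order) exponents and by positivity of all coefficients in every chart, one can argue that $f$ would have to be an \emph{infinite} positive sum of theta functions — its $\mathbf{g}$-vector support in $\mathcal{A}_{\mathbb{T}_1^2}^\vee\big(\mathbb{Z}^T\big)$ is infinite — so $f \notin \mathrm{can}$. Concretely: if $f = \sum_{i=1}^N c_i \vartheta_{q_i}$ were finite, then reading off the Newton polytope of $f$ in successive clusters along a mutation path forces infinitely many distinct leading terms, a contradiction; this uses the known form of cluster variables/theta functions for the Markov quiver, where the broken-line expansions already exhibit the relevant infinite families.

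The main obstacle I anticipate is the second part — rigorously certifying that the chosen regular function $f$ really is not a finite combination of theta functions. Membership in $\mathrm{can}$ is a statement about \emph{finite} support in the tropical set, and ruling it out requires controlling the theta-function expansion of $f$ globally, not just in one chart; one must ensure that no miraculous cancellation among finitely many $\vartheta_q$'s reproduces $f$. I would handle this by exploiting that theta functions, restricted to the chart $\mathcal{C}_{\overline{\mathbf{s}}}^+$, have distinct and explicitly identifiable leading monomials indexed by their $\mathbf{g}$-vectors, so a finite sum has a finite Newton polytope in that chart with vertices among those $\mathbf{g}$-vectors; then I show $f$'s Newton polytope in that chart is not finitely generated in the required way — or, more robustly, that $f$ coincides on an open subset with a known theta-type series with infinitely many terms (this is where the folding/quotient description of the scattering diagram for the Markov quiver, established earlier in the paper, does the real work, since it lets us compute the relevant wall-crossings and broken-line contributions despite the infinite non-cluster walls). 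A secondary subtlety is bookkeeping the two subfan structures so that ``the charts of $\mathcal{A}_{\mathbb{T}_1^2}$'' are unambiguously those from the positive chambers only, which is what makes the boundary function regular there yet non-expandable in the theta basis.
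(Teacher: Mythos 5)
Your proposal has genuine gaps in both halves. For the equality $\Theta\big(\mathcal{A}_{\mathbb{T}_1^2}\big)=\mathcal{A}_{\mathbb{T}_1^2}^{\vee}\big(\mathbb{Z}^T\big)$, the mechanism you lean on --- ``the complement of the two subfans meets $\mathbb{Z}^T$ trivially'' --- is false for the Markov quiver: the cluster complex $\Delta^+$ and the negative complex $\Delta^-$ are disjoint subfans that do \emph{not} come close to covering $M_{\mathbb{R}}$, and the region between them (where the walls are dense) contains plenty of integer tropical points, which is exactly why the problem is delicate. The paper's route is different: one always has $\mathcal{C}^+_{\overline{\mathbf{s}}}\big(\mathbb{Z}^T\big)\subset\Theta$, one shows $\mathcal{C}^-_{\overline{\mathbf{s}}}\big(\mathbb{Z}^T\big)\subset\Theta$ because $\mathfrak{p}_{-,+}^{\overline{\mathbf{s}}}$ applied to the relevant monomials is a positive Laurent polynomial, and one then invokes closure of $\Theta$ under addition (GHKK, Theorem~7.5) to get everything. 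The positivity input is the real content and comes from folding: $\mathfrak{p}_{-,+}^{\overline{\mathbf{s}}}$ is the image under the quotient homomorphism of $\mathfrak{p}_{-,+}^{\mathbf{s}}$ for the $\mathbb{S}_4^2$ seed (Theorem~\ref{thm:3.5} and diagram~\eqref{eq:-9}), and there the Donaldson--Thomas transformation is a finite composition of cluster mutations because a maximal green sequence exists (Goncharov--Shen). You mention folding but never the DT-rationality/maximal-green-sequence step or the closure-under-addition step, which are what actually carry the argument; also note the statement for $\mathcal{A}$ is deduced from the $\mathcal{A}_{\rm prin}$ statement, not proved directly.

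For the strict containment, your candidate function is the right one --- the trace of monodromy around the puncture is exactly $\eta=\big(A_1^2+A_2^2+A_3^2\big)/(A_1A_2A_3)$ --- but your proposed proof that it is not a finite combination of theta functions does not work as stated: $\eta$ is a three-term Laurent polynomial in the initial chart (and in every cluster chart), so its Newton polytope in any chart is finite, and no ``infinitely many leading terms along a mutation path'' phenomenon is available; the claim that $\eta$ ``would have to be an infinite positive sum of theta functions'' is asserted, not proved. The paper instead computes the relevant theta functions explicitly --- $\vartheta_{f_{\overline{i}}}=A_i$, $\vartheta_{f_{\overline{i}}-f_{\overline{i+1}}}=A_i\eta$, $\vartheta_{-f_{\overline{i}}}=A_i\eta^2$, obtained from the DT transformation of $\mathbb{S}_4^2$ computed along the maximal green sequence and pushed through the folding map, plus a Kronecker-$2$ broken-line enumeration --- so that ${\rm can}\big(\mathcal{A}_{\mathbb{T}_1^2}\big)$ is generated by $\{A_i,\,A_i\eta,\,A_i\eta^2\}$, while by Matherne--Muller the ring $\Gamma\big(\mathcal{A}_{\mathbb{T}_1^2},\mathcal{O}\big)$ is generated by $\{A_1,A_2,A_3,\eta\}$; one then checks $\eta\notin{\rm can}$ (for instance because ${\rm can}\big(\mathcal{A}_{\mathbb{T}_1^2}\big)=\Gamma\big(\tilde{\mathcal{A}}_{\mathbb{T}_1^2},\mathcal{O}\big)$ and the gluing map sends $\eta\mapsto\eta^{-1}$, or by the elementary constraint $2(a_1+a_2+a_3)\geq b$ on monomials $A_1^{a_1}A_2^{a_2}A_3^{a_3}\eta^b$ in the subalgebra). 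Without some explicit computation of the theta functions (or an equivalent characterization of ${\rm can}$), your exclusion argument cannot be closed.
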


It is not surprising that $\mathcal{A}_{\mathbb{T}_{1}^{2}}^{\vee}\big(\mathbb{Z}^{T}\big)$
does not parametrize a vector space basis for the ring of regular
functions on $\mathcal{A}_{\mathbb{T}_{1}^{2}}$, if we take into
consideration that when we build $\mathcal{A}_{\mathbb{T}_{1}^{2}}$,
we use only half of the scattering diagram on $\mathcal{A}_{\mathbb{T}_{1}^{2}}^{\vee}\big(\mathbb{Z}^{T}\big)$.
From the mirror symmetry perspective, it will be more natural to expect
that $\mathcal{A}_{\mathbb{T}_{1}^{2}}^{\vee}\big(\mathbb{Z}^{T}\big)$ should
parametrize a basis for the ring of regular functions of the space
that include torus charts of all chambers in the scattering diagram.
We can run a construction similar to what we have done for $\mathcal{A}_{{\rm prin},\mathbb{T}_{1}^{2}}$
and get a variety $\widetilde{\mathcal{A}}_{\mathbb{T}_{1}^{2}}\supset\mathcal{A}_{\mathbb{T}_{1}^{2}}$
by also including torus charts coming from negative chambers in the
scattering diagram. The complement of the original cluster variety
$\mathcal{A}_{\mathbb{T}_{1}^{2}}$ has codimension $1$ inside $\tilde{\mathcal{A}}_{\mathbb{T}_{1}^{2}}$,
so $\Gamma\big(\tilde{\mathcal{A}}_{\mathbb{T}_{1}^{2}}, \mathcal{O}_{\tilde{\mathcal{A}}_{\mathbb{T}_{1}^{2}}}\big)$ is strictly contained in $\Gamma\big(\mathcal{A}_{\mathbb{T}_{1}^{2}},\mathcal{O}_{\mathbb{T}_{1}^{2}}\big)$.
Moreover, we have the following theorem:
\begin{Theorem}
We have
\[
{\rm can}\big(\mathcal{A}_{\mathbb{T}_{1}^{2}}\big)= \Gamma\big(\tilde{\mathcal{A}}_{\mathbb{T}_{1}^{2}}, \mathcal{O}_{\tilde{\mathcal{A}}_{\mathbb{T}_{1}^{2}}}\big).
\]
So $\mathcal{A}_{\mathbb{T}_{1}^{2}}^{\vee}\big(\mathbb{Z}^{T}\big)$ parametrizes
a vector space basis for the ring of regular functions of the enlarged
variety $\tilde{\mathcal{A}}_{\mathbb{T}_{1}^{2}}$.
\end{Theorem}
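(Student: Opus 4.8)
The plan is to compare the two sides one torus chart at a time. By construction $\tilde{\mathcal A}_{\mathbb T_1^2}$ is obtained by gluing the torus charts $T_{\mathcal C}$ attached to \emph{all} chambers $\mathcal C$ of the scattering diagram on $\mathcal A_{\mathbb T_1^2}^\vee\big(\mathbb R^T\big)$ --- the positive chambers, whose charts already cover $\mathcal A_{\mathbb T_1^2}$, together with the negative chambers --- along the path-ordered wall-crossing maps read off from that diagram. Consequently a regular function on $\tilde{\mathcal A}_{\mathbb T_1^2}$ is exactly a regular function $f$ on $\mathcal A_{\mathbb T_1^2}$ whose restriction to $T_{\mathcal C}$ is a Laurent polynomial for every negative chamber $\mathcal C$ (consistent with the codimension-one remark above: this class of $f$ is strictly smaller than $\Gamma(\mathcal A_{\mathbb T_1^2},\mathcal O)$). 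So the theorem splits into: (i) each theta function $\vartheta_q$, $q\in\mathcal A_{\mathbb T_1^2}^\vee\big(\mathbb Z^T\big)$, restricts to a Laurent polynomial in every negative chamber chart; and (ii) conversely every $f\in\Gamma(\mathcal A_{\mathbb T_1^2},\mathcal O)$ with that property is a \emph{finite} $\mathbb C$-linear combination of theta functions, hence lies in ${\rm mid}(\mathcal A_{\mathbb T_1^2})={\rm can}(\mathcal A_{\mathbb T_1^2})$.

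Step (i) I would obtain by transporting along the fibration $\pi\colon \mathcal A_{{\rm prin},\mathbb T_1^2}\to T_M$ whose fibre over the identity is $\mathcal A_{\mathbb T_1^2}$. Under $\pi$ the chamber charts of $\tilde{\mathcal A}_{{\rm prin},\mathbb T_1^2}$ restrict to the chamber charts of $\tilde{\mathcal A}_{\mathbb T_1^2}$, and by \cite{GHKK} the theta functions $\vartheta^{{\rm prin}}_{\mathbf q}$ restrict to the theta functions $\vartheta_{q}$, with $\mathbf q\mapsto q$ the induced surjection on integer tropical points. By Theorem~\ref{thm:5.1} the full Fock--Goncharov conjecture holds for $\mathcal A_{{\rm prin},\mathbb T_1^2}$, so $\Gamma(\tilde{\mathcal A}_{{\rm prin},\mathbb T_1^2},\mathcal O)=\Gamma(\mathcal A_{{\rm prin},\mathbb T_1^2},\mathcal O)={\rm can}(\mathcal A_{{\rm prin},\mathbb T_1^2})$; in particular every $\vartheta^{{\rm prin}}_{\mathbf q}$ is regular on $\tilde{\mathcal A}_{{\rm prin},\mathbb T_1^2}$, hence a Laurent polynomial in each of its negative charts. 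Restricting to the fibre gives (i), and therefore ${\rm can}(\mathcal A_{\mathbb T_1^2})\subseteq\Gamma(\tilde{\mathcal A}_{\mathbb T_1^2},\mathcal O)$. A self-contained variant avoids $\mathcal A_{{\rm prin},\mathbb T_1^2}$ and instead observes that the quotient/quiver-folding description of the scattering diagram --- the device used to tame the infinite family of non-cluster walls between $\mathcal C^-_{\overline{\mathbf s}}$ and $\mathcal C^+_{\overline{\mathbf s}}$ --- shows the broken-line expansion of each $\vartheta_q$ relative to any chamber chart has finitely many terms.

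For step (ii), fix the base seed $\overline{\mathbf s}$. In its positive chart $T_{\mathcal C^+_{\overline{\mathbf s}}}$ the function $f$ is a Laurent polynomial, and since $\vartheta_q=z^{q}+(\text{strictly lower terms})$ there for the partial order $\preceq$ attached to $\overline{\mathbf s}$, one strips $f$ of its $\preceq$-maximal monomials one theta function at a time and obtains an a~priori possibly infinite expansion $f=\sum_q c_q\vartheta_q$ in which the $\preceq$-maximal exponents $q$ with $c_q\neq 0$ lie in the Newton polytope of $f|_{T_{\mathcal C^+_{\overline{\mathbf s}}}}$. The extra hypothesis, that $f$ is also a Laurent polynomial in the negative chart $T_{\mathcal C^-_{\overline{\mathbf s}}}$, bounds in the same way the leading exponents of the $\vartheta_q$ read in that chart, i.e.\ the images of the $q$ under the tropicalised transition map between the two charts (a twist of the Donaldson--Thomas transformation). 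Because the two subfans in the scattering diagram on $\mathcal A_{\mathbb T_1^2}^\vee\big(\mathbb R^T\big)$ occupy opposite regions --- their recession cones are negatives of one another --- the two bounds are jointly proper, so $\{q:c_q\neq 0\}$ is finite and $f\in{\rm can}(\mathcal A_{\mathbb T_1^2})$. Alternatively one lifts this finiteness question to $\mathcal A_{{\rm prin},\mathbb T_1^2}$, where the cone governing $\preceq$ is strictly convex so the stripping procedure of \cite{GHKK} terminates outright, and then specialises the resulting finite decomposition back to the fibre. Together with (i) this yields the claimed equality, and combining it with the codimension-one remark recovers the failure of the full conjecture for $\mathcal A_{\mathbb T_1^2}$ recorded above.

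The main obstacle is exactly this finiteness in (ii): $\mathcal A_{\mathbb T_1^2}$ lacks ``enough global monomials'' (the cone controlling $\preceq$ in a cluster chart is not strictly convex), so termination of the theta expansion does not come for free; it must be forced either by passing to $\mathcal A_{{\rm prin},\mathbb T_1^2}$ and descending, or by exploiting the precise combinatorics of the two subfans of the Markov scattering diagram --- and in either route one must keep the infinitely many intermediate non-cluster walls under control, which is the delicate point.
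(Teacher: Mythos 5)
Your step (i) is fine and is essentially how the paper obtains the containment ${\rm can}\big(\mathcal{A}_{\mathbb{T}_{1}^{2}}\big)\subset\Gamma\big(\tilde{\mathcal{A}}_{\mathbb{T}_{1}^{2}},\mathcal{O}_{\tilde{\mathcal{A}}_{\mathbb{T}_{1}^{2}}}\big)$ (via Theorem~\ref{thm:4.6} and Proposition~\ref{prop:5.6}). The genuine gap is in step (ii), the reverse containment, which you correctly identify as the delicate point but then do not resolve. Your first route assumes $\vartheta_{q}=z^{q}+(\text{strictly lower terms})$ for a partial order attached to $\overline{{\bf s}}$ and a Newton-polytope bound on the stripping procedure; but for the Markov seed the correction exponents of broken lines lie in the monoid generated by $\{e_{\overline{1}},\cdot\}$, $\{e_{\overline{2}},\cdot\}$, $\{e_{\overline{3}},\cdot\}$, and these three vectors sum to zero (the element $e_{\overline{1}}+e_{\overline{2}}+e_{\overline{3}}$ is in the kernel of the form), so the cone they span is a full two-plane, $\preceq$ is not a partial order, and there is no well-defined leading term or termination of the stripping. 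The asserted finiteness ``because the recession cones of the two subfans are negatives of one another'' is never substantiated; $\Delta_{\overline{{\bf s}}}^{+}\cup\Delta_{\overline{{\bf s}}}^{-}$ does not fill $\mathcal{A}_{\mathbb{T}_{1}^{2}}^{\vee}\big(\mathbb{R}^{T}\big)$ (there is a region of dense non-cluster walls between them), and it is exactly this failure that allows $\eta=\frac{A_{1}^{2}+A_{2}^{2}+A_{3}^{2}}{A_{1}A_{2}A_{3}}$ to be regular on $\mathcal{A}_{\mathbb{T}_{1}^{2}}$ without lying in ${\rm can}\big(\mathcal{A}_{\mathbb{T}_{1}^{2}}\big)$, so any such properness claim needs a real argument. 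Your alternative route is circular as stated: expanding a lift in $\mathcal{A}_{{\rm prin},\mathbb{T}_{1}^{2}}$ requires every $f\in\Gamma\big(\tilde{\mathcal{A}}_{\mathbb{T}_{1}^{2}},\mathcal{O}\big)$ to admit a lift in $\Gamma\big(\tilde{\mathcal{A}}_{{\rm prin},\mathbb{T}_{1}^{2}},\mathcal{O}\big)={\rm can}\big(\mathcal{A}_{{\rm prin},\mathbb{T}_{1}^{2}}\big)$, and since the principal theta functions restrict on the fibre to the theta functions of $\mathcal{A}_{\mathbb{T}_{1}^{2}}$, surjectivity of that restriction is equivalent to the statement being proved.

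By contrast, the paper makes no order-theoretic expansion at all; it relies on concrete input for the Markov quiver. It takes the generators $A_{1},A_{2},A_{3},\eta$ of $\Gamma\big(\mathcal{A}_{\mathbb{T}_{1}^{2}},\mathcal{O}\big)$ from \cite[Proposition~6.2.2]{MM13}, computes the gluing of the two copies explicitly, $\alpha_{\overline{{\bf s}}_{v}}^{*}\big(A_{i}^{\overline{{\bf s}}_{v}}\big)=A_{i}^{\overline{{\bf s}}_{v}}\eta^{2}$, checks compatibility with mutation, and then uses $\big(\alpha_{\overline{{\bf s}}}^{-1}\big)^{*}(\eta)=\eta^{-1}$ to see that a monomial $A_{1}^{a_{1}}A_{2}^{a_{2}}A_{3}^{a_{3}}\eta^{b}$ is regular on $\tilde{\mathcal{A}}_{\mathbb{T}_{1}^{2}}$ only if $0\leq b\leq2(a_{1}+a_{2}+a_{3})$, in which case it is a product of the theta generators $A_{i}=\vartheta_{f_{\overline{i}}}$, $A_{i}\eta$, $A_{i}\eta^{2}=\vartheta_{-f_{\overline{i}}}$. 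Some explicit presentation of this kind (of ${\rm up}\big(\mathcal{A}_{\mathbb{T}_{1}^{2}}\big)$ or of the gluing map, effectively the Donaldson--Thomas twist by $\eta^{2}$) appears unavoidable; to repair your argument you would need either to import it, or to genuinely prove the finiteness claim your sketch leaves open.
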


The above theorem justifies that $\mathcal{\widetilde{A}}_{\mathbb{T}_{1}^{2}}$
should be the correct mirror dual variety to $\mathcal{A}_{\mathbb{T}_{1}^{2}}^{\vee}$
since it is the space where the duality conjecture becomes true. At
the end of the introduction, we will elaborate more on this point
against the backdrop of the mirror symmetry program.

\subsection[Technical framework: Quiver folding in the context of scattering
diagrams]{Technical framework: Quiver folding in the context\\ of scattering
diagrams}

In Section~\ref{sec:Full-Fock--Goncharov-conjecture}, instead of directly
studying canonical bases for $\mathcal{A}_{\mathbb{T}_{1}^{2}}$,
$\mathcal{A}_{{\rm prin,\mathbb{T}_{1}^{2}}}$ or $\mathcal{X}_{\mathbb{T}_{1}^{2}}$
and their scattering diagrams, we relate them to those of cluster
varieties associated to $4$-punctured sphere~$\mathbb{S}_{4}^{2}$.
In order to do this, we introduce the technique of quiver-folding
in cluster theory into scattering diagrams.

Given a seed ${\bf s}$, if a seed $\overline{{\bf s}}$ can obtained
from ${\bf s}$ through a procedure called \emph{folding},\footnote{In this paper, I only deal with a special class of folding, see Section~\ref{subsec:Application-of-folding}.} then we prove the following relationship between the corresponding scattering diagrams:
\begin{Theorem}[cf.\ Construction \ref{2.1}, Theorem~\ref{thm:2.21}] The scattering
diagram $\mathfrak{D}_{\overline{{\bf s}}}$ can be obtained from $\mathfrak{D}_{{\bf s}}$ via a quotient construction.
\end{Theorem}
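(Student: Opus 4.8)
The plan is to reduce everything to the uniqueness theorem for scattering diagrams (Kontsevich--Soibelman, Gross--Siebert): a scattering diagram is determined up to equivalence by its set of incoming walls together with the consistency condition. Thus it suffices to show that the diagram $Q\big(\mathfrak{D}_{\bf s}\big)$ produced by the quotient construction of Construction~\ref{2.1} is (i) a genuine, consistent scattering diagram on the folded ambient space, and (ii) has the same incoming walls as $\mathfrak{D}_{\overline{\bf s}}$; then uniqueness forces $Q\big(\mathfrak{D}_{\bf s}\big)\simeq\mathfrak{D}_{\overline{\bf s}}$. As a first step I would record that the folding datum makes the finite folding group $\Gamma$ act on the fixed data and on the seed ${\bf s}$ compatibly with mutation, so that the initial walls of $\mathfrak{D}_{\bf s}$ are permuted by $\Gamma$; since the Kontsevich--Soibelman algorithm only uses the $\Gamma$-invariant initial data, the scattering diagram it produces is a $\Gamma$-equivariant representative of $\mathfrak{D}_{\bf s}$. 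The folded lattices are identified with the $\Gamma$-coinvariants of the original ones, so the folded ambient space $\overline{N}_{\mathbb{R}}$ embeds in $N_{\mathbb{R}}$ as the fixed subspace, and dual to this inclusion one has a surjection $\psi$ of completed monoid algebras, the ``descent of functions''.

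Next I would make Construction~\ref{2.1} precise and verify it outputs a scattering diagram. Starting from the $\Gamma$-equivariant representative of $\mathfrak{D}_{\bf s}$, group its walls into $\Gamma$-orbits; for each orbit whose walls have a common trace of codimension one on the fixed subspace, attach to that trace the $\psi$-image of the product of the wall functions over the orbit. The special class of foldings under consideration --- where each vertex-orbit consists of pairwise non-adjacent vertices, so that the skew-form descends to $\overline{N}$ with symmetrizers $d_i=|[i]|$ --- is precisely what makes this well-posed: the initial walls in a single orbit are parallel with a common codimension-one trace, their attached automorphisms commute, and $\psi$ intertwines the exponentials defining wall-crossings on the $N$-side with those on the $\overline{N}$-side. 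One then checks that modulo each power of the maximal ideal only finitely many walls of $Q\big(\mathfrak{D}_{\bf s}\big)$ are non-trivial, so $Q\big(\mathfrak{D}_{\bf s}\big)$ is a bona fide scattering diagram.

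For consistency I would take a small loop $\overline{\gamma}$ around a codimension-two locus in the fixed subspace, view it as a loop $\gamma$ in $N_{\mathbb{R}}$, and compare path-ordered products: crossing the trace of a $\Gamma$-orbit of parallel walls along $\overline{\gamma}$ corresponds, after applying $\psi$, to crossing that entire packet of walls of $\mathfrak{D}_{\bf s}$ along $\gamma$. Hence the path-ordered product of $Q\big(\mathfrak{D}_{\bf s}\big)$ along $\overline{\gamma}$ is the $\psi$-image of the path-ordered product of $\mathfrak{D}_{\bf s}$ along $\gamma$, which is the identity by consistency of $\mathfrak{D}_{\bf s}$; as this holds for all such loops, $Q\big(\mathfrak{D}_{\bf s}\big)$ is consistent. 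It remains to match incoming walls: the $\Gamma$-orbit of initial walls of ${\bf s}$ lying over a vertex-orbit $[i]$ has common trace $\overline{e}_i^{\,\perp}$, and the $\psi$-image of the product of their functions is, once the symmetrizer conventions of $\overline{\bf s}$ are unwound, exactly the initial wall function of $\mathfrak{D}_{\overline{\bf s}}$ at $[i]$; and $Q\big(\mathfrak{D}_{\bf s}\big)$ has no further incoming walls since the analogous statement holds for $\mathfrak{D}_{\bf s}$ and is preserved under the quotient. Applying the uniqueness theorem then finishes the proof.

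The step I expect to be the main obstacle is the compatibility used in the consistency argument: showing that simultaneously crossing a $\Gamma$-packet of parallel walls in $\mathfrak{D}_{\bf s}$ descends under $\psi$ to a single wall-crossing in $Q\big(\mathfrak{D}_{\bf s}\big)$, in a way compatible with the \emph{non-commutative} wall-crossing automorphisms. These automorphisms are built from the skew-form on $N$, so one must check both that this form descends to $\overline{N}$ in the expected way and that the a priori non-commuting contributions of the walls in a packet assemble into something lying in the image of $\psi$; this is exactly where the restriction to the special class of foldings is indispensable, since for a general folding the naive quotient need not even be a scattering diagram on $\overline{N}_{\mathbb{R}}$.
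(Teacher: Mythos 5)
Your proposal is correct and follows essentially the same route as the paper: push walls of $\mathfrak{D}_{{\bf s}}$ forward along the embedding $q^{*}\colon \overline{M}_{\mathbb{R}}\hookrightarrow M_{\mathbb{R}}$ of the $\Pi$-invariant subspace (Construction~\ref{2.1}), prove consistency by perturbing descended paths so that crossing a trace amounts to crossing a commuting packet of walls of $\mathfrak{D}_{{\bf s}}$ (Theorem~\ref{thm:2.21}), and then match incoming walls, where the orbit product yields the factor $d_{\Pi i}$ in $\exp\big({-}d_{\Pi i}{\rm Li}_{2}\big({-}z^{e_{\Pi i}}\big)\big)$, concluding by the uniqueness statements (Theorems~\ref{thm:2.12} and~\ref{thm: uniqueness-incoming walls}, Theorem~\ref{thm:3.2}). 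The compatibility you flag as the main obstacle is exactly what the paper disposes of in Lemma~\ref{lem:2.19}: any two walls through a point $q^{*}(x)$ with $x$ general have directions $n_{1}$, $n_{2}$ satisfying $\{n_{1},n_{2}\}=\{q(n_{1}),q(n_{2})\}=0$, so the packet is abelian and descends to a single well-defined wall-crossing.
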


The folding procedure can also be applied to theta functions. Given
a cluster variety $V$, let $\Theta(V)\subset V^{\vee}\big(\mathbb{Z}^{T}\big)$
be the subset that parametrize theta functions that are contained
in the ring of regular functions on~$V$. Let $\mathcal{A}_{{\rm prin},{\bf s}}$
and $\mathcal{A}_{{\rm prin},\overline{{\bf s}}}$ be the cluster
$\mathcal{A}_{{\rm prin}}$-variety with initial seed ${\bf s}$ and
$\overline{{\bf s}}$ respectively. Then we prove the following theorem:
\begin{Theorem}[Theorem \ref{thm:3.5}] If $\Theta(\mathcal{A}_{{\rm prin},{\bf s}})=\mathcal{A}_{{\rm prin},{\bf s}}^{\vee}\big(\mathbb{Z}^{T}\big)$ holds, then $\Theta(\mathcal{A}_{{\rm prin},\overline{{\bf s}}})=\allowbreak\mathcal{A}_{{\rm prin},\overline{{\bf s}}}^{\vee}\big(\mathbb{Z}^{T}\big)$.
\end{Theorem}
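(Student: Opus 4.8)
The plan is to push the hypothesis through the quotient construction of scattering diagrams from Theorem~\ref{thm:2.21}: folding acts on theta functions essentially by a monomial specialization, and such a specialization cannot turn a Laurent polynomial into an infinite formal series. To make the comparison precise, recall that the folding of ${\bf s}$ into $\overline{{\bf s}}$ is governed by the action of a finite group $G$ (in the application $G=\mathbb{Z}/2$) on the fixed data of ${\bf s}$; this induces a $G$-action on $\mathcal{A}_{{\rm prin},{\bf s}}^{\vee}\big(\mathbb{R}^{T}\big)$ under which $\mathcal{A}_{{\rm prin},\overline{{\bf s}}}^{\vee}\big(\mathbb{R}^{T}\big)$ is identified with the $G$-fixed subspace and $\mathcal{A}_{{\rm prin},\overline{{\bf s}}}^{\vee}\big(\mathbb{Z}^{T}\big)$ with the sublattice of $G$-invariant integer tropical points; write $\iota$ for this inclusion. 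By Construction~\ref{2.1} and Theorem~\ref{thm:2.21}, under this identification $\mathfrak{D}_{\overline{{\bf s}}}$ is exactly the scattering diagram obtained from $\mathfrak{D}_{{\bf s}}$ by the quotient construction: each of its walls is the trace on the fixed subspace of a $G$-orbit of walls of $\mathfrak{D}_{{\bf s}}$, and its wall-crossing automorphisms are the folded versions of those of $\mathfrak{D}_{{\bf s}}$.

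Next I would reformulate both sides using broken lines and build a lifting correspondence. The equality $\Theta(\mathcal{A}_{{\rm prin},{\bf s}})=\mathcal{A}_{{\rm prin},{\bf s}}^{\vee}\big(\mathbb{Z}^{T}\big)$ means precisely that for every $q\in\mathcal{A}_{{\rm prin},{\bf s}}^{\vee}\big(\mathbb{Z}^{T}\big)$ and every generic endpoint $Q$ the set of broken lines in $\mathfrak{D}_{{\bf s}}$ with asymptotic direction $q$ and endpoint $Q$ is finite, equivalently $\vartheta_{q}$ is a Laurent polynomial on each cluster chart and hence a regular function. Given $\overline{q}\in\mathcal{A}_{{\rm prin},\overline{{\bf s}}}^{\vee}\big(\mathbb{Z}^{T}\big)$ and a generic $\overline{Q}$ in the fixed subspace, I would pick a generic lift $Q$ of $\overline{Q}$ and argue, from the matching of walls and wall-crossing functions under the quotient, that each broken line of $\mathfrak{D}_{\overline{{\bf s}}}$ from $\overline{q}$ to $\overline{Q}$ lifts to finitely many broken lines of $\mathfrak{D}_{{\bf s}}$ from $\iota(\overline{q})$ to $Q$: a straight segment lifts uniquely once its starting point is fixed, and each bending downstairs pulls back to a choice among the finitely many bendings at the walls of $\mathfrak{D}_{{\bf s}}$ lying over the crossed wall. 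Since the hypothesis makes the set upstairs finite, the set of broken lines from $\overline{q}$ to $\overline{Q}$ is finite, so $\vartheta_{\overline{q}}$ is a Laurent polynomial, i.e., $\overline{q}\in\Theta(\mathcal{A}_{{\rm prin},\overline{{\bf s}}})$; as $\overline{q}$ was arbitrary this gives the theorem. Equivalently and more conceptually, folding carries $\vartheta_{\iota(\overline{q})}$ to $\vartheta_{\overline{q}}$, and on each chart this is the monomial substitution identifying the cluster variables, and the principal-coefficient variables, that lie in a common $G$-orbit, which preserves finiteness of Laurent expansions.

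I expect the main obstacle to be the broken-line lifting in the middle step. Theorem~\ref{thm:2.21} controls walls and scattering functions under the quotient, but broken lines carry the extra datum of a bending monomial at each crossing, and one must check that these choices are compatible with the $G$-action so that the lifting is well defined and, crucially, finite-to-one --- in particular that a wall of $\mathfrak{D}_{\overline{{\bf s}}}$ which is the image of several walls of $\mathfrak{D}_{{\bf s}}$ yields only finitely many lifts of a fixed broken line, and that the principal-coefficient directions, which $G$ permutes in precisely the way making the folded $\mathcal{A}_{{\rm prin}}$ genuinely $\mathcal{A}_{{\rm prin},\overline{{\bf s}}}$, behave correctly along broken lines. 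A minor point to settle is that a generic endpoint downstairs admits a generic lift upstairs, which follows from the $G$-equivariance of the wall decomposition.
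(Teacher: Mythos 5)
Your overall strategy --- ``fold the hypothesis through the quotient construction'' --- is in the right spirit, but the step you yourself flag as the main obstacle is not a technicality: it is the actual mathematical content, and the lifting argument as you describe it does not work. A broken line for $\mathfrak{D}_{\overline{{\bf s}}}$ is a path in the folded tropical space, which sits inside the unfolded space as the $\Pi$-invariant subspace via $q^{*}$ (there is no projection the other way, so ``pick a generic lift $Q$ of $\overline{Q}$'' is already not well posed: either the endpoint is $q^{*}(\overline{Q})$, which is not a generic point of the big space in the sense needed, or you move it off the subspace, and then you must control how the expansion changes). Worse, the direction of each segment of a broken line is dictated by the exponent of its attached monomial; an upstairs lift of a downstairs segment would have to carry an upstairs exponent, which in general is \emph{not} $\Pi$-invariant, so the ``lifted'' segment would have to leave the invariant subspace while coinciding with a path inside it. So ``a straight segment lifts uniquely'' is false as stated, and what you would really have to prove is the identity $\phi_{(s^{*},q)}^{*}(\vartheta_{q,Q})=\vartheta_{\overline{q},\overline{Q}}$ for all tropical points, i.e.\ that theta functions themselves are compatible with folding --- which is essentially the theorem you are trying to prove, not an input you may quote from Construction~\ref{2.1} or Theorem~\ref{thm:2.21} (those control walls and the path-ordered products, not broken lines). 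A smaller but real inaccuracy: $\mathcal{A}_{{\rm prin},\overline{{\bf s}}}^{\vee}\big(\mathbb{Z}^{T}\big)\simeq\overline{M}^{\circ}\oplus\overline{N}$ is the image of $M\oplus N$ under $(s^{*},q)$, not the sublattice of $\Pi$-invariant integer points (the $d_{\Pi i}\neq 1$ bookkeeping matters here).

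The paper's proof avoids broken lines entirely, and this is exactly how it sidesteps your obstacle. Since $\mathcal{C}_{\overline{{\bf s}}}^{+}\big(\mathbb{Z}^{T}\big)\subset\Theta(\mathcal{A}_{{\rm prin},\overline{{\bf s}}})$ always, and $\Theta$ is closed under addition and under translation by $\overline{N}$, it suffices to treat points $(\overline{m},0)$ in the negative chamber; for those, $\vartheta_{(\overline{m},0),Q}=\mathfrak{p}_{-,+}^{\overline{{\bf s}}}\big(z^{(\overline{m},0)}\big)$ by \cite[Theorem~3.5]{GHKK}. The hypothesis makes $\mathfrak{p}_{-,+}^{{\bf s}}\big(z^{(m,0)}\big)$ a positive Laurent polynomial, Lemma~\ref{lem:3.4} puts $\mathfrak{p}_{+,-}^{{\bf s}}$ in $G^{\Pi}$, and Proposition~\ref{prop:3.3} gives the commutative diagram identifying $\mathfrak{p}_{-,+}^{\overline{{\bf s}}}\big(z^{(\overline{m},0)}\big)$ with $\phi_{(s^{*},q)}^{*}\circ\mathfrak{p}_{-,+}^{{\bf s}}\big(z^{(m,0)}\big)$, a monomial specialization of a positive Laurent polynomial. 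If you want to salvage your route, you would need to prove a genuine folding theorem for theta functions (degenerating the endpoint onto the invariant subspace and using positivity to rule out cancellation under $\phi_{(s^{*},q)}^{*}$); as written, your proposal assumes that statement rather than proving it.
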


By the relationship between rank $2$ cluster varieties and log Calabi--Yau
surfaces as proved in~\cite{GHK15}, we could view $\mathcal{X}_{\mathbb{S}_{4}^{2}}$
as the universal family of cubic surfaces and $\mathcal{X}_{\mathbb{T}_{1}^{2}}\subset\mathcal{X}_{\mathbb{S}_{4}^{2}}$
the degenerate subfamily of singular cubic surfaces. For further applications
of the folding technique in cases where $\mathcal{X}$-cluster varieties
correspond to families of log Calabi--Yau surfaces, see~\cite{Z19}.

\subsection[Building $\mathcal{A}_{{\rm prin}}$ using all chambers in the scattering diagram]{Building $\boldsymbol{\mathcal{A}_{{\rm prin}}}$ using all chambers in the scattering diagram}

Let ${\bf s}_{0}$ be an initial seed. Recall that the scattering
diagram $\mathfrak{D}_{{\bf s}_{0}}$ always has two subfans, the
cluster complex $\Delta_{{\bf s}_{0}}^{+}$ that consists of positive
chambers and $\Delta_{{\bf s}_{0}}^{-}$ that consists of negative
chambers. The two complexes either have trivial intersection or coincide.
In~\cite{GHKK}, a positive space $\mathcal{A}_{{\rm prin},{\bf s}_{0}}^{{\rm scat}}$
is built by attaching a copy of algebraic torus to each chamber in
$\Delta_{{\bf s}_{0}}^{+}$ and gluing these tori together using the
wall-crossings between these chambers. The positive space $\mathcal{A}_{{\rm prin},{\bf s}_{0}}^{{\rm scat}}$ is isomorphic to $\mathcal{A}_{{\rm prin},{\bf s}_{0}}$. In Section~\ref{sec:Building--using}, we build a positive space $\mathcal{\tilde{A}}_{{\rm prin},{\bf s}_{0}}^{{\rm scat}}\supset\mathcal{A}_{{\rm prin},{\bf s}_{0}}^{{\rm scat}}$
using all chambers in $\Delta_{{\bf s}_{0}}^{+}\bigcup\Delta_{{\bf s}_{0}}^{-}$
if the path ordered product $\mathfrak{p}_{+,-}^{{\bf s}_{0}}$, or
equivalently the Donaldson--Thomas transformation of $\mathcal{A}_{{\rm prin},{\bf s}_{0}}$
as defined in \cite{GS16}, is rational. Let $\mathcal{A}_{{\rm prin},{\bf s}_{0}}^{{\rm scat},-}\subset\mathcal{\tilde{A}}_{{\rm prin},{\bf s}_{0}}^{{\rm scat}}$
be the subspace built by gluing all tori attached to negative chambers.
We prove that $\mathcal{\tilde{A}}_{{\rm prin},{\bf s}_{0}}^{{\rm scat}}$
is isomorphic to a positive space built by gluing two copies of $\mathcal{A}_{{\rm prin},{\bf s}_{0}}$
together:
\begin{Theorem}
We have an isomorphism of positive spaces:
\[
\tilde{\mathcal{A}}_{{\rm prin,{\bf s}_{0}}}=\mathcal{A}_{{\rm prin{\bf s}_{0}}}^{+}\bigcup\mathcal{A}_{{\rm prin,{\bf s}_{0}}}^{-}\stackrel{\sim}{\longrightarrow}\tilde{\mathcal{A}}_{{\rm prin},{\bf s}_{0}}^{{\rm scat}}.
\]
Here $\mathcal{A}_{{\rm prin},{\bf s}_{0}}^{\pm}$ are two copies
of $\mathcal{A}_{{\rm prin},{\bf s}_{0}}$, the image of $\mathcal{A}_{{\rm prin},{\bf s}_{0}}^{+}$
is $\mathcal{A}_{{\rm prin},{\bf s}_{0}}^{{\rm scat}}$ and the image
of $\mathcal{A}_{{\rm prin},{\bf s}_{0}}^{-}$ is $\mathcal{A}_{{\rm prin},{\bf s}_{0}}^{{\rm scat},-}$.
\end{Theorem}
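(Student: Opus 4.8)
The plan is to exhibit $\tilde{\mathcal{A}}^{{\rm scat}}_{{\rm prin},{\bf s}_0}$ as the union of its two subspaces $\mathcal{A}^{{\rm scat}}_{{\rm prin},{\bf s}_0}$ and $\mathcal{A}^{{\rm scat},-}_{{\rm prin},{\bf s}_0}$, to identify each of these with a copy of $\mathcal{A}_{{\rm prin},{\bf s}_0}$, and then to check that the gluing of the two copies along their overlap is governed by a single birational map, namely the path-ordered product $\mathfrak{p}^{{\bf s}_0}_{+,-}$, which makes the union a positive space precisely because $\mathfrak{p}^{{\bf s}_0}_{+,-}$ is rational and subtraction-free.

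First I would recall from \cite{GHKK} that $\mathcal{A}^{{\rm scat}}_{{\rm prin},{\bf s}_0}\cong\mathcal{A}_{{\rm prin},{\bf s}_0}$, the torus attached to a positive chamber $\mathcal{C}^+_{{\bf s}_v}$ being the cluster torus of the seed ${\bf s}_v$ and the wall-crossings between adjacent positive chambers being the cluster mutations; this takes care of the copy $\mathcal{A}^+_{{\rm prin},{\bf s}_0}$, whose image is $\mathcal{A}^{{\rm scat}}_{{\rm prin},{\bf s}_0}$ by construction. Next I would establish the analogous statement for the negative subfan. Here the key input is the sign-change symmetry of the scattering diagram: negation $m\mapsto -m$ carries $\Delta^-_{{\bf s}_0}$, together with the wall-crossing functions on the walls bounding its chambers, to the cluster complex $\Delta^+$ of the seed ${\bf s}_0^{{\rm op}}$ obtained by reversing all arrows of ${\bf s}_0$, and reversing the exchange matrix induces an isomorphism $\mathcal{A}_{{\rm prin},{\bf s}_0^{{\rm op}}}\cong\mathcal{A}_{{\rm prin},{\bf s}_0}$. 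Combining this with the previous paragraph gives $\mathcal{A}^{{\rm scat},-}_{{\rm prin},{\bf s}_0}\cong\mathcal{A}^{{\rm scat}}_{{\rm prin},{\bf s}_0^{{\rm op}}}\cong\mathcal{A}_{{\rm prin},{\bf s}_0}$, which provides the copy $\mathcal{A}^-_{{\rm prin},{\bf s}_0}$ with image $\mathcal{A}^{{\rm scat},-}_{{\rm prin},{\bf s}_0}$.

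It remains to identify the gluing. By construction $\tilde{\mathcal{A}}^{{\rm scat}}_{{\rm prin},{\bf s}_0}$ is obtained from the disjoint union of the tori $T_{\mathcal{C}}$, $\mathcal{C}\in\Delta^+_{{\bf s}_0}\cup\Delta^-_{{\bf s}_0}$, by gluing adjacent ones along wall-crossings. Two tori attached to chambers in the same subfan are glued exactly as inside $\mathcal{A}^{{\rm scat}}_{{\rm prin},{\bf s}_0}$ or $\mathcal{A}^{{\rm scat},-}_{{\rm prin},{\bf s}_0}$, so the only new datum is the gluing of a torus in $\Delta^+_{{\bf s}_0}$ to a torus in $\Delta^-_{{\bf s}_0}$; by consistency (path-independence) of $\mathfrak{D}_{{\bf s}_0}$ it suffices to specify one such map, say between $T_{\mathcal{C}^+_{{\bf s}_0}}$ and $T_{\mathcal{C}^-_{{\bf s}_0}}$, and this is exactly the path-ordered product $\mathfrak{p}^{{\bf s}_0}_{+,-}$ across the (infinitely many) walls separating the two chambers, which by \cite{GS16} represents the Donaldson--Thomas transformation of $\mathcal{A}_{{\rm prin},{\bf s}_0}$. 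The hypothesis that $\mathfrak{p}^{{\bf s}_0}_{+,-}$ is rational turns this formal automorphism into an honest birational map of tori, and since each individual wall-crossing acts by a subtraction-free Laurent transformation the composite preserves positivity; hence the two inclusions assemble into an isomorphism of positive spaces $\mathcal{A}^+_{{\rm prin},{\bf s}_0}\cup\mathcal{A}^-_{{\rm prin},{\bf s}_0}\stackrel{\sim}{\longrightarrow}\tilde{\mathcal{A}}^{{\rm scat}}_{{\rm prin},{\bf s}_0}$ with the asserted images.

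The main obstacle I expect is making the last step rigorous: because $\mathcal{C}^+_{{\bf s}_0}$ and $\mathcal{C}^-_{{\bf s}_0}$ are in general separated by infinitely many walls, a priori $\mathfrak{p}^{{\bf s}_0}_{+,-}$ lives only in a completed (formal) automorphism group and the ``union'' of the two halves is only a formal gluing; one must use rationality to descend from the completed algebra to the honest coordinate ring, and then verify that the cocycle condition among the positive--positive, negative--negative, and positive--negative gluings holds on the nose rather than merely formally. A secondary point worth dispatching is the degenerate case $\Delta^+_{{\bf s}_0}=\Delta^-_{{\bf s}_0}$, in which $\mathfrak{p}^{{\bf s}_0}_{+,-}$ is trivial and the statement collapses; under the standing rationality assumption with nontrivial Donaldson--Thomas transformation the two subfans meet only at the origin, which is the case of interest.
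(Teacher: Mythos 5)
Your outline has the same overall shape as the paper's argument (the statement is Corollary~\ref{cor:4.3} there, obtained from \cite[Theorem~4.4]{GHKK} for the positive half together with Theorem~\ref{thm:4.2} for the negative half), but the step that carries all the content is asserted rather than proved. You dispose of the negative subfan by claiming that negation carries $\Delta_{{\bf s}_0}^{-}$, with its wall functions, to the cluster complex of the opposite seed ${\bf s}_0^{\rm op}$, and that reversing the exchange matrix gives $\mathcal{A}_{{\rm prin},{\bf s}_0^{\rm op}}\cong\mathcal{A}_{{\rm prin},{\bf s}_0}$. Neither claim is quotable in this form, and together they are exactly what the paper's Theorem~\ref{thm:4.2} actually proves: the embedding of the second copy is built chamber by chamber as $\varphi_{v_0,v}^{-}\circ\Sigma$, where $\Sigma$ is the inversion $z^{(m,n)}\mapsto z^{(-m,-n)}$, and one must check the commutative diagram saying that $\Sigma$ intertwines the scattering wall-crossing $\mathfrak{p}_{\mathcal{C}_v^{-},\mathcal{C}_{v'}^{-}}$ between adjacent negative chambers with the cluster mutation $\mu_{{\bf s}_v,{\bf s}_{v'}}$; the paper does this by an explicit computation at the initial seed and a reduction over the exchange tree (the cube argument), which in turn leans on Theorem~\ref{thm:4.1} (seed-independence of $\tilde{\mathcal{A}}_{{\rm prin},{\bf s}_0}^{{\rm scat}}$), whose mixed cases are handled via mutation invariance of theta functions. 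Note also that for principal coefficients your auxiliary claim is imprecise as stated: the principal-coefficient exchange data of ${\bf s}_0$ and ${\bf s}_0^{\rm op}$ differ by negating only the unfrozen block, so the identity on variables is not an isomorphism; one must additionally invert the frozen (principal-coefficient) directions and rescale the non-initial cluster variables by frozen monomials, which is precisely the role $\Sigma$ plays in the paper. So the central technical verification is missing from your proposal, not merely deferred.

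A second, related gap: identifying each half abstractly with $\mathcal{A}_{{\rm prin},{\bf s}_0}$ and observing that one cross-gluing is $\mathfrak{p}_{+,-}^{{\bf s}_0}$ does not yet give an isomorphism of positive spaces. The space $\tilde{\mathcal{A}}_{{\rm prin},{\bf s}_0}^{{\rm scat}}$ is glued from tori attached to \emph{all} chambers, so you must check that your two identifications are simultaneously compatible with every transition map $\mathfrak{p}_{\sigma,\sigma'}$, including those between an arbitrary chamber of $\Delta_{{\bf s}_0}^{+}$ and an arbitrary chamber of $\Delta_{{\bf s}_0}^{-}$ (in the paper these are factored through $\mathcal{C}_{{\bf s}_0}^{\pm}$ and controlled by the commutative diagrams of Theorems~\ref{thm:4.1} and~\ref{thm:4.2}); consistency of $\mathfrak{D}_{{\bf s}_0}$ alone reduces the data but does not replace this verification. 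You correctly flag that rationality of $\mathfrak{p}_{+,-}^{{\bf s}_0}$ is what turns the formal wall-crossing into an honest birational gluing, but the ``each wall-crossing is subtraction-free, hence so is the infinite composite'' remark is not an argument either; the paper sidesteps this by working directly with the rational maps $\mathfrak{p}_{\sigma,\sigma'}$ induced on function fields.
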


The construction of $\tilde{\mathcal{A}}_{{\rm prin},\mathbb{T}_{1}^{2}}$ in the beginning is a specific instance of the above theorem. In general, $\mathfrak{D}_{{\bf s}_{0}}$ might have subfans other than $\Delta_{{\bf s}_{0}}^{+}$ and $\Delta_{{\bf s}_{0}}^{-}$. It will be interesting to produce such an example and see if these subfans can yield non equivalent cluster structures as in the case of~$\tilde{\mathcal{A}}_{{\rm prin},\mathbb{T}_{1}^{2}}$.

\subsection{Background in mirror symmetry}

In this subsection, we will recall some background in mirror symmetry and review our main results in the light of mirror symmetry. Given an affine log Calabi--Yau variety $V$ over a ground field~$\mathbb{K}$
of characteristic $0$, inspired by the homological mirror symmetry conjecture, Gross, Hacking and Keel conjectured (Abouzaid, Kontsevich and Soibelman also suggested) that $\Gamma(V,\mathcal{O}_{V})$, the
ring of regular functions on $V$, has a canonical vector space basis
called \emph{theta functions} parametrized by the integer tropical
points $V^{\vee}\big(\mathbb{Z}^{T}\big)$ of the mirror dual variety~$V^{\vee}$.
Moreover, the structure constants of $\Gamma(V,\mathcal{O}_{V})$
as a $\mathbb{K}$-algebra can be computed by counting holomorphic
discs on $V^{\vee}$. For the development of mirror symmetry for log
Calabi--Yau surfaces, see~\cite{A07,GHK11,Yu16,Yu17}.

The subject of cluster algebra is invented to study Lusztig's \emph{dual canonical basis} (cf.~\cite{Lu90,Lu94}) of the ring of regular functions on base affine space $G/N$ where $G$ is a reductive algebraic group and $N$ an unipotent subgroup. As an attempt to find an algorithm
to compute Lusztig's dual canonical basis, Fomin and Zelevinsky discovered
that $\Gamma(G/N,\mathcal{O}_{G/N})$ is a cluster algebra and~$G/N$
a~cluster variety.

Cluster varieties are particular examples of log Calabi--Yau varieties.
Therefore mirror symmetry predicts a geometric way to produce a canonical
basis for cluster varieties as mentioned in the beginning of this
subsection. The first attempt in this vein is~\cite{GHKK}. Let us
briefly review the significance of~\cite{GHKK} towards cluster theory.
First, though scattering diagrams and broken lines in~\cite{GHKK}
are still combinatorial gadgets, mirror symmetry predicts that broken
lines and theta functions should be intrinsic to the enumerative geometry
of the mirror dual varieties and therefore independent of the auxiliary
combinatorial data of cluster algebras. In fact, the technology has
developed after~\cite{GHKK}. In~\cite{KY}, for cluster varieties
whose rings of regular functions are finitely generated and smooth,
theta functions are constructed via counts of rigid analytic discs
on mirror/Langlands dual cluster varieties using Berkovich geometry.
In~\cite{M19}, it is shown that theta functions for general cluster
varieties are determined by certain descendant log Gromov--Witten invariants
of their mirror/Langlands dual.

Theta functions produced by combinatorial methods in \cite{FZ02,FZ07},
called \emph{cluster variables}, are only parametrized by integer
tropical points in the cluster complex in the scattering diagram.
Cluster complex is a subfan structure in scattering diagram. In some
cases, we expect distinct subfan structures in the scattering diagrams
to yield non-equivalent cluster structures for cluster varieties.
This paper produces a first example along this line, as mentioned
in Theorem~\ref{thm:1.2}. The insight of~\cite{GHKK} also gives
correction for the original Fock--Goncharov duality conjecture. The
mirror dual variety to a cluster variety should be the space built
by the scattering diagram which includes torus charts coming from
all chambers in the scattering diagram.

\section{A quotient construction of scattering diagrams} \label{sec:A-quotient-construction}

\subsection{Scattering diagrams}

In this subsection, we will briefly recall basics of scattering diagrams
and prove lemmas we need for the rest of the paper. See Section~1 of the first arXiv version of \cite{GHKK} for a more detailed exposition on scattering diagrams. We refer to the first arXiv version of~\cite{GHKK}
here since the first version contains a more general treatment of
scattering diagrams that fits the framework of this paper better.

Let $N$ be a lattice with dual lattice $M$ and a skew-symmetric bilinear form
\[
\{\,,\,\}\colon \ N\times N\rightarrow\mathbb{Q}.
\]
Let $I$ be the index set $\{1,2,\dots,\operatorname{rank}N\}$. Fix a basis
$(e_{i})_{i\in I}$ for~$N$. Define $N^{+}\subset N$ as follows:
\[
N^{+}=\bigg\{\sum_{i\in I}a_{i}e_{i}\,|\, a_{i}\in\mathbb{N},\,\sum_{i\in I}a_{i}>0\bigg\}.
\]
Let $\mathfrak{g}=\bigoplus_{n\in N^{+}}\mathfrak{g}_{n}$ be a Lie
algebra over a ground field $\mathbb{K}$ of characteristic $0$.
Assume that $\mathfrak{g}$ is graded by $N^{+}$, that is, $[\mathfrak{g}_{n_{1}},\mathfrak{g}_{n_{2}}]\subset\mathfrak{g}_{n_{1}+n_{2}}$,
and that $\mathfrak{g}$ is skew-symmetric for $\{\,,\,\}$, that is,
$[\mathfrak{g}_{n_{1}},\mathfrak{g}_{n_{2}}]=0$ if $\{n_{1},n_{2}\}=0$.
Define the following linear functional on $N$:
\begin{align}
d\colon \ N & \rightarrow\mathbb{Z},\nonumber \\
\sum_{i\in I}a_{i}e_{i} & \mapsto\sum_{i\in I}a_{i}.\label{eq:-11}
\end{align}
For each $k\in\mathbb{Z}_{\geq0}$, there is an Lie subalgebra $\mathfrak{g}^{>k}$
of $\mathfrak{g}$:
\[
\mathfrak{g}^{>k}=\bigoplus_{n\in N^{+}\,|\, d(n)>k}\mathfrak{g}_{n}.
\]
And a $N^{+}$-graded nilpotent Lie algebra $\mathfrak{g}^{\leq k}$:
\[
\mathfrak{g}^{\leq k}=\mathfrak{g}/\mathfrak{g}^{>k}.
\]
We denote the unipotent Lie group corresponding to $\mathfrak{g}^{\leq k}$
as $G^{\leq k}$. As a set, it is in bijection with~$\mathfrak{g}^{\leq k}$,
but the group multiplication is given by the Baker--Campbell--Hausdorff
formula. Given $i<j$, we have canonical homomorphisms
\[
\mathfrak{g}^{\leq j}\rightarrow\mathfrak{g}^{\leq i},\qquad G^{\leq j}\rightarrow G^{\leq i}.
\]
Define the pro-nilpotent Lie algebra and its corresponding pro-unipotent
Lie group by taking the projective limits:
\[
\hat{\mathfrak{g}}=\lim_{\longleftarrow}\mathfrak{g}^{\leq k},\qquad G=\lim_{\longleftarrow}G^{\leq k}.
\]
We have canonical bijections:
\[
\exp\colon \ \mathfrak{g}^{\leq k}\rightarrow G^{\leq k},\qquad\exp\colon \ \hat{\mathfrak{g}}\rightarrow G.
\]
Moreover, given any Lie subalgebra $\mathfrak{h}\subset\mathfrak{g}$,
we can define a Lie subalgebra $\hat{\mathfrak{h}}$ of the completion~$\hat{\mathfrak{g}}$ and its corresponding Lie subgroup $\exp(\hat{\mathfrak{h}})\subset G$:
\[
\hat{\mathfrak{h}}=\lim_{\longleftarrow}\mathfrak{h}^{\leq k},\qquad\mathfrak{h}^{\leq k}=\mathfrak{h}/\big(\mathfrak{h}\cap\mathfrak{g}^{>k}\big).
\]
Given an element $n_{0}$ in $N^{+}$, there is an Lie subalgebra
$\mathfrak{g}_{n_{0}}^{\parallel}$ of $\mathfrak{g}$:
\[
\mathfrak{g}_{n_{0}}^{\parallel}=\bigoplus_{k\in\mathbb{Z}_{>0}}\mathfrak{g}_{k\cdot n_{0}},
\]
and its corresponding Lie subgroup of $G$
\[
G_{n_{0}}^{\parallel}=\exp\big(\widehat{\mathfrak{g}_{n_{0}}^{\parallel}}\big).
\]
By our assumption that $\mathfrak{g}$ is skew-symmetric for $\{\,,\,\}$,
$\mathfrak{g}_{n_{0}}^{\parallel}$ and therefore $G_{n_{0}}^{\parallel}$
are abelian.
\begin{Definition}
A \emph{wall} in $M_{\mathbb{R}}$ for $N^{+}$ and $\mathfrak{g}$
is a~pair $(\mathfrak{d},g_{\mathfrak{d}})$ such that
\begin{enumerate}\itemsep=0pt
\item[1)] $g_{\mathfrak{d}}$ is contained in $G_{n_{0}}^{\parallel}$ for some primitive element $n_{0}$ in $N^{+}$,
\item[2)] $\mathfrak{d}$ is contained in $n_{0}^{\perp}\subset M_{\mathbb{R}}$
and is a convex rational polyhedral cone of dimension \linebreak \mbox{$(\operatorname{rank}N-1)$}.
\end{enumerate}

For a wall $(\mathfrak{d},g_{\mathfrak{d}})$, we say $\mathfrak{d}$ is its \emph{support}.
\end{Definition}

\begin{Definition}
A \emph{scattering diagram} $\mathfrak{D}$ for $N^{+}$ and $\mathfrak{g}$
is a collection of walls such that for every order~$k$ in $\mathbb{Z}_{\geq0}$,
there are only finitely many walls $(\mathfrak{d},g_{\mathfrak{d}})$
in $\mathfrak{D}$ with non-trivial image of~$g_{\mathfrak{d}}$ under
the projection map $G\rightarrow G^{\leq k}$. Given a scattering
diagram $\mathfrak{D}$ and an order $k>0$, by passing the attached
group elements $g_{\mathfrak{d}}$ through the projection $G\rightarrow G^{\leq k}$,
we get a scattering diagram~$\mathfrak{D}^{\leq k}$ for $N^{+}$
and $\mathfrak{g}^{\leq k}$.
\end{Definition}

\begin{Remark}The finite condition in the definition of the scattering
diagram enables us, up to order $k$, reduce to the case where the
scattering diagram has only finitely many walls, a trick repeatedly
used in this paper.
\end{Remark}
\begin{Definition}
We define the\emph{ essential support, }or simply \emph{support},
of a scattering digram $\mathfrak{D}$ to be following set:
\[
{\rm supp}_{{\rm ess}}(\mathfrak{D})=\bigcup_{(\mathfrak{d},g_{\mathfrak{d}})\in\mathfrak{D}\,|\, g_{\mathfrak{d}}\neq0}\mathfrak{d}.
\]
We define the \emph{singular locus} of a scattering diagram $\mathfrak{D}$
to be the following set
\[
{\rm sing}(\mathfrak{D})=\bigcup_{(\mathfrak{d},g_{\mathfrak{d}})\in\mathfrak{D}}\partial\mathfrak{d}\cup\bigcup_{\substack{(\mathfrak{d}_{1},g_{\mathfrak{d}_{1}}),(\mathfrak{d}_{2},g_{\mathfrak{d}})\in\mathfrak{D}\\
\dim(\mathfrak{d}_{1}\cap\mathfrak{d}_{2})=n-2
}
}\mathfrak{d}_{1}\cap\mathfrak{d}_{2}.
\]
If $\mathfrak{D}$ has only finitely many walls, then its support
is a finite polyhedral cone complex. A \emph{joint} is an $(n-2)$-dimensional
cell of this complex.
\end{Definition}

\begin{Definition}
A point $x$ in $M_{\mathbb{R}}$ is \emph{general} if there exists
at most one rational hyperplane $n_{0}^{\perp}$ such that $x$ is
contained in $n_{0}^{\perp}$. Given a general point $x$ in $M_{\mathbb{R}}$,
define
\[
g_{x}(\mathfrak{D})=\prod_{(\mathfrak{d},g_{\mathfrak{d}})\in\mathfrak{D}\,|\,\mathfrak{d}\ni x}g_{\mathfrak{d}}.
\]
\end{Definition}

\begin{Definition}\label{def:Two-scattering-diagrams}Two scattering diagrams $\mathfrak{D}$, $\mathfrak{D}'$ are \emph{equivalent} if and only if $g_{x}(\mathfrak{D})=g_{x}(\mathfrak{D}')$
for all general points~$x$ in~$M_{\mathbb{R}}$.
\end{Definition}

Let $\gamma\colon [0,1]\rightarrow\mathfrak{D}$ be a smooth immersion. We say that $\gamma$ is \emph{generic} for $\mathfrak{D}$ if it satisfies the following conditions:
\begin{enumerate}\itemsep=0pt
\item[i)] The end points $\gamma(0)$ and $\gamma(1)$ does not lie in the
essential support of $\mathfrak{D}$.
\item[ii)] $\gamma$ does not pass through the singular locus of $\mathfrak{D}$.
\item[iii)] Whenever $\gamma$ crosses a wall, it crosses it transversally.
\end{enumerate}
Then given a path $\gamma$ generic for $\mathfrak{D}$, let $\mathfrak{p}_{\gamma,\mathfrak{D}}$
be the \emph{path-ordered product} as defined in~\cite{GHKK}.
\begin{Lemma}
Two scattering diagram $\mathfrak{D}$ and $\mathfrak{D}'$ are
equivalent if and only if $\mathfrak{p}_{\gamma,\mathfrak{D}}=\mathfrak{p}_{\gamma,\mathfrak{D}'}$
for any path $\gamma$ that is generic for both $\mathfrak{D}$ and~$\mathfrak{D}'$.
\end{Lemma}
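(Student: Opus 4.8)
The plan is to prove both implications after first reducing to the case of a finite scattering diagram. Both statements --- ``$\mathfrak{D}$ is equivalent to $\mathfrak{D}'$'' in the sense of Definition~\ref{def:Two-scattering-diagrams}, and ``$\mathfrak{p}_{\gamma,\mathfrak{D}}=\mathfrak{p}_{\gamma,\mathfrak{D}'}$ for every commonly generic $\gamma$'' --- are equalities of elements of $G=\lim_{\longleftarrow}G^{\leq k}$, hence hold if and only if the corresponding equalities hold for $\mathfrak{D}^{\leq k}$ and $(\mathfrak{D}')^{\leq k}$ in $G^{\leq k}$ for every order $k$; since $\mathfrak{D}^{\leq k}$ has only finitely many walls with nontrivial attached element, we may assume throughout that $\mathfrak{D}$ and $\mathfrak{D}'$ are finite. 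The observation that keeps the argument clean is that a \emph{general} point automatically lies outside $\operatorname{sing}(\mathfrak{D})$: if $x$ lay on the relative boundary of a wall $\mathfrak{d}\subset n_{0}^{\perp}$, or on a joint $\mathfrak{d}_{1}\cap\mathfrak{d}_{2}$ of dimension $\operatorname{rank}N-2$, then --- using that every wall is full-dimensional in its hyperplane --- $x$ would lie on a second rational hyperplane, contradicting generality. Moreover all walls of $\mathfrak{D}$ through a fixed general point $x$ share the same primitive direction $n_{0}$, so their attached elements lie in the abelian group $G_{n_{0}}^{\parallel}$ and $g_{x}(\mathfrak{D})$ is well defined, independently of ordering.

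For the forward implication, assume $\mathfrak{D}\equiv\mathfrak{D}'$ and let $\gamma$ be generic for both. After a small homotopy rel endpoints that keeps $\gamma$ generic for both diagrams --- which, by homotopy invariance of the path-ordered product, changes neither $\mathfrak{p}_{\gamma,\mathfrak{D}}$ nor $\mathfrak{p}_{\gamma,\mathfrak{D}'}$ --- we may assume $\gamma$ meets the union of the essential supports of $\mathfrak{D}$ and $\mathfrak{D}'$ at finitely many times $t_{1}<\dots<t_{r}$, at each of which $\gamma(t_{j})$ is a general point $x_{j}$ and the crossing is transverse. By definition, $\mathfrak{p}_{\gamma,\mathfrak{D}}$ is then the time-ordered product of the elements $g_{x_{j}}(\mathfrak{D})^{\epsilon_{j}}$, where each $\epsilon_{j}=\pm1$ is determined by the sign of the pairing of $\gamma'(t_{j})$ with the common direction $n_{0}$ of the walls crossed at time $t_{j}$; this sign is intrinsic and agrees with the one for $\mathfrak{D}'$. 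Since $g_{x_{j}}(\mathfrak{D})=g_{x_{j}}(\mathfrak{D}')$ at every general point (a factor equal to the identity, occurring when only one of the two diagrams has an essential wall at $x_{j}$, simply contributes nothing), the two time-ordered products coincide, so $\mathfrak{p}_{\gamma,\mathfrak{D}}=\mathfrak{p}_{\gamma,\mathfrak{D}'}$.

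For the converse, fix an arbitrary general point $x$. If $x$ lies on no rational hyperplane, then no wall contains $x$ and $g_{x}(\mathfrak{D})=g_{x}(\mathfrak{D}')=1$. Otherwise $x$ lies on a unique hyperplane $n_{0}^{\perp}$ and, by the observation above, outside $\operatorname{sing}(\mathfrak{D})\cup\operatorname{sing}(\mathfrak{D}')$. Choose a ball $B$ around $x$ meeting only those finitely many walls of $\mathfrak{D}$ and $\mathfrak{D}'$ that contain $x$, and a short smooth curve $\gamma\subset B$ crossing $n_{0}^{\perp}$ transversally exactly once, at $x$, with endpoints off all essential supports; then $\gamma$ is generic for both diagrams and $\mathfrak{p}_{\gamma,\mathfrak{D}}=g_{x}(\mathfrak{D})^{\epsilon}$, $\mathfrak{p}_{\gamma,\mathfrak{D}'}=g_{x}(\mathfrak{D}')^{\epsilon}$ with the same $\epsilon=\pm1$. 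The hypothesis forces $g_{x}(\mathfrak{D})^{\epsilon}=g_{x}(\mathfrak{D}')^{\epsilon}$, hence $g_{x}(\mathfrak{D})=g_{x}(\mathfrak{D}')$; as $x$ was an arbitrary general point, $\mathfrak{D}\equiv\mathfrak{D}'$.

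I expect the only step needing genuine care to be the reduction, in the forward direction, to a representative of $\gamma$ whose wall-crossings occur one hyperplane at a time at general points: this rests on the finiteness of $\mathfrak{D}^{\leq k}$ together with the invariance of $\mathfrak{p}_{\gamma}$ under deformations not crossing the singular locus, and one must arrange the perturbation so that it respects $\mathfrak{D}$ and $\mathfrak{D}'$ simultaneously. Everything else is bookkeeping with the abelian groups $G_{n_{0}}^{\parallel}$, and the reduction to finite order is routine from the inverse-limit definitions.
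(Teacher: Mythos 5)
The paper itself gives no proof of this lemma: it is stated as part of the background on scattering diagrams recalled from the first arXiv version of~\cite{GHKK}, so there is no in-paper argument to compare yours against. Judged on its own, your proof is correct and is the standard argument: reduce modulo $\mathfrak{g}^{>k}$, note that a general point automatically avoids $\operatorname{sing}(\mathfrak{D})\cup\operatorname{sing}(\mathfrak{D}')$ (your boundary/facet argument is the right one), perturb a commonly generic path so that every essential crossing occurs at a general point, and compare the resulting time-ordered products factor by factor inside the abelian groups $G_{n_0}^{\parallel}$; for the converse, probe a given general point with a short transversal segment.

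Two steps deserve to be spelled out rather than flagged. First, the ``homotopy invariance of the path-ordered product'' you invoke is not invariance under arbitrary homotopies rel endpoints, since $\mathfrak{D}$ and $\mathfrak{D}'$ need not be consistent; it holds only for homotopies staying generic for both diagrams, and the reason is precisely that a reordering of two crossings would force the path through a boundary point or an $(n-2)$-dimensional joint, i.e., through $\operatorname{sing}(\mathfrak{D})$ or $\operatorname{sing}(\mathfrak{D}')$, while a tangency event only creates a cancelling pair $g_{\mathfrak{d}}^{\pm1}$; this observation, not bookkeeping, is the substance of the forward direction, and it is also what justifies moving the crossing points to general points (at a non-general crossing point a wall of $\mathfrak{D}$ and a wall of $\mathfrak{D}'$ may lie in different hyperplanes, and the factorwise comparison would break down). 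Second, in the converse the ball $B$ meeting only the finitely many walls through $x$ exists only after truncating at order $k$, whereas the hypothesis is about paths generic for the full diagrams; the repair is the one you implicitly intend: choose the short segment generic for the full $\mathfrak{D}$ and $\mathfrak{D}'$ (possible since the essential supports are countable unions of codimension-one cones and the singular loci have codimension two), and observe that modulo $G^{>k}$ every wall it crosses other than those containing $x$ contributes trivially, so $\mathfrak{p}_{\gamma,\mathfrak{D}}\equiv g_{x}(\mathfrak{D})^{\epsilon}$ and likewise for $\mathfrak{D}'$. With these two points made explicit the argument is complete.
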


\begin{Definition}A scattering diagram $\mathfrak{D}$ is \emph{consistent }if for any
path $\gamma$ that is generic for it, the path-ordered product $\mathfrak{p}_{\gamma,\mathfrak{D}}$
depends only on the end points of~$\gamma$.
\end{Definition}
\begin{Definition}Define the following chambers in $M_{\mathbb{R}}$:
\begin{gather*}
\mathcal{C}^{+} =\{m\in M_{\mathbb{R}}\,|\, m|_{N^{+}}\geq0\},\qquad
\mathcal{C}^{-} =\{m\in M_{\mathbb{R}}\,|\, m|_{N^{+}}\leq0\}.
\end{gather*}
We call $\mathcal{C}^{+}$ and $\mathcal{C}^{-}$ the \emph{positive
chamber }and the \emph{negative chamber} respectively.
\end{Definition}

\begin{Theorem}[Kontsevich--Soibelman \cite{KS13}] \label{thm:2.12}Given an element
$\mathfrak{p}_{+,-}$ in $G$, there is an unique equivalence class
of consistent scattering diagrams corresponding to $\mathfrak{p}_{+,-}$.
Choose a representative $\mathfrak{D}_{\mathfrak{p_{+,-}}}$ in this
equivalence class. The element $\mathfrak{p}_{+,-}$ is just the path-ordered
product along any path with the initial point in the interior of $\mathcal{C}^{+}$
and the end point in the interior of $\mathcal{C}^{-}$.
\end{Theorem}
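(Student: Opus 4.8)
The plan is to establish existence of a representative, uniqueness of the equivalence class, and the identification of $\mathfrak p_{+,-}$ with the cross-product all at once, by induction on the order $k$. Two elementary facts get used throughout: modulo $\mathfrak g^{>k}$ a scattering diagram has only finitely many nontrivial walls, and $\{n\in N^{+}\mid d(n)=k\}$ is finite for every $k$, so only finitely many hyperplanes $n^{\perp}$ are relevant at any given order. The last sentence of the theorem will then be automatic: once a consistent representative with cross-product $\mathfrak p_{+,-}$ is in hand, consistency makes every path-ordered product depend only on its endpoints, so any path from the interior of $\mathcal C^{+}$ to the interior of $\mathcal C^{-}$ realizes $\mathfrak p_{+,-}$. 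For the base case, $[\mathfrak g_{e_{i}},\mathfrak g_{e_{j}}]\subset\mathfrak g_{e_{i}+e_{j}}$ has degree $2$, so $\mathfrak g^{\leq 1}=\bigoplus_{i\in I}\mathfrak g_{e_{i}}$ is abelian; I pick $c_{i}\in\mathfrak g_{e_{i}}$ so that $\mathfrak D_{1}=\{(e_{i}^{\perp},\exp c_{i})\mid c_{i}\neq 0\}$ has path-ordered product $\equiv\mathfrak p_{+,-}\pmod{\mathfrak g^{>1}}$ along a generic path from the interior of $\mathcal C^{+}$ to that of $\mathcal C^{-}$ --- possible because each $e_{i}\in N^{+}$ is nonnegative on $\mathcal C^{+}$ and nonpositive on $\mathcal C^{-}$, so such a path crosses every $e_{i}^{\perp}$, and the order-$1$ crossings commute. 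Consistency at order $1$ is immediate.

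For the inductive step toward existence, suppose $\mathfrak D_{k}$ is a finite collection of walls, consistent modulo $\mathfrak g^{>k}$, whose cross-product is $\equiv\mathfrak p_{+,-}\pmod{\mathfrak g^{>k}}$. Passing to order $k+1$: for each joint $\mathfrak{j}$ of $\mathfrak D_{k}^{\leq k+1}$ the path-ordered product around a small loop encircling $\mathfrak{j}$ is trivial modulo $\mathfrak g^{>k}$ by hypothesis, hence equals $\exp(\Theta_{\mathfrak{j}})$ for a unique $\Theta_{\mathfrak{j}}\in\bigoplus_{d(n)=k+1,\ \mathfrak{j}\subset n^{\perp}}\mathfrak g_{n}$. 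The family $(\Theta_{\mathfrak{j}})$ satisfies a compatibility relation along every codimension-$3$ stratum --- obtained by contracting the composite of the small loops around the joints adjacent to that stratum inside a three-dimensional normal slice --- and this is precisely the condition that lets me cancel all of the $\Theta_{\mathfrak{j}}$ by adjoining, along suitable codimension-$1$ faces of the finitely many hyperplanes $n^{\perp}$ with $d(n)=k+1$, walls with elements in the abelian groups $G_{n}^{\parallel}$ that are trivial modulo $\mathfrak g^{>k}$ (modulo $\mathfrak g^{>k+1}$ the new elements commute with everything, so one subtracts $\Theta_{\mathfrak{j}}$ componentwise). The resulting diagram is consistent modulo $\mathfrak g^{>k+1}$, but its cross-product may have changed to $\exp(\delta)\,\mathfrak p_{+,-}$ for some $\delta=\sum_{d(n)=k+1}\delta_{n}$ with $\delta_{n}\in\mathfrak g_{n}$; adjoining the full-hyperplane walls $(n^{\perp},\exp(-\delta_{n}))$ restores the cross-product without breaking consistency, because a small loop crosses a full hyperplane an even number of times and the attached abelian element cancels against its inverse. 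This produces $\mathfrak D_{k+1}$, and $\mathfrak D_{\mathfrak p_{+,-}}:=\bigcup_{k}\mathfrak D_{k}$ is a consistent scattering diagram with cross-product $\mathfrak p_{+,-}$.

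Uniqueness up to equivalence follows from a parallel induction: if two consistent diagrams $\mathfrak D,\mathfrak D'$ have the same cross-product $\mathfrak p_{+,-}$ and are equivalent modulo $\mathfrak g^{>k}$, I claim they are equivalent modulo $\mathfrak g^{>k+1}$. At order $k+1$ their difference is recorded, at each general point $x$, by a degree-$(k+1)$ element of $\mathfrak g$ supported in the direction of the hyperplane through $x$ (the logarithm, computed in the abelian group $G_{n}^{\parallel}$, of the ratio of the two wall-products through $x$); consistency forces this family to obey the same closedness at joints as the defects above, and the common fixed cross-product removes the remaining freedom, so the family is a boundary and $\mathfrak D,\mathfrak D'$ agree modulo $\mathfrak g^{>k+1}$. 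As before one exploits that the groups $G_{n}^{\parallel}$ are abelian, so that the order-$(k+1)$ part behaves linearly.

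The step I expect to be the main obstacle is the correction inside the induction: showing that the joint-defects $(\Theta_{\mathfrak{j}})$ are realizable as the boundary of a compatible family of walls. Locally, in the two-dimensional slice normal to a joint this is the basic rank-$2$ scattering lemma --- any prescribed codimension-one incoming data has a consistent completion --- and it is here that the hypotheses that $\mathfrak g$ is $N^{+}$-graded and skew-symmetric for $\{\,,\,\}$ are genuinely used, the skew-symmetry guaranteeing that parallel walls commute so that the two-variable analysis is tractable. Globally one must check that these local completions patch into honest walls, which is the content of the codimension-$3$ compatibility; arranging the induction so that the finitely many faces of the hyperplanes $n^{\perp}$ fit together correctly at the lower strata is the bookkeeping-heavy heart of the proof.
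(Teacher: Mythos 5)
The paper itself contains no proof of this theorem: it is imported from Kontsevich--Soibelman \cite{KS13}, and the only ingredient of its proof that the paper actually uses is the factorization identity $g_{x}(\mathfrak{D}_{\mathfrak{p}_{+,-}})=(\mathfrak{p}_{+,-})_{0}^{x}$ at a general point $x$, invoked in the proof of Proposition~\ref{prop:2.14}. Your argument is therefore an independent reconstruction, and it takes a genuinely different route: you run the perturbative, order-by-order construction (in the style of Gross--Siebert and of Appendix~C of \cite{GHKK}), cancelling the degree-$(k+1)$ defects $\Theta_{\mathfrak{j}}$ at the joints and then restoring the cross-product with full-hyperplane walls, whereas the proof behind the citation goes the other way: for each general $x$ one factors $\mathfrak{p}_{+,-}=(\mathfrak{p}_{+,-})_{+}^{x}\circ(\mathfrak{p}_{+,-})_{0}^{x}\circ(\mathfrak{p}_{+,-})_{-}^{x}$ with respect to the splitting of $\mathfrak{g}$ determined by $x$, observes that consistency forces $g_{x}(\mathfrak{D})=(\mathfrak{p}_{+,-})_{0}^{x}$ (which gives uniqueness at once, and is exactly the identity the paper needs later), and then checks order by order that $x\mapsto(\mathfrak{p}_{+,-})_{0}^{x}$ is cut out by finitely many walls modulo each $\mathfrak{g}^{>k}$ and is consistent. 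Your version is correct in outline and closer in spirit to the completion algorithm behind Theorem~\ref{thm: uniqueness-incoming walls}; the factorization route buys uniqueness and Proposition~\ref{prop:2.14} essentially for free.

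The one place where your sketch is thinner than it should be is the step you yourself single out: realizing $(\Theta_{\mathfrak{j}})$ as jumps of chamber-wise elements on the hyperplanes $n^{\perp}$ with $d(n)=k+1$, via a codimension-$3$ compatibility. That compatibility is true, but to make it honest you must use that the path-ordered product of a finite diagram is invariant under homotopies avoiding ${\rm sing}(\mathfrak{D})$ even before consistency is known: on a small $2$-sphere linking a codimension-$3$ stratum the composite of the conjugated loops around the adjacent joints is null-homotopic in the complement of the punctures, and since each such loop equals $\exp(\Theta_{\mathfrak{j}})$ with $\Theta_{\mathfrak{j}}$ central modulo $\mathfrak{g}^{>k+1}$, the conjugations drop out and one gets $\sum_{\mathfrak{j}}\pm\Theta_{\mathfrak{j},n}=0$ for each $n$. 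You can, however, bypass this global patching entirely: for each joint $\mathfrak{j}$ and each $n$ with $d(n)=k+1$, $\mathfrak{j}\subset n^{\perp}$, adjoin the single wall supported on one of the two halves of $n^{\perp}$ bounded by $\mathfrak{j}$, with attached element $\exp(-\Theta_{\mathfrak{j},n})\in G_{n_{0}}^{\parallel}$ (here $n_{0}$ is the primitive vector underlying $n$, since $n$ need not be primitive); centrality modulo $\mathfrak{g}^{>k+1}$ makes the corrections at different joints independent and harmless at the new codimension-$2$ intersections they create, so no codimension-$3$ bookkeeping is needed. Finally, in the uniqueness step, after closedness reduces the order-$(k+1)$ discrepancy to full-hyperplane walls $(n^{\perp},\exp(c_{n}))$, say explicitly that a path from the interior of $\mathcal{C}^{+}$ to the interior of $\mathcal{C}^{-}$ crosses each $n^{\perp}$ a net of once, so equality of the cross-products gives $\sum_{n}c_{n}=0$ and hence $c_{n}=0$ for every $n$ by the grading.
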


\begin{Definition}
Given a wall $(\mathfrak{d},g_{\mathfrak{d}})$ such that $\mathfrak{d}$
is contained in $n_{0}^{\perp}$ for a primitive element $n_{0}\in N^{+}$,
we say that $(\mathfrak{d},g_{\mathfrak{d}})$ is \emph{incoming}
if $\{n_{0},\cdot\}$ is contained in $\mathfrak{d}$. Otherwise,
we say that $(\mathfrak{d},g_{\mathfrak{d}})$ is \emph{outgoing}.
\end{Definition}

\begin{Theorem}[{\cite[Theorems 1.12 and~1.21]{GHKK}}] \label{thm: uniqueness-incoming walls} The equivalence class of a consistent scattering diagram is determined by its set of incoming walls. Vice versa, let $\mathfrak{D}_{{\rm in}}$ be a scattering diagram whose only walls
are full hyperplanes, i.e., are of the form $\big(n_{0}^{\perp},g_{n_{0}}\big)$
where $n_{0}$ is a primitive element in $N^{+}$. Then there is a
scattering diagram $\mathfrak{D}$ satisfying:
\begin{enumerate}\itemsep=0pt
\item[$(1)$] $\mathfrak{D}$ is consistent,
\item[$(2)$] $\mathfrak{D}\supset\mathfrak{D}_{{\rm in}}$,
\item[$(3)$] $\mathfrak{D}\setminus\mathfrak{D}_{{\rm in}}$ consists only of outgoing walls.
\end{enumerate}

Moreover, $\mathfrak{D}$ satisfying these three properties is unique up to equivalence.
\end{Theorem}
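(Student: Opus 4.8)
The plan is to establish both directions by induction on the order $k$, exploiting the finiteness condition on scattering diagrams: up to order $k$ only finitely many walls contribute, so at each stage we may assume the diagram has finitely many walls and work inside the nilpotent Lie algebra $\mathfrak{g}^{\leq k}$ and its unipotent group $G^{\leq k}$. For \emph{existence} I would build $\mathfrak{D}$ degree by degree. At order $1$ the commutator lands in degree $\geq 2$, so $G^{\leq 1}$ is abelian and $\mathfrak{D}_{\rm in}^{\leq 1}$ is automatically consistent; set $\mathfrak{D}^{(1)} = \mathfrak{D}_{\rm in}^{\leq 1}$. Assume inductively we have a consistent $\mathfrak{D}^{(k)}$ for $\mathfrak{g}^{\leq k}$ with $\mathfrak{D}^{(k)} \supseteq \mathfrak{D}_{\rm in}^{\leq k}$ and $\mathfrak{D}^{(k)} \setminus \mathfrak{D}_{\rm in}^{\leq k}$ outgoing. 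Lift the wall functions $g_{\mathfrak{d}}$ arbitrarily to $G^{\leq k+1}$, adjoin the new incoming walls $\mathfrak{D}_{\rm in}^{\leq k+1} \setminus \mathfrak{D}_{\rm in}^{\leq k}$, and call the (possibly inconsistent) result $\mathfrak{D}'$. Because $\mathfrak{D}^{(k)}$ was consistent, for a small loop $\gamma$ around any joint $\mathfrak{j}$ of $\mathfrak{D}'$ the path-ordered product $\mathfrak{p}_{\gamma, \mathfrak{D}'}$ is trivial modulo degree $k+1$, hence equals $\exp(z_{\mathfrak{j}})$ for a unique element $z_{\mathfrak{j}}$ in the degree-$(k+1)$ part of $\mathfrak{g}$, and $z_{\mathfrak{j}}$ is independent of the chosen lifts. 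The heart of the inductive step is to kill all the $z_{\mathfrak{j}}$ simultaneously by adjoining a finite family of order-$(k+1)$ \emph{outgoing} walls.

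To do this I would localize at a joint $\mathfrak{j}$: only walls whose support contains $\mathfrak{j}$ affect $z_{\mathfrak{j}}$, and those walls lie in $n_{0}^{\perp}$ for $n_{0}$ in the rank-$2$ lattice $N_{\mathfrak{j}} := \mathfrak{j}^{\perp} \cap N$. Since a skew form on a rank-$2$ lattice is either zero or nondegenerate, there are two cases. If $\{\,,\,\}|_{N_{\mathfrak{j}}} = 0$ (a ``parallel'' joint), the relevant Lie subalgebra $\bigoplus_{n \in N^{+} \cap N_{\mathfrak{j}}} \mathfrak{g}_{n}$ is abelian, so $z_{\mathfrak{j}} = 0$ and nothing need be added. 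If $\{\,,\,\}|_{N_{\mathfrak{j}}}$ is nondegenerate (a ``perpendicular'' joint), then after quotienting by the linear span of $\mathfrak{j}$ the local picture near $\mathfrak{j}$ is a rank-$2$ scattering diagram, and I would invoke the rank-$2$ existence result --- the Kontsevich--Soibelman statement underlying Theorem~\ref{thm:2.12}, applied to the product read off $\mathfrak{p}_{\gamma,\mathfrak{D}'}$ across the incoming rays at $\mathfrak{j}$ --- to obtain outgoing rays through $\mathfrak{j}$ whose path-ordered product corrects $z_{\mathfrak{j}}$ to order $k+1$. These rays are outgoing because the directions $\{n_{0}, \cdot\}$ they realize do not lie in $\mathfrak{j}$. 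Adjoining such rays at every joint (finitely many, by finiteness of $\mathfrak{D}'$ up to order $k+1$) yields $\mathfrak{D}^{(k+1)}$; since all newly added group elements sit in degree $k+1$ they commute modulo degree $k+2$, so no lower-order or newly created order-$(k+1)$ joint is disturbed. Taking the inverse limit over $k$ produces the desired $\mathfrak{D}$.

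For \emph{uniqueness} --- and for the first sentence of the theorem --- I would run the parallel comparison induction. If $\mathfrak{D}$ and $\widetilde{\mathfrak{D}}$ both satisfy $(1)$--$(3)$ and $\mathfrak{D}^{\leq k} \sim \widetilde{\mathfrak{D}}^{\leq k}$, then at order $k+1$ the incoming walls coincide by hypothesis, while consistency forces the degree-$(k+1)$ outgoing contributions at each joint to agree: any discrepancy would be a nontrivial element of some $G^{\parallel}_{n_{0}}$ supported on an outgoing ray, which would make a diagram that is already consistent at order $k$ fail consistency at order $k+1$ --- a contradiction. Hence $\mathfrak{D}^{\leq k+1} \sim \widetilde{\mathfrak{D}}^{\leq k+1}$, and uniqueness up to equivalence follows. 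The first assertion is the same argument with $\widetilde{\mathfrak{D}}$ replaced by an arbitrary consistent scattering diagram whose incoming walls are $\mathfrak{D}_{\rm in}$: order by order its outgoing part is pinned down by consistency once the lower orders and the incoming walls are fixed, so it is equivalent to the completion built above.

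I expect the genuine obstacle to be the joint-resolution step, that is, proving the order-$(k+1)$ inconsistency can always be absorbed purely by outgoing walls; this is exactly where one needs the rank-$2$ reduction near perpendicular joints together with the rank-$2$ existence theorem, plus the verification that the correction neither breaks lower-order consistency nor forces new incoming walls. The remaining ingredients --- the grading bookkeeping, the Baker--Campbell--Hausdorff manipulations, and the finiteness reductions --- are routine.
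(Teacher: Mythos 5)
You should first be aware that the paper contains no proof of this statement: it is quoted directly from \cite[Theorems~1.12 and~1.21]{GHKK}, so the only meaningful comparison is with the argument in that reference. Your route is the direct Kontsevich--Soibelman/Gross--Siebert order-by-order construction (work modulo $\mathfrak{g}^{>k}$, localize at joints, split into parallel joints where the restricted form vanishes and perpendicular joints where one reduces to a rank-$2$ scattering problem, and adjoin outgoing correction rays of pure degree $k+1$). That is indeed how the existence half is established in the literature, and it is consistent with the toolkit this paper borrows elsewhere (the appeal to Definition-Lemma~C.2 and Lemmas~C.6--C.7 of \cite{GHKK} in Lemma~\ref{lem: general-interior} and Theorem~\ref{thm:2.21}). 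GHKK themselves, however, obtain the statement more structurally: equivalence classes of consistent diagrams biject with elements $\mathfrak{p}_{+,-}\in G$ (the paper's Theorem~\ref{thm:2.12}), the factorization map $\Psi\colon G\to\prod_{n_0}G_{n_0}^{\parallel}$ is a bijection (Proposition~\ref{prop:()-The-map}), and under these two bijections the incoming-wall data of $\mathfrak{D}_{\mathfrak{p}_{+,-}}$ is exactly $\Psi(\mathfrak{p}_{+,-})$ --- precisely the dictionary this paper exploits in Lemma~\ref{lem:2.18}. Your approach buys an explicit algorithm; theirs makes the uniqueness and the ``determined by incoming walls'' assertion essentially automatic once Theorem~\ref{thm:2.12} and Proposition~\ref{prop:()-The-map} are in place.

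The genuine gap is in your uniqueness step, which is also the first sentence of the theorem. Writing ``any discrepancy would be a nontrivial element of some $G^{\parallel}_{n_0}$ supported on an outgoing ray, which would make the diagram fail consistency at order $k+1$'' asserts exactly what has to be proved. The discrepancy between two consistent diagrams with the same incoming walls is not a single wall but a whole configuration of outgoing walls carrying central, pure-degree-$(k+1)$ elements whose contributions around \emph{every} joint cancel; one must show that any such consistent, purely outgoing, central configuration is equivalent to the empty diagram. This requires an actual argument --- e.g., using that an outgoing support is a proper convex cone in $n_0^{\perp}$ not containing $\{n_0,\cdot\}$, so the element is detected by a suitable path-ordered product or by the $\Psi$-component of $\mathfrak{p}_{+,-}$ --- or one can bypass it entirely via the Theorem~\ref{thm:2.12}/Proposition~\ref{prop:()-The-map} route. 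Two further points in your existence step are asserted rather than shown: the independence of $z_{\mathfrak{j}}$ from the chosen lifts (walls whose boundary contains $\mathfrak{j}$ are crossed only once, so changing their lifts does change the loop product), and the claim that the adjoined rays do not disturb consistency at other joints; both are handled in \cite{GHKK} by the perturbation/generic-position argument you do not spell out. You correctly identify the joint-resolution step as delicate, but as written the uniqueness half is the part that is circular.
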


Given a primitive element $n_{0}$ in $N^{+}$, consider the following splitting of $\mathfrak{g}$ with respect to $n_{0}$:
\begin{gather}
\mathfrak{g}=\mathfrak{g}_{+}^{n_{0}}\bigoplus \left(\mathfrak{g}_{n_{0}}^{\parallel}\bigoplus\mathfrak{g}_{n_{0}}^{\perp}\right) \bigoplus\mathfrak{g}_{-}^{n_{0}},\label{eq:-1}\\
\mathfrak{g}_{+}^{n_{0}}:=\bigoplus_{\{n_{0},n\}>0}\mathfrak{g}_{n},\qquad \mathfrak{g}_{-}^{n_{0}}:=\bigoplus_{\{n_{0},n\}<0}\mathfrak{g}_{n},\qquad \mathfrak{g}_{n_{0}}^{\parallel}:=\bigoplus_{k\in\mathbb{N}^{+}}\mathfrak{g}_{kn_{0}},\qquad \mathfrak{g}_{n_{0}}^{\perp}:=\bigoplus_{\substack{\{n_{0},n\}=0\\
n\notin\mathbb{N}^{+}\cdot n_{0}}}\mathfrak{g}_{n}.\nonumber
\end{gather}
The above splitting of $\mathfrak{g}$ induces unique factorization
$g=g_{+}^{n_{0}}\circ \big(g_{n_{0}}^{\parallel}\circ g_{n_{0}}^{\perp}\big)\circ g_{-}^{n_{0}}$.
Thus we can define the following map
\begin{align*}
\Psi\colon \ G & \rightarrow\prod_{n_{0}\in N^{+}\text{primitive}}G_{n_{0}}^{\parallel},\\
g & \mapsto\prod_{n_{0}\in N^{+}\text{primitive}}g_{n_{0}}^{\parallel}.
\end{align*}

\begin{Proposition}[{\cite[Proposition 1.20]{GHKK}}]\label{prop:()-The-map} The map~$\Psi$ is a bijection.
\end{Proposition}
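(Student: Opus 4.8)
The plan is to reduce everything to finite order and run an induction. Since $G=\lim_{\longleftarrow}G^{\leq k}$ and $G_{n_0}^{\parallel}=\lim_{\longleftarrow}G_{n_0}^{\parallel,\leq k}$, and since for each fixed $k$ only the finitely many primitive $n_0$ with $d(n_0)\leq k$ give a nontrivial $G_{n_0}^{\parallel,\leq k}$, the factorization $g=g_+^{n_0}\circ\big(g_{n_0}^{\parallel}\circ g_{n_0}^{\perp}\big)\circ g_-^{n_0}$ induced by~\eqref{eq:-1} is compatible with the truncations $G\to G^{\leq k}$. Hence $\Psi=\lim_{\longleftarrow}\Psi^{\leq k}$ for the induced maps $\Psi^{\leq k}\colon G^{\leq k}\to\prod_{n_0}G_{n_0}^{\parallel,\leq k}$, and it suffices to prove each $\Psi^{\leq k}$ is a bijection. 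I would induct on $k$; the base case $k=0$ is trivial since $\mathfrak{g}^{\leq 0}=\mathfrak{g}/\mathfrak{g}^{>0}=0$ because every element of $N^{+}$ has $d>0$.

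For the inductive step, note that $\mathfrak{g}_k:=\bigoplus_{d(n)=k}\mathfrak{g}_n$ is central in $\mathfrak{g}^{\leq k}$ (any nonzero bracket lands in degree $>k$), so $1\to\exp(\mathfrak{g}_k)\to G^{\leq k}\to G^{\leq k-1}\to 1$ is a central extension; likewise the ``new'' top piece of the target is $\bigoplus_{n_0}\big(\mathfrak{g}_{n_0}^{\parallel}\cap\mathfrak{g}_k\big)$, and since every $n\in N^{+}$ with $d(n)=k$ is uniquely $jn_0$ for a primitive $n_0$ (with $j=k/d(n_0)$), this piece is canonically identified with $\mathfrak{g}_k$ itself. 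The key computation is to track the degree-$k$ component through $g=g_+^{n_0}\circ g_{n_0}^{\parallel}\circ g_{n_0}^{\perp}\circ g_-^{n_0}$: by the Baker--Campbell--Hausdorff formula the degree-$k$ part of $g$ is the sum of the degree-$k$ parts of the four factors plus a correction depending only on the image $\bar g\in G^{\leq k-1}$. Because $\mathfrak{g}_{jn_0}$ occurs among the four factors only inside $g_{n_0}^{\parallel}$ (it is excluded from $\mathfrak{g}_+^{n_0}$, $\mathfrak{g}_-^{n_0}$ and $\mathfrak{g}_{n_0}^{\perp}$ by construction), the $\mathfrak{g}_{jn_0}$-component of $\Psi^{\leq k}(g)$ equals the $\mathfrak{g}_{jn_0}$-component of $g$ up to a term determined by $\bar g$. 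Equivalently — and this is the cleanest form to use — replacing $g$ by $g\cdot\exp(v)$ with $v\in\mathfrak{g}_k$ central changes $\Psi^{\leq k}(g)$ by precisely the image of $v$ under the rearrangement map $\mathfrak{g}_k\xrightarrow{\ \sim\ }\bigoplus_{n_0}\big(\mathfrak{g}_{n_0}^{\parallel}\cap\mathfrak{g}_k\big)$, which is a linear isomorphism by the paragraph above.

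With this the inductive step is formal: given a target element, lift its order-$(k-1)$ truncation by the inductive hypothesis to some $\bar g\in G^{\leq k-1}$, lift $\bar g$ arbitrarily to $g'\in G^{\leq k}$, and correct $g'$ by the unique central $\exp(v)$ matching the degree-$k$ data — possible and unique because the fiber of $G^{\leq k}\to G^{\leq k-1}$ over $\bar g$ and the fiber of $\prod_{n_0}G_{n_0}^{\parallel,\leq k}\to\prod_{n_0}G_{n_0}^{\parallel,\leq k-1}$ over the truncated target are torsors over $(\mathfrak{g}_k,+)$ identified through that isomorphism, so $\Psi^{\leq k}$ restricts to an equivariant, hence bijective, map between them. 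Surjectivity and injectivity of $\Psi^{\leq k}$ follow simultaneously, and then $\Psi=\lim_{\longleftarrow}\Psi^{\leq k}$ is a bijection. The only real obstacle I anticipate is bookkeeping: correctly isolating the degree-$k$ ``new'' part of both source and target and checking that the BCH corrections genuinely involve only lower-degree data, so that the induction closes. There is no deeper difficulty once the central-extension picture is in place; the skew-symmetry hypothesis on $\mathfrak{g}$ enters only to ensure the factorization defining $\Psi$ exists and that $\mathfrak{g}_{n_0}^{\parallel}$ is abelian, both of which I would take as given from the discussion preceding the statement.
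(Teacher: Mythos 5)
Your argument is correct. Note that the paper offers no proof of this statement at all---it simply quotes \cite[Proposition~1.20]{GHKK}---and your order-by-order induction, using that the degree-$k$ piece $\mathfrak{g}_{k}$ is central in $\mathfrak{g}^{\leq k}$ so that the fibers of $G^{\leq k}\rightarrow G^{\leq k-1}$ and of the truncated target are torsors over $\mathfrak{g}_{k}$ identified through the rearrangement isomorphism, is essentially the standard argument behind the cited result.
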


\subsection{A quotient construction of scattering diagrams}

Let $\Pi$ be a finite group that acts on $I$ and satisfies the following
condition: For any indices $i,j\in I$, for any $\pi$, $\pi'$ in~$\Pi$, we have
\begin{gather}
\{e_{\pi\cdot i},e_{\pi'\cdot j}\}=\{e_{i},e_{j}\}.\label{eq:-2}
\end{gather}
The group $\Pi$ acts on the basis $(e_{i})_{i\in I}$ via permutation
of indices in $I$. Moreover, its action on $(e_{i})_{i\in I}$ can
be extended $\mathbb{Z}$-linearly to an action on the lattice $N$
and $\mathbb{R}$-linearly to an action on~$N_{\mathbb{R}}$. Given~$\pi$ in~$\Pi$, let $\pi^{*}\colon M_{\mathbb{R}}\rightarrow M_{\mathbb{R}}$ be the automorphism dual to $\pi\colon N_{\mathbb{R}}\rightarrow N_{\mathbb{R}}$. Assume that $\Pi$ also acts as Lie algebra automorphisms on~$\mathfrak{g}$
compatibly with its action on~$N^{+}$, that is, for any~$\pi$ in~$\Pi$, for any~$n$ in $N^{+}$,
\begin{gather}
\pi\cdot\mathfrak{g}_{n}=\mathfrak{g}_{\pi(n)}.\label{eq:-12}
\end{gather}

\begin{Proposition}\label{prop:2.14}Given $\pi$ in $\Pi$, for any general point~$x$ in $M_{\mathbb{R}}$, we have
\[
\pi\cdot g_{x}(\mathfrak{D}_{\mathfrak{p_{+,-}}})=g_{(\pi^{-1})^{*}\cdot x}(\mathfrak{D}_{\pi\cdot \mathfrak{p_{+,-}}}).
\]
\end{Proposition}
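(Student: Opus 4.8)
The plan is to reduce this to a statement about path-ordered products, using Lemma (equivalence of scattering diagrams via $\mathfrak{p}_{\gamma,\mathfrak{D}}$) together with the bijection $\Psi$ of Proposition~\ref{prop:()-The-map}, and to track carefully how the $\Pi$-action transports walls, paths, and group elements. First I would observe that by \eqref{eq:-12} the action of $\pi$ on $\mathfrak{g}$ induces an action on the pro-unipotent group $G$, on $N^{+}$, and hence on the set of all walls: a wall $(\mathfrak{d},g_{\mathfrak{d}})$ with $g_{\mathfrak{d}}\in G_{n_{0}}^{\parallel}$ and $\mathfrak{d}\subset n_{0}^{\perp}$ is sent to $(\pi^{*,-1}\cdot\mathfrak{d},\,\pi\cdot g_{\mathfrak{d}})$, which is again a wall, now with direction $\pi(n_{0})$ and support inside $(\pi(n_{0}))^{\perp}=\pi^{*,-1}\cdot n_{0}^{\perp}$ (using that $\pi^{*}$ is dual to $\pi$). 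Thus $\pi$ carries any scattering diagram $\mathfrak{D}$ to a scattering diagram $\pi\cdot\mathfrak{D}$, and it carries a path $\gamma$ generic for $\mathfrak{D}$ to a path $\pi^{*,-1}\circ\gamma$ generic for $\pi\cdot\mathfrak{D}$.

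Next I would show $\pi\cdot\mathfrak{p}_{\gamma,\mathfrak{D}}=\mathfrak{p}_{\pi^{*,-1}\circ\gamma,\,\pi\cdot\mathfrak{D}}$. This follows because the path-ordered product is assembled, order by order, from the wall-crossing automorphisms in the sequence in which $\gamma$ meets the walls; applying $\pi$ to each factor $g_{\mathfrak{d}}^{\pm 1}$ and reindexing the walls by $\pi$ gives exactly the factors encountered by $\pi^{*,-1}\circ\gamma$ in $\pi\cdot\mathfrak{D}$, in the same order (since $\pi^{*,-1}$ is a linear homeomorphism it preserves the ordering of crossings along the path). Applying this to $\mathfrak{D}=\mathfrak{D}_{\mathfrak{p}_{+,-}}$, and using that $\pi^{*,-1}$ preserves the interiors of $\mathcal{C}^{+}$ and $\mathcal{C}^{-}$ (because $\Pi$ acts on $N^{+}$ by permuting the basis, hence preserves the defining inequalities of these chambers), Theorem~\ref{thm:2.12} gives that the path-ordered product of $\pi\cdot\mathfrak{D}_{\mathfrak{p}_{+,-}}$ along a path from $\mathcal{C}^{+}$ to $\mathcal{C}^{-}$ is $\pi\cdot\mathfrak{p}_{+,-}$. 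By the uniqueness clause of Theorem~\ref{thm:2.12}, $\pi\cdot\mathfrak{D}_{\mathfrak{p}_{+,-}}$ is equivalent to $\mathfrak{D}_{\pi\cdot\mathfrak{p}_{+,-}}$.

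Finally I would unwind what equivalence of $\pi\cdot\mathfrak{D}_{\mathfrak{p}_{+,-}}$ and $\mathfrak{D}_{\pi\cdot\mathfrak{p}_{+,-}}$ says at a general point. For a general point $x\in M_{\mathbb{R}}$, the walls of $\mathfrak{D}_{\mathfrak{p}_{+,-}}$ through $\pi^{*,-1}\cdot x$ correspond bijectively, via $\pi$, to the walls of $\pi\cdot\mathfrak{D}_{\mathfrak{p}_{+,-}}$ through $x$ — note $\pi^{*,-1}\cdot x$ is general since $\pi^{*,-1}$ is a linear automorphism — so
\[
g_{x}(\pi\cdot\mathfrak{D}_{\mathfrak{p}_{+,-}})=\pi\cdot g_{(\pi^{-1})^{*}\cdot x}(\mathfrak{D}_{\mathfrak{p}_{+,-}}),
\]
where one must check the factors on the right are multiplied in the correct order; since all walls through a common general point share the same primitive $n_{0}$ (up to sign) the corresponding group elements lie in the abelian group $G_{n_{0}}^{\parallel}$, so ordering is not an issue and the product is well-defined. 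Combining with the equivalence from the previous paragraph, $g_{x}(\mathfrak{D}_{\pi\cdot\mathfrak{p}_{+,-}})=g_{x}(\pi\cdot\mathfrak{D}_{\mathfrak{p}_{+,-}})=\pi\cdot g_{(\pi^{-1})^{*}\cdot x}(\mathfrak{D}_{\mathfrak{p}_{+,-}})$, which, after renaming $x\leftrightarrow(\pi^{-1})^{*}\cdot x$, is the asserted identity. The main obstacle I anticipate is the bookkeeping in the second step: making precise, at each finite order $k$, that $\pi$ intertwines the wall-crossing exponentials and that reindexing walls by $\pi$ together with reparametrizing the path by $\pi^{*,-1}$ preserves the ordering in which walls are met — this is where one genuinely uses that $\Pi$ acts by Lie algebra automorphisms compatibly with the $N^{+}$-grading, i.e.\ \eqref{eq:-2} and \eqref{eq:-12}, rather than just as a set map.
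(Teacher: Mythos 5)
Your strategy is sound but follows a genuinely different route from the paper. You transport the entire diagram: let $\pi$ act on walls as in \eqref{eq:-15}, check that generic paths and path-ordered products are transported, i.e.\ $\pi\cdot\mathfrak{p}_{\gamma,\mathfrak{D}}=\mathfrak{p}_{(\pi^{-1})^{*}\circ\gamma,\,\pi\cdot\mathfrak{D}}$, conclude that $\pi\cdot\mathfrak{D}_{\mathfrak{p}_{+,-}}$ is consistent with path-ordered product $\pi\cdot\mathfrak{p}_{+,-}$ from $\mathcal{C}^{+}$ to $\mathcal{C}^{-}$, and invoke the uniqueness in Theorem~\ref{thm:2.12} to identify it, up to equivalence, with $\mathfrak{D}_{\pi\cdot\mathfrak{p}_{+,-}}$ before reading off $g_{x}$ at general points. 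The paper instead argues pointwise and purely group-theoretically: for a general $x$ it takes the unique factorization $\mathfrak{p}_{+,-}=(\mathfrak{p}_{+,-})_{+}^{x}\circ(\mathfrak{p}_{+,-})_{0}^{x}\circ(\mathfrak{p}_{+,-})_{-}^{x}$ attached to the splitting $\mathfrak{g}=\mathfrak{g}_{+}^{x}\oplus\mathfrak{g}_{0}^{x}\oplus\mathfrak{g}_{-}^{x}$, shows via \eqref{eq:-12} that $\pi$ carries this splitting to the one at $(\pi^{-1})^{*}\cdot x$, and uses the identification $g_{x}(\mathfrak{D}_{\mathfrak{p}_{+,-}})=(\mathfrak{p}_{+,-})_{0}^{x}$ from Theorem~\ref{thm:2.12}. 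Your version buys a reusable fact (a $\Pi$-action on consistent scattering diagrams compatible with path-ordered products, which the paper only packages afterwards via \eqref{eq:-15}), at the cost of the order-by-order bookkeeping you yourself flag; the factorization argument is shorter and avoids paths entirely.

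There is one indexing error to repair in your final step. With the convention $\pi\cdot(\mathfrak{d},g_{\mathfrak{d}})=\big((\pi^{-1})^{*}(\mathfrak{d}),\pi\cdot g_{\mathfrak{d}}\big)$, the wall $(\pi^{-1})^{*}(\mathfrak{d})$ of $\pi\cdot\mathfrak{D}$ contains $x$ if and only if $\mathfrak{d}\ni\pi^{*}(x)$, not $(\pi^{-1})^{*}(x)$. Hence the correct identity is $g_{x}(\pi\cdot\mathfrak{D}_{\mathfrak{p}_{+,-}})=\pi\cdot g_{\pi^{*}\cdot x}(\mathfrak{D}_{\mathfrak{p}_{+,-}})$, equivalently $g_{(\pi^{-1})^{*}\cdot x}(\pi\cdot\mathfrak{D}_{\mathfrak{p}_{+,-}})=\pi\cdot g_{x}(\mathfrak{D}_{\mathfrak{p}_{+,-}})$, and combined with the equivalence $\pi\cdot\mathfrak{D}_{\mathfrak{p}_{+,-}}\sim\mathfrak{D}_{\pi\cdot\mathfrak{p}_{+,-}}$ this is exactly the Proposition, with no renaming needed. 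As you wrote it, $g_{x}(\pi\cdot\mathfrak{D})=\pi\cdot g_{(\pi^{-1})^{*}\cdot x}(\mathfrak{D})$, the statement obtained after your renaming is the Proposition with $\pi^{*}$ in place of $(\pi^{-1})^{*}$, which agrees only when $\pi^{2}=\mathrm{id}$ (true in the paper's application, where $\Pi\simeq(\mathbb{Z}/2)^{3}$, but not in the stated generality). The remaining points — abelianness of $G_{n_{0}}^{\parallel}$ making the product at a general point order-independent, preservation of $\mathcal{C}^{\pm}$ by $(\pi^{-1})^{*}$, and preservation of crossing signs since $\langle\pi\cdot n_{0},((\pi^{-1})^{*}\circ\gamma)'\rangle=\langle n_{0},\gamma'\rangle$ — are fine.
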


\begin{proof}It is clear that the action of $\Pi$ on $M_{\mathbb{R}}$ maps general
points to general points. Given a~general point $x$ in $M_{\mathbb{R}}$,
we have the splitting of Lie algebra $\mathfrak{g}$ with respect to~$x$: $\mathfrak{g}=\mathfrak{g}_{+}^{x}\oplus\mathfrak{g}_{0}^{x}\oplus\mathfrak{g}_{-}^{x}$. This splitting induces an unique factorization of $\mathfrak{p}_{+,-}$:
\[
\mathfrak{p}_{+,-}=\left(\mathfrak{p}_{+,-}\right)_{+}^{x}\circ \left(\mathfrak{p}_{+,-}\right)_{0}^{x}\circ\left(\mathfrak{p}_{+,-}\right)_{-}^{x},
\]
where $\left(\mathfrak{p}_{+,-}\right)_{\pm}^{x}$ is contained in
$\exp(\mathfrak{\hat{g}}_{\pm}^{x})$ and $\left(\mathfrak{p}_{+,-}\right)_{0}^{x}$
is contained in $\exp(\hat{\mathfrak{g}}_{0}^{x})$. Notice that $\pi\cdot(\mathfrak{g}_{+}^{x})=\mathfrak{g}_{+}^{(\pi^{-1})^{*}\cdot x}$.
Indeed, by condition~\eqref{eq:-12},
\begin{align*}
\mathfrak{g}_{n}\subseteq\mathfrak{g}_{+}^{(\pi^{-1})^{*}\cdot x} & \Longleftrightarrow\big\langle n,\big(\pi^{-1}\big)^{*}\cdot x\big\rangle >0
 \Longleftrightarrow\big\langle \pi^{-1}\cdot n,x\big\rangle >0\\
 & \Longleftrightarrow\mathfrak{g}_{\pi^{-1}\cdot n}\subseteq\mathfrak{g}_{+}^{x}
 \Longleftrightarrow\pi^{-1}\cdot\mathfrak{g}_{n}\subseteq\mathfrak{g}_{+}^{x}
 \Longleftrightarrow\mathfrak{g}_{n}\subseteq\pi\cdot\big(\mathfrak{g}_{+}^{x}\big).
\end{align*}
Similarly, we have $\pi\cdot\big(\mathfrak{g}_{0}^{x}\big)=\mathfrak{g}_{0}^{(\pi^{-1})^{*}\cdot x}$
and $\pi\cdot(\mathfrak{g}_{-}^{x})=\mathfrak{g}_{-}^{(\pi^{-1})^{*}\cdot x}$.
Hence
\[
\pi\cdot\mathfrak{p}_{+,-}=\pi\cdot (\mathfrak{p}_{+,-} )_{+}^{x}\circ\pi\cdot (\mathfrak{p}_{+,-} )_{0}^{x}\circ\pi\cdot (\mathfrak{p}_{+,-} )_{-}^{x}
\]
 is the unique factorization of $\pi\cdot\mathfrak{p}_{+,-}$ with respect to $\big(\pi^{-1}\big)^{*}\cdot x$. Therefore,
\[
 (\pi\cdot\mathfrak{p_{+,-}} )_{0}^{(\pi^{-1})^{*}(x)}=\pi\cdot (\mathfrak{p}_{+,-} )_{0}^{x}.
\]
By Theorem \ref{thm:2.12},
\begin{gather*}
g_{x}(\mathfrak{D}_{\mathfrak{p_{+,-}}}) = (\mathfrak{p}_{+,-} )_{0}^{x},\qquad
g_{(\pi^{-1})^{*}(x)}(\mathfrak{D}_{\pi\cdot\mathfrak{p_{+,-}}}) = (\pi\cdot\mathfrak{p}_{+,-} )_{0}^{(\pi^{-1})^{*}(x)}.
\end{gather*}
Hence, we get the equality:
\begin{gather*}
g_{(\pi^{-1})^{*}(x)}(\mathfrak{D}_{\pi\cdot\mathfrak{p_{+,-}}}) =\pi\cdot (\mathfrak{p}_{+,-} )_{0}^{x}=\pi\cdot g_{x}(\mathfrak{D}_{\mathfrak{p}_{+,-}}). \tag*{\qed}
\end{gather*}\renewcommand{\qed}{}
\end{proof}

\begin{Corollary}
Let $G^{\Pi}$ be the subgroup of $G$ invariant under the action
of~$\Pi$. Then given an element $\mathfrak{p}_{+,-}$ in~$G^{\Pi}$,
the set of walls in $\mathfrak{D}_{\mathfrak{p_{+,-}}}$ is invariant
under the action of $\Pi$ in the following sense:
\[
\pi\cdot g_{x}(\mathfrak{D}_{\mathfrak{p}_{+,-}})=g_{(\pi^{-1})^{*}\cdot x}(\mathfrak{D}_{\mathfrak{p_{+,-}}}).
\]
\end{Corollary}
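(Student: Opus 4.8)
The plan is to derive the identity directly from Proposition \ref{prop:2.14}, the hypothesis $\mathfrak{p}_{+,-}\in G^{\Pi}$ being exactly what collapses the two sides. First I would note that $G^{\Pi}$ consists of those $g\in G$ with $\pi\cdot g=g$ for all $\pi\in\Pi$, so for $\mathfrak{p}_{+,-}\in G^{\Pi}$ we have $\pi\cdot\mathfrak{p}_{+,-}=\mathfrak{p}_{+,-}$, and hence $\mathfrak{D}_{\pi\cdot\mathfrak{p}_{+,-}}=\mathfrak{D}_{\mathfrak{p}_{+,-}}$ as equivalence classes, which by Definition \ref{def:Two-scattering-diagrams} is all that is needed in order to speak of $g_x$.

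Next, fix $\pi\in\Pi$ and a general point $x\in M_{\mathbb{R}}$. Applying Proposition \ref{prop:2.14} to this $\mathfrak{p}_{+,-}$ gives
\[
\pi\cdot g_x(\mathfrak{D}_{\mathfrak{p}_{+,-}})=g_{(\pi^{-1})^{*}\cdot x}(\mathfrak{D}_{\pi\cdot\mathfrak{p}_{+,-}})=g_{(\pi^{-1})^{*}\cdot x}(\mathfrak{D}_{\mathfrak{p}_{+,-}}),
\]
where the second equality uses the previous step. Since $(\pi^{-1})^{*}$ carries general points to general points (as already observed inside the proof of Proposition \ref{prop:2.14}), this holds for every general $x$, which is precisely the asserted $\Pi$-invariance of the wall set.

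There is no real obstacle to overcome here: all the work sits in Proposition \ref{prop:2.14}, and the corollary is simply its specialization to a $\Pi$-fixed $\mathfrak{p}_{+,-}$. The one point I would flag for a line of care is the distinction between ``$\Pi$-invariant up to equivalence'' and ``$\Pi$-invariant on the nose'': the displayed identity only yields the former. If one wants the chosen representative $\mathfrak{D}_{\mathfrak{p}_{+,-}}$ to be literally stable under the transport $(\mathfrak{d},g_{\mathfrak{d}})\mapsto\big((\pi^{-1})^{*}\cdot\mathfrak{d},\,\pi\cdot g_{\mathfrak{d}}\big)$, one can additionally invoke the uniqueness statement of Theorem \ref{thm: uniqueness-incoming walls}, noting that $\Pi$ permutes the incoming hyperplane walls among themselves (by \eqref{eq:-12}) and carries outgoing walls to outgoing walls, so that the transported diagram again satisfies the three characterizing properties and therefore coincides with $\mathfrak{D}_{\mathfrak{p}_{+,-}}$.
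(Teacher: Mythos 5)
Your proposal is correct and matches the paper's intent: the corollary is stated without separate proof precisely because it is the immediate specialization of Proposition \ref{prop:2.14} to a $\Pi$-fixed $\mathfrak{p}_{+,-}$, which is exactly your argument. Your closing remark distinguishing invariance up to equivalence from invariance of a chosen representative is a sensible clarification but not needed for the statement as given.
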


Inspired by the above corollary, we can define an action of $\Pi$
on the set of walls for~$N^{+}$ and~$\mathfrak{g}$ as follows:
\begin{gather}
\pi\cdot(\mathfrak{d},g_{\mathfrak{d}}):=\big(\big(\pi^{-1}\big)^{*}(\mathfrak{d}),\pi\cdot g_{\mathfrak{d}}\big).\label{eq:-15}
\end{gather}
Observe that $(\pi^{-1})^{*}(n_{0}^{\perp})= (\pi\cdot n_{0} )^{\perp}$ for any primitive element $n_{0}$ in $N^{+}$. Indeed,
\begin{gather*}
 \langle n_{0},x \rangle =0 \Longleftrightarrow\big\langle n_{0},\pi^{*}\big(\pi^{-1}\big)^{*}x\big\rangle =0 \Longleftrightarrow\big\langle \pi\cdot n_{0},\big(\pi^{-1}\big)^{*}x\big\rangle =0.
\end{gather*}
So if $\mathfrak{d}$ is contained in $n_{0}^{\perp}$ for a primitive
element $n_{0}$ in $N^{+}$, $\big(\pi^{-1}\big)^{*}(\mathfrak{d})$ will
be contained in $(\pi\cdot n_{0})^{\perp}$. Since $g_{\mathfrak{d}}$
is contained in $G_{n_{0}}^{\parallel}$ and $\pi\cdot G_{n_{0}}^{\parallel}=G_{\pi\cdot n_{0}}^{\parallel}$,
we conclude that $\pi\cdot g_{\mathfrak{d}}$ is contained in $G_{\pi\cdot n_{0}}^{\parallel}$.
So the action of $\Pi$ on the set of walls is well-defined.

A natural question arises: How to characterize the elements in $G^{\Pi}$?
The following lemma gives a characterization of elements in $G^{\Pi}$
in terms of incoming walls of the corresponding scattering diagrams.
\begin{Lemma}
\label{lem:2.18}Given $\mathfrak{p}_{+,-}$ in $G$, set
\[
\mathfrak{D}_{{\rm in}}:=\big\{ \big(n_{0}^{\perp},\Psi(\mathfrak{p}_{+,-})_{n_{0}}\big)\,|\, n_{0}\in N^{+}\text{ primitive}\big\},
\]
where
\[
\Psi\colon \ G\rightarrow\prod_{n_{0}\in N^{+}\text{ primitive}}G_{n_{0}}^{\parallel}
\]
is the canonical projection map in Proposition~{\rm \ref{prop:()-The-map}}.
Then $\mathfrak{p}_{+,-}$ is in $G^{\Pi}$ if and only if $\mathfrak{D}_{{\rm in}}$
is invariant subset under the action of $\Pi$ on the set of walls $($cf.~\eqref{eq:-15}$)$.
\end{Lemma}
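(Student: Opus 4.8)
The plan is to deduce the statement from Proposition~\ref{prop:2.14} together with the bijectivity of $\Psi$ (Proposition~\ref{prop:()-The-map}). The key observation is that $\Psi$ is equivariant for the two $\Pi$-actions in play: the action of $\Pi$ on $G$ by Lie algebra automorphisms (on the target side, this is the action permuting the factors $G_{n_0}^{\parallel}$ via $\pi\cdot G_{n_0}^{\parallel}=G_{\pi\cdot n_0}^{\parallel}$, already noted in the excerpt), and the action on the set of walls defined in~\eqref{eq:-15}. First I would check directly from the splitting~\eqref{eq:-1}--\eqref{eq:-12} that for each primitive $n_0$ and each $\pi\in\Pi$ one has $\pi\cdot(g_+^{n_0})=(\pi\cdot g)_+^{\pi\cdot n_0}$, and similarly for the $\parallel$, $\perp$ and $-$ pieces; since $\pi$ is a Lie algebra automorphism sending $\mathfrak g_n$ to $\mathfrak g_{\pi(n)}$ and preserving $\{\,,\,\}$ by~\eqref{eq:-2}, this is a short unwinding of definitions. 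Applying $\pi$ to the unique factorization $g=g_+^{n_0}\circ(g_{n_0}^{\parallel}\circ g_{n_0}^{\perp})\circ g_-^{n_0}$ and invoking uniqueness of the factorization for $\pi\cdot g$ with respect to $\pi\cdot n_0$, we get
\[
\Psi(\pi\cdot g)_{\pi\cdot n_0}=\pi\cdot\big(\Psi(g)_{n_0}\big)
\qquad\text{for all primitive }n_0\in N^+,\ \pi\in\Pi.
\]

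Next I would translate this equivariance into a statement about $\mathfrak D_{\rm in}$. By definition $\mathfrak D_{\rm in}=\{(n_0^\perp,\Psi(\mathfrak p_{+,-})_{n_0})\}_{n_0}$, and by~\eqref{eq:-15} the action of $\pi$ sends the wall $(n_0^\perp,\Psi(\mathfrak p_{+,-})_{n_0})$ to $((\pi^{-1})^*(n_0^\perp),\pi\cdot\Psi(\mathfrak p_{+,-})_{n_0})$. Using the identity $(\pi^{-1})^*(n_0^\perp)=(\pi\cdot n_0)^\perp$ recorded just before the lemma, together with the equivariance of $\Psi$ above, this image equals $((\pi\cdot n_0)^\perp,\Psi(\pi\cdot\mathfrak p_{+,-})_{\pi\cdot n_0})$. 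As $n_0$ runs over all primitive elements of $N^+$, so does $\pi\cdot n_0$ (since $\pi$ permutes the basis and hence permutes the primitive elements of $N^+$), so
\[
\pi\cdot\mathfrak D_{\rm in}=\big\{\big(m^\perp,\Psi(\pi\cdot\mathfrak p_{+,-})_m\big)\,|\,m\in N^+\text{ primitive}\big\}.
\]
In other words $\pi\cdot\mathfrak D_{\rm in}$ is exactly the incoming-wall diagram associated to $\pi\cdot\mathfrak p_{+,-}$.

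From here both directions of the equivalence follow. If $\mathfrak p_{+,-}\in G^\Pi$, then $\pi\cdot\mathfrak p_{+,-}=\mathfrak p_{+,-}$ for all $\pi$, so the displayed formula gives $\pi\cdot\mathfrak D_{\rm in}=\mathfrak D_{\rm in}$, i.e.\ $\mathfrak D_{\rm in}$ is $\Pi$-invariant. Conversely, if $\mathfrak D_{\rm in}$ is $\Pi$-invariant, then for each $\pi$ the two diagrams $\{(m^\perp,\Psi(\mathfrak p_{+,-})_m)\}_m$ and $\{(m^\perp,\Psi(\pi\cdot\mathfrak p_{+,-})_m)\}_m$ coincide as sets of walls; comparing the group element attached to each hyperplane $m^\perp$ gives $\Psi(\mathfrak p_{+,-})_m=\Psi(\pi\cdot\mathfrak p_{+,-})_m$ for every primitive $m$, hence $\Psi(\mathfrak p_{+,-})=\Psi(\pi\cdot\mathfrak p_{+,-})$ in $\prod_m G_m^{\parallel}$, and since $\Psi$ is a bijection (Proposition~\ref{prop:()-The-map}) we conclude $\mathfrak p_{+,-}=\pi\cdot\mathfrak p_{+,-}$ for all $\pi$, i.e.\ $\mathfrak p_{+,-}\in G^\Pi$. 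The one point requiring a little care — the main obstacle, such as it is — is making sure that "$\Pi$-invariant as a subset of the set of walls" really does force the attached group elements to match hyperplane-by-hyperplane rather than merely permuting the walls; this is clean here because the first coordinate $m^\perp$ determines $m$ among primitive elements of $N^+$, so a set-level equality of these two families is automatically a bijection matching $m^\perp$ to $m^\perp$, and one reads off equality of the second coordinates. I would also double-check the edge case where $\Psi(\mathfrak p_{+,-})_{n_0}$ is trivial for some $n_0$ (so the wall is "invisible"), but since we are comparing the full indexed families rather than the essential supports, this causes no difficulty.
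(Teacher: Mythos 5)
Your proposal is correct and follows essentially the same route as the paper: establish the $\Pi$-equivariance of $\Psi$, namely $\Psi(\pi\cdot\mathfrak p_{+,-})_{\pi\cdot n_0}=\pi\cdot\Psi(\mathfrak p_{+,-})_{n_0}$, by applying $\pi$ to the unique factorization of $\mathfrak p_{+,-}$ with respect to $n_0$ and using uniqueness, then translate invariance of $\mathfrak D_{\rm in}$ under the wall action \eqref{eq:-15} (via $(\pi^{-1})^*(n_0^{\perp})=(\pi\cdot n_0)^{\perp}$) into the equalities $\pi\cdot\Psi(\mathfrak p_{+,-})_{n_0}=\Psi(\mathfrak p_{+,-})_{\pi\cdot n_0}$, and conclude with the bijectivity of $\Psi$ from Proposition~\ref{prop:()-The-map}. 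Your extra care that the support $n_0^{\perp}$ determines the primitive $n_0\in N^{+}$, so set-level invariance matches attached group elements hyperplane by hyperplane, is a fine (implicit in the paper) clarification; the opening reference to Proposition~\ref{prop:2.14} is not actually needed in your argument.
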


\begin{proof}
By Proposition \ref{prop:()-The-map}, $\Psi$ is a bijection of sets.
We claim that the projection map $\Psi$ is compatible with the action
of $\Pi$: For any $\pi$ in $\Pi$, for any $\mathfrak{p}_{+,-}$
in $G$,
\begin{gather*}
\Psi(\pi\cdot\mathfrak{p}_{+,-})_{\pi\cdot n_{0}}=\pi\cdot\Psi(\mathfrak{p}_{+,-})_{n_{0}}.%\label{eq:}
\end{gather*}
Let
\[
\mathfrak{p}_{+,-}=\left(\mathfrak{p}_{+,-}\right)_{n_{0}}^{+}\circ\left(\mathfrak{p}_{+,-}\right)_{n_{0}}^{\parallel}\circ\left(\mathfrak{p}_{+,-}\right)_{n_{0}}^{\perp}\circ\left(\mathfrak{p}_{+,-}\right)_{n_{0}}^{-}
\]
be the unique factorization of $\mathfrak{p}_{+,-}$ induced by splitting
of $\mathfrak{g}$ in~\eqref{eq:-1}. Observe that
\begin{gather*}
\pi\cdot\big(\mathfrak{g}_{+}^{n_{0}}\big) =\bigoplus_{\substack{n\in N^{+}
\{n_{0},\pi^{-1}(n)\}>0 }}\mathfrak{g}_{n}
 =\bigoplus_{\substack{n\in N^{+}\\
\{\pi(n_{0}),\pi (\pi^{-1}(n) )\}>0
}
}\mathfrak{g}_{n}
 =\bigoplus_{\substack{n\in N^{+}\\
\{\pi(n_{0}),n\}>0
}
}\mathfrak{g}_{n} =\mathfrak{g}_{+}^{\pi(n_{0})}.
\end{gather*}
Similarly, we have $\pi\cdot\big(\mathfrak{g}_{-}^{n_{0}}\big)=\mathfrak{g}_{-}^{\pi(n_{0})}$,
$\pi\cdot\big(\mathfrak{g}_{n_{0}}^{\parallel}\big)=\mathfrak{g}_{\pi(n_{0})}^{\parallel}$
and $\pi\cdot\big(\mathfrak{g}_{n_{0}}^{\perp}\big)=\mathfrak{g}_{\pi(n_{0})}^{\perp}$.
Therefore,
\[
\pi\cdot\mathfrak{p}_{+,-}=\pi\cdot\left(\mathfrak{p}_{+,-}\right)_{+}^{n_{0}}\circ\pi\cdot\left(\mathfrak{p}_{+,-}\right)_{n_{0}}^{\parallel}\circ\pi\cdot\left(\mathfrak{p}_{+,-}\right)_{n_{0}}^{\perp}\circ\pi\cdot\left(\mathfrak{p}_{+,-}\right)_{-}^{n_{0}}
\]
is the unique factorization of $\pi\cdot\mathfrak{p}_{+,-}$ with
respect to $\pi\cdot n_{0}$. By definition of $\Psi$, we have
\[
\Psi(\pi\cdot\mathfrak{p}_{+,-})_{\pi\cdot n_{0}}=\pi\cdot\left(\mathfrak{p}_{+,-}\right)_{n_{0}}^{\parallel}=\pi\cdot\Psi(\mathfrak{p}_{+,-})_{n_{0}}.
\]
Therefore, $\mathfrak{p}_{+,-}$ is contained in $G^{\Pi}$ if and
only if for all $\pi$ in $\Pi,$ for all primitive $n_{0}$ in $N^{+}$,
the following is satisfied
\begin{gather}
\Psi(\pi\cdot\mathfrak{p}_{+,-})_{\pi\cdot n_{0}}=\pi\cdot\Psi(\mathfrak{p}_{+,-})_{n_{0}}=\Psi(\mathfrak{p}_{+,-})_{\pi\cdot n_{0}}.\label{eq:-14}
\end{gather}
By definition,
\begin{gather*}
\pi\cdot\big(n_{0}^{\perp},\Psi(\mathfrak{p}_{+,-})_{n_{0}}\big) =\big(\big(\pi^{-1}\big)^{*}\big(n_{0}^{\perp}\big),\pi\cdot (\Psi(\mathfrak{p}_{+,-})_{n_{0}})\big)
 =\big((\pi\cdot n_{0})^{\perp},\pi\cdot (\Psi(\mathfrak{p}_{+,-})_{n_{0}} )\big).
\end{gather*}
Hence the equality \eqref{eq:-14} is equivalent to that for any $\pi$
in $\Pi$ and any primitive $n_{0}$ in $N^{+}$,
\[
\pi\cdot\big(n_{0}^{\perp},\Psi(\mathfrak{p}_{+,-})_{n_{0}}\big)=\big((\pi\cdot n_{0})^{\perp},\Psi(\mathfrak{p}_{+,-})_{\pi\cdot n_{0}}\big).
\]
which is equivalent to that $(\mathfrak{D}_{\mathfrak{p_{+,-}}})_{{\rm in}}$
is invariant under the action of $\Pi$. By Theorem~\ref{thm:2.12},
$(\mathfrak{D}_{\mathfrak{p}_{+,-}})_{{\rm in}}$ determines
the equivalence class of the scattering diagrams corresponding to
$\mathfrak{p_{+,-}}$. Hence we have the bijection of sets as stated
in the lemma.
\end{proof}

The action of $\Pi$ on $N$ enables us to construct a quotient lattice
$\overline{N}$ with a basis $(e_{\Pi i})_{\Pi i\in\overline{I}}$
where~$\overline{I}$ is the index set of orbits of $I$ under the
action of~$I$. Let $q\colon N\twoheadrightarrow\overline{N}$ be the natural
quotient homomorphism of lattices that sends $e_{i}$ to $e_{\Pi i}$.
We define the following skew-symmetric bilinear form $\{\,,\,\}$ on $\overline{N}$:
\[
\{e_{\Pi i},e_{\Pi j}\}=\{e_{i},e_{j}\}.
\]
The condition~\eqref{eq:-2} implies that the skew symmetric form on
$\overline{N}$ is well-defined and the skew symmetric form on~$N$
is the pull-back of that on~$\overline{N}$. The surjective homomorphism
between lattices~$q$ induces an injective homomorphism between dual lattices
$q^{*}\colon \overline{M}\hookrightarrow M$, which extends to an injective
linear map $q^{*}\colon \overline{M}_{\mathbb{R}}\hookrightarrow M_{\mathbb{R}}$.
Define
\[
\overline{N}^{+}=\bigg\{\sum_{\Pi i\in\overline{I}}a_{\Pi i}e_{\Pi i}\,|\, a_{\Pi i}\in\mathbb{Z}_{\geq0},\sum_{\Pi i\in\overline{I}}a_{\Pi i}>0\bigg\}.
\]
Notice that $q(N^{+})=\overline{N}^{+}$. Define the following linear
function
\begin{align}
\overline{d}\colon \ \overline{N} & \rightarrow\mathbb{Z},\nonumber \\
\sum_{\Pi i\in\overline{I}}a_{\Pi i}e_{\Pi i} & \mapsto\sum_{\Pi i\in\overline{I}}a_{\Pi i}.\label{eq:-10}
\end{align}
Given an element $n_{0}$ in $N^{+}$, let $\overline{n_{0}}$ be
its image in $\overline{N}^{+}$ under~$q$. Let $\overline{\mathfrak{g}}=\bigoplus_{\overline{n}\in\overline{N}^{+}}\overline{\mathfrak{g}}_{\overline{n}}$
be a~$\overline{N}^{+}$-graded Lie algebra skew-symmetric for $\{\,,\,\}$
on $\overline{N}$. Similar to $\mathfrak{g}$, for every $k$ in
$\mathbb{Z}_{>0}$, we define
\begin{gather*}
\overline{\mathfrak{g}}^{>k}=\bigoplus_{\overline{n}\in\overline{N}^{+}\,|\,\bar{d} (\overline{n})>k}\overline{\mathfrak{g}}_{\overline{n}},\qquad
\overline{\mathfrak{g}}^{\leq k}=\overline{\mathfrak{g}}/\overline{\mathfrak{g}}^{>k},\qquad\overline{G}^{\leq k}=\exp\big(\overline{\mathfrak{g}}^{\leq k}\big),\\
\widehat{\overline{\mathfrak{g}}}=\lim_{\longleftarrow}\overline{\mathfrak{g}}^{\leq k},\qquad G=\exp\big(\widehat{\overline{\mathfrak{g}}}\big)=\lim_{\longleftarrow}\overline{G}^{\leq k}.
\end{gather*}
Suppose that we are given an order-preserving graded Lie algebra homomorphism
$\tilde{q}\colon \mathfrak{g}\rightarrow\overline{\mathfrak{g}}$, then
for each order $k\in\mathbb{Z}_{>0}$, we have an induced Lie algebra
homomorphism: $\tilde{q}\colon \mathfrak{g}^{\leq k}\rightarrow\overline{\mathfrak{g}}^{\leq k}$.
Let $\tilde{q}\colon G^{\leq k}\rightarrow\overline{G}^{\leq k}$ be the
corresponding homomorphism of Lie groups. Taking the limit of compositions
of Lie group homomorphisms $G\rightarrow G^{\leq k}\rightarrow\overline{G}^{\leq k}$
for all $k\in\mathbb{Z}_{>0}$, we get a Lie group homomorphism $\tilde{q}\colon G\rightarrow\overline{G}$.

Let $\bar{n}_{0}$ be a primitive element in $\overline{N}^{+}$ and
$x$ a general element in $\bar{n}_{0}^{\perp}$. Notice that $q^{*}(x)$
would no longer necessarily be general in $M_{\mathbb{R}}$. However,
we have the following lemma.
\begin{Lemma}
\label{lem:2.19}The direct sum
\[
\mathfrak{g}_{_{0}}^{q^{*}(x)}=\bigoplus_{\substack{n\in N_{{\bf }}^{+}\\
\langle n,q^{*}(x)\rangle=0}
}\mathfrak{g}_{n}
\]
is an abelian Lie subalgebra.
\end{Lemma}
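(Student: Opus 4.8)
Proof proposal. The plan is to transport the orthogonality condition defining $\mathfrak{g}_{0}^{q^{*}(x)}$ from $M_{\mathbb{R}}$ to $\overline{M}_{\mathbb{R}}$, to use genericity of $x$ inside $\bar{n}_{0}^{\perp}$ to pin down the $q$-images of the degrees that occur, and then to read off abelianness directly from the standing hypothesis that $\mathfrak{g}$ is skew-symmetric for $\{\,,\,\}$.

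First I would rewrite the index set. Since $q^{*}$ is the transpose of $q$, we have $\langle n,q^{*}(x)\rangle=\langle q(n),x\rangle$ for every $n\in N$, so $\mathfrak{g}_{0}^{q^{*}(x)}=\bigoplus_{n\in N^{+},\,\langle q(n),x\rangle=0}\mathfrak{g}_{n}$. Now $x$ is general in $\overline{M}_{\mathbb{R}}$ (with respect to $\overline{N}^{+}$, $\overline{\mathfrak{g}}$) and lies on the rational hyperplane $\bar{n}_{0}^{\perp}$; by definition it lies on no other rational hyperplane of $\overline{M}_{\mathbb{R}}$, so the only primitive element of $\overline{N}^{+}$ orthogonal to $x$ is $\bar{n}_{0}$. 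Given $n\in N^{+}$ with $\langle q(n),x\rangle=0$, we have $q(n)\in\overline{N}^{+}$ because $q(N^{+})=\overline{N}^{+}$, in particular $q(n)\neq0$; writing $q(n)=c\,\bar{n}'$ with $\bar{n}'$ primitive and $c\in\mathbb{Z}_{\geq1}$, genericity forces $\bar{n}'=\bar{n}_{0}$ (the alternative $\bar{n}'=-\bar{n}_{0}$ is excluded since $\bar{d}(\bar{n}')>0$ while $\bar{d}(\bar{n}_{0})>0$). Hence every degree $n$ occurring in $\mathfrak{g}_{0}^{q^{*}(x)}$ satisfies $q(n)=k(n)\,\bar{n}_{0}$ for some $k(n)\in\mathbb{Z}_{>0}$.

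From here both assertions are immediate. If $n_{1},n_{2}\in N^{+}$ occur in $\mathfrak{g}_{0}^{q^{*}(x)}$, then $n_{1}+n_{2}\in N^{+}$ and $q(n_{1}+n_{2})=q(n_{1})+q(n_{2})=(k(n_{1})+k(n_{2}))\,\bar{n}_{0}$, so $[\mathfrak{g}_{n_{1}},\mathfrak{g}_{n_{2}}]\subseteq\mathfrak{g}_{n_{1}+n_{2}}\subseteq\mathfrak{g}_{0}^{q^{*}(x)}$; thus $\mathfrak{g}_{0}^{q^{*}(x)}$ is a graded Lie subalgebra. For abelianness, recall that the skew-symmetric form on $N$ is the pull-back along $q$ of the one on $\overline{N}$, so $\{n_{1},n_{2}\}=\{q(n_{1}),q(n_{2})\}=k(n_{1})k(n_{2})\{\bar{n}_{0},\bar{n}_{0}\}=0$ since the form is alternating. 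By the hypothesis that $\mathfrak{g}$ is skew-symmetric for $\{\,,\,\}$, this yields $[\mathfrak{g}_{n_{1}},\mathfrak{g}_{n_{2}}]=0$, and since $n_{1},n_{2}$ were arbitrary degrees occurring in $\mathfrak{g}_{0}^{q^{*}(x)}$, this subalgebra is abelian.

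The one delicate point — and the step I would state most carefully — is the genericity argument: one must invoke the notion of \emph{general} with respect to the quotient data $(\overline{N}^{+},\overline{\mathfrak{g}})$ rather than $(N^{+},\mathfrak{g})$, and then use primitivity of $\bar{n}_{0}$ in $\overline{N}$ to upgrade ``proportional to $\bar{n}_{0}$ over $\mathbb{R}$'' to ``an integer multiple of $\bar{n}_{0}$''. Note that $q^{*}(x)$ is in general \emph{not} general in $M_{\mathbb{R}}$, since many distinct $n\in N^{+}$ can map into the ray $\mathbb{Z}_{>0}\bar{n}_{0}$; this is precisely why the lemma needs stating, but it does not obstruct the argument above.
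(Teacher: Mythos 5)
Your proposal is correct and follows essentially the same route as the paper: genericity of $x$ in $\bar{n}_{0}^{\perp}$ forces every degree $n$ occurring in $\mathfrak{g}_{0}^{q^{*}(x)}$ to satisfy $q(n)\in\mathbb{Z}_{>0}\bar{n}_{0}$, whence $\{n_{1},n_{2}\}=\{q(n_{1}),q(n_{2})\}=0$ because the form on $N$ is pulled back from $\overline{N}$, and skew-symmetry of $\mathfrak{g}$ kills the brackets. Your extra care in spelling out the genericity step and the closure under the bracket is fine but does not change the argument.
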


\begin{proof}
Since $x\in\bar{n}_{0}^{\perp}$ is general, we have
\[
\mathfrak{g}_{0}^{q^{*}(x)}=\bigoplus_{\substack{q(n)=k\overline{n}_{0}\\
k\in\mathbb{N}\\
n\in N^{+}\text{ is primitive}
}
}\mathfrak{g}_{n}^{\parallel}.
\]
For any primitive elements $n_{1}$, $n_{2}$ in $N^{+}$ such that $q(n_{i})=k_{i}\overline{n}_{0}$,
$k_{i}\in\mathbb{N}$,
\begin{gather*}
\{n_{1},n_{2}\} =\{q(n_{1}),q(n_{2})\} =\{k_{1}\overline{n}_{0},k_{2}\overline{n}_{0}\} =0.
\end{gather*}
Since $\mathfrak{g}$ is skew symmetric for $\{\,,\,\}$ on $N$, $\{n_{1},n_{2}\}=0$
implies that $[\mathfrak{g}_{n_{1}},\mathfrak{g}_{n_{2}}]=0$. Therefore~$\mathfrak{g}_{0}^{q^{*}(x)}$ is an abelian Lie subalgebra.
\end{proof}
\begin{Lemma}\label{lem: general-interior}There exists a scattering diagram $\mathfrak{D}'$
in the equivalence class of $\mathfrak{D}_{\mathfrak{p}_{+,-}}$ such
that for each wall $(\mathfrak{d},g_{\mathfrak{d}})$ in $\mathfrak{D}'$
whose support contains $q^{*}(x)$ for a general point $x$ in $\overline{M}_{\mathbb{R}}$,
$q^{*}(x)$ is contained in the interior of $\mathfrak{d}$.
\end{Lemma}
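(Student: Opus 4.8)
The plan is to start from a representative built by the usual order-by-order procedure and to modify it, order by order and hyperplane by hyperplane, so that the boundary of every wall avoids the image under $q^{*}$ of the general locus of $\overline{M}_{\mathbb{R}}$. First I would reduce to finite order: by the finiteness condition in the definition of a scattering diagram, for each $k\in\mathbb{Z}_{>0}$ only finitely many walls have non-trivial image in $G^{\leq k}$, so it suffices to construct a compatible family of representatives of $\mathfrak{D}^{\leq k}_{\mathfrak{p}_{+,-}}$ with the desired property and pass to the inverse limit; equivalently, one runs the construction below inside the order-by-order algorithm producing $\mathfrak{D}_{\mathfrak{p}_{+,-}}$ itself. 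So assume there are finitely many walls.

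Next I would identify the loci that must be avoided. If $x\in\overline{M}_{\mathbb{R}}$ is general and $q^{*}(x)$ lies on the supporting hyperplane $n_{0}^{\perp}$ of a potential wall ($n_{0}\in N^{+}$ primitive), then $\langle q(n_{0}),x\rangle=0$; since $x$ meets at most one rational hyperplane and $q(N^{+})=\overline{N}^{+}\not\ni 0$, this forces $q(n_{0})\in\mathbb{Z}_{>0}\,\overline{n}_{0}$ for the unique primitive $\overline{n}_{0}\in\overline{N}^{+}$ with $x\in\overline{n}_{0}^{\perp}$. Thus every wall through $q^{*}(x)$ is supported in a hyperplane $n^{\perp}$ with $q(n)\in\mathbb{Z}_{>0}\,\overline{n}_{0}$, and the points $q^{*}(x)$ of this kind form a dense subset of the subspace $q^{*}(\overline{n}_{0}^{\perp})\subset M_{\mathbb{R}}$.

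The heart of the argument is to use Lemma~\ref{lem:2.19} to see that near such a point the scattering is trivial, so that the walls there can be straightened. In the order-by-order construction of $\mathfrak{D}_{\mathfrak{p}_{+,-}}$ one starts from the incoming walls $\{(n^{\perp},g_{n})\}$ — which are full hyperplanes, hence automatically contain any point of $n^{\perp}$ in their relative interior — and at each stage repairs consistency at joints by adjoining outgoing walls emanating from them. Suppose the generic point of a joint $\mathfrak{j}$ is of the form $q^{*}(x)$ with $x$ general. Then every wall through $\mathfrak{j}$, and hence every wall that would ever be adjoined at $\mathfrak{j}$, is supported in a hyperplane $m^{\perp}$ with $q(m)\in\mathbb{Z}_{>0}\,\overline{n}_{0}$; for any two such $m,m'$ one has $\{m,m'\}=\{q(m),q(m')\}=0$ since the form on $N$ is pulled back from $\overline{N}$ via \eqref{eq:-2}, so $[\mathfrak{g}_{m},\mathfrak{g}_{m'}]=0$ and the relevant Lie subalgebra — which is exactly $\mathfrak{g}_{0}^{q^{*}(x)}$ — is abelian, as in Lemma~\ref{lem:2.19}. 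An abelian configuration of walls is already consistent (a loop around the joint crosses each wall forward and backward, contributing commuting and hence cancelling factors), so no outgoing wall is created at $\mathfrak{j}$, and in particular no two walls in a fixed $n^{\perp}$ can carry distinct attached elements on the two sides of such a joint. Hence in a neighborhood of $q^{*}(x)$ the only walls are higher-order corrections of the incoming hyperplanes, and after merging coplanar walls with equal attached element one may take each of them to be supported on the whole hyperplane $n^{\perp}$ near $q^{*}(x)$, which then contains $q^{*}(x)$ in its relative interior. For the remaining walls — those whose boundary joint does not meet $q^{*}(\overline{M}_{\mathbb{R}})$ in top dimension — I would choose the supports, using the usual freedom in the construction, to be rational polyhedral cones of dimension $\operatorname{rank}N-1$ in general position relative to the finitely many subspaces $q^{*}(\overline{n}_{0}^{\perp})$, so that their boundaries meet these subspaces only in lower-dimensional, hence non-general, loci.

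Assembling these independent local modifications over all hyperplanes and carrying them out compatibly across orders yields the required $\mathfrak{D}'$. The step I expect to be the main obstacle is the one invoking Lemma~\ref{lem:2.19}: converting the abelian-ness of $\mathfrak{g}_{0}^{q^{*}(x)}$ into the precise statement that no wall ends up with $q^{*}(x)$ on its boundary requires careful bookkeeping of which joints lie in $q^{*}(\overline{M}_{\mathbb{R}})$ and of how outgoing walls propagate, rather than merely controlling path-ordered products at those points.
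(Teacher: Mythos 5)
Your strategy is the same as the paper's in outline: reduce to finite order, use Lemma~\ref{lem:2.19} to see that all walls through a point $q^{*}(x)$ with $x$ general have commuting attached elements, and then use the structure of the inductive construction to conclude that such a point cannot lie on the boundary of any wall of a suitable representative. The genuine gap is exactly the step you flag at the end and do not resolve: controlling where boundaries of walls can occur. The paper does not re-run the Gross--Siebert induction by hand; it invokes the structural results of the appendix of \cite{GHKK} (Definition-Lemma C.2, Lemmas C.6 and C.7), which assert that $\mathfrak{D}_{\mathfrak{p}_{+,-}}$ has a representative $\mathfrak{D}'$ in which the boundary of every wall is contained in a joint that is the intersection of two walls with non-commuting attached elements (and whose boundaries are not that joint). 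Since two walls containing a $q^{*}$-general point lie in hyperplanes $n_{1}^{\perp}$, $n_{2}^{\perp}$ with $q(n_{1})$, $q(n_{2})$ proportional to the same primitive $\overline{n}_{0}$, so that $\{n_{1},n_{2}\}=\{q(n_{1}),q(n_{2})\}=0$ and the attached groups commute, no such non-commutative joint can contain $q^{*}(x)$, and the lemma follows immediately. Your sketch needs precisely this boundary control and does not supply it: showing that no outgoing wall is \emph{created} at an abelian joint does not rule out that a wall created at a distant non-commutative joint has part of its boundary (for instance the cone over the boundary of the joint where it was created) passing through a point $q^{*}(x)$ with $x$ general. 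Nor can this be fixed by choosing supports ``in general position'': the essential support at each order is an invariant of the equivalence class and outgoing walls must emanate from the joints where consistency fails, so the only freedom is in how walls are cut, merged and extended -- and that this can always be done so that boundaries remain inside non-commutative joints is exactly the content of the cited lemmas, not a free choice.

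Two further remarks. Your argument that the loop product around an abelian joint is automatically trivial tacitly assumes every wall there is two-sided; to exclude a wall ending at such a joint (equivalently, different attached elements on the two sides of one hyperplane) one should combine consistency with the unique factorization of elements of the abelian group $\exp\big(\hat{\mathfrak{g}}_{0}^{q^{*}(x)}\big)$ into its $\mathfrak{g}_{m}^{\parallel}$-components (Proposition~\ref{prop:()-The-map}); this is essentially what you assert, but it is where the argument actually has to bite, and again it presupposes knowing which walls can reach the joint. On the other hand, your observation that a lower-dimensional rational intersection of a boundary with $q^{*}\big(\overline{M}_{\mathbb{R}}\big)$ consists of non-general points is sound, since such a locus spans a rational subspace of codimension at least two. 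So the proposal identifies the right mechanism (the one the paper uses), but as written the decisive boundary-control step is acknowledged rather than proved; citing the appendix of \cite{GHKK} as the paper does, or proving the propagation statement directly, is needed to close it.
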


\begin{proof}\looseness=-1 We work modulo $\mathfrak{g}^{>k}$ for any $k$, so that we could
assume that $\mathfrak{D}_{\mathfrak{p}_{+,-}}$ has finitely many
walls. By the inductive algorithm to construct a consistent scattering
diagram as given in~\cite{GS11}, $\mathfrak{D}_{\mathfrak{p}_{+,-}}$
is equivalent to a scattering diagram~$\mathfrak{D}'$ such that
if~$\partial\mathfrak{d}$ is a boundary of a wall in~$\mathfrak{D}'$,
then~$\partial\mathfrak{d}$ must be contained in a joint that is
the intersection of two walls whose attached Lie group elements do
not commute with each other and whose boundaries are not this joint
(cf.\ Definition-Lemma~C.2, Lemma~C.6 and Lemma~C.7 of~\cite{GHKK}).
However, given a general point $x$ in $\overline{M}_{\mathbb{R}}$,
by Lemma~\ref{lem:2.19}, any two walls that contain~$q^{*}(x)$ must
have attached Lie group elements commuting with each other. Therefore,
$q^{*}(x)$ cannot be contained in the boundary of a wall in~$\mathfrak{D}'$.
\end{proof}

\begin{Construction} \label{2.1} By Lemma \ref{lem: general-interior}, given any general point $x$ in $\overline{M}_{\mathbb{R}}$, if $q^{*}(x)$ is contained in the support of a wall $(\mathfrak{d},g_{\mathfrak{d}})$ in $\mathfrak{D}_{\mathfrak{p}_{+,-}}$, we could assume that $q^{*}(x)$ is contained in the interior of~$\mathfrak{d}$. Under this assumption, we build a scattering diagram $\overline{\mathfrak{D}_{\mathfrak{p_{+,-}}}}$ in $\overline{M}_{\mathbb{R}}$. To do so, it suffices to build $\overline{\mathfrak{D}_{\mathfrak{p_{+,-}}}}$ up to each order~$k$. Modulo $G^{>k}$, there are only finitely many nontrivial walls in~$\mathfrak{D}_{\mathfrak{p}_{+,-}}$. For each such nontrivial wall $(\mathfrak{d},g_{\mathfrak{d}})$, if $\mathfrak{d}\cap q^{*}\big(\overline{M}_{\mathbb{R}}\big)$ has codimension $1$ in $q^{*}\big(\overline{M}_{\mathbb{R}}\big)$, define $\overline{\mathfrak{d}}=q^{*-1}\big(\mathfrak{d}\cap q^{*}\big(\overline{M}_{\mathbb{R}}\big)\big)$ and $\overline{g}_{\mathfrak{d}}= \tilde{q}({g_\mathfrak{d}})$. Then $(\overline{\mathfrak{d}},\overline{g}_{\mathfrak{d}})$ will be a wall for $\overline{N}^{+}$ and $\overline{\mathfrak{g}}$. Indeed, if~$\mathfrak{d}$ is contained in $n^{\perp}$ for a primitive element~$n$ in~$N^{+}$, then $\overline{n}$ is an integer multiple of $\overline{n}_{0}$ for a primitive element $\overline{n}_{0}$ in $\overline{N}^{+}$. Then $\overline{\mathfrak{d}}$ is contained in $\overline{n}_{0}^{\perp}$ and $\overline{g}_{\mathfrak{d}}$ is contained in $\overline{G}_{\overline{n}_{0}}$. \end{Construction}

Let $\overline{\mathfrak{p}_{+,-}}$ be the image of $\mathfrak{p}_{+,-}$
under $\tilde{q}\colon G\rightarrow\overline{G}$. Let $\mathfrak{D}_{\overline{\mathfrak{p_{+,-}}}}$
be a consistent scattering diagram for~$\overline{N}^{+}$ and $\overline{\mathfrak{g}}$
corresponding to $\overline{\mathfrak{p}_{+,-}}$. In the next proposition,
we show that the quotient construction is compatible with the Lie
group homomorphism $\tilde{q}\colon G\rightarrow\overline{G}$, that is,
$\overline{\mathfrak{D}_{\mathfrak{p_{+,-}}}}$ is equivalent to $\mathfrak{D}_{\overline{\mathfrak{p_{+,-}}}}$:
\begin{Theorem}\label{thm:2.21}The scattering diagram $\overline{\mathfrak{D_{\mathfrak{p_{+,-}}}}}$
is consistent and it belongs to the equivalence class of consistent
scattering diagrams uniquely determined by $\overline{\mathfrak{p}_{+,-}}$.
\end{Theorem}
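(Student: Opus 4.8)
The plan is to verify the two assertions separately: first that $\overline{\mathfrak{D}_{\mathfrak{p}_{+,-}}}$ is consistent, and then that its path-ordered product from $\mathcal{C}^{+}$ to $\mathcal{C}^{-}$ equals $\overline{\mathfrak{p}_{+,-}} = \tilde{q}(\mathfrak{p}_{+,-})$; by Theorem~\ref{thm:2.12} these two facts together pin down the equivalence class. As in Lemma~\ref{lem: general-interior} and Construction~\ref{2.1}, I would work modulo $\overline{\mathfrak{g}}^{>k}$ for an arbitrary order $k$, so that both $\mathfrak{D}_{\mathfrak{p}_{+,-}}$ and $\overline{\mathfrak{D}_{\mathfrak{p}_{+,-}}}$ have only finitely many nontrivial walls, and then take the limit over $k$ at the end.

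For consistency, I would show that path-ordered products in $\overline{\mathfrak{D}_{\mathfrak{p}_{+,-}}}$ are pulled back from those in $\mathfrak{D}_{\mathfrak{p}_{+,-}}$ via $q^{*}$ and $\tilde{q}$. Fix a generic loop $\overline{\gamma}$ in $\overline{M}_{\mathbb{R}}$ based at a general point $x$; after a small perturbation I may assume $q^{*}\circ\overline{\gamma}$ avoids the singular locus of the (perturbed, as in Lemma~\ref{lem: general-interior}) diagram $\mathfrak{D}'$ representing $\mathfrak{D}_{\mathfrak{p}_{+,-}}$, so that $q^{*}\circ\overline{\gamma}$ is generic for $\mathfrak{D}'$. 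The key point is that $\overline{\gamma}$ crosses a wall $(\overline{\mathfrak{d}},\overline{g}_{\mathfrak{d}})$ of $\overline{\mathfrak{D}_{\mathfrak{p}_{+,-}}}$ exactly when $q^{*}\circ\overline{\gamma}$ crosses the corresponding wall $(\mathfrak{d},g_{\mathfrak{d}})$ of $\mathfrak{D}'$ — this is precisely what the codimension-one condition $\operatorname{codim}_{q^{*}(\overline{M}_{\mathbb{R}})}(\mathfrak{d}\cap q^{*}(\overline{M}_{\mathbb{R}})) = 1$ in Construction~\ref{2.1} guarantees, since walls $\mathfrak{d}$ meeting $q^{*}(\overline{M}_{\mathbb{R}})$ in higher codimension are simply not transversally crossed. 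Walls of $\mathfrak{D}'$ with $\mathfrak{d}\cap q^{*}(\overline{M}_{\mathbb{R}}) = \varnothing$ contribute nothing on either side. Hence $\mathfrak{p}_{\overline{\gamma},\overline{\mathfrak{D}_{\mathfrak{p}_{+,-}}}}$ is the image under $\tilde{q}$ of the path-ordered product of $q^{*}\circ\overline{\gamma}$ in $\mathfrak{D}'$, which is trivial by consistency of $\mathfrak{D}'$; therefore $\overline{\mathfrak{D}_{\mathfrak{p}_{+,-}}}$ is consistent. Here one needs the wall-crossing automorphism attached to $\overline{g}_{\mathfrak{d}} = \tilde{q}(g_{\mathfrak{d}})$ to be compatible with the wall-crossing of $g_{\mathfrak{d}}$ under the linear map $q^{*}$ and the Lie-algebra map $\tilde{q}$; this follows because $\tilde{q}$ is a graded Lie algebra homomorphism respecting the gradings $d$ and $\overline{d}$, so it intertwines the exponential/adjoint actions defining the wall-crossings.

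For the identification of the path-ordered product, take a path from the interior of $\mathcal{C}^{+}\subset\overline{M}_{\mathbb{R}}$ to the interior of $\mathcal{C}^{-}\subset\overline{M}_{\mathbb{R}}$; its image under $q^{*}$ runs from $\mathcal{C}^{+}\subset M_{\mathbb{R}}$ to $\mathcal{C}^{-}\subset M_{\mathbb{R}}$ (since $q^{*}$ is dual to the surjection $q$ and $q(N^{+}) = \overline{N}^{+}$, sign conditions are preserved). By Theorem~\ref{thm:2.12}, the path-ordered product of that image in $\mathfrak{D}_{\mathfrak{p}_{+,-}}$ is $\mathfrak{p}_{+,-}$, and by the wall-correspondence above its image in $\overline{\mathfrak{D}_{\mathfrak{p}_{+,-}}}$ has path-ordered product $\tilde{q}(\mathfrak{p}_{+,-}) = \overline{\mathfrak{p}_{+,-}}$. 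Again invoking Theorem~\ref{thm:2.12}, $\overline{\mathfrak{D}_{\mathfrak{p}_{+,-}}}$ lies in the unique equivalence class determined by $\overline{\mathfrak{p}_{+,-}}$, which is the class of $\mathfrak{D}_{\overline{\mathfrak{p}_{+,-}}}$.

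The main obstacle I anticipate is the transversality bookkeeping in the first step: ensuring that a generic path downstairs can be chosen so that its lift $q^{*}\circ\overline{\gamma}$ is generic for the (suitably perturbed) representative $\mathfrak{D}'$ upstairs, and that the crossing data match up wall-by-wall including multiplicities when several walls of $\mathfrak{D}'$ map to the same wall downstairs. This is where Lemma~\ref{lem:2.19} does the real work — it forces the Lie group elements of all walls through $q^{*}(x)$ to commute, so the order in which one composes them (and hence the precise choice of perturbation) does not matter, and the product over all such walls is well-defined; combining this with Lemma~\ref{lem: general-interior} lets one replace $\mathfrak{D}_{\mathfrak{p}_{+,-}}$ by a representative for which $q^{*}(x)$ always lies in wall interiors, making the correspondence clean.
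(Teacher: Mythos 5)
Your overall strategy coincides with the paper's: push path-ordered products down through $q^{*}$ and $\tilde{q}$, deduce consistency of $\overline{\mathfrak{D}_{\mathfrak{p}_{+,-}}}$ from consistency upstairs, and identify the equivalence class by applying Theorem~\ref{thm:2.12} to a path from the positive to the negative chamber; you also name the right ingredients (Lemma~\ref{lem: general-interior} and the commutativity of Lemma~\ref{lem:2.19}). The gap is in the central step. You assert that after a small perturbation $q^{*}\circ\overline{\gamma}$ is generic for $\mathfrak{D}'$. If the perturbation keeps the path inside $q^{*}(\overline{M}_{\mathbb{R}})$ --- which is what writing it as $q^{*}\circ\overline{\gamma}$ suggests --- this is false whenever the folding is nontrivial: at every time $\overline{\gamma}$ crosses a wall of $\overline{\mathfrak{D}_{\mathfrak{p}_{+,-}}}$, the image point lies simultaneously on all upstairs walls whose traces on the subspace coincide there (in the $A_{3}\to B_{2}$ example, crossing $e_{\Pi 1}^{\perp}$ forces the image through $e_{1}^{\perp}\cap e_{3}^{\perp}$), and such intersections of walls lying in distinct hyperplanes have codimension two, hence lie in the singular locus; no perturbation confined to the subspace avoids this. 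So the quantity ``path-ordered product of $q^{*}\circ\overline{\gamma}$ in $\mathfrak{D}'$'' that your argument relies on is not defined. The paper's proof resolves exactly this point: near each crossing time $t_{i}$ it replaces the segment of $q^{*}(\overline{\gamma})$ by a small semicircle $C_{i}$ in $M_{\mathbb{R}}$, leaving the subspace, chosen to cross transversally precisely the finitely many walls through $q^{*}(\overline{\gamma}(t_{i}))$ whose traces have codimension one (these contain the point in their interiors by Lemma~\ref{lem: general-interior}) and no others; Lemma~\ref{lem:2.19} then makes the product of their wall-crossings order-independent, and its image under $\tilde{q}$ is the quotient wall-crossing. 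Your closing paragraph acknowledges the bookkeeping but does not supply this construction: saying ``the precise choice of perturbation does not matter'' presupposes that a perturbation with matching crossing data exists, which is the thing to be produced.

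A second, smaller omission: walls $\mathfrak{d}$ with $\mathfrak{d}\cap q^{*}(\overline{M}_{\mathbb{R}})$ nonempty but of codimension at least two in the subspace are not ``simply not transversally crossed.'' If the lifted path met such a wall at a time $t\notin\{t_{1},\dots,t_{l}\}$, the upstairs product would acquire a factor with no counterpart downstairs (or the lifted path would fail to be generic there). The paper rules this out by choosing the representative $\mathfrak{D}'$ whose wall boundaries lie in joints of two non-commuting walls and showing that such a meeting would force two nontrivial quotient walls through $\overline{\gamma}(t)$, contradicting genericity of $\overline{\gamma}$. Some argument of this kind is needed before your wall-by-wall correspondence between crossings of $\overline{\gamma}$ and crossings of its lift is complete.
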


\begin{proof}\looseness=-1 It suffices to show that for each order $k$, $\overline{\mathfrak{D_{\mathfrak{p_{+,-}}}}}$
is consistent and is equivalent to $\mathfrak{D}_{\overline{\mathfrak{p_{+,-}}}}$
modulo~$\overline{G}^{>k}$. Modulo~$G^{>k}$, by the proof of Lemma~\ref{lem: general-interior}, we could assume that if $\partial\mathfrak{d}$ is a boundary of a wall in $\mathfrak{D}_{\mathfrak{p}_{+,-}}$, then
$\partial\mathfrak{d}$ must be contained in a joint that is the intersection
of two walls whose attached Lie group elements do not commute with
each other and whose boundaries are not this joint. To prove the consistency
of $\overline{\mathfrak{D_{\mathfrak{p_{+,-}}}}}$, given a generic
path $\overline{\gamma}$ in $\overline{M}_{\mathbb{R}}$ (cf.\ Definition~\ref{def:Two-scattering-diagrams}), we want to show that the path-ordered
product $\mathfrak{p}_{\overline{\gamma},\overline{\mathfrak{D}_{\mathfrak{p}_{+,-}}}}$
depends only on end points of $\overline{\gamma}$. We show that we
could always perturb $q^{*}(\overline{\gamma})$ to a generic path
$\gamma$ in $M_{\mathbb{R}}$ with the same end points such that
\[
\mathfrak{p}_{\overline{\gamma},\overline{\mathfrak{D}_{\mathfrak{p}_{+,-}}}} =\overline{\mathfrak{p}_{\gamma,\mathfrak{D}_{\mathfrak{p}_{+,-}}}}.
\]
Let $0<t_{1}<\cdots<t_{l}<1$ be time numbers when $\overline{\gamma}$
crosses a wall in $\overline{\mathfrak{D}_{\mathfrak{p_{+,-}}}}$.
For any $t\notin\{t_{1},\dots,t_{l}\}$, we claim that $q^{*}\left(\overline{\gamma}(t)\right)$
is not contained in any nontrivial wall in $\mathfrak{D}_{\mathfrak{p}_{+,-}}$.
Indeed, assume that $q^{*}\left(\overline{\gamma}(t)\right)$ is contained
in a non-trivial wall $(\mathfrak{d},g_{\mathfrak{d}})$ in $\mathfrak{D}_{\mathfrak{p}_{+,-}}$.
Since $\overline{\gamma}(t)$ is not contained in any wall in $\overline{\mathfrak{D}_{\mathfrak{p}_{+,-}}}$,
$\mathfrak{d}\cap q^{*}\left(\overline{M}_{\mathbb{R}}\right)$ has
codimension $2$ in $q^{*}(\overline{M}_{\mathbb{R}})$, which implies
that $\mathfrak{d}\cap q^{*}\left(\overline{M}_{\mathbb{R}}\right)$
is contained in $\partial\mathfrak{d}$. By our assumption about $\mathfrak{D}_{\mathfrak{p}_{+,-}}$,
$\partial\mathfrak{d}$ is contained in a joint that is the intersection
of two non-trivial walls in $\mathfrak{D}_{\mathfrak{p}_{+,-}}$ whose
boundaries are not this joint. These two walls will contribute to
two non-trivial walls in $\overline{\mathfrak{D_{\mathfrak{p_{+,-}}}}}$
whose intersection contains $\overline{\gamma}(t)$, which is in contradiction
with that $\overline{\gamma}$ is a generic path and does not pass
through a joint in $\overline{\mathfrak{D_{\mathfrak{p_{+,-}}}}}$.

Given $i\in\{1,\dots, l\}$, let $D=\{(\mathfrak{d}_{1},g_{\mathfrak{d}_{1}}),\dots,(\mathfrak{d}_{j},g_{\mathfrak{d}_{j}})\}$
be the collection of all nontrivial walls in~$\mathfrak{D}_{\mathfrak{p_{+,-}}}$
whose support contains $q^{*}(\overline{\gamma}(t_{i}))$ and whose
intersection with $q^{*}(\overline{M}_{\mathbb{R}})$ has codimension~$1$ in $q^{*}(\overline{M}_{\mathbb{R}})$. Let $\delta=\min\{t_{i}-t_{i-1},t_{i+1}-t_{i}\}$.
There exists $\epsilon\in(0,\delta)$ such that we can find a~semi-circle
$C_{i}$ that has end points $q^{*}(\overline{\gamma}(t_{i}-\epsilon))$
and $q^{*}(\overline{\gamma}(t_{i}+\epsilon))$, crosses walls in
$D$ transversally and does not cross any non-trivial wall in $\mathfrak{D}_{\mathfrak{p}_{+,-}}\setminus D$.
It is possible to find such a semi-circle since $q^{*}(\bar{\gamma}(t_{i}))$
is contained in the interior of each wall in~$D$ and $\overline{\gamma}$
crosses no wall on $[t_{i}-\epsilon,t_{i})\cup(t_{i},t_{i}+\epsilon]$.
Replace $q^{*}(\overline{\gamma})|_{[t_{i}-\epsilon,t_{i}+\epsilon]}$
with~$C$. Then
\[
\overline{g}_{\gamma(t_{i})}(\overline{\mathfrak{D}_{\mathfrak{p}_{+,-}}}) =\overline{\mathfrak{p}_{C_{i},\mathfrak{D}_{\mathfrak{p}_{+,-}}}}.
\]
Repeating the perturbation for each time~$t_{i}$ and smoothing the
singularities at the end points of~$C_{i}$, we get a generic path~$\gamma$ with the same end points as~$\overline{\gamma}$ for $\mathfrak{D}_{\mathfrak{p}_{+,-}}$
such that $\mathfrak{p}_{\overline{\gamma}, \overline{\mathfrak{D}_{\mathfrak{p}_{+,-}}}}= \overline{\mathfrak{p}_{\gamma,\mathfrak{D}_{\mathfrak{p}_{+,-}}}}$.
The consistency of $\overline{\mathfrak{D}_{\mathfrak{p_{+,-}}}}$
thus follows from that of~$\mathfrak{D}_{\mathfrak{p}_{+,-}}$. If
we let $\overline{\gamma}$ be the path from~$\overline{\mathcal{C}}^{+}$
to~$\overline{\mathcal{C}}^{-}$ where~$\overline{\mathcal{C}}^{\pm}$
are the positive chamber and the negative chamber in $\overline{M}_{\mathbb{R}}$,
we see that~$\overline{\mathfrak{D_{\mathfrak{p_{+,-}}}}}$ is equivalent
to~$\mathfrak{D}_{\overline{\mathfrak{p}_{+,-}}}$.
\end{proof}
\begin{Corollary}\label{cor:2.22}Let $x$ be a general point in $\overline{M}_{\mathbb{R}}$, then
\[
\overline{g}_{x}(\mathfrak{D}_{\overline{\mathfrak{p_{+,-}}}}) =\prod_{\substack{(\mathfrak{d},g_{\mathfrak{d}})\in\mathfrak{D}_{\mathfrak{p}_{+,-}}\\
\mathfrak{d}\ni q^{*}(x)
}
}\overline{g_{\mathfrak{d}}},
\]
where $\overline{g_{\mathfrak{d}}}$ is the image of $g_{\mathfrak{d}}$ under the composition $\tilde{q}\colon G\rightarrow\overline{G}$.
\end{Corollary}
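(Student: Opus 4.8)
The plan is to obtain the identity as an essentially immediate consequence of Theorem~\ref{thm:2.21} once the walls of $\overline{\mathfrak{D}_{\mathfrak{p}_{+,-}}}$ passing through a general point $x\in\overline{M}_{\mathbb{R}}$ have been identified via Construction~\ref{2.1}. First I would invoke Theorem~\ref{thm:2.21}: $\overline{\mathfrak{D}_{\mathfrak{p}_{+,-}}}$ is consistent and lies in the equivalence class of $\mathfrak{D}_{\overline{\mathfrak{p}_{+,-}}}$, so by Definition~\ref{def:Two-scattering-diagrams} one has $\overline{g}_x(\mathfrak{D}_{\overline{\mathfrak{p}_{+,-}}})=\overline{g}_x(\overline{\mathfrak{D}_{\mathfrak{p}_{+,-}}})$ for every general point $x$ in $\overline{M}_{\mathbb{R}}$. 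It therefore remains to expand $\overline{g}_x(\overline{\mathfrak{D}_{\mathfrak{p}_{+,-}}})=\prod_{(\overline{\mathfrak{d}},\overline{g}_{\mathfrak{d}})\,:\,x\in\overline{\mathfrak{d}}}\overline{g}_{\mathfrak{d}}$ and to show this product ranges precisely over the images $\tilde{q}(g_{\mathfrak{d}})=\overline{g_{\mathfrak{d}}}$ of the walls $(\mathfrak{d},g_{\mathfrak{d}})$ of $\mathfrak{D}_{\mathfrak{p}_{+,-}}$ with $q^{*}(x)\in\mathfrak{d}$, working throughout with the representative of $\mathfrak{D}_{\mathfrak{p}_{+,-}}$ furnished by Lemma~\ref{lem: general-interior} that underlies Construction~\ref{2.1}.

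The heart of the argument is this matching of walls. By Construction~\ref{2.1} a wall of $\overline{\mathfrak{D}_{\mathfrak{p}_{+,-}}}$ is $(\overline{\mathfrak{d}},\tilde{q}(g_{\mathfrak{d}}))$ for a wall $(\mathfrak{d},g_{\mathfrak{d}})$ of $\mathfrak{D}_{\mathfrak{p}_{+,-}}$ for which $\mathfrak{d}\cap q^{*}(\overline{M}_{\mathbb{R}})$ has codimension one in $q^{*}(\overline{M}_{\mathbb{R}})$, and $x\in\overline{\mathfrak{d}}$ is equivalent to $q^{*}(x)\in\mathfrak{d}$. The content to verify is the reverse direction: for $x$ general, \emph{every} wall $\mathfrak{d}$ with $q^{*}(x)\in\mathfrak{d}$ satisfies the codimension-one condition, so that no wall through $q^{*}(x)$ is dropped and no spurious wall appears. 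Here I would combine the genericity of $x$ with Lemma~\ref{lem: general-interior}: in the chosen representative $q^{*}(x)$ lies in the relative interior of any wall $\mathfrak{d}$ containing it, so near $q^{*}(x)$ the cone $\mathfrak{d}$ fills out the hyperplane $n^{\perp}$ of its primitive normal $n$; since $x$ is general it lies on at most one rational hyperplane $\overline{n}_{0}^{\perp}$, which forces $q(n)\in\mathbb{Z}_{>0}\cdot\overline{n}_{0}$, and then $n^{\perp}\cap q^{*}(\overline{M}_{\mathbb{R}})=q^{*}(\overline{n}_{0}^{\perp})$ does have codimension one. Hence the walls of $\overline{\mathfrak{D}_{\mathfrak{p}_{+,-}}}$ whose support contains $x$ are in bijection with the walls $(\mathfrak{d},g_{\mathfrak{d}})$ of $\mathfrak{D}_{\mathfrak{p}_{+,-}}$ with $q^{*}(x)\in\mathfrak{d}$, the bijection carrying $g_{\mathfrak{d}}$ to $\tilde{q}(g_{\mathfrak{d}})$.

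Putting the pieces together yields $\overline{g}_x(\mathfrak{D}_{\overline{\mathfrak{p}_{+,-}}})=\prod_{\mathfrak{d}\ni q^{*}(x)}\tilde{q}(g_{\mathfrak{d}})=\prod_{\mathfrak{d}\ni q^{*}(x)}\overline{g_{\mathfrak{d}}}$, which is the assertion. I would close with the remark that the product is unambiguous even though $q^{*}(x)$ need not be general in $M_{\mathbb{R}}$: by Lemma~\ref{lem:2.19} all the relevant $g_{\mathfrak{d}}$ lie in the abelian subgroup $\exp(\widehat{\mathfrak{g}_{0}^{q^{*}(x)}})$, hence commute, and their images under the group homomorphism $\tilde{q}$ commute as well, so the two sides match term by term regardless of any ordering. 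The only genuine obstacle is the wall bookkeeping of the middle paragraph; once the codimension-one condition is seen to be automatic for general $x$, the corollary is a mechanical unwinding of Definition~\ref{def:Two-scattering-diagrams} and Theorem~\ref{thm:2.21}.
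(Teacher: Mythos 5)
Your proof is correct and follows exactly the route the paper intends: the corollary is stated without proof as an immediate consequence of Theorem~\ref{thm:2.21} together with the wall identification of Construction~\ref{2.1}, and your middle paragraph (genericity of $x$ plus Lemma~\ref{lem: general-interior} forcing the codimension-one condition for every wall through $q^{*}(x)$, with commutativity from Lemma~\ref{lem:2.19} making the product well defined) is precisely the bookkeeping the paper leaves implicit.
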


\section[Applying the quotient construction to cluster scattering diagrams]{Applying the quotient construction\\ to cluster scattering diagrams} \label{sec:Apply-quotient-constructions}

In this section, we will apply techniques developed in the previous
section to scattering diagrams arising from cluster theory. Given
a skew-symmetric seed~${\bf s}$, via exploiting the symmetry of~${\bf s}$,
we will construct a new seed~$\overline{{\bf s}}$. Readers familiar
with folding in cluster theory will recognize our construction as
a special class of the folding procedure. The new contribution of
this paper to the folding technique is the application to scattering
diagrams. For previous work on folding, see~\cite{Du08,FWZ}.

\subsection{Application of folding to cluster scattering diagrams}\label{subsec:Application-of-folding}

Fix a lattice $N$ of rank $r$ with a skew-symmetric $\mathbb{Z}$-bilinear
form $\{\,,\,\}\colon N\times N\rightarrow\mathbb{Z}$ and an index set $I=\{1,2,\dots,r\}$.
A \emph{seed} for $(N,\{\cdot,\cdot\})$ is a labelled
collection of elements of~$N$
\[
{\bf s}:=(e_{i}\,|\, i\in I)
\]
such that $\{e_{i}\,|\, i\in I\}$ is a basis of $N$. A choice of
seed~${\bf s}$ defines an exchange matrix~$\epsilon$ such that $\epsilon_{ij}=\{e_{i},e_{j}\}$.
Given a seed ${\bf s}$, define
\[
N^{+}=\bigg\{\sum_{i\in I}a_{i}e_{i}\,|\, a_{i}\in\mathbb{N},\sum_{i\in I}a_{i}>0\bigg\}.
\]

\begin{Remark}
For simplicity, we assume that we do not have frozen variables for
the entire paper. In general when we fix the lattice $N$, we also
fix a sublattice $N^{\circ}\subset N$ and positive integers $d_{i}$
for $i\in I$ such that $\{d_{i}e_{i}\,|\, i\in I\}$ is a basis for
$N^{\circ}$. In this paper, for simplicity, we assume that the original
seed ${\bf s}$ is \emph{simply laced}, that is, $d_{i}=1$ for all
$i\in I$. Later in this section, given a group $\Pi$ acting on $I$,
we will apply the folding technique to construct a new seed $\overline{{\bf s}}$
that will no longer be simply laced.
\end{Remark}

From now on, we always assume that we are working over the ground
field $\mathbb{C}$. Let us recall how cluster theory fits into the
framework in the previous section. Let $M={\rm Hom}(N,\mathbb{Z})$
be the dual lattice of $N$. Define the following algebraic torus
over $\mathbb{C}$:
\[
T_{M}=M\otimes_{\mathbb{Z}}\mathbb{G}_{m}={\rm Hom}(N,\mathbb{G}_{m})={\rm Spec}(\mathbb{C}[N]).
\]
The skew-symmetric form $\{\,,\,\}$ on $N$ gives the algebraic torus
$T_{M}$ a Poisson structure
\[
\big\{z^{n},z^{n'}\big\}=\{n,n'\}z^{n+n'}.
\]
Define $\mathfrak{g}=\bigoplus_{n\in N^{+}}\mathbb{C}\cdot z^{n}$.
The Poisson bracket $\{\,,\,\}$ on $\mathbb{C}[N]$ endows $\mathfrak{g}$
with a Lie bracket $[\,,\,]$: $[f,g]=-\{f,g\}$. Given $n$, $n'$ in
$N^{+}$, if $\{n,n'\}=0$, then $\{z^{n},z^{n'}\}=0$. Thus
$\mathfrak{g}$ is a $N^{+}$-graded Lie algebra skew-symmetric for
$\{\,,\,\}$ on~$N$. Let $\hat{\mathfrak{g}}$ be the pro-nilpotent Lie
algebra after completing~$\mathfrak{g}$. Let $G=\exp(\hat{\mathfrak{g}})$
be the pro-unipotent Lie group corresponding to $\hat{\mathfrak{g}}$.
The Lie group~$G$ acts on $\mathbb{C}[[z^{e_{1}},\dots,z^{e_{r}}]]$
via the Hamiltonian flow. Explicitly, an element $f$ in $\mathfrak{g}$
acts on $\mathbb{C}[[z^{e_{1}},\dots,z^{e_{r}}]]$ via the vector
field $\{\,,f\}$.

The seed ${\bf s}$ gives rises to a scattering diagram $\mathfrak{D}_{{\bf s}}$
whose set of initial walls are defined as follows:
\[
\mathfrak{D}_{{\bf s},{\rm in}}=\big\{ \big(e_{i}^{\perp},\exp\big({-}{\rm Li}_{2}\big({-}z^{e_{i}}\big)\big)\big)\,|\, i\in I\big\},
\]
where ${\rm Li}_{2}$ is the dilogarithm function ${\rm Li}_{2}(x)=\sum\limits_{k\geq1}\frac{x^{k}}{k^{2}}$.
Explicitly, $\exp\big({-}{\rm Li}_{2}\big({-}z^{e_{i}}\big)\big)$ acts as
the following automorphism:{\samepage
\[
z^{n}\mapsto z^{n}\big(1+z^{e_{i}}\big)^{\{n,e_{i}\}}.
\]
This is the coordinate free expression for the inverse of $\mathcal{X}$-cluster mutation.}

Let $\Pi$ be a group action on $I$ such that for any indices $i,j\in I$, for any $\pi_{1}$, $\pi_{2}$ in~$\Pi$,
\begin{gather}
\{e_{i},e_{j}\}=\{e_{\pi_{1}\cdot i},e_{\pi_{2}\cdot j}\}.\label{eq:-8}
\end{gather}
The group $\Pi$ acts naturally on $N$ as follows:
\[
\pi\cdot\sum_{i\in I}a_{i}e_{i}=\sum_{i\in I}a_{i}e_{\pi\cdot i}.
\]
 The condition~\eqref{eq:-8} is to guarantee that the skew-symmetric
pairing is independent of choice of representative in the orbit of~$\Pi$ acting on $N$. Let $\bar{I}$ be the set of orbits~$\Pi i$
of $I_{{\rm }}$ under the action of~$\Pi$. Consider the lattice
$\overline{N}$ with a basis $\{e_{\Pi i}\}_{\Pi i\in\overline{I}}$
indexed by $\bar{I}$. There is a natural quotient homomorphism between
lattices $q\colon N\rightarrow\overline{N}$ that sends $e_{i}$ to $e_{\Pi i}$.

Define a $\mathbb{Z}$-valued skew-symmetric bilinear form $\{\,,\,\}$
on $\overline{N}$ as follows: $\{e_{\Pi i},e_{\Pi j}\}=\{e_{i},e_{j}\}$.
By condition~\eqref{eq:-8}, the skew-symmetric form is well-defined.
Now we have the following fixed data:
\begin{itemize}\itemsep=0pt
\item The lattice $\overline{N}$ with the $\mathbb{Z}$-valued skew-symmetric bilinear form $\{\,,\,\}$.
\item The index set $\overline{I}$ parametrizing the set of orbits of~$I$ under the action of~$\Pi$.
\item For each $\Pi i\in\overline{I}$, a positive integer $d_{\Pi i}=\big|q^{-1}\{e_{\Pi i}\}\big|$.
\end{itemize}
Define the following seed $\bar{{\bf s}}=(e_{\Pi i}\,|\, \Pi i\in\bar{I})$.
Let $\overline{N}^{\circ}\subset N$ be the sublattice such that $\{d_{\Pi i}e_{\Pi i}\,|\, \allowbreak i\in\overline{I}\}$
is a basis. Unlike~\cite{GHKK}, we do not impose the condition that
the greatest common divisor of $\{d_{\Pi i}\}_{\Pi i\in\overline{I}}$
need to be~$1$. Notice that the new seed $\overline{{\bf s}}$ may
no longer be simply laced since $d_{\Pi i}$ may not be $1$. The
seed $\overline{{\bf s}}$ defines the exchange matrix $\overline{\epsilon}$
such that
\[
\overline{\epsilon}_{\Pi i\,\Pi j}=\{e_{\Pi i},e_{\Pi j}\}d_{\Pi j}.
\]
Take
\[
\overline{N}^{+}=\bigg\{\sum_{\Pi i\in\bar{I}}a_{\Pi i}e_{\Pi i}\,|\, a_{\Pi i}\geq0,\sum a_{\Pi i}>0\bigg\}.
\]
Let $\mathfrak{\bar{g}}\subset\mathbb{C}\big[\,\overline{N}\,\big]$ be the $\mathbb{C}$-vector
space with basis $z^{\bar{n}}$, $\bar{n}\in\overline{N}^{+}$. Define
a Lie bracket on $\bar{\mathfrak{g}}$ as follows:
\[
\big[z^{e_{\Pi i}},z^{e_{\Pi j}}\big]=-\{e_{\Pi i},e_{\Pi j}\}z^{e_{\Pi i}+e_{\Pi j}}.
\]
Define $\tilde{q}\colon \mathfrak{g}\rightarrow\overline{\mathfrak{g}}$
by $z^{n}\mapsto z^{\overline{n}}$ where $\bar{n}$ is the image
of $n$ under the quotient homomorphism $q\colon N\rightarrow\overline{N}$.
It is clear that $\tilde{q}$ is surjective. We check that $\tilde{q}$
is a~Lie-algebra homomorphism. Indeed,
\begin{align*}
\big[\tilde{q}\big(z^{n}\big),\tilde{q}\big(z^{n'}\big)\big] & =-\{\overline{n},\overline{n'}\}z^{\overline{n+n'}} =-\{n,n'\}z^{\overline{n+n'}}=\tilde{q}\big(\big[z^{n},z^{n'}\big]\big).
\end{align*}
For each order $k$, by our definition of $d\colon N^{+}\rightarrow\mathbb{Z}$
and $\overline{d}\colon \overline{N}^{+}\rightarrow\mathbb{Z}$ (cf.~\eqref{eq:-11},
\eqref{eq:-10}), $\tilde{q}$ restricts to a Lie algebra homomorphism
\[
\mathfrak{g}^{\leq k}=\mathfrak{g}/\mathfrak{g}^{>k}\rightarrow\overline{\mathfrak{g}}/\overline{\mathfrak{g}}^{>k}=\overline{\mathfrak{g}}^{\leq k},
\]
which induces a surjective Lie group homomorphism $G^{\leq k}\rightarrow\overline{G}^{\leq k}$.
Let $\tilde{q}^{k}\colon G\rightarrow\overline{G}^{\leq k}$ be the composition
of projection map $G\rightarrow G^{\leq k}$ with the above homomorphism.
Then we obtain the following Lie group homomorphism $\tilde{q}:=\underset{\leftarrow}{\lim}\,\tilde{q}^{k}\colon G\rightarrow\overline{G}$.
It is easy to check that both $\tilde{q}$ and the action of $\Pi$
are compatible with the quotient homomorphism $q\colon N\rightarrow\overline{N}$
(cf.~\eqref{eq:-12}).

By Theorem \ref{thm: uniqueness-incoming walls}, the seed $\overline{{\bf s}}$
gives rises to a consistent scattering diagram $\mathfrak{D}_{\overline{{\bf s}}}$
whose set of incoming walls are defined as follows:
\[
\mathfrak{D}_{\overline{{\bf s}},{\rm in}}=\big\{ \big(e_{\Pi i}^{\perp},\exp\big({-}d_{\Pi i}{\rm Li}_{2}\big({-}z^{e_{\Pi i}}\big)\big)\big)\,|\,\Pi i\in I\big\}.
\]
Let $\mathfrak{p}_{+,-}^{{\bf s}}$ be the unique group element corresponding
to $\mathfrak{D}_{{\bf s}}$ and $\mathfrak{p}_{+,-}^{{\bf \overline{s}}}$
that of $\mathfrak{D}_{\overline{{\bf s}}}$. We show that image of
$\mathfrak{p}_{+,-}^{{\bf s}}$ under the Lie group homomorphism $\tilde{q}\colon G\rightarrow\overline{G}$
is $\mathfrak{p}_{+.-}^{{\bf \overline{{\bf s}}}}$ so that results
proved in Section \ref{sec:A-quotient-construction} can be applied
to $\mathfrak{D}_{{\bf s}}$ and $\mathfrak{D}_{\overline{{\bf s}}}$.
\begin{Theorem}
\label{thm:3.2}The scattering diagram $\mathfrak{D}_{\overline{{\bf s}}}$
is equivalent to $\mathfrak{D}_{\overline{\mathfrak{p_{+,-}^{{\bf s}}}}}.$
\end{Theorem}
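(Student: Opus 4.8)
The plan is to realize $\mathfrak{D}_{\overline{{\bf s}}}$ as the folding of $\mathfrak{D}_{{\bf s}}$ in the sense of Construction~\ref{2.1}, and then invoke the two uniqueness results already available: Theorem~\ref{thm:2.21}, which says the folded diagram $\overline{\mathfrak{D}_{\mathfrak{p}_{+,-}^{{\bf s}}}}$ is consistent and sits in the equivalence class of $\mathfrak{D}_{\overline{\mathfrak{p}_{+,-}^{{\bf s}}}}$, and Theorem~\ref{thm: uniqueness-incoming walls}, which pins a consistent diagram down by its incoming walls. First I would record that the hypotheses of Section~\ref{sec:A-quotient-construction} hold here: the lattice quotient $q\colon N\to\overline{N}$, the group $\Pi$, and the homomorphism $\tilde{q}\colon G\to\overline{G}$ constructed above are compatible in the required sense, and $\tilde{q}(\mathfrak{p}_{+,-}^{{\bf s}})=\overline{\mathfrak{p}_{+,-}^{{\bf s}}}$ by construction. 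So it suffices to show $\mathfrak{D}_{\overline{{\bf s}}}$ is equivalent to $\overline{\mathfrak{D}_{\mathfrak{p}_{+,-}^{{\bf s}}}}$, and by Theorem~\ref{thm: uniqueness-incoming walls} this reduces to two things: that the incoming walls of $\overline{\mathfrak{D}_{\mathfrak{p}_{+,-}^{{\bf s}}}}$, after collecting walls with a common support, are exactly $\mathfrak{D}_{\overline{{\bf s}},{\rm in}}$, and that every other wall of $\overline{\mathfrak{D}_{\mathfrak{p}_{+,-}^{{\bf s}}}}$ is outgoing.

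For the first point I would work with the representative of $\mathfrak{D}_{{\bf s}}$ supplied by the scattering algorithm of~\cite{GS11} and adapted as in Lemma~\ref{lem: general-interior}, so that its incoming walls are precisely the hyperplanes $\big(e_{i}^{\perp},\exp({-}{\rm Li}_{2}({-}z^{e_{i}}))\big)$, $i\in I$, and all its other walls are outgoing. For such a hyperplane one computes $e_{i}^{\perp}\cap q^{*}\big(\overline{M}_{\mathbb{R}}\big)=q^{*}\big(e_{\Pi i}^{\perp}\big)$, which has codimension one, so Construction~\ref{2.1} yields the wall $\big(e_{\Pi i}^{\perp},\tilde{q}(\exp({-}{\rm Li}_{2}({-}z^{e_{i}})))\big)=\big(e_{\Pi i}^{\perp},\exp({-}{\rm Li}_{2}({-}z^{e_{\Pi i}}))\big)$, using $\tilde{q}(z^{e_{i}})=z^{e_{\Pi i}}$. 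Since the orbit $\Pi i$ has $d_{\Pi i}$ elements, all producing this same wall, and since $\overline{G}_{e_{\Pi i}}^{\parallel}$ is abelian, the product of the $d_{\Pi i}$ resulting group elements is $\exp({-}d_{\Pi i}{\rm Li}_{2}({-}z^{e_{\Pi i}}))$; these are precisely the walls of $\mathfrak{D}_{\overline{{\bf s}},{\rm in}}$.

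For the second point I would prove a small lemma: Construction~\ref{2.1} preserves the incoming/outgoing dichotomy. Given a wall $(\mathfrak{d},g_{\mathfrak{d}})$ of $\mathfrak{D}_{{\bf s}}$ with $\mathfrak{d}\subseteq n^{\perp}$ for primitive $n\in N^{+}$, let $\overline{n}_{0}$ be the primitive element of $\overline{N}^{+}$ with $q(n)=k\overline{n}_{0}$, $k\in\mathbb{Z}_{>0}$. Then $n^{\perp}\cap q^{*}\big(\overline{M}_{\mathbb{R}}\big)=q^{*}\big(\overline{n}_{0}^{\perp}\big)$, so the folded support $\overline{\mathfrak{d}}$ lies in $\overline{n}_{0}^{\perp}$; and because $\{\,,\,\}$ on $N$ is the pullback of $\{\,,\,\}$ on $\overline{N}$, the covector $\{\overline{n}_{0},\cdot\}\in\overline{M}_{\mathbb{R}}$ satisfies $q^{*}(\{\overline{n}_{0},\cdot\})=\tfrac{1}{k}\{n,\cdot\}$. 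Hence $\{\overline{n}_{0},\cdot\}\in\overline{\mathfrak{d}}$ exactly when $\{n,\cdot\}\in\mathfrak{d}$, so a folded wall is incoming exactly when the wall it comes from is. As the chosen representative of $\mathfrak{D}_{{\bf s}}$ has all non-incoming walls outgoing, every wall of $\overline{\mathfrak{D}_{\mathfrak{p}_{+,-}^{{\bf s}}}}$ outside $\mathfrak{D}_{\overline{{\bf s}},{\rm in}}$ is outgoing. Theorem~\ref{thm: uniqueness-incoming walls} then gives $\overline{\mathfrak{D}_{\mathfrak{p}_{+,-}^{{\bf s}}}}\sim\mathfrak{D}_{\overline{{\bf s}}}$, and chaining with Theorem~\ref{thm:2.21} yields $\mathfrak{D}_{\overline{{\bf s}}}\sim\mathfrak{D}_{\overline{\mathfrak{p}_{+,-}^{{\bf s}}}}$.

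The main obstacle I anticipate is precisely this last step: ruling out that an outgoing wall of $\mathfrak{D}_{{\bf s}}$ gets dragged onto a hyperplane $e_{\Pi i}^{\perp}$ and becomes a spurious incoming wall after folding. The covector identity $q^{*}(\{\overline{n}_{0},\cdot\})=\tfrac{1}{k}\{n,\cdot\}$ is what controls this, so the care lies in verifying it and in tracking which walls of the (infinite) diagram $\mathfrak{D}_{{\bf s}}$ meet $q^{*}\big(\overline{M}_{\mathbb{R}}\big)$ in codimension one; the rest is bookkeeping with ${\rm Li}_{2}$ and the abelian groups $\overline{G}_{\overline{n}_{0}}^{\parallel}$.
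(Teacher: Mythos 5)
Your proposal is correct and follows essentially the same route as the paper's proof: reduce via Theorem~\ref{thm:2.21} and Construction~\ref{2.1} to comparing incoming walls, check that the initial walls $e_{i}^{\perp}$ fold onto $e_{\Pi i}^{\perp}$ with the dilogarithm elements multiplying to the $d_{\Pi i}$-power (up to splitting), verify that folding preserves the incoming condition via the compatibility of $\{n,\cdot\}$ with $q^{*}(\{\overline{n}_{0},\cdot\})$, and conclude by Theorem~\ref{thm: uniqueness-incoming walls}. Your explicit lemma with the factor $\tfrac{1}{k}$ simply makes precise a step the paper leaves implicit, so no substantive difference.
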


\begin{proof}
Let $\mathfrak{D}_{\overline{\mathfrak{p_{+,-}^{{\bf s}}}},{\rm in}}$
be the set of incoming walls of $\mathfrak{D}_{\overline{\mathfrak{p_{+,-}^{{\bf s}}}}}.$
Given $(\overline{\mathfrak{d}},g_{\overline{\mathfrak{d}}})$ in
$\mathfrak{D}_{\overline{\mathfrak{p}_{+,-}^{{\bf s}}}}$ with $\overline{\mathfrak{d}}$
contained in $\overline{n_{0}}^{\perp}$ for a primitive element in
$N^{+}$, $(\overline{\mathfrak{d}},g_{\overline{\mathfrak{d}}})$
is contained in $\mathfrak{D}_{\overline{\mathfrak{p_{+,-}^{{\bf s}}}},{\rm in}}$
if and only if $\{\overline{n_{0}},\cdot\}$ is contained in $\overline{\mathfrak{d}}$.
Let $(\mathfrak{d},g_{\mathfrak{d}})$ be the wall in $\mathfrak{D}_{{\bf s}}$
such that $q^{*}(\overline{\mathfrak{d}})=\mathfrak{d}\bigcap q^{*}\left(\overline{M}_{\mathbb{R}}\right)$
and $\overline{g_{\mathfrak{d}}}=g_{\overline{\mathfrak{d}}}$. Then
$\{\overline{n_{0}},\cdot\}$ is contained in $\overline{\mathfrak{d}}$
if and only if $q^{*}(\{\overline{n_{0}},\cdot\})$ is contained in
$q^{*}(\overline{\mathfrak{d}})$, which is equivalent to that $\{n_{0},\cdot\}$
is contained in $\mathfrak{d}\bigcap q^*\big(\overline{M}_{\mathbb{R}}\big)$. Hence
$(\mathfrak{d},g_{\mathfrak{d}})$ is contained in $\mathfrak{D}_{{\bf s},{\rm in}}$
if $(\overline{\mathfrak{d}},g_{\overline{\mathfrak{d}}})$ is contained
in $\mathfrak{D}_{\overline{\mathfrak{p_{+,-}^{{\bf s}}}},{\rm in}}$.
Therefore each wall in~$\mathfrak{D}_{\overline{\mathfrak{p}_{+,-}},{\rm in}}$
comes from a wall in~$\mathfrak{D}_{{\bf s},{\rm in}}$. Given $\big(e_{i}^{\perp},\exp\big({-}{\rm Li}_{2}\big({-}z^{e_{i}}\big)\big)\big)$ in~$\mathfrak{D}_{{\bf s},{\rm in}}$,
\[
q^{*-1}\big[e_{i}^{\perp}\bigcap q^{*}\big(\overline{M}_{\mathbb{R}}\big)\big]=e_{\Pi i}^{\perp}.
\]
So
\begin{gather*}
\mathfrak{D}_{\overline{\mathfrak{p}_{+,-}},{\rm in}} =\big\{ \big(e_{\Pi i}^{\perp}, \overline{\exp\big({-}{\rm Li}_{2}\big({-}z^{e_{i}}\big)\big)}\big)\,|\, i\in I\big\}
=\big\{\big(e_{\Pi i}^{\perp}, \exp\big({-}{\rm Li}_{2}\big({-}z^{e_{\Pi i}}\big)\big)\big)\,|\, i\in I\big\}.
\end{gather*}
Therefore up to splitting of incoming walls in $\mathfrak{D}_{\overline{{\bf s}}}$,
$\mathfrak{D}_{\overline{\mathfrak{p}_{+,-}},{\rm in}}=\mathfrak{D}_{\overline{{\bf s}},{\rm in}}$.
By Theorem~\ref{thm: uniqueness-incoming walls}, $\mathfrak{D}_{\overline{{\bf s}}}$
and~$\mathfrak{D}_{\overline{\mathfrak{p_{+,-}^{{\bf s}}}}}$ are
equivalent.
\end{proof}

\subsection[An illustrative example: $A_{3}$ folded to $B_{2}$]{An illustrative example: $\boldsymbol{A_{3}}$ folded to $\boldsymbol{B_{2}}$}

Let $I=\{1,2,3\}$. Let $N$ be a lattice of rank $3$ with a basis
$e_{1}$, $e_{2}$, $e_{3}$ and a $\mathbb{Z}$-bilinear skew-symmetric form
$\{\,,\,\}$ such that
\[
\{e_{1},e_{2}\}=\{e_{3},e_{2}\}=1,\qquad\{e_{1},e_{3}\}=0,
\]
The labelled basis $(e_{i})_{i\in I}$ defines a seed ${\bf s}$ of
type $A_{3}$. It has exchange matrix
\[
\epsilon=\begin{bmatrix}0 & 1 & 0\\
-1 & 0 & -1\\
0 & 1 & 0
\end{bmatrix}.
\]
Let $\Pi=\mathbb{Z}_{2}$ act on $I$ by interchanging the index~$1$
and~$3$. Clearly the action of $\Pi$ satisfies the conditions~\eqref{eq:-8},
so we can run our construction. The quotient lattice~$\overline{N}$
is of rank~$2$ with a~basis~$e_{\Pi1}$,~$e_{\Pi2}$. The $\mathbb{Z}$-linear
skew symmetric form on~$\overline{N}$ is defined as follows:
\[
\{e_{\Pi1},e_{\Pi2}\}=1.
\]
The sublattice $\overline{N}^{\circ}\subset\overline{N}$ has a basis
$d_{\Pi1}e_{\Pi1},e_{\Pi2}$ where $d_{\Pi1}=2$. The labelled basis
$(e_{\Pi i})_{\Pi i\in\overline{I}}$ defines a seed $\overline{{\bf s}}$
of type $B_{2}$ with exchange matrix
\[
\overline{\epsilon}=\begin{bmatrix}0 & 1\\
-2 & 0
\end{bmatrix}.
\]

\begin{figure}[t]\centering
\begin{tikzpicture}[scale=0.35)]
\begin{scope}[shift={(-5,25,0)}] \draw (0,0,0)--(5,0,0); \draw [->] (0,0,0)--(0,2.5,0); \draw [->] (0,0,0)--(-5,2.5,0); \draw [->] (0,0,0)--(-5,5,0); \draw [->] (0,0,0)--(-5,0,0); \draw (0,0,0)--(0,-2.5,0); \draw (-5,-2.5,0) node {\small ${\mathfrak{D}_{\overline{\mathbf{s}}}}$};
\draw [right hook->] (0,-4,0) -- (0,-10,0); \end{scope}
\begin{scope}[shift={(12,25,0)}] \draw (0,3.5,0) node {\small ${e_{\Pi1}^{\perp}}$}; \draw (0,0,0)--(5,0,0); \draw [->] (0,0)--(0,2.5,0);
\draw [->] (0,0)--(-5,0,0); \draw (0,0,0)--(0,-2.5,0); \draw (-4,1,0) node {\small ${e_{\Pi2}^{\perp}}$}; \draw (-4,-2.5,0) node {\small ${\mathfrak{D}_{\overline{\mathbf{s}},\mathrm{in}}}$}; \draw [right hook->] (0,-4,0) -- (0,-10,0);
\end{scope}
\begin{scope}[shift={(-5,0,0)}]
\fill [gray!50,opacity=0.4] (10,7.5,10) -- (-10,7.5,-10) -- (-10,-7.5,-10)-- (10,-7.5,10); \draw [->] (-2,-9,0) to [out=100,in=180] (1,-6,0); \draw (-3,-10,0) node {\small$\{x=z\}$};
\fill[blue!50,opacity=0.8] (0,0,0) rectangle (5,5,0); \fill[blue!50,opacity=0.8] (0,0,0) -- (0,5,0) --(0,5,5) -- (0,0,5); \fill[blue!50,opacity=0.8] (0,0,0) -- (5,0,0) -- (5,0,5) -- (0,0,5);
\fill[blue!50,opacity=0.8] (0,0,0) -- (0,5,0) -- (0,10,-5) -- (0,5,-5); \fill[blue!50,opacity=0.8](0,0,0)--(5,0,0)--(5,5,-5)--(0,5,-5);
\fill[blue!50,opacity=0.8] (0,0,0) -- (5,0,0) --(5,-5,0) --(0,-5,0); \fill[blue!50,opacity=0.8] (0,0,0) -- (0,0,5) -- (0,-5,5)--(0,-5,0);
\fill [blue!50,opacity=0.8] (0,0,0) -- (0,5,0) -- (-5,10,0)-- (-5,5,0); \fill [blue!50,opacity=0.8] (0,0,0) -- (0,0,5) -- (-5,5,5) -- (-5,5,0);
\fill [blue!50,opacity=0.6](0,0,0)--(0,-5,0)--(0,-5,-5)--(0,0,-5); \fill [blue!50,opacity=0.6](0,0,0)--(0,0,-5)--(5,0,-5)--(5,0,0);
\fill [blue!50,opacity=0.6](0,0,0)--(-5,0,0)--(-5,-5,0)--(0,-5,0); \fill [blue!50,opacity=0.6](0,0,0)--(-5,0,0)--(-5,0,5)--(0,0,5);
\fill [blue!50,opacity=0.6] (0,0,0)--(-5,0,0)--(-10,5,0)--(-5,5,0); \fill [blue!50,opacity=0.6] (0,0,0)--(-5,0,0)--(-5,0,5)--(0,0,5);
\fill [blue!50,opacity=0.6](0,0,0)--(-5,5,0)--(-5,10,-5)--(0,5,-5);
\fill [blue!50,opacity=0.6](0,0,0)--(0,0,-5)--(0,5,-10)--(0,5,-5); \fill [blue!50,opacity=0.6](0,0,0)--(0,0,-5)--(5,0,-5)--(5,0,0);
\fill [blue!50,opacity=0.4](0,0,0)--(-5,5,0)--(-10,10,-5)--(-5,5,-5); \fill [blue!50,opacity=0.4](0,0,0)--(0,5,-5)--(-5,10,-10)--(-5,5,-5);
\fill [blue!50,opacity=0.4](0,0,0)--(0,0,-5)--(-5,5,-10)--(-5,5,-5); \fill [blue!50,opacity=0.4](0,0,0)--(0,5,-5)--(-5,10,-10)--(-5,5,-5);
\fill [blue!50,opacity=0.4](0,0,0)--(-5,0,0)--(-5,0,-5)--(0,0,-5);
\fill [blue!50,opacity=0.3] (0,0,0)--(-5,0,0)--(-10,5,-5)--(-5,5,-5);
\fill [blue!50,opacity=0.3] (0,0,0)--(0,0,-5)--(-5,5,-10)--(-5,5,-5);
\fill [blue!50,opacity=0.3] (0,0,0)--(-5,5,-5)--(-10,5,-5)--(-5,0,0); \fill [blue!50,opacity=0.3] (0,0,0)--(-5,5,-5)--(-10,10,-5)--(-5,5,0);
\draw [red!65] (0,0,0)--(5,0,5); \draw [->,red!65] (0,0,0)--(0,2.5,0); \draw [->,red!65] (0,0,0)--(-5,5,-5); \draw [->,red!65] (0,0,0)--(-5,2.5,-5); \draw [->,red!65] (0,0,0)--(-5,0,-5);
\draw [red!65] (0,0,0)--(0,-2.5,0); \end{scope}
\begin{scope}[shift={(13,0,0)}] \fill [gray!50,opacity=0.4] (10,7.5,10) -- (-10,7.5,-10) -- (-10,-7.5,-10)-- (10,-7.5,10); \fill[blue!50,opacity=0.8] (-5,-5,0) rectangle (5,5,0); \fill[blue!50,opacity=0.8] (-5,0,5) -- (-5,0,-5) --(5,0,-5) -- (5,0,5); \fill[blue!50,opacity=0.8] (0,-5,5) -- (0,-5,-5) -- (0,5,-5) -- (0,5,5);
\draw (2.3,7.5,0) node {\small${e_{1}^{\perp}} $}; \draw (5.5,5.5,0) node {\small${e_{3}^{\perp}} $}; \draw (5.5,-0.2,-5.5) node {\small${e_{2}^{\perp}} $}; \draw [red!65] (0,0,0)--(5,0,5); \draw [->,red!65] (0,0,0)--(0,2.5,0);
\draw [->,red!65] (0,0,0)--(-5,0,-5);
\draw [red!65] (0,0,0)--(0,-2.5,0); \end{scope} \end{tikzpicture} \caption{Embedding of scattering diagram ${\mathfrak{D}_{\overline{\mathbf{s}}}}$ of type $B_{2}$ into ${\mathfrak{D}_{\mathbf{s}}}$ of type $A_{3}$.} \label{Folding}
\end{figure}
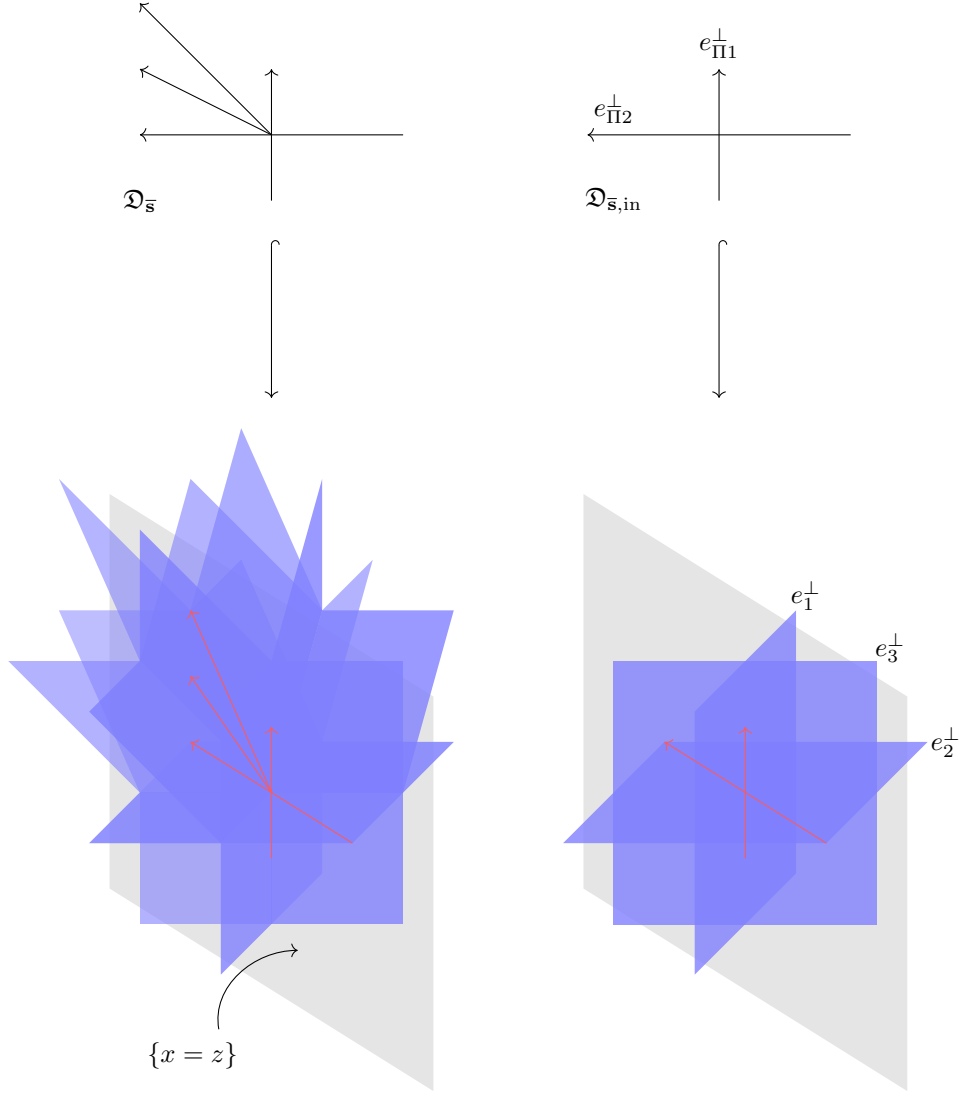

\looseness=-1 The group $\Pi$ acts on $\mathfrak{D}_{{\bf s}}$ in such a way that
on the level of the support of $\mathfrak{D}_{{\bf s}}$, its action
is reflection with respect to the plane $\{x-z=0\}\subseteq M_{\mathbb{R}}$.
The quotient map $q\colon N_{\mathbb{R}}\twoheadrightarrow\overline{N}_{\mathbb{R}}$
induces an embedding on the dual spaces $q^{*}\colon \overline{M}_{\mathbb{R}}\hookrightarrow M_{\mathbb{R}}$ that embeds $\mathfrak{D}_{\overline{{\bf s}}}$ into the plane $\{x=z\}$.
Through this embedding, we could recover $\mathfrak{D}_{\overline{{\bf s}}}$
via walls in $\mathfrak{D}_{{\bf s}}$ whose intersection with the
plane $\{x-z=0\}$ is dimension $1$. We can see this immediately
on the level of initial walls, observing that
\begin{gather*}
q^{*}(e_{\Pi1}^{\perp}) =e_{1}^{\perp}\bigcap\{x=z\}=e_{3}^{\perp}\bigcap\{x=z\}=e_{1}^{\perp}\cap e_{3}^{\perp},\\
\exp\big({-}d_{\Pi1}{\rm Li}_{2}\big({-}z^{e_{\Pi1}}\big)\big)=\overline{\exp\big({-}{\rm Li}_{2}\big({-}z^{e_{1}}\big)\big)\exp\big({-}{\rm Li}_{2}\big({-}z^{e_{3}}\big)\big)}
\end{gather*}
and
\begin{gather*}
q^{*}(e_{\Pi2}^{\perp})=e_{2}^{\perp}\bigcap\{x=z\},\qquad
\exp\big({-}{\rm Li}_{2}\big({-}z^{e_{\Pi2}}\big)\big)=\overline{\exp\big({-}{\rm Li}_{2}\big({-}z^{e_{2}}\big)\big)}.
\end{gather*}

\subsection{Application of folding to theta functions}

In this subsection, we want to apply the folding technique to canonical
bases for $\mathcal{A}_{{\rm prin}}$-cluster varieties. Let $\mathfrak{D}_{{\bf s}}^{\mathcal{A}_{{\rm prin}}}$ be the scattering diagram for the $\mathcal{A}_{{\rm prin}}$-cluster
variety associated to the initial seed ${\bf s}$ as defined in \cite[Construction~2.11]{GHKK}. Via the canonical projection map
\begin{align*}
\rho^{T}\colon \ M_{\mathbb{R}}\oplus N_{\mathbb{R}} & \rightarrow M_{\mathbb{R}},\\
(m,n) & \mapsto m
\end{align*}
walls of $\mathfrak{D}_{{\bf s}}^{\mathcal{A}_{{\rm prin}}}$ are inverse image of walls in $\mathfrak{D}_{{\bf s}}$ with the same attached Lie group elements. So to study wall-crossing in $\mathfrak{D}_{{\bf s}}^{\mathcal{A}_{{\rm prin}}}$ along a~path~$\gamma$, it suffices to study wall-crossing in $\mathfrak{D}_{{\bf s}}$ along the projection of path~$\gamma$. This is the philosophy behind this subsection.

Recall that $\overline{N}^{\circ}\subset\overline{N}$ is the sublattice
generated by $(d_{\Pi i}e_{\Pi i})_{\Pi i\in\overline{I}}$. Define
$\overline{M}^{\circ}={\rm Hom}\big(\overline{N}^{\circ},\allowbreak\mathbb{Z}\big)$.
The quotient homomorphism $q\colon N_{\mathbb{R}}\twoheadrightarrow\overline{N}_{\mathbb{R}}$
have a natural section
\begin{align*}
s\colon \ \overline{N}_{\mathbb{R}} & \hookrightarrow N_{\mathbb{R}}, \\
e_{\Pi i} & \mapsto\frac{1}{d_{\Pi i}}\sum_{i'\in\Pi i}e_{i'}.
\end{align*}
In particular, given $n$ in $N$, we have
\[
s(\overline{n})=\frac{1}{|\Pi n|}\sum_{n'\in\Pi n}n'.
\]
The section $s$ restricts to an embedding $s\colon \overline{N}^{\circ}\hookrightarrow N$
which induces an quotient homomorphism between dual lattices
\begin{align*}
s^{*}\colon \ M & \twoheadrightarrow\overline{M}^{\circ},\\
e_{i}^{*} & \mapsto\frac{e_{\Pi i}^{*}}{d_{\Pi i}}.
\end{align*}
In particular, notice that $s^{*}(\{n,\cdot\})=\{\overline{n},\cdot\}$ since for any~$i$ in~$I$,
\begin{align*}
\big\langle e_{\Pi i},s^{*}(\{n,\cdot\})\big\rangle =\big\langle s (e_{\Pi i} ),\{n,\cdot\}\big\rangle
 =\left\langle \frac{1}{|\Pi i|}\sum_{i'\in\Pi i}e_{i'}, \{ n,\cdot \} \right\rangle
 =\frac{1}{|\Pi i|}\sum_{i'\in\Pi i}\{n,\cdot e_{i'}\}
 =\{\overline{n},e_{\Pi i}\}.
\end{align*}
The quotient map of lattices
\begin{align*}
(s^{*},q)\colon \ M\oplus N & \twoheadrightarrow\overline{M}^{\circ}\oplus\overline{N},\\
(m,n) & \mapsto\left(s^{*}(m),q(n)\right)
\end{align*}
induces an embedding of algebraic torus $\phi_{(s^{*},q)}\colon T_{\overline{N}^{\circ}\oplus\overline{M}}\hookrightarrow T_{N\oplus M}$.
In the rest of this section, we often denote the image of $(m,n)$
in $M\oplus N$ under $(s^{*},q)$ as $(\overline{m},\overline{n})$
and use these images to represent elements in~$\overline{M}^{\circ}\oplus\overline{N}$.

Let $\sigma\subset M_{\mathbb{R}}\oplus N_{\mathbb{R}}$ be the cone
generated by $ \{ (\{e_{i},\cdot\},e_{i} ) \} _{i\in I}$
and $ \{ (e_{i}^{*},0 ) \} _{i\in I}$. The strictly
convex top-dimensional cone~$\sigma$ yields a monoid $P=\sigma\bigcap (M\oplus N)$
and a monoid ring~$\mathbb{C}[P]$. Complete $\mathbb{C}[P]$ with
respect to the ideal~$J$ generated by $P\setminus P^{\times}$, we
get $\widehat{\mathbb{C}[P]}$. Recall that in~\cite{GHKK}, the Lie
algebra~$\mathfrak{g}$ acts on~$\mathbb{C}[P]$ as \emph{log derivations}
as follows:
\begin{gather*}
z^{n'}\circ z^{(\{ n,\cdot\},n)} =\{ n,n'\} \cdot z^{(\{ n',\cdot\},n')}\cdot z^{(\{ n,\cdot\},n)},\\
z^{n'}\circ z^{(m,0)} =\langle n',m\rangle \cdot z^{(\{ n',\cdot\},n')}\cdot z^{(m,0)}.
\end{gather*}
Since the action of $z^{n'}$ is a derivation, it satisfies the
Leibniz's rule, i.e.,{\samepage
\[
z^{n'}\circ(f\cdot g)=\big(z^{n'}\circ f\big)\cdot g+f\cdot\big(z^{n'}\circ g\big).
\]
The action of $\mathfrak{g}$ on $\mathbb{C}[P]$ induces an action of the Lie group $G$ on $\widehat{\mathbb{C}[P]}$.}

Similarly, let $\overline{\sigma}\subset\overline{M}_{\mathbb{R}}\oplus\overline{N}_{\mathbb{R}}$
be the cone generated by $\{(\{e_{\Pi i},\cdot\},e_{\Pi i})\} _{\Pi i\in\overline{I}}$
and $\big\{\big(\frac{e_{\Pi i}^{*}}{d_{\Pi i}},0\big)\big\}_{\Pi i\in\overline{I}}$
which yields a~monoid $\overline{P}$ and a monoid ring $\mathbb{C}[\overline{P}]$.
Complete $\mathbb{C}[\overline{P}]$ with respect to the ideal $\overline{J}$
generated by $\overline{P}\setminus\overline{P}^{\times}$, we get
$\widehat{\mathbb{C}[\overline{P}]}$. Notice that $\phi_{(s^{*},q)}^{*}$
induce an surjection $\widehat{\mathbb{C}[P]}\twoheadrightarrow\widehat{\mathbb{C}[\overline{P}]}$.
The Lie algebra $\overline{\mathfrak{g}}$ acts on $\mathbb{C}[\overline{P}]$
as log derivations and we have an induced action of~$\overline{G}$
on~$\widehat{\mathbb{C}[\overline{P}]}$.

Now given a generic path $\gamma$ in $\mathfrak{D}_{{\bf s}}$, a
priori, the path-ordered product $\mathfrak{p}_{\gamma}$ only induces
an automorphism $\mathfrak{p}_{\gamma}\colon \widehat{\mathbb{C}[P]}\rightarrow\widehat{\mathbb{C}[P]}$.
However, it can happen that~$\mathfrak{p}_{\gamma}$ is rational,
i.e., it induces an automorphism of the function field~$\mathbb{C}(M\oplus N)$.
For example, let~$\mathfrak{p}_{\gamma}$ be the path starting from~$\mathcal{C}_{{\bf s}}^{+}$ and crossing only the wall
\[
\big(e_{i}^{\perp},\exp\big({-}{\rm Li}_{2}\big({-}z^{e_{i}}\big)\big)\big)\in\mathfrak{D}_{{\bf s},{\rm in}}.
\]
Then one can check that action of $\mathfrak{p}_{\gamma}$ on $\mathbb{C}(M\oplus N)$
is as follows:
\begin{gather*}
z^{(m,0)} \mapsto z^{(m,0)}\big(1+z^{(\{ e_{i},\cdot\},e_{i})}\big)^{\langle e_{i},m\rangle},\\
z^{(0,n)} \mapsto z^{(0,n)}.
\end{gather*}
This is the inverse mutation for $\mathcal{A}_{{\rm prin},{\bf s}}$.
Similarly, let $\mathfrak{p}_{\overline{\gamma}}$ be the path starting
from $\mathcal{C}_{{\bf \overline{s}}}^{+}$ and crossing only the
wall
\[
\big(e_{\Pi i}^{\perp},\exp\big({-}d_{\Pi i}{\rm Li}_{2}\big({-}z^{e_{\Pi i}}\big)\big)\big)\in\mathfrak{D}_{\overline{{\bf s}},{\rm in}}
\]
One can check that the action of $\mathfrak{p}_{\overline{\gamma}}$
on $\mathbb{C}\big(\overline{M}^{\circ}\oplus\overline{N}\big)$
agrees with the inverse mutation for $\mathcal{A}_{{\rm prin},\overline{{\bf s}}}$.

\begin{Remark}Unlike~\cite{GHKK}, we consider a single scattering diagram with
wall-crossing automorphisms in the Lie group. By letting the Lie group
acting on different completed rings, we get wall-crossings for both
$\mathcal{X}$ and $\mathcal{A}_{{\rm prin}}$ spaces.
\end{Remark}

At the end of this subsection, we will prove the following proposition:
\begin{Proposition}\label{prop:3.3}Given an element~$g$ in~$G$, denote by $\overline{g}$
its image under the quotient homomorphism $G\twoheadrightarrow\overline{G}$.
Let $G^{\Pi}$ be the subgroup of $G$ invariant under the action
of $\Pi$. Suppose~$g$ is contained in $G^{\Pi}$ and $g$ induces
an automorphism of the function field $\mathbb{C}(M\oplus N)$, then
$\overline{g}$ induces an automorphism of the functional field $\mathbb{C}\big(\overline{M}^{\circ}\oplus\overline{N}\big)$.
Moreover, we have the following commutative diagram
\[
%\xyR{4pc}\xyC{4pc}
\xymatrix{T_{\overline{N}^{\circ}\oplus\overline{M}}\ar@{^{(}->}[r]^{\phi_{(s^{*},q)}}\ar@{-->}[d]^{\varphi_{g}} & T_{N\oplus M}\ar@{-->}[d]^{\varphi_{\overline{g}}}\\
T_{\overline{N}^{\circ}\oplus\overline{M}}\ar@{^{(}->}[r]^{\phi_{(s^{*},q)}} & T_{N\oplus M},
}
\]
where $\varphi_{g}$ and $\varphi_{\overline{g}}$ are birational
automorphisms induced by $g$ and $\overline{g}$ respectively.
\end{Proposition}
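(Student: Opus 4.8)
The plan is to transfer all the structure through the surjection $\phi^{*}\colon\widehat{\mathbb{C}[P]}\twoheadrightarrow\widehat{\mathbb{C}[\overline{P}]}$, abbreviating $\phi_{(s^{*},q)}$ to $\phi$. The computational heart of the argument is the \emph{intertwining property}: for $g\in G^{\Pi}$,
\begin{equation*}
\phi^{*}\circ g=\overline{g}\circ\phi^{*}\qquad\text{as maps }\widehat{\mathbb{C}[P]}\to\widehat{\mathbb{C}[\overline{P}]}.
\end{equation*}
Writing $g=\exp(\hat{X})$ with $\hat{X}$ in the $\Pi$-invariant part of $\widehat{\mathfrak{g}}$ and exponentiating, it suffices to prove the infinitesimal version $\phi^{*}(\hat{X}\circ f)=\tilde{q}(\hat{X})\circ\phi^{*}(f)$; since each side is a derivation of $\widehat{\mathbb{C}[P]}$ into $\widehat{\mathbb{C}[\overline{P}]}$ along the homomorphism $\phi^{*}$, it is enough to check it on the monomials $z^{(\{n_{0},\cdot\},n_{0})}$ $(n_{0}\in N^{+})$ and $z^{(m,0)}$ $(\text{with }(m,0)\in P)$, since every $z^{p}$, $p\in P$, is a product of two such. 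Because $\Pi$ permutes the basis $\{z^{n}\}$ of $\mathfrak{g}$, the $\Pi$-invariant part of $\widehat{\mathfrak{g}}$ is topologically spanned by the orbit sums $\hat{X}_{n}=\sum_{n'\in\Pi n}z^{n'}$, for which $\tilde{q}(\hat{X}_{n})=|\Pi n|\,z^{\overline{n}}$. Inserting this into the explicit log-derivation formulas, the identity for $z^{(\{n_{0},\cdot\},n_{0})}$ reduces to $\sum_{n'\in\Pi n}\{n_{0},n'\}=|\Pi n|\,\{\overline{n_{0}},\overline{n}\}$, which holds by the $\Pi$-invariance of $\{\,,\,\}$ on $N$ and the fact that $\{\,,\,\}$ on $N$ is the pullback of $\{\,,\,\}$ on $\overline{N}$; and the identity for $z^{(m,0)}$ reduces to $\sum_{n'\in\Pi n}\langle n',m\rangle=|\Pi n|\,\langle\overline{n},s^{*}(m)\rangle$, which follows from $s(\overline{n})=\frac{1}{|\Pi n|}\sum_{n'\in\Pi n}n'$ and $s^{*}(\{n,\cdot\})=\{\overline{n},\cdot\}$. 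This is precisely where $g\in G^{\Pi}$ is used: for a non-invariant $g$, e.g.\ the crossing of a single initial wall, this intertwining already fails.

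Second, I would use the intertwining to show that $g$ preserves a subtorus. Applying it to $g$ gives $g(\ker\phi^{*})\subseteq\ker\phi^{*}$, and applying it to $g^{-1}\in G^{\Pi}$ gives the reverse inclusion, so $g(\ker\phi^{*})=\ker\phi^{*}$. Now $\phi^{*}$ is the $J$-adic completion of the surjection of coordinate rings $\mathbb{C}[P]\twoheadrightarrow\mathbb{C}[\overline{P}]$, whose kernel is the ideal of the closure $\overline{T'}$ of the subtorus $T':=\phi\big(T_{\overline{N}^{\circ}\oplus\overline{M}}\big)$ inside $\operatorname{Spec}\mathbb{C}[P]$; moreover $\overline{T'}$, being the image of $\operatorname{Spec}\mathbb{C}[\overline{P}]$ under the closed immersion dual to $\mathbb{C}[P]\twoheadrightarrow\mathbb{C}[\overline{P}]$, contains the closed point cut out by $J$ (the image of the one cut out by $\overline{J}$). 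Since $g$ is rational, its action on $\widehat{\mathbb{C}[P]}$ is the $J$-adic expansion of a birational automorphism $\varphi_{g}$ of $T_{N\oplus M}$ that is a local isomorphism near that closed point; hence $g(\ker\phi^{*})=\ker\phi^{*}$ forces $\varphi_{g}$ to preserve the formal neighbourhood of $\overline{T'}$ there, hence to preserve $\overline{T'}$, i.e.\ $\varphi_{g}(T')=T'$.

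Third, rationality of $\overline{g}$ and the commutative square come out together. For $p\in P$ with image $\overline{p}=(s^{*},q)(p)$ in $\overline{P}$,
\begin{equation*}
\overline{g}\big(z^{\overline{p}}\big)=\overline{g}\big(\phi^{*}z^{p}\big)=\phi^{*}\big(g\,z^{p}\big)=\phi^{*}\big(\varphi_{g}^{*}z^{p}\big),
\end{equation*}
and because $\varphi_{g}$ preserves $T'$ the rational function $\varphi_{g}^{*}(z^{p})\in\mathbb{C}(M\oplus N)$ is regular and nowhere vanishing along $T'$, so the last term makes sense and is the nonzero element $\varphi_{g}^{*}(z^{p})|_{T'}$ of $\mathbb{C}\big(\overline{M}^{\circ}\oplus\overline{N}\big)$. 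As the $z^{\overline{p}}$ and their inverses generate $\mathbb{C}\big[\overline{M}^{\circ}\oplus\overline{N}\big]$, this exhibits $\overline{g}$ as the expansion of a birational automorphism $\varphi_{\overline{g}}$ of $T_{\overline{N}^{\circ}\oplus\overline{M}}$, and the displayed computation says $\phi^{*}\circ\varphi_{g}^{*}=\varphi_{\overline{g}}^{*}\circ\phi^{*}$ on the $z^{p}$; equivalently, the birational automorphism of $T_{N\oplus M}$ induced by $g$ restricts, along $\phi$, to the one of $T_{\overline{N}^{\circ}\oplus\overline{M}}$ induced by $\overline{g}$---which is the commutativity of the square in the statement.

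The hard part, I expect, is not any individual computation but the bookkeeping needed to move between the formal picture, in which the intertwining of the first step is proved, and the rational picture, in which the proposition is phrased: one must verify that $\phi_{(s^{*},q)}^{*}$ is compatible with the $J$-adic and $\overline{J}$-adic completions and with their centres, that a rational $g$ is genuinely a local isomorphism near the centre, and that ``$g$ preserves the completed ideal of $\overline{T'}$'' is the same as ``$\varphi_{g}(T')=T'$'' for a birational map regular there.
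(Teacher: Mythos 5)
Your proposal follows the paper's own argument: the key intertwining identity $\phi_{(s^{*},q)}^{*}\big(g\cdot z^{(m,n)}\big)=\overline{g}\cdot z^{(\overline{m},\overline{n})}$ is exactly the paper's equation~\eqref{eq:-3}, verified in the same way on the orbit-sum basis $\sum_{n'\in\Pi n}z^{n'}$ of $\mathfrak{g}^{\Pi}$ via the log-derivation formulas and the identities $s(\overline{n})=\frac{1}{|\Pi n|}\sum_{n'\in\Pi n}n'$ and $s^{*}(\{n,\cdot\})=\{\overline{n},\cdot\}$. Your second and third steps simply spell out the passage from this completed-ring identity to the rationality of $\overline{g}$ and the commutative square (preservation of $\ker\phi_{(s^{*},q)}^{*}$, hence of the subtorus), which the paper treats as an immediate consequence; this is a correct elaboration rather than a different route.
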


To see the application of the above proposition, we need the following
lemma:
\begin{Lemma}
\label{lem:3.4}$\mathfrak{p}_{+,-}^{{\bf s}}$ is contained in $G^{\Pi}$.
\end{Lemma}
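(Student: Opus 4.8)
**The plan is to show that $\mathfrak{p}_{+,-}^{{\bf s}}$ commutes with the $\Pi$-action by reducing everything to the invariance of the initial scattering data.** Recall from Theorem~\ref{thm: uniqueness-incoming walls} that the equivalence class of a consistent scattering diagram is determined by its set of incoming walls, and from Lemma~\ref{lem:2.18} that $\mathfrak{p}_{+,-}\in G^{\Pi}$ if and only if the incoming part $\mathfrak{D}_{{\rm in}}=\{(n_{0}^{\perp},\Psi(\mathfrak{p}_{+,-})_{n_{0}})\}$ is an invariant subset under the $\Pi$-action on walls defined in~\eqref{eq:-15}. So it suffices to verify that $\mathfrak{D}_{{\bf s},{\rm in}}=\{(e_{i}^{\perp},\exp(-{\rm Li}_{2}(-z^{e_{i}})))\mid i\in I\}$ is $\Pi$-invariant. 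Here I should be slightly careful: $\mathfrak{D}_{{\bf s},{\rm in}}$ is the set of incoming walls of $\mathfrak{D}_{{\bf s}}$, but it is a priori only a subset of the full incoming wall set predicted by Lemma~\ref{lem:2.18}; however, by the construction of $\mathfrak{D}_{{\bf s}}$ via Theorem~\ref{thm: uniqueness-incoming walls}, these are exactly the walls of $\mathfrak{D}_{{\bf s}}$ that are full hyperplanes, hence they do coincide with the incoming walls, so the criterion of Lemma~\ref{lem:2.18} applies directly.

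\textbf{The key computation is the action of $\pi\in\Pi$ on a single initial wall.} By the definition~\eqref{eq:-15}, $\pi\cdot(e_{i}^{\perp},\exp(-{\rm Li}_{2}(-z^{e_{i}})))=((\pi^{-1})^{*}(e_{i}^{\perp}),\pi\cdot\exp(-{\rm Li}_{2}(-z^{e_{i}})))$. For the support, I use the identity $(\pi^{-1})^{*}(n_{0}^{\perp})=(\pi\cdot n_{0})^{\perp}$ already established in the excerpt (just after~\eqref{eq:-15}), which gives $(\pi^{-1})^{*}(e_{i}^{\perp})=(\pi\cdot e_{i})^{\perp}=e_{\pi\cdot i}^{\perp}$, using that $\Pi$ acts on $N$ by permuting the basis. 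For the group element, the $\Pi$-action on $\mathfrak{g}$ sends $z^{e_{i}}\mapsto z^{\pi\cdot e_{i}}=z^{e_{\pi\cdot i}}$ (this is condition~\eqref{eq:-12}, which holds here because $\Pi$ acts on $\mathfrak{g}=\bigoplus_{n\in N^{+}}\mathbb{C}\cdot z^{n}$ by $z^{n}\mapsto z^{\pi\cdot n}$ and this respects the Lie bracket by~\eqref{eq:-8}), so since $\exp$ and ${\rm Li}_{2}$ are defined by power series in $z^{e_{i}}$ with $\Pi$-invariant coefficients, $\pi\cdot\exp(-{\rm Li}_{2}(-z^{e_{i}}))=\exp(-{\rm Li}_{2}(-z^{e_{\pi\cdot i}}))$. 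Combining, $\pi\cdot(e_{i}^{\perp},\exp(-{\rm Li}_{2}(-z^{e_{i}})))=(e_{\pi\cdot i}^{\perp},\exp(-{\rm Li}_{2}(-z^{e_{\pi\cdot i}})))$, which is again an element of $\mathfrak{D}_{{\bf s},{\rm in}}$. Since $\pi$ permutes $I$, the map $i\mapsto\pi\cdot i$ is a bijection of $\mathfrak{D}_{{\bf s},{\rm in}}$ onto itself, so $\mathfrak{D}_{{\bf s},{\rm in}}$ is $\Pi$-invariant.

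\textbf{Concluding.} By Lemma~\ref{lem:2.18}, the $\Pi$-invariance of $\mathfrak{D}_{{\bf s},{\rm in}}=(\mathfrak{D}_{\mathfrak{p}_{+,-}^{{\bf s}}})_{{\rm in}}$ is equivalent to $\mathfrak{p}_{+,-}^{{\bf s}}\in G^{\Pi}$, which is exactly the claim. The only point requiring genuine attention is the bookkeeping at the start — confirming that the set $\mathfrak{D}_{{\bf s},{\rm in}}$ written down via Theorem~\ref{thm: uniqueness-incoming walls} really is the incoming wall set to which Lemma~\ref{lem:2.18} refers, i.e.\ that no further incoming (full-hyperplane) walls are hidden in $\mathfrak{D}_{{\bf s}}$; this follows because property~(3) of Theorem~\ref{thm: uniqueness-incoming walls} forces every wall of $\mathfrak{D}_{{\bf s}}\setminus\mathfrak{D}_{{\bf s},{\rm in}}$ to be outgoing, hence not a full hyperplane of the form $(n_{0}^{\perp},g)$ with $\{n_{0},\cdot\}\in n_{0}^{\perp}$. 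Everything else is the elementary observation that the $\Pi$-action permutes the initial data of $\mathfrak{D}_{{\bf s}}$, which is built into the hypotheses~\eqref{eq:-8} and the definition of the $\Pi$-action on $N$ and $\mathfrak{g}$.
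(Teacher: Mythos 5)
Your proposal is correct and follows essentially the same route as the paper: reduce via Lemma~\ref{lem:2.18} to the $\Pi$-invariance of $\mathfrak{D}_{{\bf s},{\rm in}}$, then observe that $\pi\cdot\big(e_{i}^{\perp},\exp\big({-}{\rm Li}_{2}\big({-}z^{e_{i}}\big)\big)\big)=\big(e_{\pi\cdot i}^{\perp},\exp\big({-}{\rm Li}_{2}\big({-}z^{e_{\pi\cdot i}}\big)\big)\big)$, so the $\Pi$-action permutes the initial walls. Your extra remark identifying $\mathfrak{D}_{{\bf s},{\rm in}}$ with the incoming-wall data appearing in Lemma~\ref{lem:2.18} is a bookkeeping point the paper leaves implicit, not a different argument.
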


\begin{proof}
By Lemma \ref{lem:2.18}, it suffices to check that $\mathfrak{D}_{{\bf s},{\rm in}}$
is invariant under the action of $\Pi$. Indeed,
\[
\mathfrak{D}_{{\rm {\bf s},{\rm in}}}=\big\{\big(e_{i}^{\perp},\exp\big({-}{\rm Li}_{2}\big({-}z^{e_{i}}\big)\big)\big)\,|\, i\in I=I_{{\rm uf}}\big\}.
\]
Given $\pi$ in $G$,
\[
\pi\cdot\big(e_{i}^{\perp},\exp\big({-}{\rm Li}_{2}\big({-}z^{e_{i}}\big)\big)\big)=\big(e_{\pi\cdot i}^{\perp},\exp\big({-}{\rm Li}_{2}\big({-}z^{e_{\pi\cdot i}}\big)\big)\big).
\]
Hence, $\mathfrak{D}_{{\bf s},{\rm in}}$ is invariant under the action of~$\Pi$.
\end{proof}

\begin{Proposition}\label{prop:Given-a-generic}Given a generic path $\overline{\gamma}$
in $\mathfrak{D}_{\overline{{\bf s}}}$, $q^{*}(\overline{\gamma})$
can be perturbed to a generic path $\gamma$ with the same end points
in $\mathfrak{D}_{{\bf s}}$ such that $\mathfrak{p}_{\gamma}$ is
contained in $G^{\Pi}$ and $\mathfrak{p}_{\overline{\gamma}}=\overline{\mathfrak{p}_{\gamma}}$
where $\overline{\mathfrak{p_{\gamma}}}$ is the image of $\mathfrak{p}_{\gamma}$
under the quotient homomorphism $G\twoheadrightarrow\overline{G}$.
\end{Proposition}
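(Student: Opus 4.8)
The plan is to take the generic path $\gamma$ already produced in the proof of Theorem~\ref{thm:2.21} from $q^{*}(\overline{\gamma})$ and to upgrade it to satisfy $\mathfrak{p}_{\gamma}\in G^{\Pi}$ by a symmetry argument whose inputs are Lemma~\ref{lem:3.4} and the consistency of $\mathfrak{D}_{{\bf s}}$. First I would record the geometric fact that $q^{*}(\overline{M}_{\mathbb{R}})$ is precisely the fixed subspace of the $\Pi$-action on $M_{\mathbb{R}}$: for $m\in M_{\mathbb{R}}$ and $\pi\in\Pi$ one has $\langle n,(\pi^{-1})^{*}m\rangle=\langle \pi^{-1}\cdot n,m\rangle$, so $(\pi^{-1})^{*}m=m$ for all $\pi$ if and only if $m$ is constant on the $\Pi$-orbits of the $e_{i}$, i.e.\ if and only if $m$ lies in the image of $q^{*}$. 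In particular the endpoints $q^{*}(\overline{\gamma}(0))$, $q^{*}(\overline{\gamma}(1))$ are $\Pi$-fixed, hence so are the endpoints of $\gamma$; and the perturbation construction in the proof of Theorem~\ref{thm:2.21}, combined with Theorem~\ref{thm:3.2} (identifying $\mathfrak{D}_{\overline{{\bf s}}}$ with $\overline{\mathfrak{D}_{\mathfrak{p}_{+,-}^{{\bf s}}}}$), already delivers $\mathfrak{p}_{\overline{\gamma}}=\overline{\mathfrak{p}_{\gamma}}$. Thus the only new content is $\mathfrak{p}_{\gamma}\in G^{\Pi}$.

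For that I would work modulo $\mathfrak{g}^{>k}$ for each order $k$ (so that finitely many walls occur) and show $\pi\cdot\mathfrak{p}_{\gamma}=\mathfrak{p}_{\gamma}$ for every $\pi\in\Pi$. Since $\mathfrak{p}_{+,-}^{{\bf s}}\in G^{\Pi}$ by Lemma~\ref{lem:3.4}, the Corollary to Proposition~\ref{prop:2.14} gives that $\pi\cdot\mathfrak{D}_{{\bf s}}$ is equivalent to $\mathfrak{D}_{{\bf s}}$. Next, the $\Pi$-action on $G$ is compatible with path-ordered products — the path-level analogue of Proposition~\ref{prop:2.14}, proved in the same way: $\Pi$ permutes the walls of any scattering diagram by \eqref{eq:-15} and carries each transverse crossing of $\gamma$ to the corresponding transverse crossing of $(\pi^{-1})^{*}\gamma$ against the translated wall — so $\pi\cdot\mathfrak{p}_{\gamma,\mathfrak{D}_{{\bf s}}}=\mathfrak{p}_{(\pi^{-1})^{*}\gamma,\,\pi\cdot\mathfrak{D}_{{\bf s}}}$. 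After a harmless further perturbation of $\gamma$, rel its endpoints and within the complement of the finitely many walls of $\mathfrak{D}_{{\bf s}}$ and of $\sigma\cdot\mathfrak{D}_{{\bf s}}$, $\sigma\in\Pi$ — which alters neither $\mathfrak{p}_{\gamma}$ nor $\overline{\mathfrak{p}_{\gamma}}$ — the path $(\pi^{-1})^{*}\gamma$ is generic for both $\pi\cdot\mathfrak{D}_{{\bf s}}$ and $\mathfrak{D}_{{\bf s}}$, so the Lemma following Definition~\ref{def:Two-scattering-diagrams} lets me replace $\pi\cdot\mathfrak{D}_{{\bf s}}$ by $\mathfrak{D}_{{\bf s}}$: $\mathfrak{p}_{(\pi^{-1})^{*}\gamma,\,\pi\cdot\mathfrak{D}_{{\bf s}}}=\mathfrak{p}_{(\pi^{-1})^{*}\gamma,\,\mathfrak{D}_{{\bf s}}}$. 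Finally $(\pi^{-1})^{*}\gamma$ has the same endpoints as $\gamma$ by the first paragraph, and $\mathfrak{D}_{{\bf s}}$ is consistent (Theorem~\ref{thm: uniqueness-incoming walls}), so $\mathfrak{p}_{(\pi^{-1})^{*}\gamma,\,\mathfrak{D}_{{\bf s}}}=\mathfrak{p}_{\gamma,\mathfrak{D}_{{\bf s}}}$. Chaining these equalities yields $\pi\cdot\mathfrak{p}_{\gamma}=\mathfrak{p}_{\gamma}$ at each order $k$, and since the transition maps $G^{\leq j}\to G^{\leq i}$ are $\Pi$-equivariant, passing to the inverse limit over $k$ puts $\mathfrak{p}_{\gamma}$ in $G^{\Pi}$.

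The heavy lifting — the perturbation and the identity $\mathfrak{p}_{\overline{\gamma}}=\overline{\mathfrak{p}_{\gamma}}$ — is inherited from Theorem~\ref{thm:2.21}, so the only delicate point is the genericity bookkeeping: one must arrange $(\pi^{-1})^{*}\gamma$ to be generic for $\mathfrak{D}_{{\bf s}}$ simultaneously for all $\pi\in\Pi$ without crossing any new walls, which is a routine transversality argument because, modulo order $k$, every scattering diagram in sight has finitely many walls. The one step that deserves to be written out carefully is the compatibility of the $\Pi$-action with path-ordered products, which should be stated and verified in the same spirit as Proposition~\ref{prop:2.14}.
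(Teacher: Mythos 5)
Your proposal is correct, but it proves the key point $\mathfrak{p}_{\gamma}\in G^{\Pi}$ by a genuinely different mechanism than the paper. The paper argues locally: it reduces to the case where $\overline{\gamma}$ makes a single wall-crossing at a general point $x$, notes via Lemma~\ref{lem:2.19} that all walls of $\mathfrak{D}_{{\bf s}}$ through $q^{*}(x)$ have commuting attached elements, uses Proposition~\ref{prop:2.14} together with Lemma~\ref{lem:3.4} to see that this finite set $D$ of walls is $\Pi$-invariant, and concludes that $\mathfrak{p}_{\gamma}=\prod_{l}g_{\mathfrak{d}_{l}}$ is $\Pi$-fixed because $\Pi$ merely permutes commuting factors. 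You argue globally: the endpoints of $\gamma$ lie in $q^{*}(\overline{M}_{\mathbb{R}})$, which you correctly identify as the $\Pi$-fixed subspace; path-ordered products are $\Pi$-equivariant, $\pi\cdot\mathfrak{p}_{\gamma,\mathfrak{D}_{{\bf s}}}=\mathfrak{p}_{(\pi^{-1})^{*}\gamma,\,\pi\cdot\mathfrak{D}_{{\bf s}}}$ (the sign check $\langle\pi n_{0},((\pi^{-1})^{*}\gamma)'\rangle=\langle n_{0},\gamma'\rangle$ makes this go through, and you rightly flag it as the step to write out); $\pi\cdot\mathfrak{D}_{{\bf s}}$ is equivalent to $\mathfrak{D}_{{\bf s}}$ by Lemma~\ref{lem:3.4} and the corollary to Proposition~\ref{prop:2.14}; and consistency plus fixed endpoints gives $\mathfrak{p}_{(\pi^{-1})^{*}\gamma,\mathfrak{D}_{{\bf s}}}=\mathfrak{p}_{\gamma,\mathfrak{D}_{{\bf s}}}$, closing the chain $\pi\cdot\mathfrak{p}_{\gamma}=\mathfrak{p}_{\gamma}$ order by order. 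Your route dispenses entirely with the abelian-joint analysis of Lemma~\ref{lem:2.19} and with the single-crossing reduction, at the modest cost of an extra simultaneous-genericity perturbation with respect to all translates $\sigma\cdot\mathfrak{D}_{{\bf s}}$ (harmless indeed, and in fact automatically so: by consistency any generic path with the same endpoints yields the same path-ordered product, so both $\mathfrak{p}_{\gamma}$ and $\overline{\mathfrak{p}_{\gamma}}=\mathfrak{p}_{\overline{\gamma}}$ are untouched); the paper's local argument, by contrast, yields the explicit formula $\mathfrak{p}_{\gamma}=\prod_{l}g_{\mathfrak{d}_{l}}$ and stays closer to the bookkeeping already set up in the proof of Theorem~\ref{thm:2.21}. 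Both rely on that proof, and on Theorem~\ref{thm:3.2}, for the identity $\mathfrak{p}_{\overline{\gamma}}=\overline{\mathfrak{p}_{\gamma}}$, and both work modulo $\mathfrak{g}^{>k}$ and pass to the limit, so no new gap is introduced there.
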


\begin{proof}It suffices to prove the statement up to each order~$k$. It follows
from the proof of Theorem~\ref{thm:2.21} that $q^{*}(\overline{\gamma})$
can be perturbed to a generic path $\gamma$ such that $\mathfrak{p}_{\overline{\gamma}}=\overline{\mathfrak{p}_{\gamma}}$.
So it remains to show that $\mathfrak{p}_{\gamma}$ is contained in
$G^{\Pi}$. Furthermore, it suffices to prove for the case where $\overline{\gamma}$
has only one wall-crossing. Indeed, suppose at time $t_{1}$, $\overline{\gamma}$
crosses a non-trivial wall in $\mathfrak{D}_{\overline{{\bf s}}}$.
Let $x=\overline{\gamma}(t_{1})$. By finiteness of $\mathfrak{D}_{\overline{{\bf s}}}$
up to order $k$, we could assume that $x$ is a general point in
$\overline{M}_{\mathbb{R}}$ and at time $t_{1}$, $\overline{\gamma}$
crosses a single wall in $\mathfrak{D}_{\overline{{\bf s}}}$. By
Lemma~\ref{lem:2.18}, $q^{*}(x)$ is not contained in the boundary
of any wall in $\mathfrak{D}_{{\bf s}}$. Let $D=\{(\mathfrak{d}_{1},g_{\mathfrak{d}_{1}}),\dots,(\mathfrak{d}_{j},g_{\mathfrak{d}_{j}})\}$
be the collection of all nontrivial walls in $\mathfrak{D}_{{\bf s}}$
whose support contains $q^{*}(x)$. For each~$l$, let $n_{l}$ be
the primitive element in $N^{+}$ such that $n_{l}^{\perp}\supset\mathfrak{d}_{l}$. By finiteness of $\mathfrak{D}_{{\bf s}}$ up to order $k$, we
could assume that walls in $D$ are contained in distinct hyperplanes
in $M_{\mathbb{R}}$ and given any general point $x_{l}$ in $\mathfrak{d}_{l}$,
\[
g_{x_{l}} (\mathfrak{D}_{{\bf s}} )=\big(\mathfrak{p}_{+,-}^{{\bf s}}\big)_{x_{l}}^{0}=g_{\mathfrak{d}_{l}},
\]
where
\[
\mathfrak{p}_{+,-}^{{\bf s}}=\big(\mathfrak{p}_{+,-}^{{\bf s}}\big)_{x_{l}}^{+}\circ\big(\mathfrak{p}_{+,-}^{{\bf s}}\big)_{x_{l}}^{_{0}}\circ\big(\mathfrak{p}_{+,-}^{{\bf s}}\big)_{x_{l}}^{-}
\]
 is the unique factorization of $\mathfrak{p}_{+,-}^{{\bf s}}$ with
respect to $x_{l}$ (cf.\ \cite[proof of Theorem~1.17]{GHKK}).
Since $\mathfrak{p}_{+,-}^{{\bf s}}$ is contained in~$G^{\Pi}$,
by Proposition~\ref{prop:2.14}, we could assume that~$D$ is invariant
under the action of~$\Pi$ on~$\mathfrak{D}_{{\bf s}}$. By Lemma~\ref{lem:2.18}, $q^{*}(x)$ is contained in the abelian joint of~$\mathfrak{D}_{{\bf s}}$. So $g_{\mathfrak{d}_{l}}$ $(1\leq l\leq j)$
commute with each other. Therefore, $\prod_{l}g_{\mathfrak{d}_{l}}$
is contained in~$G^{\Pi}$. It follows proof of Theorem~\ref{thm:2.21}
that we could assume that $\gamma$ crosses walls in~$D$ transversally
and does not cross any non-trivial wall in~$\mathfrak{D}_{{\bf s}}\setminus D$.
Therefore,
\begin{gather*}
\mathfrak{p}_{\gamma}=\prod_{l}g_{\mathfrak{d}_{l}}\in G^{\Pi}.\tag*{\qed}
\end{gather*}\renewcommand{\qed}{}
\end{proof}

Here is an important application of Proposition~\ref{prop:3.3} together
with Lemma~\ref{lem:3.4}. Let $\mathcal{A}_{{\rm prin},{\bf s}}$
(resp.~$\mathcal{A}_{{\rm prin},\overline{{\bf s}}}$ ) be the cluster
variety of type $\mathcal{A}_{{\rm prin}}$ using ${\bf s}$ (resp.~$\overline{{\bf s}}$) as the initial seed. The seed~$\overline{{\bf s}}$ gives us an identification
\[
\mathcal{A}_{{\rm prin},\overline{{\bf s}}}^{\vee}\big(\mathbb{R}^{T}\big)\simeq\overline{M}_{\mathbb{R}}\oplus\overline{N}_{\mathbb{R}}\text{ and }\mathcal{A}_{{\rm prin},\overline{{\bf s}}}^{\vee}\big(\mathbb{Z}^{T}\big)=\overline{M}^{\circ}\oplus\overline{N}.
\]
In~\cite{GHKK}, given an integer tropical point $(\overline{m},\overline{n})$
in $\mathcal{A}_{{\rm prin},\overline{{\bf s}}}^{\vee}\big(\mathbb{Z}^{T}\big)$,
there is a \emph{theta function} $\vartheta_{(\overline{m},\overline{n})}$
associated to it. Given a generic point~$Q$ in $\mathcal{A}_{{\rm prin},\overline{{\bf s}}}^{\vee}$,
that is, a point in $\mathcal{A}_{{\rm prin},\overline{{\bf s}}}^{\vee}\big(\mathbb{R}^{T}\big)\setminus\allowbreak{\rm supp}\big(\mathfrak{D}_{\overline{{\bf s}}}^{\mathcal{A}_{{\rm prin}}}\big)$
with irrational coordinates, denote by $\vartheta_{\left(\overline{m},\overline{n}\right),Q}$
the expansion of the theta function at $Q$. The set of all integer
tropical points $(\overline{m},\overline{n})$ such that $\vartheta_{(\overline{m},\overline{n}),Q}$
is a Laurent polynomial with non-negative integer coefficients (\emph{positive
Laurent polynomial} for short) for a generic point~$Q$ in~$\mathcal{C}_{\overline{{\bf s}}}^{+}$\footnote{We denote by $\mathcal{C}_{\overline{{\bf s}}}^{\pm}$ the pull-back
of positive and negative chamber in $\mathfrak{D}_{\overline{{\bf s}}}$
via~$\rho^{T}$.} is called the \emph{theta set} of $\mathcal{A}_{{\rm prin},\overline{{\bf s}}}$,
or $\Theta(\mathcal{A}_{{\rm prin},\overline{{\bf s}}})$. As we have
mentioned at the beginning of this subsection, wall-crossing along
a path $\gamma$ in $\mathfrak{D}_{\overline{{\bf s}}}^{\mathcal{A}_{{\rm prin}}}$
is the same as wall-crossing along the projection of~$\gamma$ under
$\rho^{T}$ in $\mathfrak{D}_{\overline{{\bf s}}}$. In particular,
let~$\gamma$ be a generic path with initial point in~$\mathcal{C}_{\overline{{\bf s}}}^{+}$
and final point in~$\mathcal{C}_{\overline{{\bf s}}}^{-}$, then
\[
\mathfrak{p}_{\gamma,\mathfrak{D}_{\overline{{\bf s}}}^{\mathcal{A}_{{\rm prin}}}}=\mathfrak{p}_{+,-}^{\overline{{\bf s}}}.
\]
Recall that $\mathcal{C}_{\overline{{\bf s}}}^{+}\big(\mathbb{Z}^{T}\big)$
is always contained in $\Theta(\mathcal{A}_{{\rm prin},\overline{{\bf s}}})$.
Since $\Theta(\mathcal{A}_{{\rm prin},\overline{{\bf s}}})$ is closed
under addition after the identification $\mathcal{A}_{{\rm prin},\overline{{\bf s}}}^{\vee}\big(\mathbb{Z}^{T}\big)\simeq\overline{M}^{\circ}\oplus\overline{N}$
by Theorem 7.5 of \cite{GHKK}, we conclude that to prove $\Theta(\mathcal{A}_{{\rm prin},\overline{{\bf s}}})=\mathcal{A}_{{\rm prin},\overline{{\bf s}}}^{\vee}\big(\mathbb{Z}^{T}\big)$,
it suffices to prove that $\mathcal{C}_{{\bf \overline{{\bf s}}}}^{-}\big(\mathbb{Z}^{T}\big)$
is contained in $\Theta(\mathcal{A}_{{\rm prin},\overline{{\bf s}}})$.
Given $\left(\overline{m},0\right)$ in $\mathcal{C}_{\overline{{\bf s}}}^{-}\big(\mathbb{Z}^{T}\big)$,
$(\overline{m},0)$ is contained in $\Theta(\mathcal{A}_{{\rm prin},\overline{{\bf s}}})$
if and only if $\vartheta_{(\overline{m},0),Q}$ is a positive Laurent
polynomial for a generic point $Q$ in $\mathcal{C}_{\overline{{\bf s}}}^{+}$. Let $Q_{0}$ be a generic point in $\mathcal{C}_{\overline{{\bf s}}}^{-}$.
We have $\vartheta_{(\overline{m},0),Q_{0}}=z^{(\overline{m},0)}$.
By \cite[Theorem 3.5]{GHKK}, for $(\overline{m},0)$ in $\mathcal{C}_{\overline{{\bf s}}}^{-}\big(\mathbb{Z}^{T}\big)$,
\[
\vartheta_{(\overline{m},0),Q}=\mathfrak{p}_{-,+}^{\overline{{\bf s}}}(\vartheta_{(\overline{m},0),Q_{0}})=\mathfrak{p}_{-,+}^{\overline{{\bf s}}}\big(z^{(\overline{m},0)}\big).
\]
Now suppose $\Theta(\mathcal{A}_{{\rm prin},{\bf s}})=\mathcal{A}_{{\rm prin},{\bf s}}^{\vee}\big(\mathbb{Z}^{T}\big)$. In particular, $\mathfrak{p}_{-,+}^{{\bf s}}$ induces an automorphism
of the function field $\mathbb{C}(M\oplus N)$ and $\mathfrak{p_{-,+}^{{\bf s}}}\big(z^{(m,0)}\big)$
is a positive Laurent polynomial. Thus, by Lemma~\ref{lem:3.4}, $\mathfrak{p}_{-,+}^{{\bf s}}$
satisfies the assumption in Proposition~\ref{prop:3.3} and we have
the following commutative diagram:
\begin{gather}\begin{split}&
%\xyR{4pc}\xyC{4pc}
\xymatrix{\mathbb{C}(M\oplus N)\ar@{->>}[r]^{\phi_{(s^{*},q)}^{*}}\ar@{-->}[d]^{\mathfrak{p}_{-,+}^{{\bf s}}} & \mathbb{C}\big(\overline{M}^{\circ}\oplus\overline{N}\big)\ar@{-->}[d]^{\mathfrak{p}_{-,+}^{\overline{{\bf s}}}}\\
\mathbb{C}(M\oplus N)\ar@{->>}[r]^{\phi_{(s^{*},q)}^{*}} & \mathbb{C}\big(\overline{M}^{\circ}\oplus\overline{N}\big).
}\end{split}
\label{eq:-9}
\end{gather}
Given $(m,0)$ in $\mathcal{C}_{{\bf s}}^{-}\big(\mathbb{Z}^{T}\big)$, if
$\mathfrak{p}_{-,+}^{{\bf s}}\big(z^{(m,0)}\big)$ is a positive Laurent polynomial,
then $\mathfrak{p}_{-,+}^{\overline{{\bf s}}}\big(z^{(\overline{m},0)}\big)=\phi_{(s^{*},q)}^{*}\circ\mathfrak{p_{-,+}^{{\bf s}}}\big(z^{(m,0)}\big)$
is also a~positive Laurent polynomial and therefore $(\overline{m},0)$
is contained in~$\Theta(\mathcal{A}_{{\rm prin},\overline{{\bf s}}})$.
Since $\Theta(\mathcal{A}_{{\rm prin},\overline{{\bf s}}})$ is closed
under translation by $\overline{N}$, if $(\overline{m},0)$ is in
$\Theta(\mathcal{A}_{{\rm prin},\overline{{\bf s}}})$ for all $(\overline{m},0)$
in $\mathcal{C}_{\overline{{\bf s}}}^{+}$, then $\mathcal{C}_{\overline{{\bf s}}}^{+}$
is contained in $\Theta(\mathcal{A}_{{\rm prin},\overline{{\bf s}}}^{\vee})$.
Hence we prove the main theorem of this subsection:
\begin{Theorem}\label{thm:3.5}If $\Theta(\mathcal{A}_{{\rm prin},{\bf s}})=\mathcal{A}_{{\rm prin},{\bf s}}^{\vee}\big(\mathbb{Z}^{T}\big)$
holds, then $\Theta(\mathcal{A}_{{\rm prin},\overline{{\bf s}}})=\mathcal{A}_{{\rm prin},\overline{{\bf s}}}^{\vee}\big(\mathbb{Z}^{T}\big)$.
\end{Theorem}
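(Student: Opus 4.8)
The plan is to reduce the assertion to a positivity statement about the negative chamber and then transport it across the folding quotient, using the machinery of Section~\ref{sec:A-quotient-construction} and Section~\ref{sec:Apply-quotient-constructions}. First I would invoke two structural facts from \cite{GHKK}: after identifying $\mathcal{A}_{{\rm prin},\overline{{\bf s}}}^{\vee}\big(\mathbb{Z}^{T}\big)\simeq\overline{M}^{\circ}\oplus\overline{N}$ via the seed $\overline{{\bf s}}$, the cone of integer points $\mathcal{C}_{\overline{{\bf s}}}^{+}\big(\mathbb{Z}^{T}\big)$ of the positive chamber is always contained in $\Theta(\mathcal{A}_{{\rm prin},\overline{{\bf s}}})$, and $\Theta(\mathcal{A}_{{\rm prin},\overline{{\bf s}}})$ is closed under addition (Theorem~7.5 of \cite{GHKK}). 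Since every integer tropical point is a sum of a point of $\mathcal{C}_{\overline{{\bf s}}}^{+}\big(\mathbb{Z}^{T}\big)$ and a point of $\mathcal{C}_{\overline{{\bf s}}}^{-}\big(\mathbb{Z}^{T}\big)$, and since $\Theta$ is invariant under translation by $\overline{N}$, it suffices to show that $\vartheta_{(\overline{m},0),Q}$ is a positive Laurent polynomial for every $(\overline{m},0)\in\mathcal{C}_{\overline{{\bf s}}}^{-}\big(\mathbb{Z}^{T}\big)$ and a generic point $Q$ in $\mathcal{C}_{\overline{{\bf s}}}^{+}$.

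Next I would make the relevant theta functions explicit. At a generic basepoint $Q_{0}$ of the negative chamber one has $\vartheta_{(\overline{m},0),Q_{0}}=z^{(\overline{m},0)}$, so by Theorem~3.5 of \cite{GHKK} the expansion at a generic point $Q$ of the positive chamber is $\vartheta_{(\overline{m},0),Q}=\mathfrak{p}_{-,+}^{\overline{{\bf s}}}\big(z^{(\overline{m},0)}\big)$; hence the point lies in $\Theta(\mathcal{A}_{{\rm prin},\overline{{\bf s}}})$ precisely when $\mathfrak{p}_{-,+}^{\overline{{\bf s}}}\big(z^{(\overline{m},0)}\big)$ is a positive Laurent polynomial. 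The hypothesis $\Theta(\mathcal{A}_{{\rm prin},{\bf s}})=\mathcal{A}_{{\rm prin},{\bf s}}^{\vee}\big(\mathbb{Z}^{T}\big)$ supplies the corresponding statement upstairs: $\mathfrak{p}_{-,+}^{{\bf s}}$ is rational, i.e.\ it induces an automorphism of the function field $\mathbb{C}(M\oplus N)$, and $\mathfrak{p}_{-,+}^{{\bf s}}\big(z^{(m,0)}\big)$ is a positive Laurent polynomial for every $(m,0)\in\mathcal{C}_{{\bf s}}^{-}\big(\mathbb{Z}^{T}\big)$.

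I would then link the two by the folding dictionary. By Lemma~\ref{lem:3.4}, $\mathfrak{p}_{+,-}^{{\bf s}}\in G^{\Pi}$, hence so is its inverse $\mathfrak{p}_{-,+}^{{\bf s}}$, and by Theorem~\ref{thm:3.2} (with Proposition~\ref{prop:Given-a-generic}) its image under $\widetilde{q}\colon G\twoheadrightarrow\overline{G}$ is $\mathfrak{p}_{-,+}^{\overline{{\bf s}}}$. Therefore Proposition~\ref{prop:3.3} applies to $g=\mathfrak{p}_{-,+}^{{\bf s}}$ and yields the commutative square whose horizontal arrows are the surjection $\phi_{(s^{*},q)}^{*}\colon \mathbb{C}(M\oplus N)\twoheadrightarrow\mathbb{C}\big(\overline{M}^{\circ}\oplus\overline{N}\big)$ and whose vertical arrows are $\mathfrak{p}_{-,+}^{{\bf s}}$ and $\mathfrak{p}_{-,+}^{\overline{{\bf s}}}$. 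Choosing $(m,0)$ with $(s^{*},q)(m,0)=(\overline{m},0)$, which is possible since $s^{*}$ is surjective, the square gives $\mathfrak{p}_{-,+}^{\overline{{\bf s}}}\big(z^{(\overline{m},0)}\big)=\phi_{(s^{*},q)}^{*}\big(\mathfrak{p}_{-,+}^{{\bf s}}\big(z^{(m,0)}\big)\big)$. As $\phi_{(s^{*},q)}^{*}$ sends monomials to monomials and has a non-negative effect on coefficients, the image of a positive Laurent polynomial is again a positive Laurent polynomial, so $(\overline{m},0)\in\Theta(\mathcal{A}_{{\rm prin},\overline{{\bf s}}})$. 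Translating by $\overline{N}$, combining with $\mathcal{C}_{\overline{{\bf s}}}^{+}\big(\mathbb{Z}^{T}\big)\subset\Theta$ and closing under addition then gives $\Theta(\mathcal{A}_{{\rm prin},\overline{{\bf s}}})=\mathcal{A}_{{\rm prin},\overline{{\bf s}}}^{\vee}\big(\mathbb{Z}^{T}\big)$.

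I do not expect any single step to be a serious obstacle once the tools of the previous sections are in hand; the real content is the coherence of the folding dictionary — that $\mathfrak{p}_{-,+}^{\overline{{\bf s}}}$ genuinely is the $\widetilde{q}$-image of $\mathfrak{p}_{-,+}^{{\bf s}}$ acting on the smaller completed ring, which is exactly what Theorem~\ref{thm:3.2}, Proposition~\ref{prop:Given-a-generic} and Proposition~\ref{prop:3.3} (together with $\mathfrak{p}_{-,+}^{{\bf s}}\in G^{\Pi}$ from Lemma~\ref{lem:3.4}) are built to guarantee. The one point that must be checked by hand is that the quotient $\phi_{(s^{*},q)}^{*}$ preserves the positive cone of Laurent polynomials — distinct monomials may collapse under it, but only by summing non-negative coefficients, so positivity cannot be destroyed — and that the basepoint value $\vartheta_{(\overline{m},0),Q_{0}}=z^{(\overline{m},0)}$ matches the upstairs expansion, so that passing to the quotient introduces no spurious terms.
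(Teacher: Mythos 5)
Your proposal is correct and follows essentially the same route as the paper: reduce to positivity of $\vartheta_{(\overline{m},0),Q}=\mathfrak{p}_{-,+}^{\overline{{\bf s}}}\big(z^{(\overline{m},0)}\big)$ for the negative chamber, then use Lemma~\ref{lem:3.4} and Proposition~\ref{prop:3.3} (via Theorem~\ref{thm:3.2}) to write this as $\phi_{(s^{*},q)}^{*}\circ\mathfrak{p}_{-,+}^{{\bf s}}\big(z^{(m,0)}\big)$, which inherits positivity, and conclude with closure of $\Theta$ under addition and translation by $\overline{N}$. The only additions you make beyond the paper's argument are harmless explicit checks (surjectivity of $s^{*}$ and that $\phi_{(s^{*},q)}^{*}$ preserves positive Laurent polynomials), which the paper leaves implicit.
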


The rest of this subsection is devoted to the proof of Proposition~\ref{prop:3.3}. We have $G^{\Pi}=\underleftarrow{\lim}\exp\big(\big(\mathfrak{g}^{\Pi}\big)^{\leq k}\big)$.
Notice that $\mathfrak{g}^{\Pi}$ has a set of basis element as follows:
\[
\mathfrak{g}^{\Pi}=\bigoplus_{\Pi n\,|\, n\in N^{+}}\mathbb{C}\cdot\bigg(\sum_{n'\in\Pi n}z^{n'}\bigg).
\]
Given $(m,n)$ in $M\oplus N$, the Lie groups $G$ and $\overline{G}$
acts on $z^{(m,n)}\widehat{\mathbb{C}[P]}$ and $z^{(\overline{m},\overline{n})}\widehat{\mathbb{C}[\overline{P}}]$
respectively. Moreover, $\phi_{\left(s^{*},q\right)}^{*}$ induce
an surjection $z^{(m,n)}\widehat{\mathbb{C}[P]}\twoheadrightarrow z^{(\overline{m},\overline{n})}\widehat{\mathbb{C}[\overline{P}]}$. Given any~$g$ in~$G^{\Pi}$, we want to show that
\begin{gather}
\overline{g}\cdot z^{(\overline{m},\overline{n})}=\phi_{(s^{*},q)}^{*}\big(g\cdot z^{(m,n)}\big).\label{eq:-3}
\end{gather}
It suffices to show that the equation~\eqref{eq:-3} is true for basis element $\sum\limits_{n'\in\Pi n}z^{n'}$ in~$\mathfrak{g}^{\Pi}$. Indeed,
\begin{gather*}
 \phi_{(s^{*},q)}^{*}\left[\left(\sum_{n'\in\Pi n}z^{n'}\right)\circ z^{(m,n)}\right]
= \phi_{(s^{*},q)}^{*}\left[z^{(m,n)}\cdot\left(\sum_{n'\in\Pi n}\langle n',m\rangle z^{(\{n',\cdot\},n')}\right)\right]\\
\hphantom{\phi_{(s^{*},q)}^{*}\left[\left(\sum_{n'\in\Pi n}z^{n'}\right)\circ z^{(m,n)}\right]}{} = z^{(\overline{m},\overline{n})}\cdot z^{(\{ \overline{n},\cdot\},\overline{n})}\cdot\sum_{n'\in\Pi n} \langle n',m \rangle,
\end{gather*}
and
\begin{align*}
\big(|\Pi n|\cdot z^{\overline{n}}\big)\circ\big[\phi_{(s^{*},q)}^{*}\big(z^{(m,n)}\big)\big] &
=\big(|\Pi n|\cdot z^{\overline{n}}\big)\circ z^{(\overline{m},\overline{n})}
 =|\Pi n|\cdot z^{(\{ \overline{n},\cdot\},\overline{n})}\langle \overline{n},s^{*}(m)\rangle z^{(\overline{m},\overline{n})}\\
 & =|\Pi n|\cdot z^{(\{ \overline{n},\cdot\},\overline{n})}\langle s(\overline{n}),m\rangle z^{(\overline{m},\overline{n})}\\
 & =|\Pi n|\cdot z^{(\{ \overline{n},\cdot\},\overline{n})}\left\langle \frac{1}{|\Pi n|}\sum_{n'\in\Pi n}n',m\right\rangle z^{(\overline{m},\overline{n})}\\
 & =z^{(\overline{m},\overline{n})}\cdot z^{(\{ \overline{n},\cdot\},\overline{n})}\cdot\sum_{n'\in\Pi n}\langle n',m\rangle.
\end{align*}
Now given $g$ in $G^{\Pi}$, if $g$ induces an automorphism of $\mathbb{C}(M\oplus N)$, the equation~\eqref{eq:-3} implies that~$\overline{g}$ induces an automorphism of $\mathbb{C}\big(\overline{M}^{\circ}\oplus\overline{N}\big)$. Thus we proved Proposition~\ref{prop:3.3}.

\section[Building $\mathcal{A}_{{\rm prin}}$ using all chambers in the scattering diagram]{Building $\boldsymbol{\mathcal{A}_{{\rm prin}}}$ using all chambers in the scattering diagram} \label{sec:Building--using}

Let $\mathbf{s}_{0}$ be an initial seed. In this section, we allow
seed data of ${\bf s}_{0}$ to be general, that is, there can be frozen
variables and $N^{\circ}$ can be properly contained in~$N$. The
algebraic torus $T_{N^{\circ}\oplus M}$ has character lattice $M^{\circ}\oplus N$.
Let~$\Delta_{{\bf s}_{0}}^{+}\subset\mathcal{A}_{{\rm prin},{\bf s}_{0}}^{\vee}\big(\mathbb{R}^{T}\big)$
be the cluster complex. Let $\mathfrak{T}_{{\bf s}_{0}}$ be the oriented
tree whose vertices parametrize seeds mutationally equivalent to ${\bf s}_{0}$.
Let~$\Delta_{{\bf s}_{0}}^{-}$ be the complex consisting of all negative
chambers $\{\mathcal{C}_{v}^{-}\}_{v\in\mathfrak{T}_{{\bf s}_{0}}}$
in $\mathcal{A}_{{\rm prin},{\bf s}_{0}}^{\vee}\big(\mathbb{R}^{T}\big)$.
Assume that~$\mathcal{C}_{{\bf s}_{0}}^{-}$ is not contained in $\Delta_{{\bf s}_{0}}^{+}$,
therefore $\Delta_{{\bf s}_{0}}^{+}$ and $\Delta_{{\bf s}_{0}}^{-}$
are two distinct subfans in the scattering diagram, but $\mathfrak{p}_{+,-}^{{\bf s}_{0}}$
is rational, that is, $\mathfrak{p}_{+,-}^{{\bf s}_{0}}$ acts as
an automorphism of the field of fractions $\mathbb{C}(M^{\circ}\oplus N)\rightarrow\mathbb{C}(M^{\circ}\oplus N)$.
In \cite[Section~4]{GHKK}, gluing torus charts corresponding to each
chamber in the cluster complex via birational maps induced by wall-crossings
in the scattering diagram, they built a positive space~$\mathcal{A}_{{\rm prin},{\bf s}_{0}}^{{\rm scat}}$
that is isomorphic to the $\mathcal{A}_{{\rm prin}}$ cluster variety.
In this paper, under the assumption that~$\mathfrak{p}_{+,-}^{{\bf s}_{0}}$
acts as a birational automorphism of the field of fractions~$\mathbb{C}(M^{\circ}\oplus N)$,
we will build a positive space $\tilde{\mathcal{A}}_{{\rm prin},{\bf s}_{0}}^{{\rm scat}}$
using all the chambers in $\Delta_{{\bf s}_{0}}^{+}\bigcup\Delta_{{\bf s}_{0}}^{-}$.
Observe that $\mathcal{A}_{{\rm prin},{\bf s}_{0}}^{{\rm scat}}$
is contained in $\tilde{\mathcal{A}}_{{\rm prin},{\bf s}_{0}}^{{\rm scat}}$.
Furthermore, we will show that chambers in $\Delta_{{\bf s}_{0}}^{-}$
also parametrize an atlas of torus charts that can be viewed as cluster
torus charts of a cluster variety isomorphic to $\mathcal{A}_{{\rm prin},{\bf s}_{0}}$
in the proper sense. Therefore $\tilde{\mathcal{A}}_{{\rm prin},{\bf s}_{0}}^{{\rm scat}}$
is isomorphic to a positive space $\tilde{\mathcal{A}}_{{\rm prin,{\bf s}_{0}}}$
gluing two copies of $\mathcal{A}_{{\rm prin},{\bf s}_{0}}$ together
using birational maps $\mathfrak{p}_{\gamma,\mathfrak{D}_{{\bf s}_{0}}}$\footnote{From now on, we will simply denote $\mathfrak{D}_{{\bf s}_{0}}^{\mathcal{A}_{{\rm prin}}}$
by $\mathfrak{D}_{{\bf s}_{0}}$ and hope that this abuse of notation
will not confuse readers since there are not other scattering diagrams
we refer to.} that are not cluster, where $\gamma$'s are paths connecting chambers
in $\Delta_{{\bf s}_{0}}^{+}$ and those in $\Delta_{{\bf s}_{0}}^{-}$.

\subsection[Construction of $\tilde{\mathcal{A}}_{{\rm prin},{\bf s}_{0}}^{{\rm scat}}$]{Construction of $\boldsymbol{\tilde{\mathcal{A}}_{{\rm prin},{\bf s}_{0}}^{{\rm scat}}}$}

For each chamber in $\sigma$ in $\Delta_{{\bf s}_{0}}^{+}\bigcup\Delta_{{\bf s}_{0}}^{-}$,
we attach a copy of algebraic torus $T_{N^{\circ}\oplus M,\sigma}$.
Given two chambers $\sigma$, $\sigma'$ in $\Delta_{{\bf s}_{0}}^{+}\bigcup\Delta_{{\bf s}_{0}}^{-}$,
let~$\gamma$ be a path from $\sigma'$ to $\sigma$. The path
product $\mathfrak{p}_{\gamma,\mathfrak{D}_{{\bf s}_{0}}}$ is independent
of our choice of the path $\gamma$ since $\mathfrak{D}_{{\bf s}_{0}}$
is consistent. If both $\sigma$, $\sigma'$ are contained in~$\Delta_{{\bf s}_{0}}^{+}$
or $\Delta_{{\bf s}_{0}}^{-}$, then $\mathfrak{p}_{\gamma,\mathfrak{D}_{{\bf s}_{0}}}$
acts as an automorphism of the field of fractions $\mathbb{C}(M^{\circ}\oplus N)$
and therefore induces a birational map $\mathfrak{p}_{\sigma,\sigma'}\colon T_{N^{\circ}\oplus M,\sigma}\dashrightarrow T_{N^{\circ}\oplus M,\sigma'}$.
Suppose that~$\sigma'$ is contained in~$\Delta_{{\bf s}_{0}}^{+}$
and~$\sigma$ in~$\Delta_{{\bf s}_{0}}^{-}$. Notice that~$\gamma$
can be written as composition of three paths $\gamma_{1}\circ\gamma_{2}\circ\gamma_{3}$
where~$\gamma_{1}$ is a~path connecting~$\sigma'$ to $\mathcal{C}_{{\bf s}_{0}}^{-}$, $\gamma_{2}$ a path connecting $\mathcal{C}_{{\bf s}_{0}}^{-}$ to~$\mathcal{C}_{{\bf s}_{0}}^{+}$ and~$\gamma_{3}$ a~path connecting~$\mathcal{C}_{{\bf s}_{0}}^{+}$ to~$\sigma$. Since all $\mathfrak{p}_{\gamma_{i},\mathfrak{D}_{{\bf s}_{0}}}$
act as automorphisms of the field of fractions $\mathbb{C}(M^{\circ}\oplus N)$,
we conclude that $\mathfrak{p}_{\gamma,\mathfrak{D}_{{\bf s}_{0}}}$
induces a birational map $\mathfrak{p}_{\sigma,\sigma'}\colon T_{N^{\circ}\oplus M,\sigma}\dashrightarrow T_{N^{\circ}\oplus M,\sigma'}$. Similar argument applies to the case where $\sigma'$ is contained
in $\Delta_{{\bf s}_{0}}^{-}$ and $\sigma$ in $\Delta_{{\bf s}_{0}}^{+}$.
Thus we can glue all tori $T_{N^{\circ}\oplus M,\sigma}$ together
using the birational maps induced by path products and get a space
$\tilde{\mathcal{A}}_{{\rm prin},{\bf s}_{0}}^{{\rm scat}}$. If we
only use chambers in $\Delta_{{\bf s}_{0}}^{+}$, we get the space
$\mathcal{A}_{{\rm prin},{\bf s}_{0}}^{{\rm scat}}$.

Let $\mathcal{A}_{{\rm prin},{\bf s}_{0}}=\bigcup_{v\in\mathfrak{T}_{{\bf s}_{0}}}T_{N^{\circ}\oplus M,{\bf s}_{v}}$
be the cluster variety of type $\mathcal{A}_{{\rm prin}}$ we build
using the initial seed data of ${\bf s}_{0}$, $\mathcal{A}_{{\rm prin},{\bf s}_{0}}^{\vee}=\bigcup_{v\in\mathfrak{T}_{{\bf s}_{0}}}T_{M^{\circ}\oplus N,{\bf s}_{v}}$
be its Fock--Goncharov dual. Given vertices~$v$,~$v'$ in~$\mathfrak{T}_{{\bf s}_{0}}$,
let $\mu_{v,v'}^{\vee}\colon T_{M^{\circ}\oplus N,{\bf s}_{v}}\dashrightarrow T_{M^{\circ}\oplus N,{\bf s}_{v'}}$ be the birational map induced by
\[
T_{M^{\circ}\oplus N,{\bf s}_{v}}\hookrightarrow\mathcal{A}_{{\rm prin,{\bf s}_{0}}}^{\vee}\hookleftarrow T_{M^{\circ}\oplus N,{\bf s}_{v'}}.
\]
Let $v_{0}$ be the root of $\mathfrak{T}_{{\bf s}_{0}}$. Then ${\bf s}_{v_{0}}={\bf s}_{0}$.
Let $\big(\mu_{v,v_{0}}^{\vee}\big)^{T}\colon M^{\circ}\oplus N\rightarrow M^{\circ}\oplus N$
be the tropicalization of $\mu_{v,v_{0}}^{\vee}$. It extends to a~piecewise linear map
\[
\big(\mu_{v,v_{0}}^{\vee}\big)^{T}\colon \ M_{\mathbb{R}}^{\circ}\oplus N_{\mathbb{R}}\rightarrow M_{\mathbb{R}}^{\circ}\oplus N_{\mathbb{R}}.
\]
Moreover, for any chamber $\sigma$ in $\Delta_{{\bf s}_{v}}^{+}\bigcup\Delta_{{\bf s}_{v}}^{-}$, the restriction of $\big(\mu_{v,v_{0}}^{\vee}\big)^{T}$ to $\sigma$ is
an linear isomorphism onto the corresponding chamber $\sigma_{0}:=\big(\mu_{v,v_{0}}^{\vee}\big)^{T}(\sigma)$
in $\Delta_{{\bf s}_{0}}^{+}\bigcup\Delta_{{\bf s}_{0}}^{-}$. In
the following theorem, we prove that up to isomorphisms, $\tilde{\mathcal{A}}^{\rm scat}_{{\rm prin},{\bf s}_{0}}$ is independent of the choice of the initial seed ${\bf s}_{0}$.
\begin{Theorem}\label{thm:4.1}Given any vertex $v$ in $\mathfrak{T}_{{\bf s}_{0}}$,
consider the Fock--Goncharov tropicalization $\big(\mu^{\vee}_{v,v_{0}}\big)^{T} \!\colon\!$ $M^{\circ}\oplus N\rightarrow M^{\circ}\oplus N$
of $\mu^{\vee}_{v,v_{0}}\colon T_{M^{\circ}\oplus N,{\bf s}_{v}}\dashrightarrow T_{M^{\circ}\oplus N,{\bf s}_{0}}$.
Let $T_{v,\sigma_{0}}\colon T_{N^{\circ}\oplus M,\sigma_{0}}\stackrel{\sim}{\longrightarrow}T_{N^{\circ}\oplus M,\sigma}$
be the isomorphism induced by the linear map $\big(\mu_{v,v_{0}}^{\vee}\big)^{T}|_{\sigma}$.
These isomorphisms glue together to give an isomorphism of positive
spaces $\tilde{\mathcal{A}}_{{\rm prin},{\bf s}_{0}}^{{\rm scat}}\simeq\tilde{\mathcal{A}}_{{\rm prin},{\bf s}_{v}}^{{\rm scat}}$.
\end{Theorem}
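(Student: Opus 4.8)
The plan is to deduce the theorem from the \emph{mutation invariance} of the scattering diagram $\mathfrak{D}_{{\bf s}_{0}}=\mathfrak{D}_{{\bf s}_{0}}^{\mathcal{A}_{{\rm prin}}}$ established in~\cite{GHKK}, together with the elementary fact that a lattice isomorphism of chambers transports wall-crossing automorphisms to wall-crossing automorphisms. Precisely, the input I use is: for every vertex $v$ of $\mathfrak{T}_{{\bf s}_{0}}$ the piecewise $\mathbb{Z}$-linear homeomorphism $\big(\mu_{v,v_{0}}^{\vee}\big)^{T}\colon M_{\mathbb{R}}^{\circ}\oplus N_{\mathbb{R}}\to M_{\mathbb{R}}^{\circ}\oplus N_{\mathbb{R}}$ carries $\mathfrak{D}_{{\bf s}_{v}}$ onto $\mathfrak{D}_{{\bf s}_{0}}$ sending walls to walls, and if on a wall $\mathfrak{d}$ of $\mathfrak{D}_{{\bf s}_{v}}$ the map $\big(\mu_{v,v_{0}}^{\vee}\big)^{T}$ agrees with a single element $L\in{\rm GL}(M^{\circ}\oplus N)$, then the automorphism attached to $\big(\mu_{v,v_{0}}^{\vee}\big)^{T}(\mathfrak{d})$ in $\mathfrak{D}_{{\bf s}_{0}}$ is the conjugate, by the automorphism of $T_{N^{\circ}\oplus M}$ induced by $L$, of the automorphism attached to $\mathfrak{d}$. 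As recalled just before the statement, $\big(\mu_{v,v_{0}}^{\vee}\big)^{T}$ is linear on each chamber of $\Delta_{{\bf s}_{v}}^{+}\cup\Delta_{{\bf s}_{v}}^{-}$ and sends it onto a chamber $\sigma_{0}=\big(\mu_{v,v_{0}}^{\vee}\big)^{T}(\sigma)$ of $\Delta_{{\bf s}_{0}}^{+}\cup\Delta_{{\bf s}_{0}}^{-}$; applying the same to $\big(\mu_{v_{0},v}^{\vee}\big)^{T}=\big(\big(\mu_{v,v_{0}}^{\vee}\big)^{T}\big)^{-1}$ shows $\sigma\mapsto\sigma_{0}$ is a bijection between the two extended complexes.

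Given this, for each chamber $\sigma$ of $\Delta_{{\bf s}_{v}}^{+}\cup\Delta_{{\bf s}_{v}}^{-}$ let $L_{\sigma}\in{\rm GL}(M^{\circ}\oplus N)$ be the restriction $\big(\mu_{v,v_{0}}^{\vee}\big)^{T}|_{\sigma}$, and let $T_{v,\sigma_{0}}\colon T_{N^{\circ}\oplus M,\sigma_{0}}\stackrel{\sim}{\longrightarrow}T_{N^{\circ}\oplus M,\sigma}$ be the induced torus isomorphism, as in the statement. To see that these intertwine the gluing data, fix chambers $\sigma,\sigma'$ of $\Delta_{{\bf s}_{v}}^{+}\cup\Delta_{{\bf s}_{v}}^{-}$ and a generic path $\gamma$ for $\mathfrak{D}_{{\bf s}_{v}}$ connecting them; then $\big(\mu_{v,v_{0}}^{\vee}\big)^{T}\circ\gamma$, after a smoothing at the finitely many corners it may acquire (which by consistency of $\mathfrak{D}_{{\bf s}_{0}}$ does not change the path-ordered product), is a generic path $\gamma_{0}$ for $\mathfrak{D}_{{\bf s}_{0}}$ connecting $\sigma_{0}$ and $\sigma_{0}'$ and crossing exactly the images of the walls crossed by $\gamma$. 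Applying the mutation-invariance statement at each crossing and composing gives
\[
\mathfrak{p}_{\sigma,\sigma'}^{{\bf s}_{v}}\circ T_{v,\sigma_{0}}=T_{v,\sigma_{0}'}\circ\mathfrak{p}_{\sigma_{0},\sigma_{0}'}^{{\bf s}_{0}}
\]
as birational maps $T_{N^{\circ}\oplus M,\sigma_{0}}\dashrightarrow T_{N^{\circ}\oplus M,\sigma'}$, where $\mathfrak{p}_{\sigma,\sigma'}^{{\bf s}_{v}}$ and $\mathfrak{p}_{\sigma_{0},\sigma_{0}'}^{{\bf s}_{0}}$ are the transition maps used to build $\tilde{\mathcal{A}}_{{\rm prin},{\bf s}_{v}}^{{\rm scat}}$ and $\tilde{\mathcal{A}}_{{\rm prin},{\bf s}_{0}}^{{\rm scat}}$ respectively. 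Here one uses that both diagrams are consistent, so these maps are independent of the chosen paths, and that they are rational: when $\gamma$ meets both $\Delta^{+}$ and $\Delta^{-}$ one factors through $\mathcal{C}^{-}$ and $\mathcal{C}^{+}$ and invokes the rationality of $\mathfrak{p}_{+,-}^{{\bf s}_{0}}$ exactly as in the construction of $\tilde{\mathcal{A}}_{{\rm prin},{\bf s}_{0}}^{{\rm scat}}$, and the analogous rationality needed to make sense of $\tilde{\mathcal{A}}_{{\rm prin},{\bf s}_{v}}^{{\rm scat}}$ holds for the same reason, $\mathfrak{p}_{+,-}^{{\bf s}_{v}}$ being conjugate to $\mathfrak{p}_{+,-}^{{\bf s}_{0}}$ up to rational cluster mutations.

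The displayed identities say exactly that $\{T_{v,\sigma_{0}}\}$ is an isomorphism between the gluing data defining $\tilde{\mathcal{A}}_{{\rm prin},{\bf s}_{0}}^{{\rm scat}}$ and $\tilde{\mathcal{A}}_{{\rm prin},{\bf s}_{v}}^{{\rm scat}}$, so the $T_{v,\sigma_{0}}$ glue to the desired isomorphism of positive spaces, restricting to the isomorphism $\mathcal{A}_{{\rm prin},{\bf s}_{0}}^{{\rm scat}}\stackrel{\sim}{\longrightarrow}\mathcal{A}_{{\rm prin},{\bf s}_{v}}^{{\rm scat}}$ of~\cite{GHKK} on the subspaces built from $\Delta^{+}$. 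The main obstacle is the first step: one must be sure the mutation-invariance theorem of~\cite{GHKK} is applied to the \emph{entire} scattering diagram $\mathfrak{D}_{{\bf s}_{0}}^{\mathcal{A}_{{\rm prin}}}$, in particular to the outgoing walls lying outside the cluster complex in the region between $\mathcal{C}_{{\bf s}_{0}}^{-}$ and $\mathcal{C}_{{\bf s}_{0}}^{+}$, and that it is compatible with composition of mutations; both are contained in the cited results once one notes the cocycle relation satisfied by the maps $\big(\mu_{v,v_{0}}^{\vee}\big)^{T}$.
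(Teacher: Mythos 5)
Your outline covers the easy half of the theorem correctly: when $\sigma_0$ and $\sigma_0'$ lie in the same subfan ($\Delta^{+}$ or $\Delta^{-}$), the transition map is a finite composition of wall-crossings, and intertwining it with the chamber-wise linear maps $T_{v,\sigma_0}$ is exactly the mutation-invariance argument of \cite{GHKK} (Theorem~1.24 and the Proposition~4.3-type bookkeeping at the bending hyperplane $e_k^{\perp}$, where the attached element is not literally conjugated by either linear piece but absorbs their discrepancy). This matches the paper's cases (i)--(ii), which are dispatched by precisely those citations after reducing to a single mutation.

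The genuine gap is the case $\{\sigma_0,\sigma_0'\}=\{\mathcal{C}_{{\bf s}_0}^{+},\mathcal{C}_{{\bf s}_0}^{-}\}$ (and, via factorization, every mixed case). There the transition map is $\mathfrak{p}_{+,-}$, an \emph{infinite} product of wall-crossings which is only a formal automorphism of $\widehat{\mathbb{C}[P]}$, assumed rational; the individual wall-crossings are not birational maps of the torus, so ``applying the mutation-invariance statement at each crossing and composing'' is not literally available. Nor can one simply conjugate the formal path-ordered product by $T_{v,\sigma_0}$: the linear map $\big(\mu_{v,v_0}^{\vee}\big)^{T}|_{\sigma}$ does not carry the positive cone $N^{+}_{{\bf s}_v}$ to $N^{+}_{{\bf s}_0}$ (e.g.\ $e_k\mapsto -e_k$), so the two formal completions in which $\mathfrak{p}_{+,-}^{{\bf s}_v}$ and $\mathfrak{p}_{+,-}^{{\bf s}_0}$ live are not identified, and an order-by-order transport plus a passage from the formal identity to an identity of birational maps would need a separate argument that you do not supply. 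The paper avoids this entirely: for $(m_0,n_0)\in\mathcal{C}^{-}_{{\bf s}_0}$ it identifies $\mathfrak{p}^{*}_{\mathcal{C}^{+},\mathcal{C}^{-}}\big(z^{(m_0,n_0)}\big)$ with the theta-function expansion $\vartheta_{(m_0,n_0),Q_0}$ and then invokes mutation invariance of theta functions (\cite[Proposition~3.6]{GHKK}), which is already proved at the level of broken lines/formal sums and hence handles the infinite wall-crossing automatically; the remaining mixed cases are reduced to this one by factoring $\mathfrak{p}_{\sigma_0,\sigma_0'}$ through $\mathcal{C}^{\pm}_{{\bf s}_0}$. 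You use that factorization only to argue rationality, not to prove the intertwining, so the crucial case of your claimed identity is unproved as written; either adopt the theta-function argument or supply the missing order-by-order/rationality argument with the completion issue addressed.
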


\begin{proof}It suffices to prove for the case where $v$ is adjacent to $v_{0}$
via an edge labelled with~$k$ in~$I_{{\rm uf}}$, that is, ${\bf s}_{v}=\mu_{k}({\bf s}_{0})$.
Given $\sigma_{0},\sigma_{0}'$ in $\Delta_{{\bf s}_{0}}^{+}\bigcup\Delta_{{\bf s}_{0}}^{-}$,
let $\sigma=\mu_{v,v_{0}}^{T}(\sigma_{0})$ and $\sigma'=\mu_{v,v_{0}}^{T}(\sigma_{0}')$.
The lemma amounts to the commutativity of the following diagram:
\begin{gather}\begin{split}&
\xymatrix{T_{N^{\circ}\oplus M,\sigma_{0}}\ar[r]^{T_{v,\sigma_{0}}}\ar@{-->}[d]^{\mathfrak{p}_{\sigma_{0},\sigma_{0}'}} & T_{N^{\circ}\oplus M,\sigma}\ar@{-->}[d]^{\mathfrak{p}_{\sigma,\sigma'}}\\
T_{N^{\circ}\oplus M,\sigma_{0}'}\ar[r]^{T_{v,\sigma_{0}'}} & T_{N^{\circ}\oplus M,\sigma'}.
}\end{split}\label{eq:-5}
\end{gather}
The case where both $\sigma_{0}$ and $\sigma_{0}'$ are contained
in $\Delta_{{\bf s}_{0}}^{+}$ is proved in \cite[Proposition~4.3]{GHKK}. We prove for the remaining cases.
\begin{enumerate}\itemsep=0pt
\item[(i)] Both $\sigma_{0}$ and $\sigma_{0}'$ are contained in $\Delta_{{\bf s}_{0}}^{-}$
and on the same side of $e_{k}^{\perp}$. The commutativity of~\eqref{eq:-5}
follows from the mutation invariance of the scattering diagram (cf.\ \cite[Theorem~1.24]{GHKK}).
\item[(ii)] Both $\sigma_{0}$ and $\sigma_{0}'$ are contained in $\Delta_{{\bf s}_{0}}^{-}$
and on different sides of~$e_{k}^{\perp}$. The proof is the same
as the case where both~$\sigma_{0}$ and~$\sigma_{0}'$ are contained
in $\Delta_{{\bf s}_{0}}^{+}$ and on different sides of $e_{k}^{\perp}$.
\item[(iii)] $\{\sigma_{0},\sigma_{0}'\}=\{\mathcal{C}_{{\bf s}_{0}}^{+},\mathcal{C}_{{\bf s}_{0}}^{-}\}$.
Without loss of generality, assume that $\sigma_{0}=\mathcal{C}_{{\bf s}_{0}}^{+}$
and $\sigma_{0}'=\mathcal{C}_{{\bf s}_{0}}^{-}$. Given $(m_{0},n_{0})$
in $\mathcal{C}_{{\bf s}_{0}}^{-}$, $\mathfrak{p}_{\sigma_{0},\sigma_{0}'}^{*}\big(z^{(m_{0},n_{0})}\big)$
is the expansion of theta function $\vartheta_{(m_{0},n_{0})}$ at
a~generic point $Q_{0}$ in $\sigma_{0}$. Let $(m,n)$ in $\sigma'$
be the point such that $(m_{0},n_{0})=\big(\mu_{v,v_{0}}^{\vee}\big)^{T}(m,n)$.
Then $\mathfrak{p}_{\sigma,\sigma'}^{*}\big(z^{(m,n)}\big)$ is the expansion
of theta function $\vartheta_{(m,n)}$ at a generic point $Q$ in
$\sigma$. By mutation invariance of theta functions (cf.\ \cite[Proposition~3.6]{GHKK}),
\begin{gather*}
T_{v,\sigma_{0}}^{*}(\vartheta_{(m,n),Q}) =\vartheta_{(m_{0}n_{0}),T_{v,\sigma_{0}}^{*}(Q)}.
\end{gather*}
Since $\big(\mu_{v,v_{0}}^{\vee}\big)^{T}(\sigma)=\sigma_{0}$,
$T_{v,\sigma_{0}}^{*}(Q)$ is a generic point contained in $\sigma_{0}$.
Hence $\vartheta_{(m_{0},n_{0}),T_{v,\sigma_{0}}^{*}(Q)}\allowbreak =\vartheta_{(m_{0},n_{0}),Q_{0}}$.
Hence
\begin{align*}
\mathfrak{p}_{\sigma_{0},\sigma_{0}'}^{*}\circ T_{v,\sigma_{0}'}^{*}\big(z^{(m,n)}\big) & =\mathfrak{p}_{\sigma_{0},\sigma_{0}'}^{*}\big(z^{(m_{0},n_{0})}\big) =\vartheta_{(m_{0},n_{0}),Q_{0}}
 =\vartheta_{(m_{0},n_{0}),T_{v,\sigma_{0}}^{*}(Q)}\\
 & =T_{v,\sigma_{0}}^{*}(\vartheta_{(m,n),Q}) =T_{v,\sigma_{0}}^{*}\circ\mathfrak{p}_{\sigma,\sigma'}^{*}\big(z^{(m,n)}\big).
\end{align*}

\item[(iv)] $\sigma_{0}$ is contained in $\Delta_{{\bf s}_{0}}^{+}$ and
$\sigma_{0}'$ in $\Delta_{{\bf s}_{0}}^{-}$ or vice versa. Without
loss of generality, assume that~$\sigma_{0}$ is contained in $\Delta_{{\bf s}_{0}}^{+}$
and~$\sigma_{0}'$ in $\Delta_{{\bf s}_{0}}^{-}$. We could factor
$\mathfrak{p}_{\sigma_{0},\sigma_{0}'}$ as
\[
\mathfrak{p}_{\mathcal{C}_{{\bf s}_{0}}^{-},\sigma_{0}'}\circ\mathfrak{p}_{\mathcal{C}_{{\bf s}_{0}}^{+},\mathcal{C}_{{\bf s}_{0}}^{-}}\circ\mathfrak{p}_{\sigma_{0},\mathcal{C}_{{\bf s}_{0}}^{+}}
\]
 and reduce to previous cases.\hfill\qed
\end{enumerate}\renewcommand{\qed}{}
\end{proof}

\subsection[Construction of $\tilde{\mathcal{A}}_{{\rm prin},{\bf s}_{0}}$]{Construction of $\boldsymbol{\tilde{\mathcal{A}}_{{\rm prin},{\bf s}_{0}}}$}

Given each vertex $v$ in $\mathfrak{T}_{{\bf s}_{0}}$, attach two
copies of algebraic torus $T_{N^{\circ}\oplus M,{\bf s}_{v}}^{+}$
and $T_{N^{\circ}\oplus M,{\bf s}_{v}}^{-}$. Let $\mathcal{A}_{{\rm prin},{\bf s}_{0}}^{+}=\bigcup_{v\in\mathfrak{T}_{{\bf s}_{0}}}T_{N^{\circ}\oplus M,{\bf s}_{v}}^{+}$
be the usual cluster variety of type $\mathcal{A}_{{\rm prin}}$ we
build using the initial seed ${\bf s}_{0}$, $\mathcal{A}_{{\rm prin},{\bf s}_{0}}^{\vee}=\bigcup_{v\in\mathfrak{T}_{{\bf s}_{0}}}T_{M^{\circ}\oplus N,{\bf s}_{v}}$
be its Fock--Goncharov dual. Given vertices $v$, $v'$ in $\mathfrak{T}_{{\bf s}_{0}}$,
let $\mu_{v,v'}\colon T_{N^{\circ}\oplus M,{\bf s}_{v}}^{+}\dashrightarrow T_{N^{\circ}\oplus M,{\bf s}_{v'}}^{+}$ be the birational map induced by
\[
T_{N^{\circ}\oplus M,{\bf s}_{v}}^{+}\hookrightarrow\mathcal{A}_{{\rm prin,{\bf s}_{0}}}\hookleftarrow T_{N^{\circ}\oplus M,{\bf s}_{v'}}^{+}
\]
and $\mu_{v,v'}^{\vee}\colon T_{M^{\circ}\oplus N,{\bf s}_{v}}\dashrightarrow T_{M^{\circ}\oplus N,{\bf s}_{v'}}$
that induced by
\[
T_{M^{\circ}\oplus N,{\bf s}_{v}}\hookrightarrow\mathcal{A}_{{\rm prin,{\bf s}_{0}}}^{\vee}\hookleftarrow T_{M^{\circ}\oplus N,{\bf s}_{v'}}.
\]
Let $v_{0}$ be the root of $\mathfrak{T}_{{\bf s}_{0}}$. Then ${\bf s}_{v_{0}}={\bf s}_{0}$.
Let $\big(\mu_{v_{0,}v}^{\vee}\big)^{T}\colon M^{\circ}\oplus N\rightarrow M^{\circ}\oplus N$
be the tropicalization of $\mu_{v_{0},v}^{\vee}$. It extends to a
piecewise linear map:
\[
\big(\mu_{v_{0,}v}^{\vee}\big)^{T}\colon \ M_{\mathbb{R}}^{\circ}\oplus N_{\mathbb{R}}\rightarrow M_{\mathbb{R}}^{\circ}\oplus N_{\mathbb{R}}.
\]
Moreover, its restriction to the chamber $\mathcal{C}_{v}^{\pm}$
is linear and the inverse image of $\mathcal{C}_{{\bf s}_{v}}^{\pm}$
under $\big(\mu_{v_{0,}v}^{\vee}\big)^{T}$ is $\mathcal{C}_{v}^{\pm}$ (cf.\
\cite[Construction~1.30]{GHKK}). Let
\[
\big(\mu_{v_{0},v}^{\vee}\big)^{T}|_{\mathcal{C}_{v}^{\pm}}\colon \ M^{\circ}\oplus N\rightarrow M^{\circ}\oplus N
\]
be this linear map and $\varphi_{v_{0},v}^{\pm}\colon T_{N^{\circ}\oplus M,{\bf s}_{v}}^{\pm}\rightarrow T_{N^{\circ}\oplus M,\mathcal{C}_{v}^{\pm}}$ the isomorphism such that $\big(\varphi_{v_{0},v}^{\pm}\big)^{*}=\big(\mu_{v_{0},v}^{\vee}\big)^{T}|_{\mathcal{C}_{v}^{\pm}}$.
It is proved in \cite[Theorem~4.4]{GHKK} that the isomorphisms
$\varphi_{v_{0},v}^{+}\colon T_{N^{\circ}\oplus M,{\bf s}_{v}}^{+}\rightarrow T_{N^{\circ}\oplus M,\mathcal{C}_{v}^{+}}$ induce a global isomorphism $\mathcal{A}_{{\rm prin},{\bf s}_{0}}\simeq\mathcal{A}_{{\rm prin},{\bf s}_{0}}^{{\rm scat}}$, that is, for any $v$, $v'$ in $\mathfrak{T}_{{\bf s}_{0}}$, the following diagram commutes:
\[
%\xyR{3pc}\xyC{2pc}
\xymatrix{T_{N^{\circ}\oplus M,{\bf s}_{v}}^{+}\subset\mathcal{A}_{{\rm prin},{\bf s}_{0}}\ar[r]^{\varphi_{v_{0},v}^{+}}\ar@{-->}[d]^{\mu_{{\bf s}_{v},{\bf s}_{v'}}} & T_{N^{\circ}\oplus M,\mathcal{C}_{v}^{+}}\subset\mathcal{A}_{{\rm prin},{\bf s}_{0}}^{{\rm scat}}\ar@{-->}[d]^{\mathfrak{p}_{\mathcal{C}_{v}^{+},\mathcal{C}_{v'}^{+}}}\\
T_{N^{\circ}\oplus M,{\bf s}_{v'}}^{+}\subset\mathcal{A}_{{\rm prin,{\bf s}_{0}}}\ar[r]^{\varphi_{v_{0},v'}^{+}} & T_{N^{\circ}\oplus M,\mathcal{C}_{v'}^{+}}\subset\mathcal{A}_{{\rm prin},{\bf s}_{0}}^{{\rm scat}}.
}
\]
Let $\mathcal{A}_{{\rm prin},{\bf s}_{0}}^{-}=\bigcup_{v\in\mathfrak{T}_{{\bf s}_{0}}}T_{N^{\circ}\oplus M,{\bf s}_{v}}^{-}$ be another copy of $\mathcal{A}_{{\rm prin}}$. Let $\Sigma\colon T_{N^{\circ}\oplus M}\rightarrow T_{N^{\circ}\oplus M}$ be the inversion map such that $\Sigma^{*}(z^{(m,n)})=z^{(-m,-n)}$.
Denote by $\mathcal{A}_{{\rm prin},{\bf s}_{0}}^{{\rm scat},-}$ the subspace
\[
\bigcup_{\mathcal{C}_{v}^{-}\in\Delta_{{\bf s}_{0}}^{-}}T_{N^{\circ}\oplus M,\mathcal{C}_{v}^{-}}
\]
inside $\tilde{\mathcal{A}}_{{\rm prin},{\bf s}_{0}}^{{\rm scat}}$.
\begin{Theorem}\label{thm:4.2} The compositions $\big\{ \varphi_{v_{0},v'}^{-}\circ\Sigma\,|\, v'\in\mathfrak{T}_{{\bf s}_{0}}\big\} $ glue to a global inclusion of positive spaces
\[
\varphi_{v_{0}}^{-}\colon \ \mathcal{A}_{{\rm prin},{\bf s}_{0}}^{-}\hookrightarrow\tilde{\mathcal{A}}_{{\rm prin},{\bf s}_{0}}^{{\rm scat}}
\]
such that the image of $\varphi_{v_{0}}^{-}$ in $\tilde{\mathcal{A}}_{{\rm prin},{\bf s}_{0}}^{{\rm scat}}$
is $\mathcal{A}_{{\rm prin},{\bf s}_{0}}^{{\rm scat},-}$. Furthermore,
given any $v$ in $\mathfrak{T}_{{\bf s}_{0}}$, the following diagram
commutes:
\begin{gather}\begin{split}&
\xymatrix{\mathcal{A}_{{\rm prin,{\bf s}_{0}}}^{-}\ar@{^{(}->}[r]^{\varphi_{v_{0}}^{-}}\ar[d] & \tilde{\mathcal{A}}_{{\rm prin},{\bf s}_{0}}^{{\rm scat}}\ar[d]\\
\mathcal{A}_{{\rm prin},{\bf s}_{v}}^{-}\ar@{^{(}->}[r]^{\varphi_{v}^{-}} & \tilde{\mathcal{A}}_{{\rm prin},{\bf s}_{v}}^{{\rm scat}}.
}\end{split}
\label{eq:-6}
\end{gather}
Here, the left column is the isomorphism $\mathcal{A}_{{\rm prin,{\bf s}_{0}}}^{-}\stackrel{\sim}{\rightarrow}\mathcal{A}_{{\rm prin},{\bf s}_{v}}^{-}$,
the right column is the isomorphism in Theorem{\rm ~\ref{thm:4.1}}.
\end{Theorem}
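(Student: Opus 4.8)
The plan is to first establish the local statement --- that each $\varphi_{v_0,v'}^- \circ \Sigma$ lands in $\mathcal{A}_{{\rm prin},{\bf s}_0}^{{\rm scat},-}$ and that these maps are compatible with the gluing birational maps on both sides --- and then deduce the global inclusion and the commutativity of \eqref{eq:-6}. For the local statement, fix $v' \in \mathfrak{T}_{{\bf s}_0}$. The composition $\Sigma$ identifies $T_{N^\circ \oplus M,{\bf s}_{v'}}^-$ with $T_{N^\circ \oplus M,{\bf s}_{v'}}^+$ (up to the sign involution on characters), and then $\varphi_{v_0,v'}^+$ sends the latter isomorphically onto $T_{N^\circ \oplus M,\mathcal{C}_{v'}^+}$. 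Composing instead with the sign change, one checks that $\big(\mu_{v_0,v'}^\vee\big)^T$ restricted to $\mathcal{C}_{v'}^-$ differs from its restriction to $\mathcal{C}_{v'}^+$ precisely by the action of the inversion $\Sigma^T$ on the target: this is essentially the statement (cf.~\cite[Construction~1.30]{GHKK}) that $-\mathcal{C}_{v'}^+ = \mathcal{C}_{v'}^-$ and that the piecewise-linear map $\big(\mu_{v_0,v'}^\vee\big)^T$ is odd when restricted appropriately. Hence $\varphi_{v_0,v'}^- \circ \Sigma$ is an isomorphism of $T_{N^\circ \oplus M,{\bf s}_{v'}}^-$ onto $T_{N^\circ \oplus M,\mathcal{C}_{v'}^-}$, and $\mathcal{C}_{v'}^- \in \Delta_{{\bf s}_0}^-$, so the image sits inside $\mathcal{A}_{{\rm prin},{\bf s}_0}^{{\rm scat},-}$.

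Next I would verify that these isomorphisms are compatible with the transition maps. On the source, the transition map between $T_{N^\circ \oplus M,{\bf s}_v}^-$ and $T_{N^\circ \oplus M,{\bf s}_{v'}}^-$ is the cluster mutation $\mu_{{\bf s}_v,{\bf s}_{v'}}$ (since $\mathcal{A}_{{\rm prin},{\bf s}_0}^-$ is just another copy of $\mathcal{A}_{{\rm prin},{\bf s}_0}$). On the target $\mathcal{A}_{{\rm prin},{\bf s}_0}^{{\rm scat},-}$, the transition map between the tori attached to $\mathcal{C}_v^-$ and $\mathcal{C}_{v'}^-$ is the path-ordered product $\mathfrak{p}_{\mathcal{C}_v^-,\mathcal{C}_{v'}^-}$. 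The required square
\[
\xymatrix{
T_{N^\circ \oplus M,{\bf s}_v}^- \ar[r]^{\varphi_{v_0,v}^- \circ \Sigma} \ar@{-->}[d]^{\mu_{{\bf s}_v,{\bf s}_{v'}}} & T_{N^\circ \oplus M,\mathcal{C}_v^-} \ar@{-->}[d]^{\mathfrak{p}_{\mathcal{C}_v^-,\mathcal{C}_{v'}^-}} \\
T_{N^\circ \oplus M,{\bf s}_{v'}}^- \ar[r]^{\varphi_{v_0,v'}^- \circ \Sigma} & T_{N^\circ \oplus M,\mathcal{C}_{v'}^-}
}
\]
commutes: conjugating the already-known commutative square for the $+$-charts (\cite[Theorem~4.4]{GHKK}) by $\Sigma$ reduces this to checking that $\Sigma$ intertwines $\mathfrak{p}_{\mathcal{C}_v^+,\mathcal{C}_{v'}^+}$ with $\mathfrak{p}_{\mathcal{C}_v^-,\mathcal{C}_{v'}^-}$, which in turn follows from the structure of the scattering diagram under the reflection $m \mapsto -m$ (the walls and wall functions of $\mathfrak{D}_{{\bf s}_0}$ restricted to the negative half reflect onto those on the positive half, up to the inversion of characters). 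Here it is crucial that $\mathfrak{p}_{+,-}^{{\bf s}_0}$ is rational, which is part of the standing hypothesis of this section, so all the path-ordered products in play are genuine birational maps and the diagrams make sense. Once this square commutes for all pairs $v,v'$, the maps $\varphi_{v_0,v'}^- \circ \Sigma$ glue to a well-defined inclusion $\varphi_{v_0}^- : \mathcal{A}_{{\rm prin},{\bf s}_0}^- \hookrightarrow \tilde{\mathcal{A}}_{{\rm prin},{\bf s}_0}^{{\rm scat}}$ with image exactly $\bigcup_{\mathcal{C}_v^- \in \Delta_{{\bf s}_0}^-} T_{N^\circ \oplus M,\mathcal{C}_v^-} = \mathcal{A}_{{\rm prin},{\bf s}_0}^{{\rm scat},-}$.

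For the final diagram \eqref{eq:-6}, I would argue as in the proof of Theorem~\ref{thm:4.1}, reducing to the case where $v$ is adjacent to $v_0$ via an edge labelled $k$. The right-hand vertical map is the isomorphism $\tilde{\mathcal{A}}_{{\rm prin},{\bf s}_0}^{{\rm scat}} \simeq \tilde{\mathcal{A}}_{{\rm prin},{\bf s}_v}^{{\rm scat}}$ built from the linear maps $T_{v,\sigma_0}$ induced by $\big(\mu_{v,v_0}^\vee\big)^T|_\sigma$, restricted to the negative subfans; the left-hand vertical map is just the canonical mutation isomorphism between the two copies of $\mathcal{A}_{{\rm prin}}$. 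Tracing a character $z^{(m,n)}$ around the square and using the mutation invariance of theta functions (\cite[Proposition~3.6]{GHKK}) --- exactly as in case (iii) of the proof of Theorem~\ref{thm:4.1} --- gives the commutativity; the only new ingredient is the compatibility of $\Sigma$ with the linear maps $T_{v,\sigma_0}$, which holds because $\big(\mu_{v,v_0}^\vee\big)^T$ commutes with negation on each chamber where it is linear. The main obstacle I anticipate is precisely the bookkeeping in the middle step: carefully checking that the inversion $\Sigma$ conjugates the positive-chamber wall-crossings into the negative-chamber ones, i.e.\ that the restriction of $\mathfrak{D}_{{\bf s}_0}$ to $\Delta_{{\bf s}_0}^-$ is the $\Sigma$-reflection of its restriction to $\Delta_{{\bf s}_0}^+$. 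This is intuitively clear from the symmetry $\mathcal{C}^- = -\mathcal{C}^+$ of the positive and negative chambers and the fact that cluster scattering diagrams are built from the incoming hyperplanes $e_i^\perp$ (which are $\Sigma$-invariant), but making it precise requires going back to the uniqueness statement Theorem~\ref{thm: uniqueness-incoming walls} and tracking how the wall functions transform.
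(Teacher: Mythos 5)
Your overall architecture (define the chart maps, check a compatibility square over adjacent chambers, then deduce the global gluing and the diagram \eqref{eq:-6}) is the same as the paper's, but the mechanism you propose for the crucial middle step rests on a symmetry that is false. You assert that $\mathcal{C}_{v'}^{-}=-\mathcal{C}_{v'}^{+}$, that $\big(\mu_{v_{0},v'}^{\vee}\big)^{T}$ is ``odd when restricted appropriately'', and that $\Sigma$ intertwines $\mathfrak{p}_{\mathcal{C}_{v}^{+},\mathcal{C}_{v'}^{+}}$ with $\mathfrak{p}_{\mathcal{C}_{v}^{-},\mathcal{C}_{v'}^{-}}$ because $\mathfrak{D}_{{\bf s}_{0}}$ restricted to the negative side is the $m\mapsto-m$ reflection of its positive side. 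None of this holds in general: the outgoing walls break the central symmetry (already for an $A_{2}$ seed the unique outgoing ray lies in only one of the two quadrants exchanged by $m\mapsto-m$, and there $\mathcal{C}_{\mu_{2}{\bf s}}^{-}$ is a full quadrant while $-\mathcal{C}_{\mu_{2}{\bf s}}^{+}$ is a proper subcone of it), and the tropicalized mutations, being built from $[\,\cdot\,]_{+}$, are not odd. Concretely, for $v=v_{0}$ and $v'$ adjacent across $e_{k}^{\perp}$ one has
\[
\Sigma^{*}\circ\mathfrak{p}_{\mathcal{C}_{v}^{+},\mathcal{C}_{v'}^{+}}^{*}\big(z^{(m,n)}\big)=z^{(-m,-n)}\big(1+z^{-(\{e_{k},\cdot\},e_{k})}\big)^{-\langle d_{k}e_{k},m\rangle},\qquad
\mathfrak{p}_{\mathcal{C}_{v}^{-},\mathcal{C}_{v'}^{-}}^{*}\circ\Sigma^{*}\big(z^{(m,n)}\big)=z^{(-m,-n)}\big(1+z^{(\{e_{k},\cdot\},e_{k})}\big)^{-\langle d_{k}e_{k},m\rangle},
\]
which differ by the monomial automorphism $z^{(m,n)}\mapsto z^{(m,n)+(\{e_{k},\cdot\},e_{k})\langle d_{k}e_{k},m\rangle}$. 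So conjugating the positive-chart square of \cite[Theorem~4.4]{GHKK} by $\Sigma$ does \emph{not} yield the negative-chart square, and the step you yourself flagged as the anticipated obstacle is a genuine gap, not mere bookkeeping.

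What saves the statement is exactly the monomial discrepancy above: it coincides with $\big(\varphi_{v_{0},v'}^{-}\big)^{*}$, i.e., with the fact that $\big(\mu_{v_{0},v'}^{\vee}\big)^{T}$ has \emph{different} linear branches on $\mathcal{C}_{v'}^{+}$ and $\mathcal{C}_{v'}^{-}$ which are not negatives of one another. Accordingly the paper never seeks a $\Sigma$-intertwining: it first proves the commutativity of \eqref{eq:-6} (equivalently \eqref{eq:-7}) by a purely linear check on character lattices (your appeal to mutation invariance of theta functions here is unnecessary but harmless), then uses those squares together with Theorem~\ref{thm:4.1} in a cube argument to reduce the gluing square \eqref{eq:-4} to the base case $v=v_{0}$, $v'$ adjacent, and finally verifies that base case by the direct computation $\mu_{{\bf s}_{v},{\bf s}_{v'}}^{*}\circ\Sigma^{*}\circ\big(\varphi_{v_{0},v'}^{-}\big)^{*}\big(z^{(m,n)}\big)=z^{(-m,-n)}\big(1+z^{-(\{e_{k},\cdot\},e_{k})}\big)^{\langle d_{k}e_{k},m\rangle}=\Sigma^{*}\circ\mathfrak{p}_{\mathcal{C}_{v}^{-},\mathcal{C}_{v'}^{-}}^{*}\big(z^{(m,n)}\big)$, in which the twist from $\big(\varphi_{v_{0},v'}^{-}\big)^{*}$ cancels the sign flip produced by $\Sigma$. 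To repair your argument you would need to replace the claimed reflection symmetry by this explicit base-case computation (or an equivalent one), and note that the commutativity of \eqref{eq:-6} should be established first, since the reduction of the gluing square to the base case uses it.
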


\begin{proof}We prove the second part of the theorem first. The commutativity of
diagram~\eqref{eq:-6} is equivalent to that of the following diagram
for any given vertex~$v'$ in~$\mathfrak{T}_{{\bf s}_{0}}$:
\begin{gather}\begin{split}&
%\xyR{2pc}\xyC{3pc}
\xymatrix{T_{N^{\circ}\oplus M,v'}\subset\mathcal{A}_{{\rm prin},{\bf s}_{0}}^{-}\ar[r]\ar@2{-}[d] & T_{N^{\circ}\oplus M,\mathcal{C}_{v'}^{-}}\subset\tilde{\mathcal{A}}_{{\rm prin},{\bf s}_{0}}^{{\rm scat}}\ar[d]\\
T_{N^{\circ}\oplus M,v'}\subset\mathcal{A}_{{\rm prin},{\bf s}_{v}}^{-}\ar[r] & T_{N^{\circ}\oplus M,\mathcal{C}_{v'}^{-}}\subset\tilde{\mathcal{A}}_{{\rm prin},{\bf s}_{v}}^{{\rm scat}}.
}\end{split} \label{eq:-7}
\end{gather}
The left vertical map in the diagram~\eqref{eq:-7} is given by Theorem~\ref{thm:4.1}. In terms of character lattices, it is equivalent to the following diagram:
\[
%\xyR{3pc}\xyC{6.5pc}
\xymatrix{M^{\circ}\oplus N\ar@2{-}[d] & M^{\circ}\oplus N\ar[l]_{\Sigma^{*}\circ\left(\mu_{v_{0},v'}^{\vee}\right)^{T}|_{\mathcal{C}_{v'\in\mathfrak{T}_{{\bf s}_{0}}}^{-}}}\\
M^{\circ}\oplus N & M^{\circ}\oplus N.\ar[l]^{\Sigma^{*}\circ\left(\mu_{v,v'}^{\vee}\right)^{T}|_{\mathcal{C}_{v'\in\mathfrak{T}_{{\bf s}_{v}}}^{-}}}\ar[u]_{\left(\mu_{v,v_{0}}^{\vee}\right)^{T}|_{\mathcal{C}_{v'\in\mathfrak{T}_{{\bf s}_{v}}}^{-}}}
}
\]
The commutativity of the above diagram is clear. Hence the commutativity
of diagram~\eqref{eq:-7} follows.

The first part of the theorem is equivalent to the following statement:
Given vertices~$v$,~$v'$ in~$\mathfrak{T}_{{\bf s}_{0}}$, the following
diagram commutes:
\begin{gather}\begin{split}&
%\xyR{2pc}\xyC{3pc}
\xymatrix{T_{N^{\circ}\oplus M,{\bf s}_{v}}^{-}\subset\mathcal{A}_{{\rm prin},{\bf s}_{0}}^{-}\ar[r]^{\varphi_{v_{0},v}^{-}\circ\Sigma}\ar@{-->}[d]^{\mu_{{\bf s}_{v},{\bf s}_{v'}}} & T_{N^{\circ}\oplus M,\mathcal{C}_{v}^{-}}\subset\tilde{\mathcal{A}}_{{\rm prin},{\bf s}_{0}}^{{\rm scat}}\ar@{-->}[d]^{\mathfrak{p}_{\mathcal{C}_{v}^{-},\mathcal{C}_{v'}^{-}}}\\
T_{N^{\circ}\oplus M,{\bf s}_{v'}}^{-}\subset\mathcal{A}_{{\rm prin,{\bf s}_{0}}}^{-}\ar[r]^{\varphi_{v_{0,}v'}^{-}\circ\Sigma} & T_{N^{\circ}\oplus M,\mathcal{C}_{v'}^{-}}\subset\tilde{\mathcal{A}}_{{\rm prin},{\bf s}_{0}}^{{\rm scat}}.
}\end{split}
\label{eq:-4}
\end{gather}
It suffices to prove for the case where there is an oriented edge
labelled by $k$ from $v$ to $v'$. Moreover, we show that it
suffices to prove for the case where $v=v_{0}$, i.e., ${\bf s}_{0}={\bf s}_{v}$.
Indeed, let $v''$ be the vertex such that there is an oriented
edge labelled by~$k'$ from~$v'$ to~$v''$. Consider the
following cube of commutative diagrams whose front vertical face is
the commutative diagram~\eqref{eq:-4} and back vertical face the commutative
diagram analogous to \eqref{eq:-4} for $v'$ and $v''$. The top
and bottom faces are instances of~\eqref{eq:-7}. The commutativity
of the left vertical face follows from inclusions of tori into $\mathcal{A}_{{\rm prin,{\bf s}_{0}}}$
and $\mathcal{A}_{{\rm prin},{\bf s}_{v}}$. The commutativity of
the right vertical face follows from Theorem~\ref{thm:4.1}. Hence
the back vertical face also commutes.

At last, let us prove the commutativity of diagram \eqref{eq:-4} for the case where $v=v_{0}$ and there is an oriented edge labelled by~$k$ from~$v$ to~$v'$. Indeed,
\begin{align*}
\Sigma^{*}\circ\mathfrak{p}_{\mathcal{C}_{v}^{-},\mathcal{C}_{v'}^{-}}^{*}(z^{(m,n)}) & =\Sigma^{*}\big(z^{(m,n)}(1+z^{(\{e_{k},\cdot\},e_{k})})^{\langle d_{k}e_{k},m\rangle}\big)\\
 & =z^{(-m,-n)}(1+z^{-(\{e_{k},\cdot\},e_{k})})^{\langle d_{k}e_{k},m\rangle},
\end{align*}
and
\begin{align*}
\mu_{{\bf s}_{v},{\bf s}_{v'}}^{*}\circ\Sigma^{*}\circ(\varphi_{v_{0,}v'}^{-})^{*}(z^{(m,n)}) & =\mu_{{\bf s}_{v},{\bf s}_{v'}}^{*}\circ\Sigma^{*}\big[z^{(m,n)+(\{e_{k},\cdot\},e_{k})\langle d_{k}e_{k},m\rangle}\big]\\
 & =\mu_{{\bf s}_{v},{\bf s}_{v'}}^{*}\big(z^{(-m,-n)-(\{e_{k},\cdot\},e_{k})\langle d_{k}e_{k},m\rangle}\big)\\
 & =z^{(-m,-n)-(\{e_{k},\cdot\},e_{k})\langle d_{k}e_{k},m\rangle} \big(1+z^{(\{e_{k},\cdot\},e_{k})}\big)^{-\langle d_{k}e_{k},-m\rangle}\\
 & =z^{(-m,-n)}\big(1+z^{-(\{e_{k},\cdot\},e_{k})}\big)^{\langle d_{k}e_{k},m\rangle}\\
 & =\Sigma^{*}\circ\mathfrak{p}_{\mathcal{C}_{v}^{-},\mathcal{C}_{v'}^{-}}^{*}(z^{(m,n)}).
\end{align*}
Hence we obtain the desired commutativity of diagram~\eqref{eq:-4}.
\end{proof}

Combine Theorem \ref{thm:4.2} together with Theorem 4.4 of \cite{GHKK},
we obtain the following corollary:
\begin{Corollary}
\label{cor:4.3}We have the isomorphism of positive spaces:
\[
\tilde{\mathcal{A}}_{{\rm prin,{\bf s}_{0}}}=\mathcal{A}_{{\rm prin{\bf s}_{0}}}^{+}\bigcup\mathcal{A}_{{\rm prin,{\bf s}_{0}}}^{-}\stackrel{\sim}{\longrightarrow}\tilde{\mathcal{A}}_{{\rm prin},{\bf s}_{0}}^{{\rm scat}}.
\]
Here the restriction of the isomorphism $\tilde{\mathcal{A}}_{{\rm prin},{\bf s}_{0}}\stackrel{\sim}{\rightarrow}\tilde{\mathcal{A}}_{{\rm prin},{\bf s}_{0}}^{{\rm scat}}$
to each of $\mathcal{A}_{{\rm prin},{\bf s}_{0}}^{\pm}$ is the inclusion
$\mathcal{A}_{{\rm prin},{\bf s}_{0}}^{\pm}\hookrightarrow\tilde{\mathcal{A}}_{{\rm prin},{\bf s}_{0}}^{{\rm scat}}$.
\end{Corollary}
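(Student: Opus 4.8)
The plan is to build the claimed isomorphism
\[
\Phi\colon \tilde{\mathcal{A}}_{{\rm prin},{\bf s}_{0}}=\mathcal{A}_{{\rm prin},{\bf s}_{0}}^{+}\bigcup\mathcal{A}_{{\rm prin},{\bf s}_{0}}^{-}\stackrel{\sim}{\longrightarrow}\tilde{\mathcal{A}}_{{\rm prin},{\bf s}_{0}}^{{\rm scat}}
\]
one torus chart at a time and then check that the local pieces glue. Recall from Section~\ref{sec:Building--using} that $\tilde{\mathcal{A}}_{{\rm prin},{\bf s}_{0}}^{{\rm scat}}$ is the union of $\mathcal{A}_{{\rm prin},{\bf s}_{0}}^{{\rm scat}}$ (the tori attached to chambers of $\Delta_{{\bf s}_{0}}^{+}$) and $\mathcal{A}_{{\rm prin},{\bf s}_{0}}^{{\rm scat},-}$ (the tori attached to chambers of $\Delta_{{\bf s}_{0}}^{-}$), so that its torus charts are indexed by $\Delta_{{\bf s}_{0}}^{+}\cup\Delta_{{\bf s}_{0}}^{-}$; and that $\tilde{\mathcal{A}}_{{\rm prin},{\bf s}_{0}}$ has charts $\big\{T_{N^{\circ}\oplus M,{\bf s}_{v}}^{\pm}\big\}_{v\in\mathfrak{T}_{{\bf s}_{0}}}$, the gluing of $T_{N^{\circ}\oplus M,{\bf s}_{v}}^{+}$ to $T_{N^{\circ}\oplus M,{\bf s}_{v'}}^{-}$ being, by the construction recalled at the start of that section, the transport of the path-ordered product $\mathfrak{p}_{\mathcal{C}_{v}^{+},\mathcal{C}_{v'}^{-}}$ of $\mathfrak{D}_{{\bf s}_{0}}$ through the chart identifications $\varphi_{v_{0},v}^{+}$ and $\varphi_{v_{0},v'}^{-}\circ\Sigma$. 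I would let $\Phi$ act on $T_{N^{\circ}\oplus M,{\bf s}_{v}}^{+}$ by $\varphi_{v_{0},v}^{+}$, landing in $T_{N^{\circ}\oplus M,\mathcal{C}_{v}^{+}}$, and on $T_{N^{\circ}\oplus M,{\bf s}_{v}}^{-}$ by $\varphi_{v_{0},v}^{-}\circ\Sigma$, landing in $T_{N^{\circ}\oplus M,\mathcal{C}_{v}^{-}}$.

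Next I would verify compatibility of these chart isomorphisms along the three kinds of overlap. For two charts of $\mathcal{A}_{{\rm prin},{\bf s}_{0}}^{+}$ this is exactly Theorem~4.4 of \cite{GHKK}, which asserts that the $\varphi_{v_{0},v}^{+}$ glue to an isomorphism $\mathcal{A}_{{\rm prin},{\bf s}_{0}}\stackrel{\sim}{\longrightarrow}\mathcal{A}_{{\rm prin},{\bf s}_{0}}^{{\rm scat}}$. For two charts of $\mathcal{A}_{{\rm prin},{\bf s}_{0}}^{-}$ this is the first part of Theorem~\ref{thm:4.2}, which asserts that the $\varphi_{v_{0},v}^{-}\circ\Sigma$ glue to an inclusion $\varphi_{v_{0}}^{-}\colon\mathcal{A}_{{\rm prin},{\bf s}_{0}}^{-}\hookrightarrow\tilde{\mathcal{A}}_{{\rm prin},{\bf s}_{0}}^{{\rm scat}}$ with image $\mathcal{A}_{{\rm prin},{\bf s}_{0}}^{{\rm scat},-}$. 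For a mixed overlap between a chart of $\mathcal{A}_{{\rm prin},{\bf s}_{0}}^{+}$ and one of $\mathcal{A}_{{\rm prin},{\bf s}_{0}}^{-}$, compatibility is automatic: the gluing in $\tilde{\mathcal{A}}_{{\rm prin},{\bf s}_{0}}$ was defined as the pull-back of $\mathfrak{p}_{\mathcal{C}_{v}^{+},\mathcal{C}_{v'}^{-}}$ along the very maps $\varphi_{v_{0},v}^{+}$ and $\varphi_{v_{0},v'}^{-}\circ\Sigma$ that $\Phi$ uses, while $\mathfrak{p}_{\mathcal{C}_{v}^{+},\mathcal{C}_{v'}^{-}}$ is by definition the gluing in $\tilde{\mathcal{A}}_{{\rm prin},{\bf s}_{0}}^{{\rm scat}}$. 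Rationality of this map poses no problem, since $\mathfrak{p}_{+,-}^{{\bf s}_{0}}$ is assumed rational and every $\mathfrak{p}_{\mathcal{C}_{v}^{+},\mathcal{C}_{v'}^{-}}$ factors through $\mathfrak{p}_{+,-}^{{\bf s}_{0}}$, $\mathfrak{p}_{-,+}^{{\bf s}_{0}}$ and cluster mutations, exactly as was used when building $\tilde{\mathcal{A}}_{{\rm prin},{\bf s}_{0}}^{{\rm scat}}$.

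Finally I would check that $\Phi$ is an isomorphism of positive spaces. It is an isomorphism of algebraic tori on each chart and is compatible with the positive birational gluings, so it suffices to see that the induced map on chart index sets, $T_{N^{\circ}\oplus M,{\bf s}_{v}}^{+}\mapsto T_{N^{\circ}\oplus M,\mathcal{C}_{v}^{+}}$ and $T_{N^{\circ}\oplus M,{\bf s}_{v}}^{-}\mapsto T_{N^{\circ}\oplus M,\mathcal{C}_{v}^{-}}$, is a bijection onto $\Delta_{{\bf s}_{0}}^{+}\cup\Delta_{{\bf s}_{0}}^{-}$. The maps $v\mapsto\mathcal{C}_{v}^{+}$ and $v\mapsto\mathcal{C}_{v}^{-}$ are injective with images $\Delta_{{\bf s}_{0}}^{+}$ and $\Delta_{{\bf s}_{0}}^{-}$ (their injectivity is already forced, respectively, by Theorem~4.4 of \cite{GHKK} and by the injectivity of $\varphi_{v_{0}}^{-}$ in Theorem~\ref{thm:4.2}; cf.\ also \cite[Construction~1.30]{GHKK}), and the two subfans $\Delta_{{\bf s}_{0}}^{+}$, $\Delta_{{\bf s}_{0}}^{-}$ are disjoint under the standing assumption $\mathcal{C}_{{\bf s}_{0}}^{-}\not\subset\Delta_{{\bf s}_{0}}^{+}$ (they always either coincide or are disjoint). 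Hence the combined map is a bijection, $\Phi$ is an isomorphism, and by construction $\Phi$ restricts on $\mathcal{A}_{{\rm prin},{\bf s}_{0}}^{+}$ to the inclusion of \cite[Theorem~4.4]{GHKK} and on $\mathcal{A}_{{\rm prin},{\bf s}_{0}}^{-}$ to $\varphi_{v_{0}}^{-}$ of Theorem~\ref{thm:4.2}. The only genuinely substantive input is Theorem~\ref{thm:4.2} (which itself rests on Theorem~\ref{thm:4.1} and mutation-invariance of theta functions); granting that, the present statement is a gluing-bookkeeping argument, and the sole hypothesis-dependent subtlety — the point I expect to require the most care — is the bijectivity of the chart index, which is exactly where the distinctness of $\Delta_{{\bf s}_{0}}^{+}$ and $\Delta_{{\bf s}_{0}}^{-}$ is used.
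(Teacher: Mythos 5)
Your proposal is correct and follows essentially the same route as the paper, whose entire proof is to combine Theorem~\ref{thm:4.2} (gluing of the negative charts via $\varphi_{v_{0},v}^{-}\circ\Sigma$) with \cite[Theorem~4.4]{GHKK} (gluing of the positive charts), the mixed overlaps being automatic because the gluing maps of $\tilde{\mathcal{A}}_{{\rm prin},{\bf s}_{0}}$ are by definition the transported path-ordered products. Your extra bookkeeping about bijectivity of the chart index map is harmless but not really needed (nor is injectivity of $v\mapsto\mathcal{C}_{v}^{\pm}$ required, since redundant charts are identified on both sides); the substantive inputs are exactly the two theorems you cite.
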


\subsection[The case where $\Theta\big(\mathcal{A}_{{\rm prin},{\bf s}_{0}}\big)=\mathcal{A}_{{\rm prin},{\bf s}_{0}}^{\vee}\big(\mathbb{Z}^{T}\big)$]{The case where $\boldsymbol{\Theta\big(\mathcal{A}_{{\rm prin},{\bf s}_{0}}\big)=\mathcal{A}_{{\rm prin},{\bf s}_{0}}^{\vee}\big(\mathbb{Z}^{T}\big)}$}

Now suppose we are in the case where $\Theta\big(\mathcal{A}_{{\rm prin},{\bf s}_{0}}\big)=\mathcal{A}_{{\rm prin},{\bf s}_{0}}^{\vee}\big(\mathbb{Z}^{T}\big)$, then it follows that $\mathfrak{p_{+,-}^{{\bf s}_{0}}}$ is rational. So we can apply the previous construction and build the space $\tilde{\mathcal{A}}_{{\rm prin},{\bf s}_{0}}$. Moreover, by \cite[Theorem 7.5]{GHKK}, the vector space
\[
{\rm can}\big(\mathcal{A}_{{\rm prin},{\bf s}_{0}}\big)=\bigoplus_{q\in\mathcal{A}_{{\rm prin,{\bf s}_{0}}}^{\vee}}\mathbb{C}\cdot\vartheta_{q}
\]
 has a $\mathbb{C}[N]$-algebra structure and we have canonical inclusion
\[
{\rm can}\big(\mathcal{A}_{{\rm prin},{\bf s}_{0}}\big) \subset{\rm up}\big(\mathcal{A}_{{\rm prin},{\bf s}_{0}}\big),
\]
where ${\rm up}\big(\mathcal{A}_{{\rm prin},{\bf s}_{0}}\big)$ is the ring
of regular functions of $\mathcal{A}_{{\rm prin},{\bf s}_{0}}$.
\begin{Proposition}\label{prop:4.4}Given any $(m,n)$ in $\mathcal{A}_{{\rm prin},{\bf s}_{0}}^{\vee}\big(\mathbb{Z}^{T}\big)$, if for some generic point $Q$ in $\Delta_{{\bf s}_{0}}^{-}$, there
are finitely many broken lines with initial direction $(m,n)$ and
endpoints $Q$, then the same is true for any generic point $Q'$
in $\Delta_{{\bf s}_{0}}^{-}$. In particular, $\vartheta_{(m,n),Q'}$
is a positive Laurent polynomial.
\end{Proposition}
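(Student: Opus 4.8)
The plan is to transport the question to the second copy $\mathcal{A}_{{\rm prin},{\bf s}_{0}}^{-}$ of the $\mathcal{A}_{{\rm prin}}$-variety furnished by Corollary~\ref{cor:4.3}, where it becomes the statement that an element of ${\rm mid}$ is regular and hence a Laurent polynomial on every cluster chart. First note that whether there are finitely many broken lines of a fixed initial direction $(m,n)$ ending at a point depends only on the open chamber of $\mathfrak{D}_{{\bf s}_{0}}$ containing that point: broken lines, together with their attached monomials, deform without changing combinatorial type as long as no wall is crossed, so $\vartheta_{(m,n),Q}$ is one and the same formal Laurent series for all generic $Q$ in a given open chamber, and it is a Laurent polynomial if and only if there are finitely many broken lines ending at $Q$ --- in which case, since every broken line contributes a monomial with a positive coefficient, it is a positive Laurent polynomial. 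By Theorem~\ref{thm:4.1}, replacing the initial seed ${\bf s}_{0}$ by a suitable ${\bf s}_{v}$ (and $(m,n)$ by its image under the associated tropicalized mutation, using the mutation invariance of theta functions), we may thus assume that the given generic point $Q$ lies in the interior of the initial negative chamber $\mathcal{C}_{{\bf s}_{0}}^{-}$.

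Under the standing hypothesis $\Theta(\mathcal{A}_{{\rm prin},{\bf s}_{0}})=\mathcal{A}_{{\rm prin},{\bf s}_{0}}^{\vee}(\mathbb{Z}^{T})$ the element $\mathfrak{p}_{+,-}^{{\bf s}_{0}}$ is rational, so the space $\tilde{\mathcal{A}}_{{\rm prin},{\bf s}_{0}}$ of Corollary~\ref{cor:4.3} is defined and identifies the negative-chamber charts $\{T_{N^{\circ}\oplus M,\mathcal{C}_{v}^{-}}\}_{v\in\mathfrak{T}_{{\bf s}_{0}}}$, glued by the wall-crossings $\mathfrak{p}_{\mathcal{C}_{v}^{-},\mathcal{C}_{v'}^{-}}$, with the cluster charts of $\mathcal{A}_{{\rm prin},{\bf s}_{0}}^{-}$: the chart $T_{N^{\circ}\oplus M,\mathcal{C}_{v}^{-}}$ corresponds to $T_{N^{\circ}\oplus M,{\bf s}_{v}}^{-}$ via the monomial isomorphism $\varphi_{v_{0},v}^{-}\circ\Sigma$, and $\mathfrak{p}_{\mathcal{C}_{v}^{-},\mathcal{C}_{v'}^{-}}$ corresponds to the $\mathcal{A}_{{\rm prin}}$-mutation $\mu_{{\bf s}_{v},{\bf s}_{v'}}$ by the commutativity of~\eqref{eq:-4}. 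Pulling $\vartheta_{(m,n),Q}$ back along $\varphi_{v_{0},v_{0}}^{-}\circ\Sigma=\Sigma$ to the initial chart $T_{N^{\circ}\oplus M,{\bf s}_{0}}^{-}$, I would check that $\Sigma^{*}\vartheta_{(m,n),Q}$ is the expansion, at the positive chamber of $\mathcal{A}_{{\rm prin},{\bf s}_{0}}^{-}$, of a theta function $\psi$ of $\mathcal{A}_{{\rm prin},{\bf s}_{0}}^{-}$ --- this uses that the embedding of Corollary~\ref{cor:4.3} carries the broken lines of $\mathfrak{D}_{{\bf s}_{0}}$ ending in $\mathcal{C}_{{\bf s}_{0}}^{-}$ to the broken lines defining the theta functions of $\mathcal{A}_{{\rm prin},{\bf s}_{0}}^{-}$ at its positive chamber. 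By the first paragraph $\Sigma^{*}\vartheta_{(m,n),Q}$ is a positive Laurent polynomial, so the tropical point indexing $\psi$ lies in the theta set $\Theta(\mathcal{A}_{{\rm prin},{\bf s}_{0}}^{-})$, whence $\psi\in{\rm mid}(\mathcal{A}_{{\rm prin},{\bf s}_{0}}^{-})\subseteq{\rm up}(\mathcal{A}_{{\rm prin},{\bf s}_{0}}^{-})$ by \cite[Theorem~7.5]{GHKK}.

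Being in ${\rm up}(\mathcal{A}_{{\rm prin},{\bf s}_{0}}^{-})$, the theta function $\psi$ is a Laurent polynomial on every cluster chart $T_{N^{\circ}\oplus M,{\bf s}_{v}}^{-}$; transporting back through the monomial isomorphisms $\varphi_{v_{0},v}^{-}\circ\Sigma$ then shows that $\vartheta_{(m,n),Q'}$ is a Laurent polynomial for every generic $Q'$ in every negative chamber $\mathcal{C}_{v}^{-}$, that is, for every generic $Q'$ in $\Delta_{{\bf s}_{0}}^{-}$; positivity of the coefficients is automatic from the broken-line description. Undoing the reduction of the first paragraph via Theorem~\ref{thm:4.1} gives the statement for the original data. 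The step deserving the most care is the identification in the second paragraph: one must verify that the embedding of Corollary~\ref{cor:4.3} intertwines the broken-line structures, not merely the wall-crossing automorphisms, so that $\Sigma^{*}\vartheta_{(m,n),Q}$ really is a theta-function expansion of $\mathcal{A}_{{\rm prin},{\bf s}_{0}}^{-}$ and membership in $\Theta(\mathcal{A}_{{\rm prin},{\bf s}_{0}}^{-})$ can be read off from the hypothesis. The reduction along $\mathfrak{T}_{{\bf s}_{0}}$, the local constancy of broken-line data inside an open chamber, and the transport through the monomial maps are routine.
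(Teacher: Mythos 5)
There is a genuine gap, and it sits exactly at the step you yourself flag: the claim that the identification of Corollary~\ref{cor:4.3} ``carries the broken lines of $\mathfrak{D}_{{\bf s}_{0}}$ ending in $\mathcal{C}_{{\bf s}_{0}}^{-}$ to the broken lines defining the theta functions of $\mathcal{A}_{{\rm prin},{\bf s}_{0}}^{-}$,'' so that $\Sigma^{*}\vartheta_{(m,n),Q}$ is a theta-function expansion for the same seed ${\bf s}_{0}$ at its positive chamber. Theorem~\ref{thm:4.2} and Corollary~\ref{cor:4.3} are statements about positive spaces: they match the path-ordered products \emph{between chambers} of $\Delta_{{\bf s}_{0}}^{-}$ with cluster mutations after twisting by $\Sigma$, and nothing more. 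Broken lines with endpoint in a negative chamber are global objects in $\mathcal{A}_{{\rm prin},{\bf s}_{0}}^{\vee}\big(\mathbb{R}^{T}\big)$ and bend at walls of $\mathfrak{D}_{{\bf s}_{0}}$ that are nowhere near the boundaries of $\Delta_{{\bf s}_{0}}^{-}$, so matching the chamber gluings does not identify the two broken-line theories. The geometric transport implicit in your claim is the central symmetry $x\mapsto -x$ together with $\Sigma$ on monomials, and $\mathfrak{D}_{{\bf s}_{0}}$ is simply not invariant under $x\mapsto -x$: already for the $A_{2}$ seed the unique outgoing ray has no mirror image among the walls. The correct symmetry relates the negative chambers of $\mathfrak{D}_{{\bf s}_{0}}$ to the cluster complex of the scattering diagram of the \emph{opposite} seed (exchange matrix negated), with a nontrivial twist on wall functions; for the Markov quiver this happens to be the original seed up to relabelling (which is why the explicit $F$ and $G$ in Section~5 look like mirror images of each other), but Proposition~\ref{prop:4.4} is stated for a general ${\bf s}_{0}$. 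Proving the theta/broken-line version of that symmetry is essentially the same work as the sign-reversed broken-line analysis the proposition encapsulates, so taking it for granted makes the argument circular; moreover you would need it not just at $\mathcal{C}_{{\bf s}_{0}}^{-}$ but again at every negative chamber when you ``transport back.''

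For comparison, the paper's proof does not pass through $\tilde{\mathcal{A}}_{{\rm prin},{\bf s}_{0}}$ at all (and in particular does not need the standing hypothesis $\Theta\big(\mathcal{A}_{{\rm prin},{\bf s}_{0}}\big)=\mathcal{A}_{{\rm prin},{\bf s}_{0}}^{\vee}\big(\mathbb{Z}^{T}\big)$ or the rationality of $\mathfrak{p}_{+,-}^{{\bf s}_{0}}$): it simply reruns the argument of Proposition~7.1 of \cite{GHKK} with signs reversed, working directly with broken lines in $\mathfrak{D}_{{\bf s}_{0}}$ whose endpoints lie in $\Delta_{{\bf s}_{0}}^{-}$. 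The point is that adjacent chambers of $\Delta_{{\bf s}_{0}}^{-}$ are separated by walls whose attached automorphisms have the same cluster form (this is the computation appearing in the proof of Theorem~\ref{thm:4.2}), so the finiteness of broken lines and the positive-Laurent-polynomial property propagate chamber by chamber across $\Delta_{{\bf s}_{0}}^{-}$ exactly as in the positive case, together with the (correct, and which you also use) observation that the expansion is constant on the interior of each chamber. If you want to keep your global strategy, you would first have to formulate and prove the scattering-diagram isomorphism $-\mathfrak{D}_{{\bf s}_{0}}\simeq\mathfrak{D}_{{\bf s}_{0}^{\rm op}}$ at the level of walls, wall functions and broken lines, and then run the \cite{GHKK} machinery for ${\bf s}_{0}^{\rm op}$; as written, the crucial identification is asserted rather than proved.
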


\begin{proof} This proposition follows from the arguments in Proposition~7.1 of~\cite{GHKK} with signs reversed.
\end{proof}
\begin{Definition} Let $\Theta^{-}\subset\mathcal{A}_{{\rm prin},{\bf s}_{0}}^{\vee}\big(\mathbb{Z}^{T}\big)$ be the collection of $(m,n)$ such that for some (or equivalently
for any by Proposition~\ref{prop:4.4}) generic point~$Q$ in $\Delta_{{\bf s}_{0}}^{-}$,
there are finitely many broken lines with initial direction $(m,n)$
and end point~$Q$.
\end{Definition}

\begin{Theorem}\label{thm:4.6}If $\mathcal{C}_{{\bf s}_{0}}^{+}$ is contained in~$\Theta^{-}$, then we have the canonical inclusion
\[
{\rm can}\big(\mathcal{A}_{{\rm prin},{\bf s}_{0}}\big)\subset{\rm up}\big(\tilde{\mathcal{A}}_{{\rm prin},{\bf s}_{0}}\big),
\]
where ${\rm up}\big(\tilde{\mathcal{A}}_{{\rm prin},{\bf s}_{0}}\big)$ is the ring of regular function of $\tilde{\mathcal{A}}_{{\rm prin},{\bf s}_{0}}$.
\end{Theorem}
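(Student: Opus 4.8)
The plan is to show that every theta function $\vartheta_q$ with $q\in\mathcal{A}_{{\rm prin},{\bf s}_0}^\vee\big(\mathbb{Z}^T\big)$ is regular on each of the torus charts $T_{N^\circ\oplus M,\sigma}$ comprising $\tilde{\mathcal{A}}_{{\rm prin},{\bf s}_0}=\mathcal{A}_{{\rm prin},{\bf s}_0}^+\bigcup\mathcal{A}_{{\rm prin},{\bf s}_0}^-$, and that this assignment is compatible with the gluing birational maps, so that $\bigoplus_q\mathbb{C}\cdot\vartheta_q$ lands inside $\Gamma\big(\tilde{\mathcal{A}}_{{\rm prin},{\bf s}_0},\mathcal{O}\big)={\rm up}\big(\tilde{\mathcal{A}}_{{\rm prin},{\bf s}_0}\big)$. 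On the charts indexed by positive chambers this is already contained in the statement ${\rm can}\big(\mathcal{A}_{{\rm prin},{\bf s}_0}\big)\subset{\rm up}\big(\mathcal{A}_{{\rm prin},{\bf s}_0}\big)$ recalled above, using Theorem~7.5 of~\cite{GHKK}. So the content is to handle the charts $T_{N^\circ\oplus M,{\bf s}_v}^-$ indexed by negative chambers, equivalently (via Corollary~\ref{cor:4.3} and the identification of $\mathcal{A}_{{\rm prin},{\bf s}_0}^{{\rm scat},-}$ with $\mathcal{A}_{{\rm prin},{\bf s}_0}^-$) the charts $T_{N^\circ\oplus M,\mathcal{C}_v^-}$ for $\mathcal{C}_v^-\in\Delta_{{\bf s}_0}^-$.

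First I would reduce, using the mutation-invariance isomorphism of Theorem~\ref{thm:4.1} together with the compatibility of broken lines with mutation (\cite[Proposition~3.6]{GHKK}), to checking regularity of $\vartheta_q$, expanded at a generic point $Q$ in the single negative chamber $\mathcal{C}_{{\bf s}_0}^-$, for all $q\in\mathcal{A}_{{\rm prin},{\bf s}_0}^\vee\big(\mathbb{Z}^T\big)$. Now the key point: the hypothesis $\mathcal{C}_{{\bf s}_0}^+\subset\Theta^-$ says exactly that for every $q$ lying in the positive chamber $\mathcal{C}_{{\bf s}_0}^+\big(\mathbb{Z}^T\big)$ there are only finitely many broken lines with initial direction $q$ and endpoint $Q\in\Delta_{{\bf s}_0}^-$, so that $\vartheta_{q,Q}$ is a (positive) Laurent polynomial by Proposition~\ref{prop:4.4}. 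Next I would invoke the fact that $\Theta^-$ is closed under addition: for the expansion at a generic point of $\Delta_{{\bf s}_0}^-$, concatenation of broken lines shows $\vartheta_{q_1,Q}\cdot\vartheta_{q_2,Q}$ is an $\mathbb{N}$-linear combination of $\vartheta_{q,Q}$ with $q$ ranging over a finite set, and finiteness of broken lines is preserved; this is the sign-reversed version of the monoid argument in~\cite{GHKK}. Together with the standard fact that $\mathcal{A}_{{\rm prin},{\bf s}_0}^\vee\big(\mathbb{Z}^T\big)$ is generated as a monoid, up to $\Theta^-$-translation, by $\mathcal{C}_{{\bf s}_0}^+\big(\mathbb{Z}^T\big)\cup\mathcal{C}_{{\bf s}_0}^-\big(\mathbb{Z}^T\big)$ — the latter contributing only monomials $z^{(m,n)}$, which are trivially regular on the chart $T_{N^\circ\oplus M,\mathcal{C}_{{\bf s}_0}^-}$ — this yields $\Theta^-=\mathcal{A}_{{\rm prin},{\bf s}_0}^\vee\big(\mathbb{Z}^T\big)$, hence $\vartheta_{q,Q}$ is a Laurent polynomial for every $q$.

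Having that, I would finish by promoting "Laurent polynomial on one chart" to "regular function on $\tilde{\mathcal{A}}_{{\rm prin},{\bf s}_0}$". For the charts in $\mathcal{A}_{{\rm prin},{\bf s}_0}^+$ regularity is ${\rm can}\big(\mathcal{A}_{{\rm prin},{\bf s}_0}\big)\subset{\rm up}\big(\mathcal{A}_{{\rm prin},{\bf s}_0}\big)$; for the charts in $\mathcal{A}_{{\rm prin},{\bf s}_0}^-$, after the identification $\varphi_{v_0}^-$ of Theorem~\ref{thm:4.2}, the wall-crossing $\mathfrak{p}_{\mathcal{C}_v^-,\mathcal{C}_{v'}^-}$ between two negative chambers is, up to the inversion $\Sigma$, the same as an $\mathcal{A}_{{\rm prin}}$-mutation, so regularity on one negative chart propagates to all of them by the same broken-line argument, and the fact that $\vartheta_{q,Q}$ is already a Laurent polynomial for $Q\in\mathcal{C}_{{\bf s}_0}^-$ gives regularity on every negative chart simultaneously. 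Since $\tilde{\mathcal{A}}_{{\rm prin},{\bf s}_0}$ is covered by these two families of charts and a function regular on each chart of an open cover is a global regular function, we conclude $\vartheta_q\in{\rm up}\big(\tilde{\mathcal{A}}_{{\rm prin},{\bf s}_0}\big)$ for all $q$, i.e.\ ${\rm can}\big(\mathcal{A}_{{\rm prin},{\bf s}_0}\big)\subset{\rm up}\big(\tilde{\mathcal{A}}_{{\rm prin},{\bf s}_0}\big)$.

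The main obstacle I anticipate is the transition step, that is, verifying that finiteness of broken lines for endpoints in $\Delta_{{\bf s}_0}^-$ — equivalently membership in $\Theta^-$ — is preserved under addition and under the passage from the chart $\mathcal{C}_{{\bf s}_0}^+$ to a general negative chamber, since a priori broken lines can proliferate when crossing the infinitely many walls separating $\mathcal{C}_{{\bf s}_0}^+$ from $\mathcal{C}_{{\bf s}_0}^-$; this is precisely where one must import, with signs reversed, the structural input of Proposition~7.1 and Theorem~7.5 of~\cite{GHKK} and check that the hypothesis $\mathcal{C}_{{\bf s}_0}^+\subset\Theta^-$ is exactly the seed of the induction that makes it run.
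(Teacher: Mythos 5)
Your proposal follows essentially the same route as the paper: reduce to showing $\Theta^{-}=\mathcal{A}_{{\rm prin},{\bf s}_{0}}^{\vee}\big(\mathbb{Z}^{T}\big)$, get $\mathcal{C}_{{\bf s}_{0}}^{-}\big(\mathbb{Z}^{T}\big)\subset\Theta^{-}$ from the sign-reversed one-broken-line argument (your ``monomials'' step), use the hypothesis for $\mathcal{C}_{{\bf s}_{0}}^{+}$, and close up under addition via the sign-reversed positivity/multiplication argument of \cite{GHKK}, exactly as in the paper's proof of Theorem~\ref{thm:4.6}. The extra bookkeeping you add about propagating regularity across the negative charts is the implicit content of the paper's reduction and does not change the argument.
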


\begin{proof}\looseness=-1 Since we already have the canonical inclusion ${\rm can}\big(\mathcal{A}_{{\rm prin},{\bf s}_{0}}\big)\subset{\rm up}\big(\mathcal{A}_{{\rm prin},{\bf s}_{0}}\big)$,
it suffices to show that for any $(m,n)$ in $\mathcal{A}_{{\rm prin},{\bf s}_{0}}^{\vee}\big(\mathbb{Z}^{T}\big)$,
for any generic point $Q$ in $\Delta_{{\bf s}_{0}}^{-}$, $\vartheta_{(m,n),Q}$
is a Laurent polynomial. In particular, it suffices to show that $\Theta^{-}=\mathcal{A}_{{\rm prin},{\bf s}_{0}}^{\vee}\big(\mathbb{Z}^{T}\big)$.
First we show that $\mathcal{C}_{{\bf s}_{0}}^{-}$ is contained in
$\Theta^{-}$. Indeed, using the same arguments in \cite[Proposition~3.8
and Corollary~3.9]{GHKK} with signs reversed, we can show
that given a chamber~$\sigma$ in $\Delta_{{\bf s}_{0}}^{-}$ and
a generic point in~$Q$ in ${\rm int}(\sigma)$, for initial direction
$(m,n)$ with $(m,n)$ in $\sigma\bigcap(M^{\circ}\oplus N)$, there
is only one broken line with end point~$Q$. Next, we show that $\Theta^{-}$
is closed under addition. Indeed, if~$p$,~$q$ are in~$\Theta^{-}$,
consider the multiplication $\vartheta_{p}\cdot\vartheta_{q}$. Let
$\alpha(p,q,p+q)$ be the coefficient of $\vartheta_{p+q}$ in the
expansion by theta functions of $\vartheta_{p}\cdot\vartheta_{q}$.
The pair of straight lines with initial directions $p$ and $q$ respectively
and end point very close to $p+q$ will contribute to $\alpha(p,q,p+q)$,
so we have $\alpha(p,q,p+q)\geq1$. Write
\[
\vartheta_{p}\cdot\vartheta_{q}=\sum_{r}\alpha(p,q,r)\vartheta_{r}.
\]
 The right hand side is a finite sum with non-negative coefficients.
Given a generic point~$Q$ in~$\Delta_{{\bf s}_{0}}^{-}$, we have
\[
\vartheta_{p,Q}\cdot\vartheta_{q,Q}=\sum_{r}\alpha(p,q,r)\vartheta_{r,Q}.
\]
 Since each $\vartheta_{r,Q}$ is a sum of Laurent series with positive
coefficients and each of $\vartheta_{q,Q}$ and $\vartheta_{p,Q}$
is a positive Laurent polynomial, we conclude that each $\vartheta_{r,Q}$
is also a positive Laurent polynomial. Hence for each $r$ such that
$\alpha(p,q,r)\neq0$, $r$ is contained in $\Theta^{-}$. In particular,
$p+q$ is contained in $\Theta^{-}$. Since we have both $\mathcal{C}_{{\bf s}_{0}}^{+}$
and $\mathcal{C}_{{\bf s}_{0}}^{-}$ contained in $\Theta^{-}$ and
$\Theta^{-}$ is closed under addition, we conclude that $\Theta^{-}=\mathcal{A}_{{\rm prin},{\bf s}_{0}}^{\vee}\big(\mathbb{Z}^{T}\big)$.
\end{proof}

\section[The full Fock--Goncharov conjecture for $\mathcal{A}_{{\rm prin,\mathbb{T}_{1}^{2}}}$,
$\mathcal{X}_{\mathbb{T}_{1}^{2}}$ and $\mathcal{A}_{\mathbb{T}_{1}^{2}}$]{The full Fock--Goncharov conjecture for $\boldsymbol{\mathcal{A}_{{\rm prin,\mathbb{T}_{1}^{2}}}}$,
$\boldsymbol{\mathcal{X}_{\mathbb{T}_{1}^{2}}}$ and $\boldsymbol{\mathcal{A}_{\mathbb{T}_{1}^{2}}}$}\label{sec:Full-Fock--Goncharov-conjecture}

\subsection[The full Fock--Goncharov conjecture for $\mathcal{A}_{{\rm prin}\mathbb{T}_{1}^{2}}$
and $\mathcal{X}_{\mathbb{T}_{1}^{2}}$]{The full Fock--Goncharov conjecture for $\boldsymbol{\mathcal{A}_{{\rm prin}\mathbb{T}_{1}^{2}}}$
and $\boldsymbol{\mathcal{X}_{\mathbb{T}_{1}^{2}}}$}

In this section, we will study in detail cluster varieties associated
to once punctured torus $\mathbb{T}_{1}^{2}$. To apply techniques
we have developed so far, we will construct the seed associated to
the ideal triangulation of $\mathbb{T}_{1}^{2}$ from that associated
to the ideal triangulation of the four-punctured sphere~$\mathbb{S}_{4}^{2}$.
Consider the following ideal triangulation of~$\mathbb{S}_{4}^{2}$:
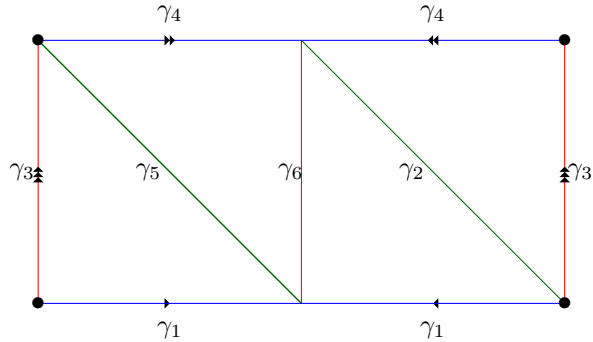
\begin{figure}[h!]\centering \begin{tikzpicture}[scale=0.7] \draw [red](0,0) -- (0,5); \draw [red](5,0) -- (5,5); \draw [red](-5,0) -- (-5,5); \fill (-5.1,2.3) -- (-4.9,2.3) -- (-5,2.4);
\fill (-5.1,2.4) -- (-4.9,2.4) -- (-5,2.5); \fill (-5.1,2.5) -- (-4.9,2.5) -- (-5,2.6); \draw (-5.3,2.5) node {\small$\gamma_{3}$}; \fill (5.1,2.3) -- (4.9,2.3) -- (5,2.4); \fill (5.1,2.4) -- (4.9,2.4) -- (5,2.5); \fill (5.1,2.5) -- (4.9,2.5) -- (5,2.6); \draw (5.3,2.5) node {\small$\gamma_{3}$}; \draw (-2.9,2.5) node {\small$\gamma_{5}$}; \draw (2.1,2.5) node {\small$\gamma_{2}$}; \draw (-0.2,2.5) node {\small$\gamma_{6}$};
\draw [blue](-5,5) -- (5,5); \fill (-2.6,-0.1) -- (-2.6,0.1) -- (-2.5,0); \fill (2.6,-0.1) -- (2.6,0.1) -- (2.5,0); \draw (-2.5,-0.5) node {\small$\gamma_{1}$}; \draw (2.5,-0.5) node {\small$\gamma_{1}$};
\fill (-2.6,4.9) -- (-2.6,5.1) -- (-2.5,5); \fill (-2.5,4.9) -- (-2.5,5.1) -- (-2.4,5); \fill (2.6,4.9) -- (2.6,5.1) -- (2.5,5); \fill (2.5,4.9) -- (2.5,5.1) -- (2.4,5); \draw (-2.5,5.5) node {\small$\gamma_{4}$}; \draw (2.5,5.5) node {\small$\gamma_{4}$};
\draw [blue](-5,0) -- (5,0);
\draw [darkgreen] (0,0)--(-5,5); \draw [darkgreen] (0,0)--(-5,5); \draw [darkgreen] (0,5)--(5,0);
\node at (-5,5) {\textbullet}; \node at (5,5) {\textbullet}; \node at (5,-0) {\textbullet}; \node at (-5,0) {\textbullet};
\end{tikzpicture}
\caption{An ideal triangulation of the 4-punctured sphere $\mathbb{S}_{4}^{2}$.}\label{Ideal Trian} \end{figure}

This ideal triangulation yields the following exchange matrix $\epsilon$:
\[
\begin{bmatrix}0 & 1 & -1 & 0 & 1 & -1\\
-1 & 0 & 1 & -1 & 0 & 1\\
1 & -1 & 0 & 1 & -1 & 0\\
0 & 1 & -1 & 0 & 1 & -1\\
-1 & 0 & 1 & -1 & 0 & 1\\
1 & -1 & 0 & 1 & -1 & 0
\end{bmatrix}.
\]
Let $N$ be a lattice of rank $6$ with a basis $(e_{i})_{i\in I}$, $I=\{1,\dots,6\}$. The exchange matrix $\epsilon$ equips~$N$ with the following skew-symmetric $\mathbb{Z}$-bilinear form
\begin{align*}
\{\,,\,\}\colon \ N\times N & \rightarrow\mathbb{Z},\\
\{e_{i},e_{j}\} & =\epsilon_{ij}.
\end{align*}
 Let ${\bf s}=(e_{i})_{i\in I}$ be our initial seed. Let $(f_{i})_{i\in I}$
be the basis for the dual lattice $M$ with $f_{i}=e_{i}^{*}$.

Let $\Pi=\left\langle \sigma_{1}\right\rangle \times\left\langle \sigma_{2}\right\rangle \times\left\langle \sigma_{3}\right\rangle $
be the subgroup of $S_{6}$ where $\sigma_{i}$ is the involution
that interchange the indices $i$ and $i+3$ for each $i=1,2,3$.
The group $\Pi$ acts on the index set $I$ for~${\bf s}$ via permutation
of indices. Denote by $\overline{i}$ the orbit of $i$ under the
action of~$\Pi$. Applying the quotient construction in Section~\ref{sec:Apply-quotient-constructions},
we obtained the following data:
\begin{itemize}\itemsep=0pt
\item An index set $\overline{I}=\{\overline{i}\}_{i=1,2,3}$.
\item A lattice $\overline{N}$ of rank $3$ with a basis $(e_{\overline{i}})_{\overline{i}\in\overline{I}}$
whose dual lattice $\overline{M}$ has a basis $(e_{\overline{i}}^{*})_{\overline{i}\in\overline{I}}$.
\item A skew-symmetric bilinear form $\{\,,\,\}\colon \overline{N}\times\overline{N}\rightarrow\mathbb{Z}$
on $\overline{N}$ such that $\{e_{\overline{i}},e_{\overline{j}}\}=\{e_{i},e_{j}\}$.
\item For each $\overline{i}\in\overline{I}$, a positive integer $d_{\overline{i}}=2$.
\item A sublattice $\overline{N}^{\circ}$ of $\overline{N}$ with a basis
$(d_{\overline{i}}e_{\overline{i}})_{\overline{i}\in\overline{I}}$
whose dual lattice $\overline{M}^{\circ}$ has a basis $(f_{\overline{i}})_{\overline{i}\in\overline{I}}$
where $f_{\overline{i}}=\frac{1}{d_{\overline{i}}}e_{\overline{i}}^{*}$.
\end{itemize}
The new seed $\overline{{\bf s}}$ is the labelled basis $(e_{\overline{i}})_{\overline{i}\in\overline{I}}$.
The exchange matrix $\overline{\epsilon}$ for $\overline{{\bf s}}$ is as follows:
\[
\begin{bmatrix}0 & 2 & -2\\
-2 & 0 & 2\\
2 & -2 & 0
\end{bmatrix}.
\]
It is the exchange matrix arising from the following ideal triangulation of $\mathbb{T}_{1}^{2}$:

\begin{center} \begin{tikzpicture}[scale=0.7] \draw [red](0,0) -- (0,5); \draw [red](5,0) -- (5,5);

\draw (5.3,2.5) node {\small$\gamma_{3}$};
\draw (2.1,2.5) node {\small$\gamma_{2}$}; \draw (-0.3,2.5) node {\small$\gamma_{3}$};
\node at (0,5) {\textbullet}; \node at (5,5) {\textbullet}; \node at (5,0) {\textbullet}; \node at (0,0) {\textbullet}; \draw [blue](0,5) -- (5,5);

\draw (2.5,-0.3) node {\small$\gamma_{1}$};
\draw (2.5,5.3) node {\small$\gamma_{1}$};
\draw [blue](0,0) -- (5,0);
\draw [darkgreen] (0,5)--(5,0);
\end{tikzpicture}
 \end{center}

The quotient map $q\colon N_{\mathbb{R}}\twoheadrightarrow\overline{N}_{\mathbb{R}}$
is defined as follows:
\begin{align*}
q\colon \ N_{\mathbb{R}} & \twoheadrightarrow\overline{N}_{\mathbb{R}},\\
e_{i} & \mapsto e_{\overline{i}},\\
e_{i+3} & \mapsto e_{\overline{i}}.
\end{align*}
The map $q$ has the following natural section
\begin{align*}
s\colon \ \overline{N}_{\mathbb{R}} & \rightarrow N_{\mathbb{R}},\\
e_{\overline{i}} & \mapsto\frac{1}{d_{\overline{i}}}(e_{i}+e_{i+3}).
\end{align*}
Its restriction $s\colon \overline{N}^{\circ}\rightarrow N$, $d_{\overline{i}}e_{\overline{i}}\mapsto e_{i}+e_{i+3}$
induces a map $s^{*}$ on the dual lattices
\begin{align*}
s^{*}\colon \ M & \rightarrow\overline{M}^{\circ},\\
f_{i} & \mapsto f_{\overline{i}},\\
f_{i+3} & \mapsto f_{\overline{i}},\qquad i=1,2,3.
\end{align*}
Our goal is to show that the full Fock--Goncharov conjecture holds
for $\mathcal{A}_{{\rm prin},\mathbb{T}_{1}^{2}}$, that is,
\[
\Theta(\mathcal{A}_{{\rm prin},\mathbb{T}_{1}^{2}})=\mathcal{A}_{{\rm prin},\mathbb{T}_{1}^{2}}^{\vee}\big(\mathbb{Z}^{T}\big)
\qquad \text{and} \qquad
{\rm can}(\mathcal{A}_{{\rm prin},\mathbb{T}_{1}^{2}})=\Gamma(\mathcal{A}_{{\rm prin},\mathbb{T}_{1}^{2}},\mathcal{O}_{\mathcal{A}_{{\rm prin},\mathbb{T}_{1}^{2}}}),
\]
where ${\rm can}\big(\mathcal{A}_{{\rm prin},\mathbb{T}_{1}^{2}}\big)$ is
the $\mathbb{C}$-algebra with a vector space basis parametrized by
$\Theta\big(\mathcal{A}_{{\rm prin},\mathbb{T}_{1}^{2}}\big)$.

Our first step is to show that $\Theta(\mathcal{A}_{{\rm prin},\mathbb{T}_{1}^{2}})=\mathcal{A}_{{\rm prin},\mathbb{T}_{1}^{2}}^{\vee}\big(\mathbb{Z}^{T}\big)$.
Let $Q$ be a generic point in the interior of $\mathcal{C}_{\bar{{\bf s}}}^{+}$. It is known that the path-product $\mathfrak{p}_{-,+}^{\overline{{\bf s}}}$
is a composition of infinitely many wall-crossing automorphisms, so
it is hard to do the computation directly. Instead, we apply the commutative
diagram~\eqref{eq:-9} to our case and obtain the following equality
\[
\mathfrak{p}_{-,+}^{\overline{{\bf s}}}\big(\big(z^{(-f_{\overline{i}},0)}\big)\big)=\phi_{(s^{*},q)}^{*}\circ\mathfrak{p}_{-,+}^{{\bf s}}\big(z^{(-f_{i},0)}\big).
\]
Let $T_{N\oplus M}$ be the algebraic torus whose character lattice
is $M\oplus N$. Recall the inversion automorphism of $T_{N\oplus M}$
\begin{align*}
\Sigma\colon \ T_{N\oplus M} & \rightarrow T_{N\oplus M},\\
z^{(m,n)} & \mapsto z^{(-m,-n)}.
\end{align*}
Then $\mathfrak{p}_{-,+}^{{\bf s}}\circ\Sigma^{*}$ is the\emph{ Donaldson--Thomas
transformation }of $\mathcal{A}_{{\rm prin},\mathbb{S}_{4}^{2}}$
with respect to the seed~${\bf s}$, as defined in \cite{GS16} (see
also \cite{Ke13,KS08,KS13,Na10}). By Theorem 1.3 of \cite{GS16},
DT-transformation of $\mathcal{A}_{{\rm prin},\mathbb{S}_{4}^{2}}$
can be written as a composition of finitely many cluster mutations.
In particular, $\mathfrak{p}_{-,+}^{{\bf s}}\circ\Sigma^{*}(z^{(f_{i},0)})=\mathfrak{p}_{-,+}^{{\bf s}}(z^{(-f_{i},0)})$
is a positive Laurent polynomial. Hence after passing through the
quotient map $\phi_{\left(s^{*},q\right)}^{*}$, we conclude that
$\mathfrak{p}_{-,+}^{\overline{{\bf s}}}\big(\big(z^{(-f_{\overline{i}},0)}\big)\big)$
is also a positive Laurent polynomial. Therefore, $(-f_{\overline{i}},0)$
is contained in $\text{\ensuremath{\Theta}}(\mathcal{A}_{{\rm prin},\mathbb{T}_{1}^{2}})$
for each $\overline{i}$. Since $\text{\ensuremath{\Theta}}\big(\mathcal{A}_{{\rm prin},\mathbb{T}_{1}^{2}}\big)$
is saturated, closed under addition and translation by $\overline{N}$
after the identification $\mathcal{A}_{{\rm prin},\mathbb{T}_{1}^{2}}^{\vee}\big(\mathbb{Z}^{T}\big)\simeq\overline{M}^{\circ}\oplus\overline{N}$,
we conclude that $\mathcal{C}_{\overline{{\bf s}}}^{-}\big(\mathbb{Z}^{T}\big)$
is contained in $\Theta\big(\mathcal{A}_{{\rm prin},\mathbb{T}_{1}^{2}}\big)$.
Since $\mathcal{C}_{\overline{{\bf s}}}^{+}\big(\mathbb{Z}^{T}\big)$ is also
contained in $\Theta\big(\mathcal{A}_{{\rm prin},\mathbb{T}_{1}^{2}}^{{\bf }}\big)$
and $\Theta\big(\mathcal{A}_{{\rm prin},\mathbb{T}_{1}^{2}}\big)$ is closed
under addition we conclude that $\mathcal{A}_{{\rm prin},\mathbb{T}_{1}^{2}}^{\vee}\big(\mathbb{Z}^{T}\big)=\Theta\big(\mathcal{A}_{{\rm prin},\mathbb{T}_{1}^{2}}\big)$.

By \cite[Theorem 7.5]{GHKK}, ${\rm mid}(\mathcal{A}_{{\rm prin}})$
is always contained in $\Gamma\big(\mathcal{A}_{{\rm prin},}\mathcal{O}_{\mathcal{A}_{{\rm prin}}}\big)$.
Since $\mathcal{A}_{{\rm prin},\mathbb{T}_{1}^{2}}^{\vee}\big(\mathbb{Z}^{T}\big)=\Theta\big(\mathcal{A}_{{\rm prin},\mathbb{T}_{1}^{2}}\big)$
as we just showed, in this case, we have ${\rm can}\big(\mathcal{A}_{{\rm prin},\mathbb{T}_{1}^{2}}\big)$
equal to ${\rm mid}\big(\mathcal{A}_{{\rm prin},\mathbb{T}_{1}^{2}}\big)$
and therefore contained in $\Gamma\big(\mathcal{A}_{{\rm prin},\mathbb{T}_{1}^{2}},\mathcal{O}_{\mathcal{A}_{{\rm prin},\mathbb{T}_{1}^{2}}}\big)$.
To prove that ${\rm can}\big(\mathcal{A}_{{\rm prin},\mathbb{T}_{1}^{2}}\big)$
is equal to \linebreak $\Gamma\big(\mathcal{A}_{{\rm prin},\mathbb{T}_{1}^{2}},\mathcal{O}_{\mathcal{A}_{{\rm prin},\mathbb{T}_{1}^{2}}}\big)$,
we need to do some more explicit computations. For our choice of initial
seed~${\bf s}$, the following mutation sequence, to be read from
left to right, is a maximal green sequence\footnote{Daping Wen showed this explicit maximal green sequence to me.}
\[
[1,3,2,4,6,5,1,6,4,3,2,5].
\]
The DT-transformation $\mathfrak{p}_{-,+}^{{\bf s}}\circ\Sigma^{*}$
can be computed via compositions of cluster mutations along the above
sequence of directions. For $i=1,2,3$,
\begin{align*}
\mathfrak{p}_{-,+}^{\overline{{\bf s}}}\big(\big(z^{(-f_{\overline{i}},0)}\big)\big) & =\phi_{(s^{*},q)}^{*}\circ\mathfrak{p}_{-,+}^{{\bf s}}\circ\Sigma^{*}\big(z^{(f_{i},0)}\big)\\
 & =\phi_{(s^{*},q)}^{*}\big[z^{(-f_{i,}0)}\cdot F\big(z^{(\{e_{1},\cdot\},e_{1})},z^{(\{e_{2},\cdot\},e_{2})}, \dots,z^{(\{e_{6},\cdot\},e_{6})}\big)\big]\\
 & =z^{(-f_{\overline{i}},0)}\cdot\overline{F}\big(z^{(\{e_{\overline{i}},\cdot\}, e_{\overline{i}})},z^{(\{e_{\overline{i+1}},\cdot\},e_{\overline{i+1}})}, z^{(\{e_{\overline{i+2}},\cdot\},e_{\overline{i+2}})}\big).
\end{align*}
Here,
\begin{gather*}
F(x_{1},x_{2},\dots,x_{6}) =1+x_{1}+x_{1}x_{3}+x_{1}x_{6}+x_{1}x_{3}x_{6}+x_{1}x_{3}x_{5}x_{6}\\
\hphantom{F(x_{1},x_{2},\dots,x_{6}) =}{} +x_{1}x_{2}x_{3}x_{6}+x_{1}x_{2}x_{3}x_{5}x_{6}+x_{1}^{2}x_{2}x_{3}x_{5}x_{6}
\end{gather*}
and
\[
\overline{F}(w_{1},w_{2},w_{3})=1+w_{1}+2w_{1}w_{3}+w_{1}w_{3}^{2}+2w_{1}w_{2}w_{3}^{2}+w_{1}w_{2}^{2}w_{3}^{2}+w_{1}^{2}w_{2}^{2}w_{3}^{2}.
\]
With an argument very similar to the case of the Kronecker $2$ quiver,
we can directly enumerate broken lines with initial direction $(f_{\overline{i}}-f_{\overline{i+1}},0)$
and ending at a generic point~$Q$ in $\mathcal{C}_{\bar{{\bf s}}}^{+}$.
The theta function $\vartheta_{(f_{\overline{i}}-f_{\overline{i+1}},0)}$
has expansion at~$Q$ as follows:
\[
\vartheta_{(f_{\overline{i}}-f_{\overline{i+1}},0),Q} =z^{(f_{\overline{i}}-f_{\overline{i+1}},0)} \big(1+z^{(\{e_{\overline{i+1}},\cdot\},e_{\overline{i+1}})} +z^{(\{e_{\overline{i}}+e_{\overline{i+1}},\cdot\},e_{\overline{i}}+e_{\overline{i+1}})}\big).
\]
Indeed, given a broken line $\gamma$ with initial direction $(f_{\overline{i}}-f_{\overline{i+1}},0)$,
first observe that in order for~$\gamma$ to end at~$Q$, the first bending of $\gamma$ can only happen at the wall
\[
\mathfrak{d}_{1}=\big(\mathbb{R}_{\geq0}f_{\overline{i}} +\mathbb{R}_{\geq0}f_{\overline{i+2}}+\overline{N}_{\mathbb{R}}, 1+z^{(\{e_{\overline{i+1}},\cdot\},e_{\overline{i+1}} )}\big)
\]
 or the wall
\[
\mathfrak{d}_{2}=\big(\mathbb{R}_{\geq0}\big({-}f_{\overline{i}}+2f_{\overline{i+2}}\big) +\mathbb{R}_{\geq0}f_{\overline{i+2}}+\overline{N}_{\mathbb{R}}, 1+z^{(\{e_{\overline{i+1}},\cdot\},e_{\overline{i+1}})}\big).
\]
Both $\mathfrak{d}_{1}$ and $\mathfrak{d}_{2}$ are contained in
the hyperplane $e_{\overline{i+1}}^{\perp}$. If $\gamma$ has first
bending anywhere else, it will shoot back out and never reach $\mathcal{C}_{\overline{{\bf s}}}^{+}$.
Next, observe that by alternatively mutating in the direction $\overline{i}$
and $\overline{i+1}$, we obtain a copy $\mathfrak{D}$ of the scattering
diagram of the Kronecker $2$ quiver inside $\mathfrak{D}_{\overline{{\bf s}}}$
that contain $\mathfrak{d}_{1}$ and $\mathfrak{d}_{2}$. Thus, the
enumeration of all broken lines with initial direction $\big(\big(f_{\overline{i}}-f_{\overline{i+1}}\big),0\big)$
and end point $Q$ happens inside $\mathfrak{D}\subset\mathfrak{D}_{\overline{{\bf s}}}$
and can be reduced to the case of Example~3.10 in~\cite{GHKK}.

\begin{figure}\centering
 \begin{tikzpicture}[scale=2.2] \draw (0,0)--(1,0); \draw (0,1) -- (0,0);
\draw (1,0) -- (2,0); \draw [blue] (2,0) -- (0.5,0.866);
\draw (2,0) -- (3,0); \draw [blue] (3,0) -- (0.5,0.866);
\draw (3,0) -- (4,0); \draw [blue] (4,0) -- (0.5,0.866);
\draw [dotted] (4,0) -- (4.3,0); \draw [blue] (0,1.732) -- (1,0); \draw [blue] (0,0) -- (1,1.732);
\draw (0,0) -- (0,1.732);
\draw (0.3,0.1) node {\tiny$(0,1,0)$}; \draw (0.3,0.9) node {\tiny$(0,0,1)$}; \draw (0.3,1.65) node {\tiny$(-1,0,2)$}; \draw (1.2,1.65) node {\tiny$(0,-1,2)$}; \draw (2.2,1.65) node {\tiny$(1,-2,2)$}; \draw (3.2,1.65) node {\tiny$(2,-3,2)$}; \draw (1.2,0.1) node {\tiny$(1,0,0)$}; \draw (2.2,0.1) node {\tiny$(2,-1,0)$}; \draw (3.2,0.1) node {\tiny$(3,-2,0)$};
\draw (0,1.732) --(1,1.732); \draw (1,1.732) -- (2,1.732); \draw [blue] (2,1.732)-- (0.5,0.866); \draw (2,1.732) -- (3,1.732); \draw [blue] (3,1.732)-- (0.5,0.866); \draw (3,1.732) -- (4,1.732); \draw [blue] (4,1.732)-- (0.5,0.866); \draw [dotted] (4,1.732) -- (4.3,1.732);
\draw (0,1.732) -- (1.5,2.598) -- (1,1.732);
\draw (1,0) -- (1.5, -0.866) -- (2,0); \draw (0,0) -- (-0.5,-0.866) -- (-1,0) -- (0,0); \draw (1,0) -- (-0.5,-0.866); \draw (1,0) -- (1,-1.732) -- (1.5, -0.866);
\draw (-1,0) -- (-1.5, 0.866) -- (0,0);
\end{tikzpicture} \caption{The projection of $\mathfrak{D}_{\overline{\mathbf{s}}}$ on to the plane $x+y+z=1$ in $\overline{M}_{\mathbb{R}}$. If we only mutate in the direction of $\overline{1}$ and $\overline{2}$, we obtain a copy of the scattering diagram of the Kronecker 2 quiver as shaded by blue.}
 \end{figure}
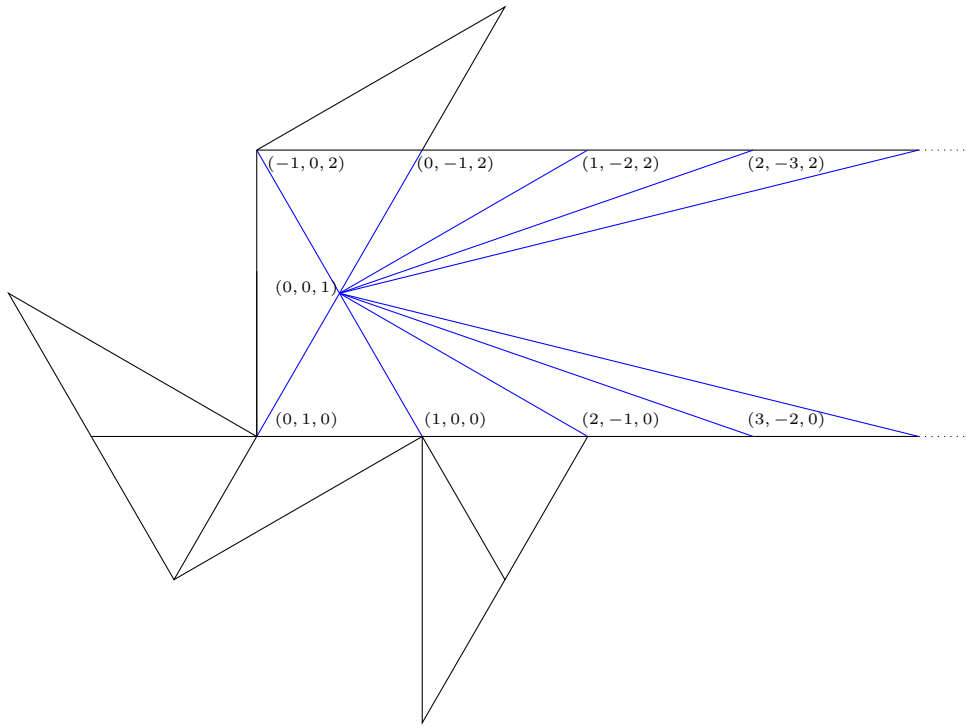

For any $Q$ in $\mathfrak{D}_{\overline{{\bf s}}}\setminus{\rm supp}(\mathfrak{D}_{\overline{{\bf s}}})$,\footnote{Again, same as in Section~\ref{sec:Building--using}, we will simply denote $\mathfrak{D}_{\overline{{\bf s}}}^{\mathcal{A}_{{\rm prin}}}$
by $\mathfrak{D}_{\overline{{\bf s}}}$. For justification of this
abuse of notation, see the discussion at the end of Section~\ref{sec:Apply-quotient-constructions}.} for any $\overline{n}$ in $\overline{N}$, $\vartheta_{(0,\overline{n}),Q}=z^{(0,\overline{n})}$.
Hence we will replace $\vartheta_{(0,\overline{n})}$ with $z^{(0,\overline{n})}$
for any~$\overline{n}$ in~$\overline{N}$. The upper cluster algebra
$\Gamma\big(\mathcal{A}_{{\rm prin},\mathbb{T}_{1}^{2}},\mathcal{O}_{\mathcal{A}_{{\rm prin},\mathbb{T}_{1}^{2}}}\big)$ is computed explicitly in~\cite{MM13} with a given set of generators.
By changing variables in \cite[Proposition~7.1.1]{MM13}\footnote{Because of different conventions for $\mathcal{X}$-cluster variables
in \cite{FZ07} and \cite{GHKK}, the $B$-matrix used in \cite{MM13}
is the transpose of the exchange matrix we have used.} -- replacing their $x_{i}$ with our $z^{(f_{\overline{i}},0)}$ and their $f_{i}$ with our $z^{(0,e_{\overline{i}})}$, we can easily
see that the generators of $\Gamma\big(\mathcal{A}_{{\rm prin},\mathbb{T}_{1}^{2}},\mathcal{O}_{\mathcal{A}_{{\rm prin},\mathbb{T}_{1}^{2}}}\big)$
given in \cite{MM13} are all theta functions, that is, $\big\{\vartheta_{\pm(f_{\overline{i}},0)},z^{\pm(0,e_{\overline{i}})}, \vartheta_{(f_{\overline{i}}-f_{\overline{i+1}},0)}\big\}_{i=1,2,3}$
is a set of generators for $\Gamma\big(\mathcal{A}_{{\rm prin},\mathbb{T}_{1}^{2}},\mathcal{O}_{\mathcal{A}_{{\rm prin},\mathbb{T}_{1}^{2}}}\big)$.
Thus we conclude that
\[
{\rm can}\big(\mathcal{A}_{{\rm prin},\mathbb{T}_{1}^{2}}\big)=\Gamma\big(\mathcal{A}_{{\rm prin},\mathbb{T}_{1}^{2}},\mathcal{O}_{\mathcal{A}_{{\rm prin},\mathbb{T}_{1}^{2}}}\big).
\]
Hence we prove the first main theorem of this section:
\begin{Theorem}\label{thm:5.1}The full Fock--Goncharov conjecture holds for $\mathcal{A}_{{\rm prin},\mathbb{T}_{1}^{2}}$, that is,
\[
\Theta\big(\mathcal{A}_{{\rm prin},\mathbb{T}_{1}^{2}}\big)=\mathcal{A}_{{\rm prin},\mathbb{T}_{1}^{2}}^{\vee}\big(\mathbb{Z}^{T}\big),\qquad{\rm can}\big(\mathcal{A}_{{\rm prin},\mathbb{T}_{1}^{2}}\big)=\Gamma\big(\mathcal{A}_{{\rm prin},\mathbb{T}_{1}^{2}},\mathcal{O}_{\mathcal{A}_{{\rm prin},\mathbb{T}_{1}^{2}}}\big).
\]
\end{Theorem}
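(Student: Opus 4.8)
The plan is to establish the two stated equalities separately, in each case transferring information about the cluster varieties of the $4$-punctured sphere $\mathbb{S}_4^2$ down to $\mathbb{T}_1^2$ through the folding by $\Pi=\langle\sigma_1\rangle\times\langle\sigma_2\rangle\times\langle\sigma_3\rangle$ fixed above, so that $\overline{{\bf s}}$ is the Markov seed and ${\bf s}$ the $\mathbb{S}_4^2$-seed.

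For the identity $\Theta(\mathcal{A}_{{\rm prin},\mathbb{T}_1^2})=\mathcal{A}_{{\rm prin},\mathbb{T}_1^2}^\vee(\mathbb{Z}^T)$, I would first recall that $\Theta$ always contains the tropical integer points $\mathcal{C}_{\overline{{\bf s}}}^+(\mathbb{Z}^T)$ of the cluster complex and, by \cite[Theorem 7.5]{GHKK}, is saturated and closed under addition and under translation by $\overline N$ once we identify $\mathcal{A}_{{\rm prin},\mathbb{T}_1^2}^\vee(\mathbb{Z}^T)\simeq\overline M^\circ\oplus\overline N$. It therefore suffices to place the generators $(-f_{\overline i},0)$ of $\mathcal{C}_{\overline{{\bf s}}}^-(\mathbb{Z}^T)$ in $\Theta$. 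Here I would use Lemma~\ref{lem:3.4}, so that $\mathfrak{p}_{+,-}^{{\bf s}}\in G^\Pi$, together with the commutative diagram of Proposition~\ref{prop:3.3} to obtain $\mathfrak{p}_{-,+}^{\overline{{\bf s}}}\big(z^{(-f_{\overline i},0)}\big)=\phi_{(s^*,q)}^*\circ\mathfrak{p}_{-,+}^{{\bf s}}\big(z^{(-f_i,0)}\big)$. Since $\mathfrak{p}_{-,+}^{{\bf s}}\circ\Sigma^*$ is the Donaldson--Thomas transformation of $\mathcal{A}_{{\rm prin},\mathbb{S}_4^2}$, which by \cite[Theorem 1.3]{GS16} is a finite composition of cluster mutations, $\mathfrak{p}_{-,+}^{{\bf s}}\big(z^{(-f_i,0)}\big)$ is a positive Laurent polynomial, hence so is its image under $\phi_{(s^*,q)}^*$; this places $(-f_{\overline i},0)$ in $\Theta$, and the closure properties of $\Theta$ then give $\mathcal{C}_{\overline{{\bf s}}}^-(\mathbb{Z}^T)\subset\Theta$ and finally the first equality. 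This step is in effect Theorem~\ref{thm:3.5} applied to the folding ${\bf s}\mapsto\overline{{\bf s}}$.

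Given the first equality, one gets ${\rm can}(\mathcal{A}_{{\rm prin},\mathbb{T}_1^2})={\rm mid}(\mathcal{A}_{{\rm prin},\mathbb{T}_1^2})$, and the latter is contained in $\Gamma(\mathcal{A}_{{\rm prin},\mathbb{T}_1^2},\mathcal{O})$ by \cite[Theorem 7.5]{GHKK}. For the reverse containment I would invoke the explicit description of the upper cluster algebra of $\mathcal{A}_{{\rm prin},\mathbb{T}_1^2}$ in \cite[Proposition 7.1.1]{MM13}: after the change of variables $x_i\mapsto z^{(f_{\overline i},0)}$ and $f_i\mapsto z^{(0,e_{\overline i})}$ (accounting for the fact that the $B$-matrix in \cite{MM13} is the transpose of the exchange matrix used here, a $\mathcal{X}$-variable convention difference between \cite{FZ07} and \cite{GHKK}), a generating set of $\Gamma(\mathcal{A}_{{\rm prin},\mathbb{T}_1^2},\mathcal{O})$ is $\big\{\vartheta_{\pm(f_{\overline i},0)},\,z^{\pm(0,e_{\overline i})},\,\vartheta_{(f_{\overline i}-f_{\overline{i+1}},0)}\big\}_{i=1,2,3}$. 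The monomials $z^{\pm(0,e_{\overline i})}$ are theta functions tautologically, the $\vartheta_{\pm(f_{\overline i},0)}$ are cluster monomials and hence theta functions, and for $\vartheta_{(f_{\overline i}-f_{\overline{i+1}},0)}$ one enumerates the broken lines ending at a generic $Q$ in $\mathcal{C}_{\overline{{\bf s}}}^+$: every admissible first bending must occur on a wall inside the hyperplane $e_{\overline{i+1}}^\perp$, and the whole count takes place inside the copy of the Kronecker-$2$ scattering sub-diagram obtained by mutating only in the directions $\overline i$ and $\overline{i+1}$, so it reduces to \cite[Example 3.10]{GHKK} and yields $\vartheta_{(f_{\overline i}-f_{\overline{i+1}},0),Q}=z^{(f_{\overline i}-f_{\overline{i+1}},0)}\big(1+z^{(\{e_{\overline{i+1}},\cdot\},e_{\overline{i+1}})}+z^{(\{e_{\overline i}+e_{\overline{i+1}},\cdot\},e_{\overline i}+e_{\overline{i+1}})}\big)$. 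Since every generator of $\Gamma$ is then a theta function, $\Gamma(\mathcal{A}_{{\rm prin},\mathbb{T}_1^2},\mathcal{O})\subset{\rm can}(\mathcal{A}_{{\rm prin},\mathbb{T}_1^2})$, completing the proof.

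The step I expect to be the main obstacle is the explicit side of the last paragraph: one has to compute the Donaldson--Thomas transformation of $\mathcal{A}_{{\rm prin},\mathbb{S}_4^2}$ along a maximal green sequence such as $[1,3,2,4,6,5,1,6,4,3,2,5]$ to obtain the polynomial $F(x_1,\dots,x_6)$, push it through $\phi_{(s^*,q)}^*$ to get the folded polynomial $\overline F(w_1,w_2,w_3)$, and then carry out the broken-line enumeration carefully enough to identify $\vartheta_{(f_{\overline i}-f_{\overline{i+1}},0)}$ with the claimed $\mathcal{A}_{{\rm prin}}$-cluster variable; reconciling the differing conventions across \cite{MM13}, \cite{FZ07} and \cite{GHKK} is an additional bookkeeping hazard that must be handled to make the generator identification airtight.
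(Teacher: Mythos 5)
Your proposal is correct and follows essentially the same route as the paper: the first equality via the folding machinery (Lemma~\ref{lem:3.4} and Proposition~\ref{prop:3.3}, i.e.\ Theorem~\ref{thm:3.5}) combined with the rationality of the Donaldson--Thomas transformation of $\mathcal{A}_{{\rm prin},\mathbb{S}_4^2}$ from \cite{GS16} and the closure properties of $\Theta$ from \cite[Theorem~7.5]{GHKK}; the second by matching the generators of \cite[Proposition~7.1.1]{MM13} with explicit theta expansions (maximal green sequence for $\vartheta_{(-f_{\overline{i}},0)}$, Kronecker-$2$ broken-line count for $\vartheta_{(f_{\overline{i}}-f_{\overline{i+1}},0)}$). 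One minor slip: $\vartheta_{(-f_{\overline{i}},0)}$ is \emph{not} a cluster monomial, since $(-f_{\overline{i}},0)$ lies in the negative chamber, which for the Markov seed is disjoint from the cluster complex; its identification with the corresponding generator of the upper cluster algebra genuinely requires the explicit expansion $z^{(-f_{\overline{i}},0)}\cdot\overline{F}$ coming from the maximal green sequence, a computation you correctly single out as the main burden in your final paragraph.
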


The corollary below follows immediately:
\begin{Corollary}\label{cor:5.2}$\Theta\big(\mathcal{X}_{\mathbb{T}_{1}^{2}}\big) =\mathcal{X}_{\mathbb{T}_{1}^{2}}^{\vee}\big(\mathbb{Z}^{T}\big)$
and ${\rm can}\big(\mathcal{X}_{\mathbb{T}_{1}^{2}}\big) =\Gamma\big(\mathcal{X}_{\mathbb{T}_{1}^{2}},\mathcal{O}_{\mathcal{X}_{\mathbb{T}_{1}^{2}}}\big)$.
\end{Corollary}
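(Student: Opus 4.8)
The strategy is to obtain Corollary~\ref{cor:5.2} from Theorem~\ref{thm:5.1} via the comparison between $\mathcal{A}_{\mathrm{prin}}$ and $\mathcal{X}$ worked out in \cite{GHKK}, the point being that $\mathcal{X}_{\mathbb{T}_{1}^{2}}$ is a torus quotient of $\mathcal{A}_{\mathrm{prin},\mathbb{T}_{1}^{2}}$ rather than a fiber, so that both halves of the full Fock--Goncharov conjecture descend directly. Recall that there is a projection $\pi\colon\mathcal{A}_{\mathrm{prin},\mathbb{T}_{1}^{2}}\to\mathcal{X}_{\mathbb{T}_{1}^{2}}$ exhibiting $\mathcal{X}_{\mathbb{T}_{1}^{2}}$ as the quotient of $\mathcal{A}_{\mathrm{prin},\mathbb{T}_{1}^{2}}$ by a free action of a torus $T$, so that $\pi^{*}$ identifies $\Gamma(\mathcal{X}_{\mathbb{T}_{1}^{2}},\mathcal{O})$ with the ring of invariants $\Gamma(\mathcal{A}_{\mathrm{prin},\mathbb{T}_{1}^{2}},\mathcal{O})^{T}$; and that there is an injection $\iota\colon\mathcal{X}_{\mathbb{T}_{1}^{2}}^{\vee}(\mathbb{Z}^{T})\hookrightarrow\mathcal{A}_{\mathrm{prin},\mathbb{T}_{1}^{2}}^{\vee}(\mathbb{Z}^{T})$ with $\pi^{*}\vartheta_{q}^{\mathcal{X}}=\vartheta_{\iota(q)}^{\mathcal{A}_{\mathrm{prin}}}$, whose image is exactly the set of $q'$ for which $\vartheta_{q'}^{\mathcal{A}_{\mathrm{prin}}}$ is $T$-invariant. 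The mechanism behind the last point is that every wall of $\mathfrak{D}_{\overline{\mathbf{s}}}^{\mathcal{A}_{\mathrm{prin}}}$ acts by multiplication by powers of a character of the form $z^{(\{n',\cdot\},n')}$ with $n'\in\overline{N}^{+}$, which is pulled back from $\mathcal{X}_{\mathbb{T}_{1}^{2}}$; hence every $\vartheta_{q'}^{\mathcal{A}_{\mathrm{prin}}}$ is a $T$-eigenfunction, and it is $T$-invariant precisely when its index lies in the image of $\iota$, in which case it descends to $\vartheta_{q}^{\mathcal{X}}$.

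First I would prove $\Theta(\mathcal{X}_{\mathbb{T}_{1}^{2}})=\mathcal{X}_{\mathbb{T}_{1}^{2}}^{\vee}(\mathbb{Z}^{T})$. Let $q\in\mathcal{X}_{\mathbb{T}_{1}^{2}}^{\vee}(\mathbb{Z}^{T})$. Since $\vartheta_{\iota(q)}^{\mathcal{A}_{\mathrm{prin}}}$ is $T$-invariant, it lies in $\Gamma(\mathcal{X}_{\mathbb{T}_{1}^{2}},\mathcal{O})=\Gamma(\mathcal{A}_{\mathrm{prin},\mathbb{T}_{1}^{2}},\mathcal{O})^{T}$ if and only if it lies in $\Gamma(\mathcal{A}_{\mathrm{prin},\mathbb{T}_{1}^{2}},\mathcal{O})$, that is, if and only if $\iota(q)\in\Theta(\mathcal{A}_{\mathrm{prin},\mathbb{T}_{1}^{2}})$. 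By Theorem~\ref{thm:5.1} this holds for every such index, so $\vartheta_{q}^{\mathcal{X}}$ is regular on $\mathcal{X}_{\mathbb{T}_{1}^{2}}$, i.e.\ $q\in\Theta(\mathcal{X}_{\mathbb{T}_{1}^{2}})$. In particular this already forces $\mathrm{mid}(\mathcal{X}_{\mathbb{T}_{1}^{2}})=\mathrm{can}(\mathcal{X}_{\mathbb{T}_{1}^{2}})$.

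Then I would prove $\mathrm{can}(\mathcal{X}_{\mathbb{T}_{1}^{2}})=\Gamma(\mathcal{X}_{\mathbb{T}_{1}^{2}},\mathcal{O})$. The inclusion $\mathrm{can}\subseteq\Gamma$ is the previous paragraph. For the reverse inclusion, Theorem~\ref{thm:5.1} gives $\Gamma(\mathcal{A}_{\mathrm{prin},\mathbb{T}_{1}^{2}},\mathcal{O})=\mathrm{can}(\mathcal{A}_{\mathrm{prin},\mathbb{T}_{1}^{2}})=\bigoplus_{q'}\mathbb{C}\cdot\vartheta_{q'}^{\mathcal{A}_{\mathrm{prin}}}$, and since each summand is a one-dimensional $T$-eigenspace, passing to $T$-invariants keeps exactly the summands indexed by the image of $\iota$; thus $\Gamma(\mathcal{X}_{\mathbb{T}_{1}^{2}},\mathcal{O})=\Gamma(\mathcal{A}_{\mathrm{prin},\mathbb{T}_{1}^{2}},\mathcal{O})^{T}=\bigoplus_{q}\mathbb{C}\cdot\vartheta_{q}^{\mathcal{X}}=\mathrm{can}(\mathcal{X}_{\mathbb{T}_{1}^{2}})$. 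I do not expect a serious obstacle: everything reduces to Theorem~\ref{thm:5.1} together with the standard facts relating $\mathcal{A}_{\mathrm{prin}}$ and $\mathcal{X}$ from \cite{GHKK}, and the only thing requiring care is keeping track of the torus $T$ and of the $T$-weights of the theta functions. It should, however, be emphasized that this quotient argument is genuinely special to $\mathcal{X}$: the analogous passage to $\mathcal{A}_{\mathbb{T}_{1}^{2}}$ (restriction to a fiber rather than a torus quotient) is not automatic, which is consistent with the later failure of the full Fock--Goncharov conjecture for $\mathcal{A}_{\mathbb{T}_{1}^{2}}$.
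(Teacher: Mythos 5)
Your proposal is correct and matches the paper's (essentially unstated) reasoning: the paper derives Corollary~\ref{cor:5.2} immediately from Theorem~\ref{thm:5.1} via the standard \cite{GHKK} relationship that $\mathcal{X}_{\mathbb{T}_{1}^{2}}$ is the quotient of $\mathcal{A}_{{\rm prin},\mathbb{T}_{1}^{2}}$ by a torus action, with theta functions forming an eigenbasis whose invariant part is exactly the $\mathcal{X}$-theta basis. You have simply spelled out the descent argument the paper leaves implicit, so there is nothing to correct.
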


\subsection{A cluster variety with two non-equivalent cluster structures}

Given a lattice $L$, let $T_{L}=\mathbb{G}_{m}\otimes L$ be the
algebraic torus whose cocharacter lattice is $L$. Given an initial
seed ${\bf s}_{0}$, let $\mathfrak{T}_{{\bf s}_{0}}$ be the oriented
tree whose root has ${\bf s}_{0}$ attached to it and whose vertices
parametrize seeds mutationally equivalent to ${\bf s}_{0}$. There
is an labelled edge $w\stackrel{k}{\rightarrow}w'$ in $\mathfrak{T}_{{\bf s}_{0}}$
if and only if ${\bf s}_{w}=\mu_{k}({\bf s}_{w'})$. We write the
cluster variety associated to ${\bf s}_{0}$ of type $\mathcal{A}$
or $\mathcal{X}$ or $\mathcal{A}_{{\rm prin}}$ as $V'=\bigcup_{w\in\mathfrak{T}_{{\bf s}_{0}}}T_{L,{\bf s}_{w}}$ where the lattice $L$ depends on the type of $V'$.
\begin{Definition}\label{def:5.3}Given a log Calabi--Yau variety~$V$, if there exists
a cluster variety $V'=\bigcup_{w\in\mathfrak{T}_{{\bf s}_{0}}}T_{L,{\bf s}_{w}}$
of type $\mathcal{A}$, $\mathcal{X}$ or $\mathcal{A}_{{\rm prin}}$ together with a birational morphism
\[
\iota\colon \ V'\rightarrow V
\]
such that for any pair of adjacent seeds ${\bf s}_{w}\stackrel{\mu_{k}}{\rightarrow}{\bf s}_{w'}$
in $\mathfrak{T}_{{\bf s}_{0}}$, the restriction of $\iota$ to $T_{L,{\bf s}_{w}}\bigcup_{\mu_{k}} \allowbreak T_{L,{\bf s}_{w'}}$ is an open embedding into~$V$, then we say that the cluster torus charts $T_{L,{\bf s}_{w}}$ together with their embeddings $\iota_{{\bf s}_{w}}\colon T_{L,{\bf s}_{w}}\hookrightarrow V$ into $V$ provide \emph{an atlas of cluster torus charts }for $V$.
If $\iota$ is an isomorphism outside strata of codimension at least~$2$ in the domain and range, then we say that the atlas of cluster torus charts $\{(T_{L,{\bf s}_{w}},\iota_{{\bf s}_{w}})\}_{w\in\mathfrak{T}_{{\bf s}_{0}}}$ provides a \emph{cluster structure} for~$V$. In this case, the codimension~$2$ condition guarantees that~$V$ and the cluster variety~$V'$ have the same ring of regular functions.
\end{Definition}

\begin{Remark}In the above definition, by calling $V$ a variety, we are assuming that~$V$ is an integral, separated scheme over~$\mathbb{C}$, though possibly not of finite type. As shown in~\cite{GHK15}, $\mathcal{A}$~and~$\mathcal{A}_{{\rm prin}}$ spaces are always separated while
$\mathcal{X}$ spaces are in general not (cf.\ \cite[Theorem~3.14 and Remark~4.2]{GHK15}). In this light, calling the $\mathcal{X}$ spaces
\emph{$\mathcal{X}$ cluster varieties} can be very misleading, though
the terminology has already been accepted in the cluster literature.
In the above definition, if $V'$ is of type $\mathcal{A}$ or
$\mathcal{A}_{{\rm prin}}$, we could just assume that $\iota$ is
an open embedding. If $V'$ is of type $\mathcal{X}$, by requiring
only the restriction of $\iota$ to any pair of adjacent cluster tori
to be an open embedding into $V$, we allow certain loci in $V'$
to get identified when mapped into $V$ and therefore get around the
issue of non-separateness of~$V'$.
\end{Remark}
\begin{Example}Let $V'$ be a cluster variety of type $\mathcal{A}$ or $\mathcal{A}_{{\rm prin}}$. Then by \cite[Theorem~3.14]{GHK15}, the canonical map
\[
\iota\colon \ V'\rightarrow{\rm Spec}({\rm up}(V'))
\]
is an open embedding where ${\rm up}(V')$ is the ring of regular
functions on $V'$. Since $V'$ is a $S_{2}$-scheme, we know
that the complement of the image of $V'$ in ${\rm Spec}({\rm up}(V'))$
has codimension at least $2$. Therefore the atlas of cluster torus
charts of $V'$ provide a cluster structure for~${\rm Spec}({\rm up}(V'))$.
\end{Example}

\begin{Definition}\label{def:5.4}Given a log Calabi--Yau variety $V$ and two atlases
of cluster torus charts
\[
\mathcal{T}_{1}=\{ (T_{L,{\bf s}_{w}},\iota_{{\bf s}_{w}})\} _{w\in\mathfrak{T}_{{\bf s}_{0}}},\qquad\mathcal{T}_{2}=\{(T_{L',{\bf s}_{v}},\iota_{{\bf s}_{v}})\}_{v\in\mathfrak{T}_{{\bf s}_{0}'}}
\]
for $V$, we say $\mathcal{T}_{1}$ and $\mathcal{T}_{2}$ are \emph{non-equivalent atlases} if for any $w$ in $\mathfrak{T}_{{\bf s}_{0}}$, there is no $v$ in $\mathfrak{T}_{{\bf s}_{0}'}$ such that the embeddings $\iota_{{\bf s}_{w}}\colon T_{L,{\bf s}_{w}}\hookrightarrow V$ and $\iota_{{\bf s}_{v}}\colon T_{L,{\bf s}_{v}}\hookrightarrow V$ have the same image in~$V$. Given two non-equivalent atlases of cluster
torus charts for~$V$, we say they provide \emph{non-equivalent cluster
structures} for~$V$ if both atlases cover~$V$ up to codimension~$2$.
\end{Definition}

\begin{Definition}\label{def:5.5}Given a variety $V$ and an atlas of cluster torus
charts $\mathcal{T}=\{(T_{L,{\bf s}_{w}},\iota_{{\bf s}_{w}})\}_{w\in\mathfrak{T}_{{\bf s}_{0}}}$
for $V$, a \emph{global monomial} for $\mathcal{T}$ is a regular function on $V$ that restricts to a character on some torus $T_{L,{\bf s}_{w}}$ in~$\mathcal{T}$.
\end{Definition}

Given our initial seed $\overline{{\bf s}}$ for $\mathcal{A}_{\mathbb{T}_{1}^{2}}$
as constructed in the beginning of this section, let $\mathfrak{T}_{\overline{{\bf s}}}$
be the infinite directed tree whose vertices parametrize seeds mutationally
equivalent to~$\overline{{\bf s}}$. Since~$\mathfrak{p}_{+,-}^{\overline{{\bf s}}}$
is rational as we have shown, we can run the construction in Section~\ref{sec:Building--using} and build a variety $\tilde{\mathcal{A}}_{{\rm prin},\mathbb{T}_{1}^{2}}:=\tilde{\mathcal{A}}_{{\rm prin},\overline{{\bf s}}}$ that is isomorphic to the space $\tilde{\mathcal{A}}_{{\rm prin},\overline{{\bf s}}}^{{\rm scat}}$
we build using all chambers in~$\Delta_{{\bf \overline{{\bf s}}}}^{+}$
and~$\Delta_{\overline{{\bf s}}}^{-}$ in the scattering diagram.
Let $\mathcal{A}_{{\rm prin},\mathbb{T}_{1}^{2}}^{\pm}\subset\tilde{\mathcal{A}}_{{\rm prin},\mathbb{T}_{1}^{2}}$ be the copy of~$\mathcal{A}_{{\rm prin},\mathbb{T}_{1}^{2}}$ corresponding
to $\Delta_{\overline{{\bf s}}}^{\pm}$ respectively.
\begin{Proposition}\label{prop:5.6}We have
\[
{\rm up}\big(\tilde{\mathcal{A}}_{{\rm prin},\mathbb{T}_{1}^{2}}\big)={\rm can}\big(\mathcal{A}_{{\rm prin},\mathbb{T}_{1}^{2}}\big),
\]
 where ${\rm up}\big(\tilde{\mathcal{A}}_{{\rm prin},\mathbb{T}_{1}^{2}}\big)$ is the ring of regular functions of $\tilde{\mathcal{A}}_{{\rm prin},\mathbb{T}_{1}^{2}}$.
\end{Proposition}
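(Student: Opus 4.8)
The plan is to establish the chain
\[
{\rm can}\big(\mathcal{A}_{{\rm prin},\mathbb{T}_{1}^{2}}\big)\subset{\rm up}\big(\tilde{\mathcal{A}}_{{\rm prin},\mathbb{T}_{1}^{2}}\big)\subset{\rm up}\big(\mathcal{A}_{{\rm prin},\mathbb{T}_{1}^{2}}^{+}\big)={\rm can}\big(\mathcal{A}_{{\rm prin},\mathbb{T}_{1}^{2}}\big),
\]
which forces all three to coincide. Here the first inclusion will come from Theorem~\ref{thm:4.6} once its hypothesis $\mathcal{C}_{\overline{{\bf s}}}^{+}\subset\Theta^{-}$ is checked, the second from restricting regular functions to the open copy $\mathcal{A}_{{\rm prin},\mathbb{T}_{1}^{2}}^{+}\subset\tilde{\mathcal{A}}_{{\rm prin},\mathbb{T}_{1}^{2}}$, and the last equality is Theorem~\ref{thm:5.1}.

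First I would verify $\mathcal{C}_{\overline{{\bf s}}}^{+}\subset\Theta^{-}$, the ``signs reversed'' counterpart of the computation in the proof of Theorem~\ref{thm:5.1}. The Donaldson--Thomas transformation $\mathfrak{p}_{-,+}^{{\bf s}}\circ\Sigma^{*}$ of $\mathcal{A}_{{\rm prin},\mathbb{S}_{4}^{2}}$ is a composition of finitely many cluster mutations by \cite[Theorem~1.3]{GS16}, hence so is its inverse, so $\mathfrak{p}_{+,-}^{{\bf s}}=\Sigma^{*}\circ\big(\mathfrak{p}_{-,+}^{{\bf s}}\circ\Sigma^{*}\big)^{-1}$ is rational and sends each monomial $z^{(f_{i},0)}$ to a positive Laurent polynomial. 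Since $\mathfrak{D}_{{\bf s},{\rm in}}$ is $\Pi$-invariant we have $\mathfrak{p}_{+,-}^{{\bf s}}\in G^{\Pi}$ exactly as in Lemma~\ref{lem:3.4}, so Proposition~\ref{prop:3.3} gives
\[
\mathfrak{p}_{+,-}^{\overline{{\bf s}}}\big(z^{(f_{\overline{i}},0)}\big)=\phi_{(s^{*},q)}^{*}\big(\mathfrak{p}_{+,-}^{{\bf s}}\big(z^{(f_{i},0)}\big)\big),
\]
a positive Laurent polynomial; by the sign reversed form of \cite[Theorem~3.5]{GHKK} this is the expansion of $\vartheta_{(f_{\overline{i}},0)}$ at a generic point of $\mathcal{C}_{\overline{{\bf s}}}^{-}$, so $(f_{\overline{i}},0)\in\Theta^{-}$ for $i=1,2,3$. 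Since $\vartheta_{(0,\overline{n})}=z^{(0,\overline{n})}$ for all $\overline{n}\in\overline{N}$ we also get $(0,\pm e_{\overline{i}})\in\Theta^{-}$, and because $\Theta^{-}$ is closed under addition (proof of Theorem~\ref{thm:4.6}) and $\mathcal{C}_{\overline{{\bf s}}}^{+}\cap\big(\overline{M}^{\circ}\oplus\overline{N}\big)$ is the monoid generated by these points, $\mathcal{C}_{\overline{{\bf s}}}^{+}\subset\Theta^{-}$. Theorem~\ref{thm:4.6} then yields the first inclusion above.

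For the reverse inclusion I would use Corollary~\ref{cor:4.3}: $\tilde{\mathcal{A}}_{{\rm prin},\mathbb{T}_{1}^{2}}$ is an integral scheme containing $\mathcal{A}_{{\rm prin},\mathbb{T}_{1}^{2}}^{+}\simeq\mathcal{A}_{{\rm prin},\mathbb{T}_{1}^{2}}$ as a nonempty, hence dense, open subscheme, so restriction of regular functions is injective and
\[
{\rm up}\big(\tilde{\mathcal{A}}_{{\rm prin},\mathbb{T}_{1}^{2}}\big)\hookrightarrow{\rm up}\big(\mathcal{A}_{{\rm prin},\mathbb{T}_{1}^{2}}\big)=\Gamma\big(\mathcal{A}_{{\rm prin},\mathbb{T}_{1}^{2}},\mathcal{O}_{\mathcal{A}_{{\rm prin},\mathbb{T}_{1}^{2}}}\big)={\rm can}\big(\mathcal{A}_{{\rm prin},\mathbb{T}_{1}^{2}}\big),
\]
the last equality being Theorem~\ref{thm:5.1}. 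Viewing all of these as subrings of the common function field of $\tilde{\mathcal{A}}_{{\rm prin},\mathbb{T}_{1}^{2}}$, the chain closes up into an equality.

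The main obstacle I expect is the first step: making the ``signs reversed'' version of the proof of Theorem~\ref{thm:5.1} rigorous, i.e.\ confirming that $\mathfrak{p}_{+,-}^{{\bf s}}$ (not just $\mathfrak{p}_{-,+}^{{\bf s}}$) is rational and positivity preserving — which reduces to invertibility of the DT transformation of $\mathcal{A}_{{\rm prin},\mathbb{S}_{4}^{2}}$ as a finite mutation sequence together with positivity of Laurent expansions of cluster variables — and that this behaviour passes through the folding map via Proposition~\ref{prop:3.3}. The second inclusion is essentially formal once Theorem~\ref{thm:5.1} and Corollary~\ref{cor:4.3} are available.
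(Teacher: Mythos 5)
Your proposal is correct, and its skeleton is the same as the paper's: both reduce the statement to the single containment ${\rm can}\big(\mathcal{A}_{{\rm prin},\mathbb{T}_{1}^{2}}\big)\subset{\rm up}\big(\tilde{\mathcal{A}}_{{\rm prin},\mathbb{T}_{1}^{2}}\big)$ (the reverse containment being formal from restriction to the dense open copy $\mathcal{A}_{{\rm prin},\mathbb{T}_{1}^{2}}^{+}$ together with Theorem~\ref{thm:5.1}), and both obtain that containment from Theorem~\ref{thm:4.6} by verifying $\mathcal{C}_{\overline{{\bf s}}}^{+}\subset\Theta^{-}$. Where you differ is in how that verification is carried out. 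The paper does it by a direct computation: it exhibits $\vartheta_{(f_{\overline{i}},0),Q}=\mathfrak{p}_{+,-}^{\overline{{\bf s}}}\big(z^{(f_{\overline{i}},0)}\big)$ explicitly as $z^{(f_{\overline{i}},0)}$ times a concrete polynomial $\overline{G}$, and then invokes closure of $\Theta^{-}$ under addition and $\overline{N}$-translation. You instead argue abstractly, running the proof of Theorem~\ref{thm:5.1} with signs reversed: $\mathfrak{p}_{+,-}^{{\bf s}}$ is the inverse of $\mathfrak{p}_{-,+}^{{\bf s}}$, the DT transformation of $\mathcal{A}_{{\rm prin},\mathbb{S}_{4}^{2}}$ is a finite composition of cluster mutations (GS16), hence so is its inverse, so $\mathfrak{p}_{+,-}^{{\bf s}}\big(z^{(f_{i},0)}\big)$ is a positive Laurent polynomial (this uses positivity of Laurent expansions of $\mathcal{A}_{{\rm prin}}$ cluster variables and the fact that $\Sigma^{*}$ preserves positivity, both available), and then you descend through folding via Lemma~\ref{lem:3.4} (which already states $\mathfrak{p}_{+,-}^{{\bf s}}\in G^{\Pi}$) and Proposition~\ref{prop:3.3}; your observation that $\mathcal{C}_{\overline{{\bf s}}}^{+}\big(\mathbb{Z}^{T}\big)$ is the monoid generated by the $(f_{\overline{i}},0)$ and $(0,\pm e_{\overline{i}})$ correctly replaces the paper's use of $\overline{N}$-translation. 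The trade-off is that your route avoids the explicit polynomial computation at the cost of invoking the cited positivity and invertibility facts, while the paper's explicit $\overline{G}$ makes the positivity self-contained; both are valid, and I see no gap in your argument.
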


\begin{proof}Since
\[
{\rm up}\big(\tilde{\mathcal{A}}_{{\rm prin},\mathbb{T}_{1}^{2}}\big)\subset{\rm up}\big(\mathcal{A}_{{\rm prin},\mathbb{T}_{1}^{2}}\big)={\rm can}\big(\mathcal{A}_{{\rm prin},\mathbb{T}_{1}^{2}}\big),
\]
to prove the equality, we only need to show that
\[
{\rm can}\big(\mathcal{A}_{{\rm prin},\mathbb{T}_{1}^{2}}\big)\subset{\rm up}\big(\tilde{\mathcal{A}}_{{\rm prin},\mathbb{T}_{1}^{2}}\big).
\]
By Theorem~\ref{thm:4.6}, it suffices to show that $\mathcal{C}_{{\bf \overline{s}}}^{+}$ is contained in $\Theta^{-}\big(\mathcal{A}_{{\rm prin},\mathbb{T}_{1}^{2}}\big)$. Indeed, given a generic point~$Q$ in $\mathcal{C}_{\overline{{\bf s}}}^{-}$, by direct computations we obtain that
\begin{gather*}
\vartheta_{(f_{\overline{i}},0),Q} =\mathfrak{p}_{+,-}^{\overline{{\bf s}}}\big(z^{(f_{\overline{i}},0)}\big)
 =z^{(f_{\overline{i}},0)}\overline{G}(z^{(\{e_{\overline{i}},\cdot\}, e_{\overline{i}})},z^{(\{e_{\overline{i+1}},\cdot\},e_{\overline{i+1}})}, z^{(\{e_{\overline{i+2}},\cdot\},e_{\overline{i+2}})}),
\end{gather*}
where
\[
G(w_{1},w_{2},w_{3})=1+w_{1}+2w_{1}w_{2}+w_{1}w_{2}^{2}+2w_{1}w_{2}^{2}w_{3} +w_{1}w_{2}^{2}w_{3}^{2}+w_{1}^{2}w_{2}^{2}w_{3}^{2}.
\]
Since $\Theta^{-}\big(\mathcal{A}_{{\rm prin},\mathbb{T}_{1}^{2}}\big)$ is
closed under addition, as we have shown in the proof of Theorem~\ref{thm:4.6},
and translation by $\overline{N}$, we conclude that $\mathcal{C}_{{\bf \overline{s}}}^{+}$
is contained in $\Theta^{-}\big(\mathcal{A}_{{\rm prin},\mathbb{T}_{1}^{2}}\big)$.
\end{proof}

It follows immediately from Proposition~\ref{prop:5.6} that ${\rm up}\big(\tilde{\mathcal{A}}_{{\rm prin},\mathbb{T}_{1}^{2}}\big)={\rm up}\big(\mathcal{A}_{{\rm prin},\mathbb{T}_{1}^{2}}\big)$.
Hence, each of~$\mathcal{A}_{{\rm prin},\mathbb{T}_{1}^{2}}^{\pm}$
has complement of codimension $2$ inside $\tilde{\mathcal{A}}_{{\rm prin},\mathbb{T}_{1}^{2}}$
and cluster tori for each of $\mathcal{A}_{{\rm prin},\mathbb{T}_{1}^{2}}^{\pm}$
provide an atlas of cluster torus charts for $\tilde{\mathcal{A}}_{{\rm prin},\mathbb{T}_{1}^{2}}$.
We denote these two atlases by $\mathcal{T}^{+}$ and $\mathcal{T}^{-}$
respectively.
\begin{Theorem}\label{thm:5.7}The atlases of cluster torus charts $\mathcal{T}^{+}$
and $\mathcal{T}^{-}$ give $\tilde{\mathcal{A}}_{{\rm prin},\mathbb{T}_{1}^{2}}$
two non-equivalent cluster structures in the sense of Definition~{\rm \ref{def:5.3}},
corresponding to $\Delta_{\overline{{\bf s}}}^{+}$ and $\Delta_{\overline{{\bf s}}}^{-}$
respectively.
\end{Theorem}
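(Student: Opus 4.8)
The plan is to split the statement into its three constituents — that $\mathcal{T}^{+}$ and $\mathcal{T}^{-}$ are atlases of cluster torus charts, that each covers $\tilde{\mathcal{A}}_{{\rm prin},\mathbb{T}_{1}^{2}}$ up to codimension $2$, and that the two atlases are non-equivalent — the first two being essentially already in hand and the third being the real content. For the first: Corollary~\ref{cor:4.3} exhibits $\mathcal{A}_{{\rm prin},\mathbb{T}_{1}^{2}}^{+}$ and $\mathcal{A}_{{\rm prin},\mathbb{T}_{1}^{2}}^{-}$ as two copies of the $\mathcal{A}_{{\rm prin}}$-cluster variety embedded into $\tilde{\mathcal{A}}_{{\rm prin},\mathbb{T}_{1}^{2}}$, so by Definition~\ref{def:5.3} the cluster tori in each provide an atlas of cluster torus charts. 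For the second: Proposition~\ref{prop:5.6} gives ${\rm up}\big(\tilde{\mathcal{A}}_{{\rm prin},\mathbb{T}_{1}^{2}}\big)={\rm can}\big(\mathcal{A}_{{\rm prin},\mathbb{T}_{1}^{2}}\big)={\rm up}\big(\mathcal{A}_{{\rm prin},\mathbb{T}_{1}^{2}}\big)$, and since $\mathcal{A}_{{\rm prin},\mathbb{T}_{1}^{2}}$ is an $S_{2}$-scheme, each $\mathcal{A}_{{\rm prin},\mathbb{T}_{1}^{2}}^{\pm}$ has complement of codimension at least $2$ in $\tilde{\mathcal{A}}_{{\rm prin},\mathbb{T}_{1}^{2}}$. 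By Definition~\ref{def:5.4} it therefore remains to show that $\mathcal{T}^{+}$ and $\mathcal{T}^{-}$ are \emph{non-equivalent atlases}: no chart of $\mathcal{T}^{+}$ has the same image in $\tilde{\mathcal{A}}_{{\rm prin},\mathbb{T}_{1}^{2}}$ as a chart of $\mathcal{T}^{-}$.

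To prove non-equivalence I would reduce it, via the isomorphism $\tilde{\mathcal{A}}_{{\rm prin},\mathbb{T}_{1}^{2}}\simeq\tilde{\mathcal{A}}_{{\rm prin},\overline{{\bf s}}}^{{\rm scat}}$ of Corollary~\ref{cor:4.3}, to the statement that $\Delta_{\overline{{\bf s}}}^{+}$ and $\Delta_{\overline{{\bf s}}}^{-}$ share no chamber. Under that isomorphism the image of the $\mathcal{T}^{+}$-chart indexed by $w$ is the torus attached to the positive chamber $\mathcal{C}_{w}^{+}$ and the image of the $\mathcal{T}^{-}$-chart indexed by $v$ is the torus attached to the negative chamber $\mathcal{C}_{v}^{-}$; since any two distinct chambers of the fan $\Delta_{\overline{{\bf s}}}^{+}\cup\Delta_{\overline{{\bf s}}}^{-}$ are separated by a wall carrying a non-trivial automorphism, the gluing maps used to build $\tilde{\mathcal{A}}^{{\rm scat}}$ are never isomorphisms between tori of distinct chambers, so a coincidence of a $\mathcal{T}^{+}$-chart image and a $\mathcal{T}^{-}$-chart image forces $\mathcal{C}_{w}^{+}=\mathcal{C}_{v}^{-}$ for some $w,v$. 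A robust alternative that sidesteps this injectivity is a global-monomial argument: a cluster $\mathcal{A}$-variable of the seed ${\bf s}_{w}$ is a global monomial for $\mathcal{T}^{+}$ (Definition~\ref{def:5.5}), hence restricts to a unit on the $\mathcal{T}^{+}$-chart of $w$; but on a negative chart it is the expansion of a theta function $\vartheta_{q}$ (with $q$ a ray of $\mathcal{C}_{w}^{+}$) at a generic point of a negative chamber, which is a monomial only if $q$ itself lies in that negative chamber — and for the base case this is witnessed concretely by the positive Laurent polynomial $\vartheta_{(f_{\overline{i}},0),Q}=z^{(f_{\overline{i}},0)}\overline{G}\big(\dots\big)$ computed in the proof of Proposition~\ref{prop:5.6}, which has at least two terms. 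Either way one is reduced to: $\Delta_{\overline{{\bf s}}}^{+}\cap\Delta_{\overline{{\bf s}}}^{-}$ contains no chamber.

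This last point I would establish from the known structure of the once-punctured-torus scattering diagram together with mutation invariance. For the Markov seed the cluster complex does not contain the negative chamber — this is precisely the infinite non-cluster wall-crossing from $\mathcal{C}_{\overline{{\bf s}}}^{-}$ to $\mathcal{C}_{\overline{{\bf s}}}^{+}$ recalled in the introduction — and every seed mutationally equivalent to $\overline{{\bf s}}$ is again a relabelling of the Markov seed, so the same holds for each such seed. If some negative chamber $\mathcal{C}_{v}^{-}$ were a positive chamber, then applying the mutation invariance of the scattering diagram and of the cluster complex (\cite[Theorem~1.24, Construction~1.30]{GHKK}, cf.\ Theorem~\ref{thm:4.1}) to re-base the picture at ${\bf s}_{v}$ would turn $\mathcal{C}_{v}^{-}$ into the initial negative chamber of ${\bf s}_{v}$ and $\Delta_{\overline{{\bf s}}}^{+}$ into $\Delta_{{\bf s}_{v}}^{+}$, so the negative chamber of the Markov-type seed ${\bf s}_{v}$ would lie in its own cluster complex — a contradiction. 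Hence $\Delta_{\overline{{\bf s}}}^{+}$ and $\Delta_{\overline{{\bf s}}}^{-}$ are disjoint, the atlases $\mathcal{T}^{+}$ and $\mathcal{T}^{-}$ are non-equivalent, and since both cover $\tilde{\mathcal{A}}_{{\rm prin},\mathbb{T}_{1}^{2}}$ up to codimension $2$ they provide two non-equivalent cluster structures, one carried by $\Delta_{\overline{{\bf s}}}^{+}$ and the other by $\Delta_{\overline{{\bf s}}}^{-}$.

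The main obstacle is the middle step: one must be certain that the quotient defining $\tilde{\mathcal{A}}^{{\rm scat}}$ does not accidentally glue the torus of a positive chamber onto that of a negative chamber, i.e.\ that an open torus chart of $\tilde{\mathcal{A}}^{{\rm scat}}$ remembers its chamber. I expect the global-monomial formulation to be the cleanest way to discharge this, since it replaces the abstract injectivity with the concrete assertion that a positive cluster variable is never a Laurent monomial on a negative chart — an assertion underwritten by the explicit expansion in Proposition~\ref{prop:5.6}, by the one-broken-line criterion used in the proof of Theorem~\ref{thm:4.6}, and by the disjointness $\Delta_{\overline{{\bf s}}}^{+}\cap\Delta_{\overline{{\bf s}}}^{-}=\emptyset$ proved above.
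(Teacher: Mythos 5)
Your overall skeleton coincides with the paper's: the two atlases come from Corollary~\ref{cor:4.3}, the codimension-$2$ covering comes from Proposition~\ref{prop:5.6} (so only non-equivalence in the sense of Definition~\ref{def:5.4} remains), and non-equivalence is detected by asking which characters of a chart extend to regular functions on $\tilde{\mathcal{A}}_{{\rm prin},\mathbb{T}_{1}^{2}}$. But the discriminating fact the paper actually uses is stronger than the one you reduce to. The paper argues: if $\iota_{v}^{+}$ and $\iota_{w}^{-}$ had the same image, then the characters on $T^{+}_{\overline{N}^{\circ}\oplus\overline{M},\overline{{\bf s}}_{v}}$ that are global monomials for $\mathcal{T}^{+}$ (these correspond to theta functions indexed by $\mathcal{C}_{v}^{+}\big(\mathbb{Z}^{T}\big)$) would have to be matched by characters of $T^{-}_{\overline{N}^{\circ}\oplus\overline{M},\overline{{\bf s}}_{w}}$ that are regular on $\tilde{\mathcal{A}}_{{\rm prin},\mathbb{T}_{1}^{2}}$ (theta functions indexed by $\mathcal{C}_{w}^{-}\big(\mathbb{Z}^{T}\big)$); since $\mathcal{C}_{v}^{+}\big(\mathbb{Z}^{T}\big)\cap\mathcal{C}_{w}^{-}\big(\mathbb{Z}^{T}\big)=0\oplus\overline{N}$ for \emph{every} pair $v,w$, only the rank-$3$ family $z^{(0,\overline{n})}$ could be common, a contradiction.

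Here is the gap in your version. Reducing to ``$\Delta_{\overline{{\bf s}}}^{+}$ and $\Delta_{\overline{{\bf s}}}^{-}$ share no chamber'' (your re-basing argument for this is fine, granting that the Markov negative chamber is not in the cluster complex) is strictly weaker than what you need: even with no common chamber, a $g$-vector ray of $\mathcal{C}_{v}^{+}$ could a priori lie in the closed cone $\mathcal{C}_{w}^{-}$, in which case the cluster variable you test with would restrict to a monomial on the negative chart and detect nothing; to close the argument you would either need the paper's stronger intersection statement $\mathcal{C}_{v}^{+}\big(\mathbb{Z}^{T}\big)\cap\mathcal{C}_{w}^{-}\big(\mathbb{Z}^{T}\big)=0\oplus\overline{N}$, or at least a convexity argument showing that not all rays of $\mathcal{C}_{v}^{+}$ can lie in $\mathcal{C}_{w}^{-}$. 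Moreover, the analytic claim your route hinges on --- that $\vartheta_{q,Q}$, for $Q$ generic in a negative chamber, is a Laurent monomial only if $q$ lies in that chamber --- is asserted rather than proved: the expansion in Proposition~\ref{prop:5.6} witnesses it only for the initial cluster variables at the initial negative chamber, and the mutation invariance of Theorems~\ref{thm:4.1} and~\ref{thm:4.2} lets you normalize only one of the two indices $v,w$ at a time, so the base case does not propagate to arbitrary pairs. Your alternative route (that the gluing of $\tilde{\mathcal{A}}_{{\rm prin},\overline{{\bf s}}}^{{\rm scat}}$ never identifies tori of distinct chambers, e.g.\ because the transition map between a positive and a negative chamber is not a monomial map) you yourself flag as the main obstacle and do not discharge. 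As written, the non-equivalence step is therefore not complete.
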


\begin{proof}Write
\[
\mathcal{T}^{\pm}=\big\{ T_{\overline{N}^{\circ}\oplus\overline{M},\overline{{\bf s}}_{v}}^{\pm},\iota_{v}^{\pm}\big\} _{v\in\mathfrak{T}_{\overline{{\bf s}}}}.
\]
Given any $v$ in $\mathfrak{T}_{\overline{{\bf s}}}$, assume there
exists $w$ in $\mathfrak{T}_{\overline{{\bf s}}}$ such that the embeddings
\[
\iota_{v}^{+}\colon \ T_{\overline{N}^{\circ}\oplus\overline{M},\overline{{\bf s}}_{v}}^{+}\hookrightarrow\tilde{\mathcal{A}}_{{\rm prin},\mathbb{T}_{1}^{2}}
\qquad \text{and} \qquad
\iota_{w}^{-}\colon \ T_{\overline{N}^{\circ}\oplus\overline{M},\overline{{\bf s}}_{w}}^{-}\hookrightarrow\tilde{\mathcal{A}}_{{\rm prin},\mathbb{T}_{1}^{2}}
\]
 have the same image. Characters on $T_{\overline{N}^{\circ}\oplus\overline{M},\overline{{\bf s}}_{v}}^{+}$
that are global monomials for $\mathcal{T}^{+}$ (cf.\ Definition~\ref{def:5.5})
correspond to theta functions parametrized by $\mathcal{C}_{v}^{+}\big(\mathbb{Z}^{T}\big)$
. Characters of $T_{\overline{N}^{\circ}\oplus\overline{M},\overline{{\bf s}}_{w}}^{-}$
that are regular functions on $\tilde{\mathcal{A}}_{{\rm prin},\mathbb{T}_{1}^{2}}$
correspond to theta functions parametrized by $\mathcal{C}_{w}^{-}\big(\mathbb{Z}^{T}\big)$.
Since $\mathcal{C}_{w}^{-}\big(\mathbb{Z}^{T}\big)\bigcap\mathcal{C}_{v}^{+}\big(\mathbb{Z}^{T}\big)=0\oplus\overline{N}$,
no characters other than $z^{(0,\overline{n})}$ for $\overline{n}\in\overline{N}$
on $T_{\overline{N}^{\circ}\oplus\overline{M},\overline{{\bf s}}_{w}}^{-}$
can be global monomials for $\mathcal{T}^{+}$, which is in contradiction
with that the embeddings $\iota_{v}^{+}$ and $\iota_{w}^{-}$ have
the same image. Hence $\mathcal{T}^{+}$ and $\mathcal{T}^{-}$ give
$\tilde{\mathcal{A}}_{{\rm prin},\mathbb{T}_{1}^{2}}$ two non-equivalent
cluster structures.
\end{proof}

\subsection[The full Fock--Goncharov conjecture for $\mathcal{A}_{\mathbb{T}_{1}^{2}}$]{The full Fock--Goncharov conjecture for $\boldsymbol{\mathcal{A}_{\mathbb{T}_{1}^{2}}}$}

Since $\Theta\big(\mathcal{A}_{{\rm prin},\mathbb{T}_{1}^{2}}\big)=\mathcal{A}_{{\rm prin},\mathbb{T}_{1}^{2}}^{\vee}$, it follows that $\Theta\big(\mathcal{A}_{\mathbb{T}_{1}^{2}}\big)=\mathcal{A}_{\mathbb{T}_{1}^{2}}^{\vee}$.
However, we will see that $\text{{\rm can}}\big(\mathcal{A}_{\mathbb{T}_{1}^{2}}\big)$
is strictly contained in $\Gamma\big(\mathcal{A}_{\mathbb{T}_{1}^{2}},\mathcal{O}_{\mathbb{T}_{1}^{2}}\big)$.
Hence the full Fock--Goncharov conjecture does not hold for $\mathcal{A}_{\mathbb{T}_{1}^{2}}$.
Since
\[
\big\{\vartheta_{(\pm f_{\overline{i}},0)},\vartheta_{(f_{\overline{i}-\overline{i+1}},0)},z^{(0,\pm e_{\overline{i}})}\big\}_{i=1,2,3}
\]
 is a set of generators of ${\rm {\rm can}}\big(\mathcal{A}_{{\rm prin},\mathbb{T}_{1}^{2}}\big)$,
$\big\{\vartheta_{\pm f_{\overline{i}}},\vartheta_{f_{\overline{i}-\overline{i+1}}}\big\}_{i=1,2,3}$
is a set of generators of ${\rm can}\big(\mathcal{A}_{\mathbb{T}_{1}^{2}}\big)$.
Denote $z^{f_{\overline{i}}}$ by $A_{i}$. Then with respect to the
initial seed $\overline{{\bf s}}$, ${\rm can}\big(\mathcal{A}_{\mathbb{T}_{1}^{2}}\big)$
has the following set of generators
\begin{gather*}
\vartheta_{f_{\overline{i}}}=A_{i},\qquad\vartheta_{f_{\overline{i}-\overline{i+1}}}
=\frac{A_{1}^{2}+A_{2}^{2}+A_{3}^{2}}{A_{i+1}A_{i+2}} =A_{i}\left(\frac{A_{1}^{2}+A_{2}^{2}+A_{3}^{2}}{A_{1}A_{2}A_{3}}\right),
\\
\vartheta_{-f_{\overline{i}}}= \frac{\big(A_{1}^{2}+A_{2}^{2}+A_{3}^{2}\big)^{2}}{A_{i}A_{i+1}^{2}A_{i+2}^{2}}= A_{i}\left(\frac{A_{1}^{2}+A_{2}^{2}+A_{3}^{2}}{A_{1}A_{2}A_{3}}\right)^{2},\qquad i=1,2,3\mod3.
\end{gather*}
By \cite[Proposition~6.2.2]{MM13}, $\Gamma\big(\mathcal{A}_{\mathbb{T}_{1}^{2}},\mathcal{O}_{\mathbb{T}_{1}^{2}}\big)$
has the following set of generators:
\[
\left\{ A_{1},A_{2},A_{3},\frac{A_{1}^{2}+A_{2}^{2}+A_{3}^{2}}{A_{1}A_{2}A_{3}}\right\}.
\]
Hence ${\rm can}\big(\mathcal{A}_{\mathbb{T}_{1}^{2}}\big)$ is strictly contained
in $\Gamma\big(\mathcal{A}_{\mathbb{T}_{1}^{2}},\mathcal{O}_{\mathbb{T}_{1}^{2}}\big)$.

We can run the construction in Section~\ref{sec:Building--using}
similarly for $\mathcal{A}_{\mathbb{T}_{1}^{2}}$ and obtain a new
variety \mbox{$\tilde{\mathcal{A}}_{\mathbb{T}_{1}^{2}}\!\supset\!\mathcal{A}_{\mathbb{T}_{1}^{2}}$}.
Moreover, given each oriented edge $v\stackrel{k}{\rightarrow}v'$ in
$\mathfrak{T}_{\overline{{\bf s}}}$, the commutative diagram
\[
%\xyR{2pc}\xyC{2pc}
\xymatrix{T_{\overline{N}^{\circ}\oplus\overline{M},\overline{{\bf s}}_{v}}^{\pm}\ar@{^{(}->}[dr]\ar@{-->}[rr]^{\mu_{k}} & & T_{\overline{N}^{\circ}\oplus\overline{M},\overline{{\bf s}}_{v'}}^{\pm}\ar@{_{(}->}[dl]\\
 & \widetilde{\mathcal{A}}_{{\rm prin},\mathbb{T}_{1}^{2}}\ar[d]^{\tilde{\pi}}\\
 & T_{\overline{M}}
}
\]
restrict to
\[
%\xyR{2pc}\xyC{2pc}
\xymatrix{T_{\overline{N}^{\circ},\overline{{\bf s}}_{v}}^{\pm}\ar@{^{(}->}[dr]\ar@{-->}[rr]^{\mu_{k}} & & T_{\overline{N}^{\circ},\overline{{\bf s}}_{v'}}^{\pm}\ar@{_{(}->}[dl]\\
 & \widetilde{\mathcal{A}}_{\mathbb{T}_{1}^{2}}\ar[d]^{\tilde{\pi}}\\
 & e.
}
\]
Given each vertex $v$, we can compute the gluing map $T_{\overline{N}^{\circ},\overline{{\bf s}}_{v}}^{+}\dashrightarrow T_{\overline{N}^{\circ},\overline{{\bf s}}_{v}}^{-}$
very explicitly. For the initial seed $\overline{{\bf s}}$, the gluing
map $\alpha_{\overline{{\bf s}}}\colon T_{\overline{N}^{\circ},\overline{{\bf s}}}^{+}\dashrightarrow T_{\overline{N}^{\circ},\overline{{\bf s}}}^{-}$
is given by
\[
\alpha_{\overline{{\bf s}}}^{*}(A_{i})=A_{i}\left(\frac{A_{1}^{2}+A_{2}^{2}+A_{3}^{2}}{A_{1}A_{2}A_{3}}\right)^{2}.
\]
For any seed $\overline{{\bf s}}_{v}=(e_{\overline{i}}')_{\overline{i}\in\overline{I}}$
for $v$ in $\mathfrak{T}_{\overline{{\bf s}}}$, denote $z^{\frac{e_{\overline{i}}^{'*}}{2}}$
by $A_{i}^{\overline{{\bf s}}_{v}}$. Let~$\eta$ be the rational
function such that $\eta(x_{1},x_{2},x_{3})=\frac{x_{1}^{2}+x_{2}^{2}+x_{3}^{2}}{x_{1}x_{2}x_{3}}$.
We know that $\eta(A_{1},A_{2},A_{3})$ is invariant under cluster
mutations, that is, for any seed $\overline{{\bf s}}'$, we have
\[
\eta\big(A_{1}^{\overline{{\bf s}}_{v}},A_{2}^{\overline{{\bf s}}_{v}},A_{3}^{\overline{{\bf s}}_{v}}\big)=\eta(A_{1},A_{2},A_{3})=\eta.
\]
For each $v$ in $\mathfrak{T}_{\overline{{\bf s}}}$, define the
following birational map $\alpha_{\overline{{\bf s}}_{v}}\colon T_{\overline{N}^{\circ},{\bf \overline{{\bf s}}}}\dashrightarrow T_{\overline{N}^{\circ},\overline{{\bf s}}}$
such that $\alpha_{\overline{{\bf s}}_{v}}^{*}\big(A_{i}^{\overline{{\bf s}}_{v}}\big)=A_{i}^{\overline{{\bf s}}_{v}}\cdot\eta^{2}$.
We show that the birational maps $\alpha_{\overline{{\bf s}}_{v}}$
give the glue maps $T_{\overline{N}^{\circ},\overline{{\bf s}}_{v}}^{+}\dashrightarrow T_{\overline{N}^{\circ},\overline{{\bf s}}_{v}}^{-}$.
It suffices to show that the following diagram commutes for any oriented
edge $v\stackrel{k}{\rightarrow}v'$ in $\mathfrak{T}_{\overline{{\bf s}}}$:
\[
\xymatrix{T_{\overline{N}^{\circ},{\bf \overline{s}}_{v}}\ar@{-->}[r]^{\mu_{k}}\ar@{-->}[d]^{\alpha_{\overline{{\bf s}}_{v}}} & T_{\overline{N}^{\circ},{\bf \overline{s}}_{v'}}\ar@{-->}[d]^{\alpha_{{\bf \overline{s}}_{v'}}}\\
T_{\overline{N}^{\circ},\overline{\mathbf{s}}_{v}}\ar@{-->}[r]^{\mu_{k}} & T_{\overline{N}^{\circ},{\bf \overline{s}}_{v'}}.
}
\]
Indeed, for $i=k$,
\begin{gather*}
\alpha_{{\bf \overline{s}}_{v}}^{*}\circ\mu_{k}^{*}\big(A_{i}^{{\bf \overline{s}}_{v'}}\big) =\alpha_{{\bf \overline{s}}_{v}}^{*}\left[\frac{\big(A_{k+1}^{{\bf \overline{s}}_{v'}}\big)^{2}+\big(A_{k+2}^{{\bf \overline{s}}_{v'}}\big)^{2}}{A_{k}^{{\bf \overline{s}}_{v'}}}\right]
 =\frac{\big(A_{k+1}^{\overline{{\bf s}}_{v}}\cdot\eta^{2}\big)^{2}+\big(A_{k+2}^{{\bf \overline{s}}_{v}}\cdot\eta^{2}\big)^{2}}{A_{k}^{{\bf \overline{s}}_{v}}\cdot\eta^{2}} \\
\hphantom{\alpha_{{\bf \overline{s}}_{v}}^{*}\circ\mu_{k}^{*}\big(A_{i}^{{\bf \overline{s}}_{v'}}\big)}{}
 =\frac{\big(A_{k+1}^{\overline{{\bf s}}_{v}}\big)^{2}+\big(A_{k+2}^{{\bf \overline{s}}_{v}}\big)^{2}}{A_{k}^{{\bf \overline{s}}_{v}}}\cdot\eta^{2}
 =\mu_{k}^{*}\big(A_{k}^{{\bf \overline{s}}_{v'}}\big)\mu_{k}^{*}\big(\eta^{2}\big)
 =\mu_{k}^{*}\big(A_{k}^{{\bf \overline{s}}_{v'}}\cdot\eta^{2}\big)\\
\hphantom{\alpha_{{\bf \overline{s}}_{v}}^{*}\circ\mu_{k}^{*}\big(A_{i}^{{\bf \overline{s}}_{v'}}\big)}{}
 =\mu_{k}^{*}\circ\alpha_{{\bf \overline{s}}_{v'}}^{*}\big(A_{k}^{{\bf \overline{s}}_{v'}}\big).
\end{gather*}
For $i\neq k$,
\begin{gather*}
\alpha_{{\bf \overline{s}}_{v}}^{*}\circ\mu_{k}^{*}\big(A_{i}^{{\bf \overline{s}}_{v'}}\big) =\alpha_{{\bf \overline{s}}_{v}}^{*}\big(A_{i}^{{\bf \overline{s}}_{v}}\big)
 =A_{i}^{{\bf \overline{s}}_{v}}\cdot\eta^{2}
 =\mu_{k}^{*}\big(A_{i}^{{\bf \overline{s}}_{v'}}\big)\cdot\mu_{k}^{*}\big(\eta^{2}\big)
 =\mu_{k}^{*}\circ\alpha_{{\bf \overline{s}}_{v'}}^{*}\big(A_{i}^{{\bf \overline{s}}_{v'}}\big).
\end{gather*}
Hence the diagram commutes.

Having already written down explicitly the gluing maps for $\tilde{\mathcal{A}}_{\mathbb{T}_{1}^{2}}$,
let us show that
\[
\Gamma\big(\tilde{\mathcal{A}}_{\mathbb{T}_{1}^{2}},\mathcal{O}_{\tilde{\mathcal{A}}_{\mathbb{T}_{1}^{2}}}\big) ={\rm can}\big(\mathcal{A}_{\mathbb{T}_{1}^{2}}\big).
\]
Indeed, since we already have the containment ${\rm can}\big(\mathcal{A}_{\mathbb{T}_{1}^{2}}\big)\subset\Gamma \big(\tilde{\mathcal{A}}_{\mathbb{T}_{1}^{2}},\mathcal{O}_{\tilde{\mathcal{A}}_{\mathbb{T}_{1}^{2}}}\big)$,
we only need to show the reverse containment. Consider $\alpha_{\overline{{\bf s}}}^{-1}\colon T_{\overline{N},\overline{{\bf s}}}^{-}\dashrightarrow T_{\overline{N},\overline{{\bf s}}}^{+}$.
Since $\big(\alpha_{\overline{{\bf s}}}^{-1}\big)^{*}(\eta)=\eta^{-1}$,
$\eta=\frac{A_{1}^{2}+A_{2}^{2}+A_{3}^{2}}{A_{1}A_{2}A_{3}}$ is not
contained in $\Gamma\big(\tilde{\mathcal{A}}_{\mathbb{T}_{1}^{2}},\mathcal{O}_{\tilde{\mathcal{A}}_{\mathbb{T}_{1}^{2}}}\big)$.
Recall that ${\rm can}\big(\mathcal{A}_{\mathbb{T}_{1}^{2}}\big)$ has the
following set of generators
\begin{gather}
\big\{A_{i},A_{i}\eta,A_{i}\eta^{2}\big\}_{i=1,2,3}.\label{eq:-13}
\end{gather}
Suppose that $A_{1}^{a_{1}}A_{2}^{a_{2}}A_{3}^{a_{3}}\eta^{b}$ is
a monomial in $\Gamma\big(\tilde{\mathcal{A}}_{\mathbb{T}_{1}^{2}},\mathcal{O}_{\tilde{\mathcal{A}}_{\mathbb{T}_{1}^{2}}}\big)$.
Since
\[
\big(\alpha_{\overline{{\bf s}}}^{-1}\big)^{*} (A_{i} )=A_{i}\eta^{2},\qquad \big(\alpha_{\overline{{\bf s}}}^{-1}\big)^{*} (\eta)=\eta^{-1},
\]
in order for $\big(\alpha_{\overline{{\bf s}}}^{-1}\big)^{*}\big(A_{1}^{a_{1}}A_{2}^{a_{2}}A_{3}^{a_{3}}\eta^{b}\big)$
to be regular on $\tilde{\mathcal{A}}_{\mathbb{T}_{1}^{2}}$, $a_{1}$, $a_{2}$, $a_{3}$
and $b$ must satisfy the following relation
\[
2(a_{1}+a_{2}+a_{3})\geq b\geq0.
\]
It is straightforward to see that $A_{1}^{a_{1}}A_{2}^{a_{2}}A_{3}^{a_{3}}\eta^{b}$
could be written as products of generators in~\eqref{eq:-13}. Therefore
$\Gamma\big(\tilde{\mathcal{A}}_{\mathbb{T}_{1}^{2}},\mathcal{O}_{\tilde{\mathcal{A}}_{\mathbb{T}_{1}^{2}}}\big)$
is contained in ${\rm can}\big(\mathcal{A}_{\mathbb{T}_{1}^{2}}\big)$.

Since $\tilde{\mathcal{A}}_{\mathbb{T}_{1}^{2}}$ is the space where
Fock--Goncharov duality conjecture holds not $\mathcal{A}_{\mathbb{T}_{1}^{2}}$,
it justifies our motto that to build the correct mirror dual cluster
variety, we should also include torus charts coming from negative
chambers in the scattering diagram.

\subsection*{Acknowledgements}

Linhui Shen first suggested to introduce folding into scattering diagrams.
I cannot thank him more. I am grateful to my advisor Sean Keel to
give suggestions for simplifying the quotient construction of scattering
diagrams. Besides I benefit from inspiring discussions with Andy Neitzke
and Daping Wen and email correspondence with Travis Mandel and Greg
Muller. I~also want to thank Andy Neitzke for carefully proofreading
the draft of this paper and thank anonymous referees for their numerous
suggestions for improvement.

%\cite{FG09,FST,GHK12}
\pdfbookmark[1]{References}{ref}
\LastPageEnding

\end{document}